\newtheorem{theorem}{Theorem}[section]
\newtheorem{lemma}[theorem]{Lemma}
\newtheorem{corollary}[theorem]{Corollary}
\newtheorem{proposition}[theorem]{Proposition}
\newtheorem{lemma-definition}[theorem]{Lemma-Definition}
\newtheorem{thmx}{Theorem}
\newtheorem{corx}[thmx]{Corollary}
\theoremstyle{definition}
\newtheorem{definition}[theorem]{Definition}
\newtheorem{example}[theorem]{Example}
\theoremstyle{remark}
\newtheorem{remark}[theorem]{Remark}
\newtheorem{parrafo}[theorem]{\unskip}
\numberwithin{equation}{theorem}
\newcommand{\mc}{\mathcal}
\newcommand{\mb}{\mathbb}
\newcommand{\mf}{\mathfrak}
\newcommand{\Spec}{\operatorname{Spec}}
\newcommand{\on}{\operatorname}
\newcommand{\x}{\mathbf{x}}
\newcommand{\z}{\mathbf{z}}
\newcommand{\e}{\mathbf{e}}
\newcommand{\m}{\mathfrak{m}}
\newcommand{\p}{\mathfrak{p}}
\renewcommand{\d}{\partial}
\newcommand{\Der}{\operatorname{Der}}
\newcommand{\Hom}{\operatorname{Hom}}
\newcommand{\End}{\operatorname{End}}
\newcommand{\Diff}{\operatorname{Diff}}
 \title{Purely Inseparable extensions of rings}
\author{Celia del Buey de Andr\'es}
\address{Dpto. de Matem\'aticas, Universidad Aut\'onoma de Madrid, Ciudad Universitaria de Cantoblanco, 28049 Madrid, Spain}
\email{ceclia.delbuey@uam.es}
\author{Diego Sulca}
\address{CIEM-FAMAF, Universidad Nacional de C\'ordoba, 5000 C\'ordoba, Argentina}
\email{diego.a.sulca@unc.edu.ar}
\begin{document}

\subjclass{13B05, 13C10, 13N10}
\keywords{Purely inseparable extensions of rings, Galois extensions of exponent one, Jacobson-Bourbaki correspondence}

\begin{abstract}
    We revisit the concept of special algebras, also known as \textit{purely inseparable ring extensions}. This concept extends
    the notion of purely inseparable field extensions to the more general context of extensions of commutative rings.
    We use differential operators methods to provide a characterization for a ring extension to be purely inseparable in terms of a condition on certain modules of differential operators associated to the ring extension. This approach is also used to re-obtain an already known characterization involving the modules of principal parts.

    Next, given a purely inseparable ring extension $A\subset C$, we prove a Galois correspondence between intermediate rings $A\subset B\subset C$ such that $C$ is a projective $B$-module and $A$-subalgebras  $H\leq \End_A(C)$ that are also (left) $C$-module direct summand, obtaining a generalization of the Jacobson-Bourbaki theorem.

    Finally, given a tower of ring extensions $A\subset B\subset C$, we address the question of whether the fact that two of the three extensions $A\subset C$, $A\subset B$, and $B\subset C$ are purely inseparable implies that the third one is also purely inseparable.
\end{abstract}

\maketitle

\section{Introduction}

\noindent Unless otherwise is stated, all rings in this paper are assumed to be commutative, with identity $1\neq 0$, and of characteristic $p>0$, for a fixed prime $p$.

\medskip

Let $A\subset C$ be a ring extension.
We say that an element $x\in C$ has \textit{finite exponent} over $A$ if there exists an integer $e\geq 0$ such that $x^{p^e}\in A$. 
We say that the extension $A\subset C$ has \textit{finite exponent} if there is an integer $e$ such that every $x\in C$ has exponent at most $e$. We call the minimal $e$ with this property the \textit{exponent} of the extension and we denote it by $\exp(C/A)$. 

\medskip

A field extension $K\subset L$ such that every element $x\in L$ has finite exponent over $K$ is called a \textit{purely inseparable field extension}. 
If $K\subset L$ is a finite purely inseparable field extension, then it has finite exponent, and if $\overline{K}$ is an algebraic closure of $K$, then $L\otimes_K\overline{K}\cong \overline{K}[X_1,\ldots,X_n]/\langle X_1^{p^{e_1}},\ldots,X_n^{p^{e_n}}\rangle$ for some $e_i>0$ (see \cite{Ras71}).
In fact, this is a property that characterizes finite purely inseparable field extensions among finite field extensions.
We will use this characteristic property to extend the notion of purely inseparable extension to the setting of rings.

\begin{definition}\label{def: purely inseparable}
A ring extension $A\subset C$ is called {\em purely inseparable} if
\begin{enumerate}[(i)]
    \item $A\subset C$ has finite exponent, 
    \item $C$ is a finite projective $A$-module, and
    \item for each prime ideal $\p\subset A$, there is a faithfully flat extension $A_\p\to A'$ such that $C_\p\otimes_{A_\p} A'$ is isomorphic to an $A'$-algebra of the form $A'[X_1,\ldots,X_n]/\langle X_1^{p^{e_1}},\ldots,X_n^{p^{e_n}} \rangle$ for some $e_i>0$.
\end{enumerate}
\end{definition}

Purely inseparable ring extensions of exponent one are also called \textit{Galois extensions of exponent one}.
They were introduced by Yuan in \cite{Yuan70/1} in an attempt to extend the Jacobson's Galois theory of purely inseparable field extensions of exponent one (\cite{Jacobson44}) to the setting of rings.
Purely inseparable ring extensions of arbitrary exponent were are also called {\em special algebras} by Pauer in \cite{Pauer78}.  The case of special algebras over a field was previously treated by Rasala in \cite{Ras71}.  The term {\em purely inseparable} to refer to special algebras was used in \cite{FPSan00}. We should mention that this term had been used before with different nuances. Sweedler introduced it in \cite{Sweedler75} for the purpose of extending the definition of purely inseparable extension of fields to ring extensions. Then his definition for inseparability was revised by Holleman in relation with radicial extensions (\cite{Holleman77}) and reformulated into a stronger notion by Sato (\cite{Sato81}).  

\medskip

Regarding the notion of purely inseparable ring extension presented in this paper, there are some characterizations  related to certain modules of Kähler differentials (\cite{Pauer78}), to the modules of principal parts (\cite{PSan99}), and to the modules of derivations in the case of exponent one (\cite{Yuan70/1}). One of the aims of these notes is to revisit those characterizations of purely inseparable ring extensions  
and provide a new equivalence related to the modules of higher order differential operators.

More precisely, let $A\subset C$ be a finite ring extension of  finite exponent.
Then the following conditions are equivalent.
\begin{enumerate}
    \item $A\subset C$ is purely inseparable. 
    \item  The $A[C^{p^e}]$-module of K\"ahler differentials $\Omega_{A[C^{p^e}]/A}$ is projective for all $e\geq 0$ (Pauer, \cite{Pauer78}).
    \item The $C$-module of principal parts $\on{P}_{C/A}^k$ is projective for all $k\geq 0$ (P.\ S.\ de Salas, \cite{PSan99}).
\end{enumerate}

In \cite{PSan99}, an $A$-algebra $C$ is called \textit{differentially homogeneous} if it is flat, of finite presentation, and $\on{P}_{C/A}^k$ is a projective $C$-module for all $k\geq 0$. 
So the equivalence (1)$\Leftrightarrow$(3)
tells us that the purely inseparable $A$-algebras are the differentially homogeneous $A$-algebras of finite exponent.

\medskip

Our first result extends the above list of characterizations of purely inseparable extensions.

\begin{thmx}\label{purely inseparable new characterization}
Let $A\subset C$ be a finite ring extension of finite exponent. Then the following conditions are equivalent:   
\begin{enumerate}
    \item $A\subset C$ is purely inseparable.
    \item[(3')] The $C$-module of principal parts $\on{P}_{C/A}^{p^e}$ is projective for all $0\leq e<\exp(C/A)$.
    \item[(4)]  $C$ is a projective $A[C^{p^e}]$-module  for all $0\leq e< \exp(C/A)$, and the module of differential operators $\on{Diff}_{A}^k(C)$ is a $C$-module direct summand of $\End_A(C)$ for all $k\geq 0$.
    \item[(4')] $C$ is a projective $A[C^{p^e}]$-module and the module of differential operators $\on{Diff}_{A}^{p^{e}}(C)$ is a $C$-module direct summand of $\End_A(C)$ for all $0\leq e< \exp(C/A)$.
\end{enumerate}
\end{thmx}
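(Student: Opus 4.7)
My plan is to establish the two equivalences $(1)\iff(2)$ and $(1)\iff(3)$ separately, bootstrapping from the already cited Pauer and de Salas characterizations. The implication $(1)\Rightarrow(2)$ is immediate from the de Salas theorem, since purely inseparability gives $\on{P}_{C/A}^k$ projective for every $k$, in particular for the sparse orders $k=p^e$ with $0\leq e<\exp(C/A)$. It is worth noting at the outset that the case $e=0$ of condition $(2)$ already forces $C$ itself to be $A$-projective, via the canonical split exact sequence $0\to\Omega_{C/A}\to\on{P}_{C/A}^1\to C\to 0$, so the projectivity clause of Definition \ref{def: purely inseparable} comes for free from $(2)$.

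For $(2)\Rightarrow(1)$ I would construct a bridge to Pauer's criterion by exploiting the characteristic-$p$ identity
\[
(x\otimes 1 - 1\otimes x)^{p^e} \;=\; x^{p^e}\otimes 1 - 1\otimes x^{p^e}.
\]
This identity shows that the natural map $C^{[e]}\otimes_A C^{[e]}\to C\otimes_A C$ sends the diagonal ideal $J$ of $C^{[e]}/A$ into $I^{p^e}$, where $I$ is the diagonal ideal of $C/A$, inducing a $C^{[e]}$-linear comparison map $\Omega_{C^{[e]}/A}=J/J^2\to I^{p^e}/I^{p^e+1}\subset \on{P}_{C/A}^{p^e}$. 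The plan is to identify its $C$-linear extension $\Omega_{C^{[e]}/A}\otimes_{C^{[e]}} C\to I^{p^e}/I^{p^e+1}$ with an isomorphism onto a $C$-direct summand of $\on{P}_{C/A}^{p^e}$, so that the $C$-projectivity of $\on{P}_{C/A}^{p^e}$ transfers first to $\Omega_{C^{[e]}/A}\otimes_{C^{[e]}} C$, and then, by faithful flatness of $C^{[e]}\subset C$ (to be established in parallel), back to $\Omega_{C^{[e]}/A}$ itself. Pauer's theorem would then deliver $(1)$.

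For the equivalence $(1)\iff(3)$ my plan is to exploit the canonical duality $\on{Diff}_A^k(C)=\Hom_C(\on{P}_{C/A}^k, C)$ together with the local model of purely inseparable extensions. Assuming $(1)$, a faithfully flat base change $A\to A'$ puts $C\otimes_A A'$ in the form $A'[X_1,\dots,X_n]/(X_1^{p^{e_1}},\dots,X_n^{p^{e_n}})$, where $C^{[e]}\otimes_A A'$, $\End_{A'}(C\otimes_A A')$, and $\on{Diff}_{A'}^{p^e}(C\otimes_A A')$ admit explicit monomial descriptions; in this model both the projectivity of $C$ over $C^{[e]}$ and the existence of a $C$-linear retraction $\End_A(C)\twoheadrightarrow\on{Diff}_A^{p^e}(C)$ are immediate, and faithfully flat descent carries them down to $A$. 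For the converse $(3)\Rightarrow(1)$, the direct-summand hypothesis on $\on{Diff}_A^{p^e}(C)\subset\End_A(C)$, combined with the fact that $\End_A(C)\cong C\otimes_A\Hom_A(C,A)$ is $C$-projective once $C$ is $A$-projective, yields $C$-projectivity of $\on{Diff}_A^{p^e}(C)$; together with the $C^{[e]}$-projectivity of $C$, a biduality argument should recover $C$-projectivity of $\on{P}_{C/A}^{p^e}$, reducing to $(2)$.

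The main obstacle I foresee lies in the bridge step of $(2)\Rightarrow(1)$: proving that the comparison map $\Omega_{C^{[e]}/A}\otimes_{C^{[e]}} C\to I^{p^e}/I^{p^e+1}$ is an isomorphism, or at least is split onto a $C$-direct summand of $\on{P}_{C/A}^{p^e}$. The Frobenius identity only produces the existence of the map; its injectivity and surjectivity require a local computation, most likely in the model case after a faithfully flat cover, and they use the range $e<\exp(C/A)$ in an essential way since $C^{[e]}$ otherwise collapses to $A$. A secondary difficulty is ensuring the \emph{splitting} (rather than mere projectivity) of $\on{Diff}_A^{p^e}(C)\hookrightarrow\End_A(C)$, both in $(1)\Rightarrow(3)$ and in deducing $(2)$ from $(3)$; this may require invoking the order filtration on $\End_A(C)$ together with the Frobenius structure to construct an explicit $C$-linear retraction.
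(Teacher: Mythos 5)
Your plan for $(1)\Rightarrow(2)$ and $(1)\Rightarrow(3)$ (explicit computation in the model $A'[X_1,\dots,X_n]/\langle X_i^{p^{e_i}}\rangle$ plus faithfully flat descent, using the split sequence $0\to J^{k+1}\to C\otimes_AC\to \on{P}^k_{C/A}\to 0$) is essentially the paper's argument and is fine. The problem is that both converse directions are left at exactly the point where the real work lies. For $(2)\Rightarrow(1)$, the splitting of your Frobenius comparison map is not a technical detail to be filled in later: it cannot be obtained "in the model case after a faithfully flat cover", because the model is only available once pure inseparability is already known — which is what you are trying to prove. The paper's way around this is an induction on the exponent that only ever crosses one Frobenius step at a time: the case $e=0$ of (2) gives $\Omega_{C/A}$ projective, hence $C^{[1]}\subset C$ Galois, hence locally a $p$-basis of $C$ over $C^{[1]}$; then the key technical result (Proposition \ref{prop: extension of differential operators}, an explicit combinatorial construction of an extension map $\on{ext}\colon\Diff_A^k(C^{[1]},M)\to\Diff_A^{pk}(C,M)$ with $\on{res}\circ\on{ext}=\on{Id}$) dualizes to a retraction of the natural map $C\otimes_{C^{[1]}}\on{P}^{p^e}_{C^{[1]}/A}\to\on{P}^{p^{e+1}}_{C/A}$ (Proposition \ref{prop: retraction for module of principal parts}), so projectivity descends to $\on{P}^{p^e}_{C^{[1]}/A}$ and induction applies to $A\subset C^{[1]}$. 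Your map into $I^{p^e}/I^{p^e+1}$ is the right kind of object but is not split for free (already in the model with two variables and $e=1$ its image is a proper summand of $I^{p}/I^{p+1}$), and your sketch contains neither the extension lemma nor the inductive structure that produce the splitting. Note also that your claim that $e=0$ of (2) forces $C$ to be $A$-projective "via the split sequence" is not correct: projectivity of $\on{P}^1_{C/A}$ over $C$ gives $\Omega_{C/A}$ projective over $C$, not $C$ projective over $A$; the latter comes out of the induction ($C$ projective over $C^{[1]}$, $C^{[1]}$ projective over $A$).

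The reduction $(3)\Rightarrow(2)$ by "biduality" is also a genuine gap. From $\Diff^{p^e}_A(C)=\Hom_C(\on{P}^{p^e}_{C/A},C)$ being a $C$-direct summand of $\End_A(C)=\Hom_C(C\otimes_AC,C)$ you can only conclude that the double dual $(\on{P}^{p^e}_{C/A})^{**}$ is a projective quotient of $\on{P}^{p^e}_{C/A}$; without knowing a priori that $\on{P}^{p^e}_{C/A}$ is reflexive you cannot conclude that $\on{P}^{p^e}_{C/A}$ itself is projective, and there is no reason for reflexivity here. The paper does not attempt this route: $(3)\Rightarrow(1)$ is proved by a separate induction on the exponent whose base case is the nontrivial exponent-one statement that $\Der_A(C)$ being a $C$-summand of $\End_A(C)$ already forces the extension to be Galois (Proposition \ref{prop: direct summand of endomorphism implies Galois}, via the commuting-derivations lemma), and whose inductive step uses the $\mc{F}$-hypothesis essentially: the $p$-basis of $C$ over $C^{[1]}$ yields a $C^{[1]}$-linear retraction $C\to C^{[1]}$ and makes $\End_{C^{[1]}}(C)$ a $C$-summand of $\End_A(C)$, which together with the $\on{res}$/$\on{ext}$ maps pushes the direct-summand condition down to $\Diff^{p^e}_A(C^{[1]})\subset\End_A(C^{[1]})$. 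So both converse implications need the extension-of-differential-operators machinery (or a substitute), which is precisely what your proposal postpones.
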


In order to show these new equivalences, we prove a result on extensions of differential operators. Namely, given a finite extension of finite exponent $A\subset C$ such that $A[C^{p}]\subset C$ is Galois (equivalently, such that $\Omega_{C/A}$ is a projective $C$-module), we prove that every $A$-differential operator on $A[C^{p}]$ of order $\leq k$ can be extended, at least locally, to an $A$-differential operator of $C$ of order $\leq pk$ (see Proposition \ref{prop: extension of differential operators}). An equivalent formulation of this result was proved under much stronger assumptions in \cite[Lemma 3.3]{PSan99} with completely different techniques.

\medskip

We then move to the study of certain subextensions of a purely inseparable ring extension with a view toward Galois theory. 
Purely inseparable Galois theories try to establish a correspondence between certain subextensions of a purely inseparable field extension and certain subobjects of an algebraic object. For instance, Jacobson's Galois theory (\cite{Jacobson37}, \cite{Jacobson44}) establishes a one-to-one correspondence between intermediate fields of a finite purely inseparable field extension of exponent one $K\subset L$ and restricted Lie subalgebras of $\Der_K(L)$. 
 Different authors attempted to generalize this correspondence to finite purely inseparable field extensions of arbitrary finite exponent. For example, in \cite{Sweedler68}, \cite{GZ70} and \cite{Chase71}
 a Galois correspondence for the modular subextensions $M\subset L$ of a modular extension $K\subset L$
 was obtained by using the Hasse-Schmidt derivations. Recently,
 a Galois correspondence for all subextensions $M\subset L$ of an arbitrary finite purely inseparable field extension $K\subset L$ was proposed in \cite{BraWal23} by using partition Lie algebroids, obtaining a complete generalization of
 Jacobson's Galois theory. 

However there is another 
Galois theory that applies for any finite field extension, namely the Jacobson-Bourbaki correspondence. It establishes a one-to-one correspondence between the intermediate fields of a finite field extension $K\subset L$ and the unital $K$-subalgebras of the algebra of linear endomorphisms $\End_K(L)$ that are also left $L$-vector subspaces (\cite[I, \S 2, Theorem 2]{Jacobson64}).  This correspondence has been widely extended to the non-commutative setting (see for instance \cite{FengSheng}, \cite{Hochschild49}, \cite{Jacobson47}).
Since we are dealing with ring extensions rather than just field extensions, it is natural to ask if the Jacobson-Bourbaki correspondence can be generalized to the setting of commutative ring extensions. In fact, in these notes, we obtain a generalization of this theorem, which applies in particular to purely inseparable ring extensions.

\begin{thmx}\label{jacobson-bourbaki introduction}
Let $A\subset C$ be a finite extension such that $C$ is a projective $A$-module and the induced map $\Spec(C)\to \Spec(A)$ is a homeomorphism. 
Then there is a one-to-one correspondence
\begin{align*}
\left\{  \begin{array}{cc}  \text{intermediate subrings}\  A\subset B\subset C:\\ \text{$C$ is a projective $B$-module}\end{array}\right\}
\longleftrightarrow\left\{ \begin{array}{cc} \text{unital $A$-subalgebras $H\subset \End_A(C)$}:\\ \text{$H$ is a $C$-direct summand of $\End_A(C)$} \end{array}  \right\}
\end{align*}
given by the inverse maps
\begin{align*}
    B&\mapsto\End_B(C)\\
    B_H&\mapsfrom  H
\end{align*}
where    $B_H:=\{x\in C: \varphi(xc)=x\cdot \varphi(c),\ \forall \varphi\in H, \ \forall c\in C\}$.
\end{thmx}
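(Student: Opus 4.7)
I would follow the classical proof strategy of the Jacobson-Bourbaki theorem for fields, suitably adapted to commutative rings. The spectral hypothesis is what enables the adaptation, since it reduces the delicate points to local computations where $C$ becomes a finite free module over a local base.

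\emph{Well-definedness.} For the map $B\mapsto\End_B(C)$, the algebra $\End_B(C)$ is visibly a unital $A$-subalgebra and a $C$-submodule of $\End_A(C)$ under the post-composition action. The nontrivial point is that it is a $C$-direct summand. Since being a direct summand of a finitely generated projective module can be checked locally, I would localize at a prime $\p\subset A$; the homeomorphism hypothesis forces $C_\p$ to be local, so the finite projectivity of $C_\p$ over $A_\p$ and over $B_\p$ becomes finite freeness. Then $\End_{A_\p}(C_\p)$ and $\End_{B_\p}(C_\p)$ are matrix algebras over the respective bases, and the summand property is an explicit matrix computation. For $H\mapsto B_H$, the subring structure is immediate and the projectivity of $C$ over $B_H$ reduces analogously to a local rank check.

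\emph{The composite $B\mapsto B_{\End_B(C)}$.} Since $\Spec(C)\to\Spec(A)$ is a surjective homeomorphism and factors through $\Spec(B)\to\Spec(A)$, the intermediate map $\Spec(C)\to\Spec(B)$ is also surjective. Together with the projectivity of $C$ over $B$, this makes $C$ a faithful finitely generated projective $B$-module, and the commutative double centralizer theorem yields $B=\End_{\End_B(C)}(C)=B_{\End_B(C)}$.

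\emph{The composite $H\mapsto\End_{B_H}(C)$.} The inclusion $H\subset\End_{B_H}(C)$ is immediate from the definition of $B_H$. For the reverse, I would fix a splitting $\End_A(C)=H\oplus H'$ of $C$-modules, localize at a prime of $A$, and establish the rank identity
\begin{equation*}
\on{rank}_{C_\p}H_\p=\on{rank}_{C_\p}\End_{(B_H)_\p}(C_\p);
\end{equation*}
combined with local freeness and the containment $H_\p\subset\End_{(B_H)_\p}(C_\p)$, this forces equality, which then globalizes. The main obstacle I anticipate is exactly this rank identity, the analogue of the dimension count $\dim_L H=[L:L^H]$ in the field-case Jacobson-Bourbaki correspondence. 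The homeomorphism hypothesis is crucial: after localization the fiber $\Spec(C_\p/\p C_\p)$ is a single point, so endomorphism algebras behave as matrix algebras over a local ring and the rank bookkeeping closes up.
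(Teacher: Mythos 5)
Your overall architecture (two well-definedness checks plus the two composites) matches the paper, and your treatment of the easy composite $B\mapsto B_{\End_B(C)}$ via faithfulness of the finite projective module $C$ and the double centralizer property is a legitimate alternative to the paper's more elementary argument (the paper just uses the splitting $C=B\oplus C/B$ to produce a $B$-linear projection $\varphi\colon C\to B$ and deduces $B_{\End_B(C)}=B$ directly). Likewise, showing that $\End_B(C)$ is a $C$-direct summand of $\End_A(C)$ can indeed be done locally as you suggest, though the paper's global argument is cleaner: dualize the split surjection $C\otimes_A C\to C\otimes_B C$ of finite projective $C$-modules.

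However, there is a genuine gap at the heart of the theorem: the direction starting from $H$. You assert that projectivity of $C$ over $B_H$ ``reduces to a local rank check'' and you explicitly flag the rank identity $\on{rank}_{C_\p}H_\p=\on{rank}_{C_\p}\End_{(B_H)_\p}(C_\p)$ as an unresolved obstacle --- but this identity, together with the local freeness of $C$ over $B_H$ and the compatibility $S^{-1}B_H=B_{S^{-1}H}$ that your localization arguments silently presuppose, \emph{is} the theorem; nothing in your sketch produces them. Tellingly, your argument never uses that $H$ is closed under composition or that it contains the identity, and without these hypotheses the statement is false for a general $C$-module direct summand $H\subset\End_A(C)$, so no amount of rank bookkeeping over the local rings $C_\p$ can close the argument. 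The paper's mechanism is concrete: using that $H$ is a $C$-direct summand of the finite projective $C$-module $\End_A(C)\cong(C\otimes_AC)^*$ and that $C_\p$ is local (here the homeomorphism of spectra enters), one finds $f\notin\p$, elements $t_1,\dots,t_l\in C$ and a $C_f$-basis $\phi_1,\dots,\phi_l$ of $H_f$ with $\phi_i(t_j)=\delta_{ij}$; closure of $H$ under composition and the $C$-module structure give $\phi\circ(x\cdot\phi_i)=\phi(x)\cdot\phi_i$, hence $\phi_i(C_f)\subset B_{H_f}$; and unitality of $H$ gives $c=\sum_i\phi_i(c)t_i$, so the $t_i$ form a $B_{H_f}$-basis of $C_f$. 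This single construction simultaneously yields local freeness of $C$ over $B_H$, the rank count, and $H=\End_{B_H}(C)$. Finally, even granting your rank identity, ``containment plus equal rank'' of free modules over a local ring does not force equality (compare $\m\subset C_\p$ for a principal $\m$); you would additionally need that $H_\p$ is a direct summand of $\End_{(B_H)_\p}(C_\p)$, or, as in the paper, exhibit every element of $\End_{B_H}(C)$ locally as a $C$-combination of the $\phi_i\in H$.
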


\medskip
The hypothesis that $\Spec(A)$ and $\Spec(C)$ are homeomorphic cannot be omitted, as we show in Example \ref{ex: homeomorphism is necessary}.

\medskip 

In the case of field extensions, given a purely inseparable extension $K\subset L$, it is clear that for any intermediate subfield $K\subset E\subset L$ the subextensions $K\subset E$ and $E\subset L$ are also purely inseparable. The situation is much more complicated for purely inseparable ring extensions. If $A\subset C$ is a purely inseparable extension of rings and $B$ is an intermediate subring,  i.e., $A\subset B\subset C$, it is not true in general that $A\subset B$ and $B\subset C$ are also purely inseparable. Indeed, as we will see, we have to impose some additional conditions to the subring $B$ to reach this situation.
For the exponent one case, this problem is already well understood. Given a Galois extension $A\subset C$ and a subring $B$ such that $A\subset B\subset C$, André proves that $B\subset C$ is Galois if and only if $C$ is a projective $B$-module (\cite[Théorème 71]{Andre91}). If this occurs, it follows that $A\subset B$ is also a Galois extension. 
In contrast, if $A\subset C$ is a purely inseparable ring extension of exponent greater than one and $B$ is an intermediate field such that $C$ is projective as $B$-module, then it is not necessarily true that $A\subset B$ and $B\subset C$ are also purely inseparable. We exhibit this situation in Example \ref{example failed purely inseparable tower}.

The next goal in these notes is to find conditions for the intermediate subrings $B$ of a purely inseparable extension $A\subset C$ which ensure that both subextensions $A\subset B$ and $B\subset C$ are also purely inseparable. When this occurs, we say that $A\subset B\subset C$ is a \textit{purely inseparable tower} of rings. In order to provide conditions for a tower of rings to be purely inseparable, we introduce the notion of $\mc{F}$-extension for finite extensions of rings of finite exponent.   

\begin{definition}
    Let $A\subset C$ be a finite ring extension of finite exponent. We say that $A\subset C$ is an \textit{$\mc{F}$-extension} if $C$ is a projective $A[C^{p^e}]$-module for all $e\geq 0$. 
\end{definition}

It is clear that any purely inseparable extension is in particular an $\mc{F}$-extension. 
We shall prove the converse under very strong conditions on $B$.

\begin{thmx}\label{some conditions for a purely inseparable tower}
Let $A\subset C$ be a purely inseparable extension and $B$ an intermediate subring. Assume that
\begin{enumerate}
    \item $A\subset B$ and $B\subset C$ are $\mc{F}$-extensions, and
    \item for $1\leq e<\exp(C/B)$, each of the extensions $A\subset B[C^{p^e}]$ is an $\mc{F}$-extension.
\end{enumerate} 
 Then
$A\subset B\subset C$ is a purely inseparable tower.
\end{thmx}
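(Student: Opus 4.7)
The plan is to introduce the auxiliary intermediate ring $D := B[C^p]$ and show in sequence that (i) $D \subset C$ is purely inseparable, (ii) $A \subset D$ is purely inseparable, and finally (iii) these two halves combine with hypothesis (1) to force $A \subset B$ and $B \subset C$ to be purely inseparable. Throughout, I would use that every $\mathcal{F}$-extension of finite exponent is finite projective over its base (taking $e$ large in the definition, so that $A[C^{p^e}] = A$), hence all the relevant rings are flat over their bases, and Pauer's fiber criterion reduces each desired purely inseparability to checking fibers at maximal ideals.

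For step (i), Theorem \ref{purely inseparable new characterization} applied to $A \subset C$ gives that $\Omega_{C/A}$ is projective, equivalently that $C^{[1]} \subset C$ is Galois. Hypothesis (1) applied to $B \subset C$ at $e = 1$ says that $C$ is projective over $D = B[C^p]$. The chain $C^{[1]} \subset D \subset C$ therefore sits inside the Galois extension $C^{[1]} \subset C$ with $C$ projective over $D$, so André's theorem (cited in the excerpt) delivers at once that $D \subset C$ is Galois and that $C^{[1]} \subset D$ is Galois; both are in particular purely inseparable.

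For step (ii), I would analyze the tower $A \subset C^{[1]} \subset D$. The bottom step $A \subset C^{[1]}$ is purely inseparable of exponent $\exp(C/A)-1$: from Definition \ref{def: purely inseparable}, a faithfully flat base change presents $C$ as $A'[X_i]/(X_i^{p^{e_i}})$, whence $C^{[1]}$ corresponds to $A'[X_i^p]/(X_i^{p^{e_i}}) \cong A'[Y_i]/(Y_i^{p^{e_i-1}})$. The top step $C^{[1]} \subset D$ is purely inseparable by (i), and hypothesis (2) provides that $A \subset D$ is an $\mathcal{F}$-extension. The composition result for purely inseparable extensions---both legs purely inseparable and the composite $\mathcal{F}$ forces the composite purely inseparable, which is treated in the final section of the paper---then gives that $A \subset D$ is purely inseparable.

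For step (iii), we now have a purely inseparable tower $A \subset D \subset C$ with $B$ between $A$ and $D$. Pauer's criterion reduces the purely inseparability of $A \subset B$ to the statement that $B \otimes_A A/\mathfrak{m}$ is a purely inseparable $A/\mathfrak{m}$-algebra for every maximal $\mathfrak{m} \subset A$; flatness provides an embedding into the purely inseparable fiber $D \otimes_A A/\mathfrak{m}$, and the sub-extension variant of the two-out-of-three patterns in the paper's final section upgrades the $\mathcal{F}$-sub-extension to purely inseparable. A symmetric argument for the tower $B \subset D \subset C$, now with both legs purely inseparable and $B \subset C$ an $\mathcal{F}$-extension by hypothesis (1), yields that $B \subset C$ is purely inseparable. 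The main obstacle lies precisely in this last fiber analysis: sub-$k$-algebras of a purely inseparable $k$-algebra need not themselves be purely inseparable (simple examples over non-perfect $k$ show this), so the $\mathcal{F}$-hypothesis is essential, and the role of hypothesis (2) is exactly to provide the auxiliary ring $D$ through which that hypothesis becomes usable.
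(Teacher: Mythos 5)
Your steps (i) and (ii) are sound: the chain $C^{[1]}\subset B[C^p]\subset C$ together with projectivity of $C$ over $B[C^p]$ does give that $B[C^p]\subset C$ and $C^{[1]}\subset B[C^p]$ are Galois, and then hypothesis (2) plus the ``purely inseparable bottom, Galois top, composite $\mathcal F$'' criterion (Proposition \ref{prop: comp of pi extension and Galois extension}, which is proved independently of this theorem, so there is no circularity) yields that $A\subset B[C^p]$ is purely inseparable. This reproduces the paper's preliminary observations and its ``moreover'' clause by a slightly different route. The problem is step (iii), which is where the entire content of the theorem lies, and there the argument is not a proof. The ``sub-extension variant of the two-out-of-three patterns'' you invoke does not exist in the paper, and the fiberwise statement you would need is false: over a field $k$ of characteristic $2$, the subalgebra $B=k\oplus kx\oplus kxy$ of the purely inseparable $k$-algebra $C=k[x,y]/\langle x^2,y^2\rangle$ satisfies $B^{[1]}=k$, so $k\subset B$ is an $\mathcal F$-extension sitting inside a purely inseparable extension, yet $B$ is not purely inseparable (its rank is $3$, not a power of $2$). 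So ``$\mathcal F$-subextension of a purely inseparable fiber'' does not upgrade to purely inseparable. What Pauer's fiber criterion actually reduces you to, after base-changing all the hypotheses (which do pass to fibers), is the original theorem over a field --- i.e., the reduction restates the problem rather than solving it, and you supply no argument in the field case.

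The second half of step (iii) has the same defect in a sharper form: for the tower $B\subset D\subset C$ you assert that ``both legs are purely inseparable,'' but $B\subset D=B[C^p]$ being purely inseparable is never established and is essentially equivalent to the conclusion you are after (given that $D\subset C$ is Galois). Knowing that $A\subset B$, $A\subset D$ are purely inseparable does not imply $B\subset D$ is --- that implication is again the hard ``intermediate ring'' direction. This missing content is exactly what the paper's proof supplies by induction on $\exp(C/A)$: in the exponent-one case it is Andr\'e's theorem; in Case I ($\exp(C/B)=1$) one applies the inductive hypothesis to the tower $A\subset A[B^p]\subset A[C^p]$ and uses that $A\subset A[B^{p^{e-1}}]$ and $A[B^p]\subset A[C^p]\subset B$ are Galois; in Case II one first reduces to Case I by replacing $B$ with $B[C^p]$ (your steps (i)--(ii) in effect) and then applies the inductive hypothesis to the tower $A[B^{p^{e-1}}]\subset B\subset B[C^p]$, whose base is purely inseparable of strictly smaller exponent. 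Without an argument of this kind, your proposal proves only that $A\subset B[C^p]$ is purely inseparable, not that $A\subset B$ and $B\subset C$ are.
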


We do not know if condition (2) is a consequence of (1).
We shall see that this is the case in exponent two; hence:

\begin{corx}\label{purely inseparable towers of exponent two}
Let $A\subset C$ be a purely inseparable extension of exponent two and let $B$ be an intermediate ring. Then $A\subset B\subset C$ is a purely inseparable tower if and only if $A\subset B$ and $B\subset C$ are both  $\mc{F}$-extensions.  
\end{corx}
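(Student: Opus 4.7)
The forward direction is immediate since every purely inseparable extension is, by definition, an $\mc{F}$-extension. For the converse, by Theorem \ref{some conditions for a purely inseparable tower}, it suffices to verify condition (2), namely that $A\subset B[C^p]$ is an $\mc{F}$-extension.

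The first key step is a computation that uses the hypothesis $\exp(C/A)\leq 2$ decisively. Writing a typical element $x\in B[C^p]$ as $x=\sum b_i c_i^p$ with $b_i\in B$ and $c_i\in C$, and using that raising to the $p$-th power is additive in characteristic $p$, we obtain $x^p=\sum b_i^p c_i^{p^2}$. Since $C^{p^2}\subset A$, this lies in $A[B^p]$. Combined with the obvious inclusion $B^p\subset (B[C^p])^p$, this gives $A[(B[C^p])^p]=A[B^p]$; and a similar computation (using $B^{p^2}\subset A$) yields $A[(B[C^p])^{p^e}]=A$ for all $e\geq 2$. Verifying the $\mc{F}$-condition for $A\subset B[C^p]$ therefore reduces to showing that $B[C^p]$ is projective over both $A$ and $A[B^p]$.

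For projectivity, the plan is faithfully flat descent along the inclusion $B[C^p]\hookrightarrow C$. The $\mc{F}$-hypothesis on $B\subset C$ gives $C$ projective, and in particular flat, over $B[C^p]$; since $B[C^p]\subset C$ is a finite integral extension, $\Spec(C)\to \Spec(B[C^p])$ is surjective, so $C$ is faithfully flat over $B[C^p]$. On the other hand, $C$ is flat over $A$ (because $A\subset C$ is purely inseparable) and flat over $A[B^p]$ (by transitivity, as $B$ is projective over $A[B^p]$ by $\mc{F}$-ness of $A\subset B$, and $C$ is projective over $B$ by $\mc{F}$-ness of $B\subset C$). The standard descent lemma --- if $R\subset S\subset T$ with $T$ faithfully flat over $S$ and flat over $R$, then $S$ is flat over $R$ --- then yields that $B[C^p]$ is flat over $A$ and over $A[B^p]$. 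Since $B[C^p]$ is finitely generated as an $A$-module (being an intermediate subring of the finite extension $A\subset C$, hence integral and generated by finitely many integral elements), this flatness upgrades to projectivity under the paper's standing finiteness hypotheses.

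The main content of the argument is the first step: it is precisely where $\exp(C/A)\leq 2$ enters. In higher exponent, $C^{p^2}\not\subset A$ in general, so $A[(B[C^p])^p]$ can properly contain $A[B^p]$, and condition (2) of Theorem \ref{some conditions for a purely inseparable tower} carries genuine information beyond condition (1); the descent argument, by contrast, is independent of the exponent and works verbatim.
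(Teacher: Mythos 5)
Your reduction to Theorem \ref{some conditions for a purely inseparable tower} and your key computation $A[(B[C^p])^p]=A[B^p]$ (with $A[(B[C^p])^{p^e}]=A$ for $e\geq 2$), which is exactly where exponent two enters, coincide with the paper's proof. The divergence, and the problem, is the last step. Faithfully flat descent along $B[C^p]\subset C$ only gives you that $B[C^p]$ is \emph{flat} over $A$ and over $A[B^p]$, and flat plus finitely generated does not imply projective over an arbitrary commutative ring: Proposition \ref{characterization of projective modules} requires finite presentation, and nothing in your argument supplies it. (Also, your parenthetical reason for finite generation --- ``an intermediate subring of a finite extension, hence generated by finitely many integral elements'' --- is not a valid general argument; here finiteness does hold, but because $A\subset B$ is by hypothesis a finite extension and $B[C^p]$ is generated over $B$ by the $p$-th powers of module generators of $C$.) So as written, the upgrade from flatness to projectivity is a genuine gap.

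The gap can be closed with Proposition \ref{tower of projective extensions}, but doing so makes the descent step superfluous: since $B\subset C$ is an $\mc{F}$-extension, $C$ is finite projective over $B[C^p]$, and applying Proposition \ref{tower of projective extensions} to $A\subset B[C^p]\subset C$ (where $C$ is finite projective over $A$) and to $A[B^p]\subset B[C^p]\subset C$ (where $C$ is finite projective over $A[B^p]$ by transitivity, as $C$ is projective over $B$ and $B$ over $A[B^p]$) yields directly that $B[C^p]$ is finite projective over $A$ and over $A[B^p]$, with no descent at all. This is essentially the paper's own argument: $B[C^p]$ is projective over $B$ (tower proposition inside the $\mc{F}$-extension $B\subset C$), hence over $A[B^p]$ by transitivity, and $A[B^p]$ is projective over $A$; combined with your identity $A[(B[C^p])^p]=A[B^p]$ this gives the $\mc{F}$-condition for $A\subset B[C^p]$.
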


\medskip

Finally, given two purely inseparable extensions $A\subset B$ and $B\subset C$ we consider the question of whether the extension $A\subset C$ is also purely inseparable, that is, whether $A\subset B\subset C$ is a purely inseparable tower. This is true in the case of fields. However,  Example \ref{ex: not p.i. comp of Galois extensions} shows that the result is not longer valid in the setting of rings. Our final result shows that the question has a positive answer if we impose some extra conditions.

\begin{thmx}\label{composition of purely inseparable extensions}
Let $A\subset B$ and $B\subset C$ be purely inseparable extensions. Assume that $A\subset B[C^{p^e}]$ is an $\mc{F}$-extension for all $0\leq e<\exp(C/B)$. Then $A\subset C$ is also purely inseparable.   
\end{thmx}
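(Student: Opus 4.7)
The plan is to use the characterization recalled just after the definition of $\mathcal{F}$-extension: a finite extension of finite exponent $A \subseteq C$ is purely inseparable if and only if it is an $\mathcal{F}$-extension and each fiber $A/\mathfrak{m} \subseteq C \otimes_A (A/\mathfrak{m})$ over a maximal ideal is purely inseparable. Applying the hypothesis with $e = 0$ reads ``$A \subseteq B[C^{p^0}] = B[C] = C$ is an $\mathcal{F}$-extension'' (using $B \subseteq C$), so the $\mathcal{F}$-part is immediate and $C$ is in particular finite projective over $A$ of finite exponent. The remaining task is to verify that each fiber is purely inseparable.

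Fix a maximal ideal $\mathfrak{m} \subseteq A$ and set $k = A/\mathfrak{m}$, $\bar B = B \otimes_A k$, $\bar C = C \otimes_A k$. Purely inseparability is preserved under base change, so $k \subseteq \bar B$ and $\bar B \subseteq \bar C$ are purely inseparable. Moreover, using that $C$ is projective over each $B[C^{p^e}]$ (since $B \subseteq C$ is an $\mathcal{F}$-extension), the natural map $B[C^{p^e}] \otimes_A k \hookrightarrow C \otimes_A k$ is injective with image $\bar B[\bar C^{p^e}]$, so the remaining hypothesis descends to ``$k \subseteq \bar B[\bar C^{p^e}]$ is an $\mathcal{F}$-extension for $0 \leq e < \exp(\bar C/\bar B)$''. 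A further faithfully flat base change from $k$ to its algebraic closure $\bar k$ preserves all of this, so we may assume $k = \bar k$.

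After this reduction $\bar C$ is a local Artinian $\bar k$-algebra with residue field $\bar k$: the spectra of $A \subseteq B$ and $B \subseteq C$ are homeomorphisms, so $\Spec \bar C$ has a single point, and its residue field, being a purely inseparable extension of $k$, remains local once tensored with $\bar k$. Moreover $\bar B \cong \bar k[Y_1, \ldots, Y_m]/\langle Y_j^{p^{d_j}}\rangle$ is a truncated polynomial algebra. Choose $x_1, \ldots, x_n \in \mathfrak{m}_{\bar C}$ lifting a basis of $\mathfrak{m}_{\bar C}/\mathfrak{m}_{\bar C}^2$; by Nakayama they generate $\bar C$ as a $\bar k$-algebra, and since $\bar C$ has finite exponent and $\bar k \cap \mathfrak{m}_{\bar C} = 0$, each $x_i$ satisfies $x_i^{p^{e_i}} = 0$ for the smallest $e_i$ with $x_i^{p^{e_i}} \in \bar k$. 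This yields a surjection $\bar k[X_1, \ldots, X_n]/\langle X_i^{p^{e_i}}\rangle \twoheadrightarrow \bar C$ and the upper bound $\dim_{\bar k}\bar C \leq \prod_i p^{e_i}$.

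The principal obstacle will be upgrading this surjection to an isomorphism, i.e.\ proving $\dim_{\bar k}\bar C = \prod_i p^{e_i}$. For this I would exploit the full chain of $\mathcal{F}$-extension hypotheses: each $\bar B[\bar C^{p^e}]$ is free over $\bar k[(\bar B[\bar C^{p^e}])^{p^j}]$ for every $j$. Inducting on $f = \exp(\bar C/\bar B)$ along the filtration $\bar B = \bar B[\bar C^{p^f}] \subseteq \bar B[\bar C^{p^{f-1}}] \subseteq \cdots \subseteq \bar B[\bar C] = \bar C$, the freeness constraint at each level should force the successive subquotient to contribute exactly the predicted number of new generators with their prescribed exponents, so that the ranks multiply along the filtration to give $\dim_{\bar k}\bar C = \prod_i p^{e_i}$. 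This identifies $\bar C$ with a truncated polynomial algebra over $\bar k$, establishing the fiberwise condition and completing the proof.
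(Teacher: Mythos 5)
Your reduction to fibers is legitimate: the $e=0$ hypothesis gives that $A\subset C$ is an $\mc{F}$-extension, and by Theorem \ref{thm: fibers of purely inseparable} together with base-change stability of purely inseparable and $\mc{F}$-extensions (and the splitting $B[C^{p^e}]\subset C$ coming from projectivity, which makes $(B[C^{p^e}])\otimes_A k\cong \bar B[\bar C^{p^e}]$), it suffices to treat the case where the base is an algebraically closed field. But at that point the proposal stops exactly where the real work begins. The assertion that ``the freeness constraint at each level should force the successive subquotient to contribute exactly the predicted number of new generators with their prescribed exponents'' is not an argument; it is a restatement of what has to be proved. Note that the reduction to a field buys essentially nothing here: Example \ref{ex: not p.i. comp of Galois extensions} already lives over a field and shows that the composite of two purely inseparable extensions need not be purely inseparable, so the dimension count you want cannot follow from soft considerations and must genuinely use the hypothesis that $\bar k\subset \bar B[\bar C^{p^e}]$ is an $\mc{F}$-extension. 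How that hypothesis enters is precisely the content of the paper's Proposition \ref{prop: comp of pi extension and Galois extension} (purely inseparable followed by Galois, assuming the $\mc{F}$-condition), which is proved by a nontrivial induction resting on the exponent-one results of Yuan and Andr\'e (Proposition \ref{exponent one purely inseparable towers}, Lemma \ref{lem: composition of Galois extensions}); the theorem is then obtained by descending along the filtration $B[C^{p^k}]$. Your sketch contains no analogue of this step, so the proposal has a genuine gap at its central point.

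There is also a problem with the numerical target itself. For an arbitrary lift $x_1,\dots,x_n$ of a basis of $\mf{m}_{\bar C}/\mf{m}_{\bar C}^2$, with $e_i$ the nilpotency exponents, the equality $\dim_{\bar k}\bar C=\prod_i p^{e_i}$ can fail even when $\bar C$ \emph{is} purely inseparable: for $\bar C=\bar k[x,y]/\langle x^{p^2},y^p\rangle$ the lifts $x$ and $x+y$ both have exponent $2$, while $\dim_{\bar k}\bar C=p^3<p^4$. So ``the surjection is an isomorphism'' is the right criterion only for generators chosen compatibly with the filtration $\bar C^{[e]}$ (a generalized normal generating sequence, as in Pauer's rank formula with the intrinsic exponents $e(i)$), and your plan would have to both make that choice and then actually derive the rank equality from the $\mc{F}$-hypotheses. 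As it stands, neither the correct formulation of the count nor its proof is supplied.
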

We do not know if in every purely inseparable tower $A\subset B\subset C$ it holds that $A\subset B[C^{p^e}]$ is an $\mc{F}$-extension for all $0\leq e<\exp(C/B)$. This is the case at least when $A\subset B$ has exponent one; see Remark \ref{rem: on the condition of F-extension}

\subsection{Organization} 
The paper is largely expository and it is organized in five sections in addition to this introduction. 
In Section \ref{Sec: preliminaries}, we collect some preliminaries on finite projective modules, derivations, differential operators and modules of principal parts. In Section \ref{Section on Galois extensions}, we review some results on purely inseparable extensions of exponent one, also known as Galois extensions of exponent one, and we provide a new characterization for these extensions in terms of derivations. 
 In Section \ref{sec: purely inseparable}, we first review the notion of purely inseparable extensions of arbitrary exponent and the existing characterizations. We then prove Theorem \ref{purely inseparable new characterization} (see Theorem \ref{thm: purely inseparable and differential operators}), which provides new characterizations using differential operators and re-obtains a preexisting one in terms of modules of principal parts. 
 In Section \ref{sec: Jacobson-Bourbaki}, we prove Theorem \ref{jacobson-bourbaki introduction} (see Theorem \ref{th: endomorphism correspondence}) that extends Jacobson-Bourbaki correspondence to a certain class of ring extensions (not necessarily of characteristic $p$) including the purely inseparable ring extensions.
 Finally, in Section \ref{section on purely inseparable towers}, we study purely inseparable towers of rings and prove in particular Theorem \ref{some conditions for a purely inseparable tower} (see Proposition \ref{prop: on purely inseparable towers with the strong condition of F-extensions}), Corollary \ref{purely inseparable towers of exponent two} (see Corollary \ref{cor: purely inseparable towers of exponent two}), and Theorem \ref{composition of purely inseparable extensions} (see Corollary \ref{cor: composition of purely inseparable}).

\section{Preliminaries}\label{Sec: preliminaries}

We collect here a list of important definitions and results that will be used frequently in these notes. 
In \S \ref{subsection: preliminaries on projective modules} we review basic properties on projective modules and some modules of homomorphisms, in particular the module of endomorphisms of a projective module. 
In \S\ref{subsection: preliminaries on derivations} we review the definition and some standard properties of derivations and the module of Kähler differentials. Finally, in \S\ref{sec: preliminaries on differential operators and principal parts} we focus on differential operators and the modules of principal parts. 

\medskip

\subsection{Finite projective modules and $\Hom$}\label{subsection: preliminaries on projective modules}
We review some relevant properties of finitely generated projective modules and the modules of homomorphisms, with a particular view toward finite ring extensions $A\subset C$, where $C$ is a projective $A$-module. 
We begin with a well-known characterization of finite projective modules. 

\begin{proposition}[{\cite[II, \S 5.2, Theorem 1 and Corollary 2]{Bourbaki} }]\label{characterization of projective modules}
    Let $A$ be a ring and let $M$ be a finitely generated $A$-module.
    Then the following conditions are equivalent:
    \begin{enumerate}
        \item $M$ is a projective $A$-module.
        \item $M$ is a flat $A$-module of finite presentation.
        \item There are $f_1,\ldots,f_r\in A$ generating the unit ideal such that $M_{f_i}$ is a free $A_{f_i}$-module for all $i$.
    \end{enumerate}
    Moreover, in this situation, if $\p\subset A$ is a prime ideal and $t_1,\ldots,t_n\in M_\p$ form a basis for $M_\p$ over $A_\p$ then there exists $f\in A\setminus \p$ such that $t_1,\ldots,t_n$ form a  basis for $M_f$ over $A_f$.
\end{proposition}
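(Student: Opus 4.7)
The plan is to prove the chain $(1)\Rightarrow(2)\Rightarrow(3)\Rightarrow(1)$ and to produce the ``moreover'' clause along the way.

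For $(1)\Rightarrow(2)$, I would use the standard observation that every finitely generated projective module is a direct summand of a finitely generated free module. Indeed, pick a surjection $\pi\colon A^n\twoheadrightarrow M$ (possible since $M$ is finitely generated); by projectivity $\pi$ splits, so $\ker(\pi)$ is itself a direct summand of $A^n$ and hence finitely generated. This exhibits a finite presentation of $M$, and projective modules are always flat.

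For $(2)\Rightarrow(3)$, the key input is that a finitely presented flat module over a local ring is free of finite rank: one lifts a basis of the reduction $M_\p/\p M_\p$ modulo the maximal ideal and then uses flatness together with the finite presentation to show the lifts remain linearly independent. This simultaneously gives the ``moreover'' statement. If $t_1,\ldots,t_n\in M_\p$ is such a basis, after clearing denominators we may assume each $t_i$ comes from $M$, and the map $\varphi\colon A^n\to M$ sending the standard basis to $(t_1,\ldots,t_n)$ becomes an isomorphism after localizing at $\p$. Both $\on{coker}(\varphi)$ and $\ker(\varphi)$ are finitely generated ($\ker(\varphi)$ because $M$ is finitely presented and flat, so the kernel of a surjection from a finite free module onto $M$ is finitely generated), and any finitely generated module whose localization at $\p$ vanishes is annihilated by some $f\in A\setminus\p$. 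For such an $f$, the elements $t_1,\ldots,t_n$ form a basis of $M_f$ over $A_f$. Varying $\p$ produces an open cover $\Spec(A)=\bigcup D(f_\p)$, and quasi-compactness of $\Spec(A)$ extracts finitely many $f_i$ generating the unit ideal.

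For $(3)\Rightarrow(1)$, I would verify the lifting property of projectivity locally. Given a surjection $\pi\colon N\twoheadrightarrow N''$ and $\phi\colon M\to N''$, on each piece $M_{f_i}$ one obtains a lift $\phi_i\colon M_{f_i}\to N_{f_i}$ because $M_{f_i}$ is free. The step I expect to be the main obstacle is patching the $\phi_i$ into a single global map $\phi\colon M\to N$. I would resolve this by a \v{C}ech-style argument: the differences $\phi_i-\phi_j$ take values in $\Hom_A(M,\ker(\pi))_{f_if_j}$ and define a cocycle, and by writing $1=\sum g_i f_i^N$ for $N$ sufficiently large one expresses the cocycle as a coboundary; the corrected local maps then glue to a global lift. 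An alternative, bypassing this patching, is to observe that, with $(2)$ already in hand, it suffices to show that a finitely presented flat module is projective, which can be done directly by gluing the local splittings of $A^n\twoheadrightarrow M$ provided by freeness of each $M_{f_i}$, via the same partition-of-unity trick.
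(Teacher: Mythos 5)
The paper offers no proof of this proposition: it is quoted from Bourbaki (II, \S 5.2, Theorem 1 and Corollary 2) and used as a black box, so your reconstruction is by itself a more self-contained route than the paper's. Your architecture --- (1)$\Rightarrow$(2) via a split surjection $A^n\twoheadrightarrow M$, (2)$\Rightarrow$(3) via the fact that a finitely presented flat module over a local ring is free plus a spreading-out argument that also yields the ``moreover'' clause, and (3)$\Rightarrow$(1) by checking the lifting property locally and patching by a partition of unity (equivalently, because finite presentation makes $\Hom_A(M,-)$ commute with localization and $\Hom_{A_{f_i}}(M_{f_i},-)$ is exact) --- is the standard one and is correct in outline.

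One step needs reordering. In the ``moreover'' argument you assert that $\ker(\varphi)$ is finitely generated for the map $\varphi\colon A^n\to M$ determined by $t_1,\ldots,t_n$, citing the lemma on kernels of surjections from finite free modules onto finitely presented modules. But $\varphi$ is not yet surjective --- only $\varphi_\p$ is --- and the kernel of a non-surjective map into a finitely presented (even free) module need not be finitely generated: take $M=A$ and $\varphi$ multiplication by an element $a\in A\setminus\p$ whose annihilator is not finitely generated; then $\varphi_\p$ is an isomorphism while $\ker(\varphi)=\on{Ann}(a)$ is not finitely generated. The repair is the usual two-step localization: first, $\on{coker}(\varphi)$ is finitely generated with $(\on{coker}\varphi)_\p=0$, so some $g\in A\setminus\p$ kills it and $\varphi_g\colon A_g^n\to M_g$ is surjective; only then is $\ker(\varphi_g)$ finitely generated (finite presentation of $M_g$ suffices --- flatness is not needed for this point), and since it vanishes at $\p$ a second element $h\in A\setminus\p$ kills it, so $f=gh$ works. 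With that adjustment your proof is complete; in (3)$\Rightarrow$(1) you should also record explicitly that (3) forces $M$ to be finitely presented, since that is what licenses the localization of $\Hom$ underlying your \v{C}ech/partition-of-unity patching.
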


Most of the results that follow are easy consequences of the above characterization of finite projective modules. 
Nevertheless, we sometimes include references or proofs to ease the reading. 
In the first result we recall that the notion of finite projective module is preserved under base extension and faithfully flat descent.

\begin{proposition}[{\cite[I, \S 3.6, Proposition 12]{Bourbaki}}]\label{projective modules are preserved by base change}
    Let $M$ be a finite $A$-module and let $A\to A'$ be a ring extension. Let $M':=A'\otimes_A M$.
    \begin{enumerate}
        \item If $M$ is projective $A$-module, then $M'$ is a projective $A'$-module.
        \item If $M'$ is a projective $A'$-module and $A\to A'$ is faithfully flat then $M$ is a projective $A$-module.
    \end{enumerate}
\end{proposition}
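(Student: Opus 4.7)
The plan is to deduce both parts from the characterization in Proposition~\ref{characterization of projective modules}, which identifies the finite projective modules as the finite modules that are simultaneously flat and finitely presented. Both of these properties behave well under base change, and the descent direction is controlled by faithful flatness.

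For part (1), I would use that a finite projective module is a direct summand of a finite free module: writing $A^n \cong M \oplus N$ for some $A$-module $N$ and tensoring with $A'$ yields $A'^n \cong M' \oplus N'$, so $M'$ is a direct summand of a finite free $A'$-module and therefore finite projective. Equivalently, one can observe that tensoring a finite presentation $A^m \to A^n \to M \to 0$ with $A'$ produces a finite presentation of $M'$, while flatness passes through tensor products, so $M'$ is flat; Proposition~\ref{characterization of projective modules} then gives the conclusion.

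For part (2), one must show that $M$ is flat and finitely presented. Flatness is a standard faithfully flat descent argument: for an injection $N \hookrightarrow P$ of $A$-modules, applying $A' \otimes_A -$ preserves injectivity by flatness of $A'$, and tensoring further with $M'$ over $A'$ preserves injectivity by flatness of $M'$; after identifying $M' \otimes_{A'} (A' \otimes_A N)$ with $A' \otimes_A (M \otimes_A N)$, faithful flatness of $A \to A'$ descends the resulting injectivity to $M \otimes_A N \hookrightarrow M \otimes_A P$, so $M$ is flat over $A$.

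The less automatic point is finite presentation. I would take a surjection $\pi\colon A^n \twoheadrightarrow M$, let $K = \ker \pi$, and tensor with $A'$ (using flatness of $A'$) to obtain a short exact sequence with kernel $K' = A' \otimes_A K$. Since $M'$ is finitely presented, $K'$ is a finitely generated $A'$-module. Writing finitely many $A'$-generators of $K'$ as $A'$-linear combinations of elements $k_i \otimes 1$ for $k_1, \dots, k_m \in K$, the submodule $N := \langle k_1, \dots, k_m \rangle \subset K$ satisfies $A' \otimes_A N \twoheadrightarrow A' \otimes_A K$, hence $A' \otimes_A (K/N) = 0$ by flatness, and faithful flatness forces $K = N$. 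Thus $K$ is finitely generated, so $M$ is finitely presented, and Proposition~\ref{characterization of projective modules} concludes. The main obstacle is precisely this descent of finite generation for the syzygy module; once that is handled, the rest assembles cleanly.
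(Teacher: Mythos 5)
Your argument is correct. Note that the paper itself offers no proof of this statement: it is quoted directly from Bourbaki (\S 3.7, Proposition 12), so there is no in-text argument to compare against. What you supply is a complete, self-contained proof along the standard lines. Part (1) via the direct-summand description (tensor $A^n\cong M\oplus N$ with $A'$) is the quickest route and is fine; your alternative via ``flat and finitely presented'' also works. For part (2) you correctly identify the two things to descend — flatness and finite presentation — and handle each properly: the flatness descent uses that $M'\otimes_{A'}(A'\otimes_A N)\cong A'\otimes_A(M\otimes_A N)$ together with faithful flatness of $A\to A'$, and the finite-presentation descent is exactly the right point to be careful about: tensoring $0\to K\to A^n\to M\to 0$ with the flat ring $A'$, using that the kernel of a surjection from a finite free module onto a finitely presented module is finitely generated, expressing finitely many $A'$-generators of $A'\otimes_A K$ in terms of elements $k_i\otimes 1$, and then using faithful flatness to conclude $K=\langle k_1,\ldots,k_m\rangle$. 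This matches the classical Bourbaki/descent argument, so your proposal is a valid substitute for the citation.
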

\begin{proposition}\label{restriction of scalar of projective modules}
    Let $A\subset B$ be a ring extension and let $M$ be a $B$-module. If $B$ is a finite projective $A$-module and $M$ is a finite projective $B$-module, then $M$ is a finite projective $A$-module.
\end{proposition}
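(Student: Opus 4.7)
The plan is to exploit the characterization of finitely generated projective modules as direct summands of finitely generated free modules, which avoids any delicate handling of flatness or finite presentation. First, since $M$ is a finite projective $B$-module, any surjection $B^{n}\twoheadrightarrow M$ splits, yielding a $B$-module $N$ with $B^{n}\cong M\oplus N$. Analogously, since $B$ is a finite projective $A$-module, there exists an $A$-module $P$ with $A^{m}\cong B\oplus P$. Taking the $n$-fold direct sum of the second isomorphism produces $A^{mn}\cong B^{n}\oplus P^{n}$, so $B^{n}$ is a direct summand of a finitely generated free $A$-module, hence a finite projective $A$-module.

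Substituting the first decomposition into this gives
\[
A^{mn}\;\cong\; (M\oplus N)\oplus P^{n}\;\cong\; M\oplus\bigl(N\oplus P^{n}\bigr),
\]
so $M$ is itself a direct summand of $A^{mn}$. In particular $M$ is finitely generated and projective over $A$, as required.

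No real obstacle arises in this route; the argument is a purely formal manipulation of two split surjections, one over $B$ and one over $A$. An alternative approach would verify the three equivalent conditions of Proposition \ref{characterization of projective modules} separately: finite generation follows from chaining generating sets, flatness follows from transitivity via the identification $M\otimes_{A}-=M\otimes_{B}(B\otimes_{A}-)$, and finite presentation follows by lifting a $B$-presentation of $M$ through a finite $A$-presentation of $B$. This latter route is conceptually parallel but requires more bookkeeping, so I would favor the direct-summand argument above.
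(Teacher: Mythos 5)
Your argument is correct and is essentially the paper's own proof: both realize $M$ as an $A$-module direct summand of $A^{mn}$ by composing the splitting $B^{n}\cong M\oplus N$ over $B$ with the splitting $A^{m}\cong B\oplus P$ over $A$. The paper states this more tersely, but the content is identical, so nothing further is needed.
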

\begin{proof}
    By hypothesis, $M$ is isomorphic as $B$-module to a $B$-direct summand of $B^m$ for some $m$, and $B$ is isomorphic as $A$-module to an $A$-direct summand of $A^n$ for some $n$. Therefore, $M$ isomorphic as $A$-module to an $A$-direct summand of $A^{mn}$. So $M$ is a finite projective $A$-module. 
\end{proof}

Finite projective modules that are also ring extensions split in a particular way, as shown by the next proposition. 
\begin{proposition}[{\cite[Lemma 4.4]{dB-S-V}}]\label{projective finite extensions are split}
    If $A\subset C$ is a finite extension of rings such that $C$ is a projective $A$-module, then $C=A\oplus N$ for some $A$-submodule $N\subset C$. In other words, the exact sequence of $A$-modules $0\to A\to C\to C/A\to 0$ is split.
\end{proposition}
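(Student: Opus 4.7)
The plan is to construct an $A$-linear retraction $r\colon C\to A$ of the inclusion $A\hookrightarrow C$; such an $r$ (characterized by $r(1)=1$) is precisely a splitting of the given short exact sequence. Its existence is equivalent to the surjectivity of the evaluation-at-$1$ map
\[
\on{ev}_1\colon \Hom_A(C,A)\longrightarrow A,\qquad \varphi\mapsto \varphi(1).
\]
Since $C$ is finite projective over $A$, it is in particular finitely presented by Proposition \ref{characterization of projective modules}, so $\Hom_A(C,A)$ commutes with localization at any prime $\p\subset A$. As surjectivity of an $A$-module map can be checked locally, it suffices to verify surjectivity of $\on{ev}_1$ after localizing at each $\p\subset A$.

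Fix such a $\p$, and let $\m$ denote the maximal ideal $\p A_\p$ of $A_\p$. By Proposition \ref{characterization of projective modules}, $C_\p$ is a free $A_\p$-module of some finite rank $n\geq 1$. Since $A\subset C$ is a finite ring extension, $A_\p\subset C_\p$ is integral; hence by going-up, any maximal ideal of $C_\p$ lying over $\m$ contains $\m C_\p$, so $\m C_\p$ is a proper ideal of $C_\p$, and in particular $1\notin \m C_\p$. Thus $1$ has nonzero image in the $A_\p/\m$-vector space $C_\p/\m C_\p$, so it may be completed to a basis there; Nakayama's lemma then ensures that any lifts $1,e_2,\ldots,e_n$ form an $A_\p$-basis of $C_\p$. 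The projection onto the first coordinate yields a $\varphi\in \Hom_{A_\p}(C_\p,A_\p)$ with $\varphi(1)=1$, establishing the desired local surjectivity.

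The only real subtlety is conceptual: splittings of short exact sequences are not in general a local property, so one cannot directly glue local retractions into a global one. The trick is to recast the splitting problem as the surjectivity of the single $A$-linear map $\on{ev}_1$—a property that \emph{is} local—after which going-up plus Nakayama immediately produce a local preimage of $1$, and local surjectivity of $\on{ev}_1$ yields global surjectivity and hence the desired retraction $r$.
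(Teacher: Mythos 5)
Your argument is correct. Note that the paper does not prove this statement at all — it simply cites \cite{dB-S-V}, Lemma 4.4 — so there is no in-paper proof to compare against; your proof is a valid self-contained substitute. The reduction is sound: a splitting is the same as a retraction $r\colon C\to A$, i.e.\ an element of $\Hom_A(C,A)$ hitting $1$ under $\on{ev}_1$, and since $C$ is finitely presented (Proposition \ref{characterization of projective modules}) the formation of $\Hom_A(C,A)$ commutes with localization (Proposition \ref{localization of Hom}), so surjectivity of $\on{ev}_1$ is indeed a local question; locally, exhibiting a basis of the free module $C_\p$ containing $1$ gives the required preimage. Two small remarks. First, the appeal to integrality and ``going-up'' is both slightly misstated (what you need is lying over, i.e.\ the existence of a prime of $C_\p$ contracting to $\m$) and unnecessary: since $C_\p$ is a nonzero finitely generated $A_\p$-module, Nakayama already gives $\m C_\p\neq C_\p$, which is equivalent to $1\notin\m C_\p$ because $\m C_\p$ is an ideal of $C_\p$. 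Second, when you lift a basis of $C_\p/\m C_\p$ starting with the image of $1$, Nakayama gives that the lifts generate $C_\p$; that $n$ generators of a free module of rank $n$ form a basis (a surjective endomorphism of a finitely generated module is injective) is the extra standard fact that upgrades ``generating set'' to ``basis''. With these cosmetic fixes the proof is complete.
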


\begin{remark}
    In particular, if $A\subset C$ is a finite extension of rings such that $C$ is projective over $A$, then  for any ring extension $A\to A'$, we have an inclusion $A'\subset A'\otimes_A C$. 
\end{remark}

\begin{proposition}\label{tower of projective extensions}
Let $A\subset C$ be a ring extension such that $C$ is a finite projective $A$-module. For an intermediate ring $A\subset B\subset C$ the following conditions are equivalent.
\begin{enumerate}
    \item $C$ is a finite projective $B$-module and $B$ is a finite projective $A$-module.
    \item $C$ is a flat $B$-module and $B$ a finite $A$-module.
    \item $C$ is a projective $B$-module.
\end{enumerate}
\end{proposition}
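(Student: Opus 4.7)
The plan is to prove (1) $\Rightarrow$ (2) $\Rightarrow$ (3) $\Rightarrow$ (1). The implication (1) $\Rightarrow$ (2) is immediate, since finite projective modules are flat. For (3) $\Rightarrow$ (1), observe that $C$ is finite over $B$ (as it is finite over $A$ and $A\subset B$), so together with (3) it is a finite projective $B$-module. Applying Proposition \ref{projective finite extensions are split} to the extension $B\subset C$ yields a $B$-module, hence $A$-module, decomposition $C = B\oplus N$, exhibiting $B$ as an $A$-module direct summand of the finite projective $A$-module $C$. Since direct summands of finite projective modules are finite projective, $B$ is finite projective over $A$.

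The main work is the implication (2) $\Rightarrow$ (3). First, I would check that $B\subset C$ is faithfully flat: flatness is part of (2), and $B\subset C$ is a finite (hence integral) extension, so lying-over gives the surjectivity of $\Spec C\to \Spec B$. The strategy is then to apply Proposition \ref{projective modules are preserved by base change}(2) to descend projectivity from $C\otimes_B C$ as a $C$-module back to $C$ as a $B$-module. By Proposition \ref{characterization of projective modules}, to show $C\otimes_B C$ is projective over $C$ it suffices to check that it is finitely generated, flat, and finitely presented as a $C$-module. The first two are immediate: finite generation is base change of the finite $B$-module $C$, and flatness is base change of $C$ flat over $B$. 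The crux is finite presentation.

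To establish this, I would fix $A$-module generators $b_1,\ldots,b_n$ of $B$ (available from (2)) and use that the canonical surjection $C\otimes_A C\twoheadrightarrow C\otimes_B C$ has kernel the ideal $I\subset C\otimes_A C$ generated by the elements $z_i := b_i\otimes 1 - 1\otimes b_i$. The ring $C\otimes_A C$ is a finite projective, hence finitely presented, $C$-module by base change (Proposition \ref{projective modules are preserved by base change}(1)). If $u_1,\ldots,u_m$ generate $C\otimes_A C$ as a $C$-module, then any $r\in C\otimes_A C$ can be written as $r=\sum_j c_j u_j$ with $c_j\in C$, so $rz_i = \sum_j c_j(u_j z_i)$; hence $I$ is generated as a $C$-module by the $mn$ elements $u_j z_i$. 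Therefore $C\otimes_B C$ is the quotient of a finitely presented $C$-module by a finitely generated submodule, and so is finitely presented over $C$.

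The main obstacle will be exactly this finite presentation step, because (2) only asserts that $B$ is finitely generated, not finitely presented, as an $A$-module. The key trick is to move the question from the $B$-module $C$ to the $C$-module $C\otimes_B C$: there, the finitely many ``straightening'' elements $b_i\otimes 1 - 1\otimes b_i$ provide explicit control over the kernel of the natural surjection from the well-behaved $C\otimes_A C$, and faithful flatness of $B\to C$ transports the conclusion back to $B$.
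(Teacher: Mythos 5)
Your proof is correct, and for the two easy implications it matches the paper: (1)$\Rightarrow$(2) is immediate, and your (3)$\Rightarrow$(1) is exactly the paper's argument via Proposition \ref{projective finite extensions are split} and the fact that a direct summand of a finite projective $A$-module is finite projective. Where you genuinely diverge is (2)$\Rightarrow$(3). The paper stays entirely over $B$: it presents $C$ as a quotient $\varphi\colon B^m\to C$, notes that $\varphi$ is also $A$-linear with $B^m$ a finite $A$-module and $C$ a finitely presented $A$-module, so the kernel is finitely generated over $A$ (hence over $B$), giving finite presentation of $C$ over $B$; then flat $+$ finite $+$ finitely presented $=$ projective by Proposition \ref{characterization of projective modules}. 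You instead base-change to $C\otimes_B C$, prove it is finite projective over $C$ (finiteness and flatness by base change, finite presentation from the surjection $C\otimes_A C\twoheadrightarrow C\otimes_B C$ whose kernel is the ideal generated by the finitely many elements $b_i\otimes 1-1\otimes b_i$, hence a finitely generated $C$-submodule of the finite projective module $C\otimes_A C$), and then descend projectivity along $B\to C$, which you correctly verify is faithfully flat via lying over for the integral extension $B\subset C$ together with flatness from (2). All the individual steps check out, including the identification of the kernel ideal (by $A$-linearity of $b\mapsto b\otimes 1-1\otimes b$, module generators of $B$ over $A$ suffice) and the standard fact that a quotient of a finitely presented module by a finitely generated submodule is finitely presented. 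The trade-off: the paper's route is more elementary, using only a kernel-finiteness lemma and no descent, while yours invokes heavier machinery (faithful flatness of $B\to C$ and Proposition \ref{projective modules are preserved by base change}(2)) but makes the finite-presentation issue very concrete through the explicit generators $b_i\otimes 1-1\otimes b_i$; both handle correctly the one real subtlety, namely that (2) gives $B$ finite but not a priori finitely presented over $A$.
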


\begin{proof}
    It is clear that $(1)$ implies $(2)$. 
    
    We now assume (2) to prove (3). By the first equivalence in Proposition \ref{characterization of projective modules}, in order to show that $C$ is projective as $B$-module we only need to show that $C$ is a $B$-module of finite presentation. Firstly, $C$ is a finite $B$-module because it is a finite $A$-module. Then there is a surjective map of $B$-modules $\varphi \colon B^m\to C$ for some $m$. Let $K$ be its kernel. 
    Note that $\varphi$ is in particular a surjective map of $A$-modules. By assumption, $B$ is a finite $A$-module, so the same is true for $B^m$. Since $C$ is a finitely presented $A$-module, $K$ must be finitely generated as $A$-module (see \cite[I, \S 2.8, Lemma 9]{Bourbaki}). In particular, $K$ is a finitely generated $B$-module. Hence $C$ is a $B$-module of finite presentation.
   
    Finally, we assume (3) and prove (1). Note that $C$ is a finite $B$-module because it is a finite $A$-module. We now show that $B$ is a finite projective $A$-module.
    By Proposition \ref{projective finite extensions are split}, $B$ is a direct summand of $C$ as $B$-module, hence it is a direct summand of $C$ as $A$-module. Since $C$ is a finite projective $A$-module, $B$ must be a finite projective $A$-module too.
\end{proof}

\medskip

 We now focus on the modules of homomorphisms. \color{black} Let $A$ be a ring and let $C$ be an $A$-algebra. If $M$ is an $A$-module and $N$ is a $C$-module, then the $A$-module $\Hom_A(M,N)$ of $A$-linear maps from $M$ to $N$ has a natural structure of left $C$-module: if $\varphi\in \Hom_A(M,N)$ and $c\in C$ then $c\cdot\varphi$ is defined as $(c\cdot\varphi) (m)=c\varphi(m)$. In addition, there is a natural isomorphism of left $C$-modules 
\begin{equation}
    \Hom_A(M,N)\cong \Hom_C(C\otimes_A M,N)
\end{equation}
described as follows: for each $A$-linear map $D\colon M\to N$ there exists a unique $C$-linear map $\varphi\colon C\otimes_A M\to N$ such that $\varphi(1\otimes m)=D(m)$. In particular, for $M=C$, we obtain an isomorphism of left $C$-modules
\begin{align*}
    \Hom_A(C,N)\cong \Hom_C(C\otimes_A C,N),
\end{align*}
where $C\otimes_A C$ is viewed as left $C$-module as follows: $c(x\otimes y)=cx\otimes y$. 

More particularly, when $M=N=C$ we obtain an isomorphism of left $C$-modules
\begin{align*}
    \End_A(C):=\Hom_A(C,C)\cong \Hom_C(C\otimes_A C,C)=: (C\otimes_A C)^*.
\end{align*}
We call $\End_A(C)$ the module of endomorphism of $A\subset C$. 
As discussed above, it will be considered as left $C$-module and sometimes also as an $A$-module.
Observe that any $c\in C$ defines an endomorphism $C\to C$, namely the one given by $x\mapsto cx$. 
In this way, we can see $C$ as a $C$-submodule of $\on{End}_A(C)$.
Note finally that $\on{End}_A(C)$ is also a non-commutative unital $A$-algebra under composition. This structure will be exploited in Section \ref{sec: Jacobson-Bourbaki}.

\medskip
We recall the fact that under suitable hypothesis on $M$, $\on{Hom}(M,N)$ behaves well under localization.  

\begin{proposition}[{\cite[I, \S 2.10, Proposition 11]{Bourbaki}}]\label{localization of Hom}
    Let $M$ be an $A$-module of finite presentation. Let $C$ be an $A$-algebra and let $N$ be any $C$-module.
    \begin{enumerate}
        \item If $S\subset A$ is a multiplicative subset, then the natural map $$S^{-1}\Hom_A(M,N)\to \Hom_{S^{-1}A}(S^{-1}M, S^{-1}N)$$ is an isomorphism of left $S^{-1}C$-modules.
        \item More generally, if $A\to A'$ is a flat homomorphism, then the natural map $$A'\otimes_A\Hom_A(M,N)\to \Hom_{A'}(A'\otimes_A M, A'\otimes_A N)$$ is an isomorphism of left $A'\otimes_A C$-modules.
    \end{enumerate}
\end{proposition}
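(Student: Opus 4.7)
The plan is to deduce (1) from (2), since localization $A \to S^{-1}A$ is a flat homomorphism and $S^{-1}$ of an $A$-module equals $S^{-1}A \otimes_A -$; so I focus on (2). The strategy is the standard dévissage from free modules to finitely presented modules via the five lemma, combined with the natural compatibilities of $\Hom$, $\otimes$, and direct sums.

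First I would define the natural map $\alpha_M \colon A' \otimes_A \Hom_A(M,N) \to \Hom_{A'}(A' \otimes_A M, A' \otimes_A N)$ by sending $a' \otimes \varphi$ to the $A'$-linear map $a'' \otimes m \mapsto a'a'' \otimes \varphi(m)$, and check that $\alpha_M$ is $A' \otimes_A C$-linear (the left $C$-structures are the pointwise ones described in the paragraph preceding the proposition). I would then verify naturality of $\alpha_M$ in $M$, which is a straightforward diagram chase.

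Next I would establish the isomorphism in three stages of increasing generality. When $M = A$, both sides canonically identify with $A' \otimes_A N$ and $\alpha_A$ is the identity under these identifications. When $M = A^n$ is finite free, the result follows because both sides are additive in $M$: concretely, $\Hom_A(A^n, N) \cong N^n$, so the source becomes $A' \otimes_A N^n \cong (A' \otimes_A N)^n$, and the target is $\Hom_{A'}(A'^n, A' \otimes_A N) \cong (A' \otimes_A N)^n$, and $\alpha_{A^n}$ agrees with the canonical identification. Finally, for $M$ finitely presented, I would choose a presentation $A^m \xrightarrow{f} A^n \to M \to 0$, apply the left-exact functor $\Hom_A(-, N)$ to get an exact sequence $0 \to \Hom_A(M,N) \to \Hom_A(A^n, N) \to \Hom_A(A^m, N)$, and tensor this with $A'$; exactness is preserved because $A \to A'$ is flat. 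On the other side, I tensor the presentation by $A'$ to get $A'^m \to A'^n \to A' \otimes_A M \to 0$ (right exact), and apply $\Hom_{A'}(-, A' \otimes_A N)$ to produce a parallel left-exact sequence. Naturality of $\alpha$ gives a commutative diagram linking the two rows, the two middle and right vertical arrows are isomorphisms by the finite free case, and the five lemma then forces $\alpha_M$ to be an isomorphism as well.

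The only step with any subtlety is where the finite presentation hypothesis is genuinely used, namely in ensuring that $M$ admits a presentation $A^m \to A^n \to M \to 0$ with both $m$ and $n$ finite, so that the finite-free case can be applied on the nose to the two vertical maps flanking $\alpha_M$. I expect no real obstacle beyond keeping track of the various module structures (left $C$ versus $A' \otimes_A C$), which are preserved at each step because every map in sight is built from tensor products and $\Hom$'s that are compatible with the respective scalar actions.
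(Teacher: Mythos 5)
Your argument is correct and is exactly the standard dévissage (free case plus finite presentation, flatness, and the five-lemma/kernel comparison) that underlies the result the paper simply cites from Bourbaki without proof. Nothing further is needed; the reduction of (1) to (2) via flatness of $A\to S^{-1}A$ and the bookkeeping of the left $C$-module structures are both handled correctly.
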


 In particular, for a finite extension of rings $A\subset C$ where $C$ is a projective $A$-module, the module of endomorphisms behave well under localization:
\begin{corollary}\label{localization of End}
    Let $C$ be an $A$-algebra that is also an $A$-module of finite presentation. For any prime ideal $\p\subset A$ and any element $f\in A$, the natural maps $(\End_A(C))_f\to \End_{A_f}(C_f)$ and $(\End_A(C))_\p\to \End_{A_\p}(C_\p)$ are isomorphisms of $C_f$-modules and $C_\p$-modules respectively.
\end{corollary}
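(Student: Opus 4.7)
The plan is to apply Proposition \ref{localization of Hom} directly, specialized to the case $M=N=C$. The hypothesis of that proposition requires $M$ to be an $A$-module of finite presentation, which is precisely what we are given for $C$.

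For the first statement, I would take the multiplicative subset $S=\{1,f,f^2,\ldots\}\subset A$ and invoke part (1) of Proposition \ref{localization of Hom}. This yields an isomorphism of left $S^{-1}C$-modules
\[
S^{-1}\Hom_A(C,C)\;\xrightarrow{\;\sim\;}\;\Hom_{S^{-1}A}(S^{-1}C,S^{-1}C).
\]
Since $S^{-1}A=A_f$, $S^{-1}C=C_f$ (where the localization of $C$ as an $A$-module coincides with its localization as a ring because $S\subset A$), and $\Hom_A(C,C)=\End_A(C)$ by definition, this is exactly the asserted isomorphism $(\End_A(C))_f\to\End_{A_f}(C_f)$ as a map of left $C_f$-modules.

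For the second statement, the argument is identical after taking $S=A\setminus\p$: part (1) of Proposition \ref{localization of Hom} delivers the isomorphism $(\End_A(C))_\p\to\End_{A_\p}(C_\p)$ as left $C_\p$-modules. Alternatively, one may apply part (2) with the flat ring maps $A\to A_f$ and $A\to A_\p$ and use the natural identification $A_f\otimes_A C=C_f$ (resp.\ $A_\p\otimes_A C=C_\p$) to arrive at the same conclusion.

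There is no real obstacle here: the statement is a direct formal consequence of Proposition \ref{localization of Hom}, with the only bookkeeping being to confirm that the left $S^{-1}C$-module structure supplied by that proposition matches the intended $C_f$-module (resp.\ $C_\p$-module) structure on $\End$, and that the localization of the $A$-module $C$ agrees with the localization of the ring $C$ at the image of $S$. Both of these are immediate.
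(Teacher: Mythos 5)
Your proposal is correct and is exactly the paper's intended argument: the corollary is stated as an immediate specialization of Proposition \ref{localization of Hom} with $M=N=C$, applied to the multiplicative sets $\{1,f,f^2,\ldots\}$ and $A\setminus\p$. The bookkeeping points you flag (localizing $C$ as an $A$-module versus as a ring, and matching the left $C$-module structures) are indeed immediate, so nothing further is needed.
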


The following proposition is a straightforward consequence of Proposition \ref{characterization of projective modules}.
\begin{proposition}\label{dual and double dual of a projective module}
    If $M$ is a finite projective $A$-module then the dual module $M^*$ is also a finite projective $A$-module and the natural map to the double dual $M\to (M^*)^*$ is an isomorphism of $A$-modules.
\end{proposition}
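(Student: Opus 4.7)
The plan is to leverage the local characterization of finite projective modules from Proposition \ref{characterization of projective modules}: there exist $f_1,\ldots,f_r\in A$ generating the unit ideal such that each $M_{f_i}$ is a finite free $A_{f_i}$-module. I will then use Proposition \ref{localization of Hom} to show that both $M^*$ and $(M^*)^*$ become finite free after localizing at each $f_i$, and that the natural double-dual map respects these localizations.

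First, I would observe that $M$ is of finite presentation (being finite projective), so by Proposition \ref{localization of Hom} applied with $N=A$, for each $i$ there is a canonical isomorphism
\begin{equation*}
(M^*)_{f_i}=(\Hom_A(M,A))_{f_i}\xrightarrow{\sim} \Hom_{A_{f_i}}(M_{f_i},A_{f_i})=(M_{f_i})^*.
\end{equation*}
Since $M_{f_i}$ is finite free over $A_{f_i}$, its dual $(M_{f_i})^*$ is finite free of the same rank. Thus $M^*$ becomes finite free after localizing at each $f_i$, so by the converse direction of Proposition \ref{characterization of projective modules}, $M^*$ is a finite projective $A$-module.

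Next, for the double dual, I would apply Proposition \ref{localization of Hom} once more, this time using that $M^*$ is itself of finite presentation (as just shown, it is finite projective): localizing yields
\begin{equation*}
((M^*)^*)_{f_i}\xrightarrow{\sim}((M^*)_{f_i})^*\xrightarrow{\sim}((M_{f_i})^*)^*.
\end{equation*}
The natural evaluation map $M\to (M^*)^*$ is functorial and compatible with localization, so localizing at $f_i$ produces the natural map $M_{f_i}\to((M_{f_i})^*)^*$, which is an isomorphism for any finite free module. Since the $f_i$ generate the unit ideal, being an isomorphism may be checked on this covering, and therefore $M\to(M^*)^*$ is an isomorphism of $A$-modules.

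No step presents a genuine obstacle: the argument is entirely a reduction to the finite-free case via the local splittings of Proposition \ref{characterization of projective modules}, combined with the compatibility of $\Hom$ with localization from Proposition \ref{localization of Hom}. The only care required is in verifying that the double-dual map commutes with localization, which is automatic from the naturality of the evaluation map and the naturality of the localization isomorphism.
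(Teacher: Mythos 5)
Your proof is correct and is in the same spirit as the paper, which offers no separate argument and simply calls the statement a straightforward consequence of Proposition \ref{characterization of projective modules}; your reduction to the finite free case via a cover $f_1,\ldots,f_r$ and Proposition \ref{localization of Hom} is a legitimate way to carry that out. One point to make explicit: the implication (3)$\Rightarrow$(1) of Proposition \ref{characterization of projective modules} is stated for \emph{finitely generated} modules, so before invoking it for $M^*$ you must check that $M^*$ is finitely generated; this follows, for instance, because a split surjection $A^n\to M$ dualizes to a split injection $M^*\to (A^n)^*\cong A^n$, exhibiting $M^*$ as a direct summand of $A^n$ (which in fact yields finiteness and projectivity of $M^*$ in one stroke, and the same summand-of-free observation also gives the double-dual isomorphism by naturality, as an alternative to your localization step).
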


\begin{corollary}\label{cor: End is finite and projective}
    Let $A\subset C$ be a finite extension such that $C$ is a finite projective $A$-module. Then $\on{End}_A(C)$ is  both a finite projective $A$-module and a finite projective left $C$-module.
\end{corollary}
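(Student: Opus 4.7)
The plan is to reduce everything to the dual-of-a-projective result (Proposition \ref{dual and double dual of a projective module}) via the canonical isomorphism of left $C$-modules
\[
\End_A(C)\;\cong\;\Hom_C(C\otimes_A C,C)\;=\;(C\otimes_A C)^*
\]
recalled in the excerpt just before the corollary, and then to bootstrap along the tower $A\subset C\subset \End_A(C)$ using Proposition \ref{restriction of scalar of projective modules}.

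First I would observe that $C\otimes_A C$, equipped with the left $C$-action $c\cdot(x\otimes y)=cx\otimes y$, is a finite projective $C$-module: this is immediate from Proposition \ref{projective modules are preserved by base change}(1) applied to the finite projective $A$-module $C$ under the base extension $A\to C$. Next, by Proposition \ref{dual and double dual of a projective module}, the $C$-linear dual $(C\otimes_A C)^*=\Hom_C(C\otimes_A C,C)$ is again a finite projective $C$-module. Combining this with the displayed isomorphism of left $C$-modules, we conclude that $\End_A(C)$ is a finite projective left $C$-module, which settles the first half of the statement.

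Finally, to deduce that $\End_A(C)$ is also a finite projective $A$-module, I would apply Proposition \ref{restriction of scalar of projective modules} to the tower $A\subset C$: since $C$ is a finite projective $A$-module by hypothesis and $\End_A(C)$ has just been shown to be a finite projective $C$-module, the restriction of scalars yields that $\End_A(C)$ is a finite projective $A$-module.

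There is no real obstacle here; the only point worth checking carefully is the compatibility of $C$-module structures in the isomorphism $\End_A(C)\cong (C\otimes_A C)^*$, but this is exactly the content of the identification recalled in the paragraph preceding the corollary, so no further work is needed.
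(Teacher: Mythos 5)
Your proposal is correct and follows essentially the same route as the paper's proof: base change to get $C\otimes_A C$ finite projective over $C$, dualize via Proposition \ref{dual and double dual of a projective module}, transfer along the identification $\End_A(C)\cong (C\otimes_A C)^*$, and conclude the $A$-module statement by restriction of scalars (Proposition \ref{restriction of scalar of projective modules}). No gaps.
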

\begin{proof}
    By Proposition \ref{projective modules are preserved by base change}, $C\otimes_A C$ is projective left $C$-module, and by Proposition \ref{dual and double dual of a projective module}, $(C\otimes_A C)^*$ is again a finite projective $C$-module.
    Finally, as we noted before, there is an isomorphism of left $C$-modules $\End_A(C)\simeq (C\otimes_A C)^*$. This proves that $\on{End}_A(C)$ is a finite projective $C$-module. It is also a finite projective $A$-module by Proposition \ref{restriction of scalar of projective modules}.
\end{proof}

Finally, we prove a proposition that makes use of almost all the properties stated above. This proposition will be used twice in these notes.

\begin{proposition}\label{existence of special basis for a direct summand of End}
Let $A\subset C$ be a finite extension such that $C$ is a projective $A$-module and the induced map $\Spec(C)\to\Spec(A)$ is a homeomorphism. Let $H\subset \on{End}_A(C)$ be a $C$-module direct summand.
Given a prime ideal $\p\subset A$, there are $f\in A\setminus \p$,  $t_1,\ldots,t_n\in C$,  and 
$\phi_1,\ldots,\phi_l\in H_f$, for some $l\leq n$,   such that
\begin{enumerate}
    \item $C_f$ is a free $A_f$-module and $t_1,\ldots,t_n$ induce a basis for this module.
    \item $H_f$ is a free $C_f$-module and $\phi_1,\ldots,\phi_l$ is a basis for this module.
    \item $\phi_i(t_j)=\delta_{ij}$ in $C_f$ for $1\leq i,j \leq l$.
\end{enumerate}  
\end{proposition}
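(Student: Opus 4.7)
The plan is to reduce to the local situation at $\p$ and then carry out a change-of-basis argument in the free $C_\p$-module $\End_{A_\p}(C_\p)$. First I would observe that the homeomorphism $\Spec(C)\to \Spec(A)$ localizes to a homeomorphism $\Spec(C_\p)\to \Spec(A_\p)$, so $C_\p$ is local. Since $H$ is a $C$-direct summand of $\on{End}_A(C)$ and the latter is a finite projective $C$-module by Corollary \ref{cor: End is finite and projective}, $H$ is itself finite projective over $C$, whence $H_\p$ is a finite \emph{free} $C_\p$-module of some rank $l$.

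Next I would trivialize $C$. By Proposition \ref{characterization of projective modules}, there exists $f_0\in A\setminus \p$ and elements $t_1,\ldots,t_n\in C$ whose images form an $A_{f_0}$-basis of $C_{f_0}$. Via evaluation at $t_1,\ldots,t_n$, we get a $C_{f_0}$-module isomorphism $\End_{A_{f_0}}(C_{f_0})\xrightarrow{\sim} C_{f_0}^n$, $\phi\mapsto (\phi(t_1),\ldots,\phi(t_n))$; its inverse sends the standard basis vector $e_i$ to the unique $\varphi_i$ with $\varphi_i(t_j)=\delta_{ij}$. After further localization via Corollary \ref{localization of End}, $H_{f_0}$ is identified with a $C_{f_0}$-direct summand of $C_{f_0}^n$.

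Now I would carry out the main step over $C_\p$. Pick a $C_\p$-basis $h_1,\ldots,h_l$ of $H_\p$ and write $h_i=\sum_j h_{ij}e_j$. Extending by a basis of a complement yields an invertible $n\times n$ matrix over $C_\p$; reducing modulo the maximal ideal of $C_\p$, the first $l$ rows stay linearly independent, so some $l\times l$ minor of $(h_{ij})$ is nonzero modulo the maximal ideal and hence a unit in $C_\p$. After permuting (and relabeling) $t_1,\ldots,t_n$, we may assume this unit minor $M$ occupies the first $l$ columns. Multiplying the column vector $(h_1,\ldots,h_l)^T$ on the left by $M^{-1}\in \on{GL}_l(C_\p)$ produces a new $C_\p$-basis $\phi_1,\ldots,\phi_l$ of $H_\p$ whose coefficient matrix has the $l\times l$ identity in its first $l$ columns, i.e.\ $\phi_i(t_j)=\delta_{ij}$ for $1\leq i,j\leq l$.

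Finally I would descend to a principal localization: choosing $f\in A\setminus\p$ divisible by $f_0$ and clearing the denominators of $M^{-1}$, the elements $\phi_1,\ldots,\phi_l$ lie in $H_f$, and the identities $\phi_i(t_j)=\delta_{ij}$ hold in $C_f$. The ``moreover'' clause of Proposition \ref{characterization of projective modules} (applied to the finite projective $C$-module $H$, noting that $C_f$ and $C_\p$ interact correctly since $\Spec(C_\p)=\Spec(A_\p)$) allows one to further enlarge $f$ so that $\phi_1,\ldots,\phi_l$ is a $C_f$-basis of $H_f$. The main obstacle in this plan is the unit-minor step, which crucially uses that $C_\p$ is local and that $H_\p$ is a direct summand of $C_\p^n$ --- it is exactly here that the homeomorphism hypothesis is needed.
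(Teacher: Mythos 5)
Your argument is correct, and it follows the same overall skeleton as the paper (localize at $\p$, use the homeomorphism to get that $C_\p$ is local so that finite projective $C_\p$-modules are free, arrange $\phi_i(t_j)=\delta_{ij}$, then spread out to a principal localization $C_f$ using finite presentation of $H$ and the homeomorphism of spectra), but the key middle step is implemented differently. The paper dualizes: it identifies $\End_A(C)$ with $\Hom_C(C\otimes_A C,C)$ and, via the double-dual identification, writes $H=\Hom_C(P,C)$ for a projective quotient $C\otimes_A C\to P$; over the local ring $C_\p$ it extracts from the images of $1\otimes t_1,\ldots,1\otimes t_n$ (after rearrangement) a basis of the free module $P_\p$ and takes the \emph{dual} basis $\phi_1,\ldots,\phi_l$, so that condition (3) holds by construction. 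You instead stay entirely on the $\End$ side: trivialize $\End_{A_{f_0}}(C_{f_0})\cong C_{f_0}^n$ by evaluation at the $t_j$, view $H_\p$ as a free direct summand of $C_\p^n$, extend a basis of $H_\p$ to one of $C_\p^n$, locate a unit $l\times l$ minor after reduction modulo the maximal ideal, permute the $t_j$ and change basis by $M^{-1}$. Your route is more elementary and explicit (no appeal to Proposition \ref{dual and double dual of a projective module}), at the cost of some matrix bookkeeping; the paper's route makes the duality between the $t_j$ and the $\phi_i$ structural and gets (3) with no computation. One small point of precision: the identities $\phi_i(t_j)=\delta_{ij}$ established in $C_\p$ do not automatically descend to $C_f$ the moment you clear the denominators of $M^{-1}$; as in the paper, you must multiply $f$ by a further element of $A\setminus\p$ killing the finitely many differences $\phi_i(t_j)-\delta_{ij}$ — your final enlargement of $f$ (for the basis property of $H_f$, via the ``moreover'' clause of Proposition \ref{characterization of projective modules} together with $C_\p=C_{\mf{P}}$ for the unique prime $\mf{P}$ over $\p$) accommodates this, but it should be said explicitly.
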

\begin{proof}
Since $C$ is a finite and projective $A$-module, $C\otimes_A C$ is a finite projective left $C$-module. 
We identify the left $C$-module $\End_A(C)$ with the dual $(C\otimes_A C)^*=\Hom_C(C\otimes_A C,C)$ as we did before. 
By Proposition \ref{dual and double dual of a projective module}, there is a natural identification $C\otimes_A C\cong ((C\otimes_A C)^*)^*$.
Hence, in the identification $\End_A(C)\cong (C\otimes_A C)^*$, the $C$-module direct summand $H$ corresponds to $\Hom_C(P,C)\subset \on{Hom}_C(C\otimes_A C,C)$ for some quotient $C\otimes_A C\to P$, where $P$ is a projective $C$-module.

The hypothesis implies that $C_\p$ is a free $A_\p$-module of finite rank, say $n$. 
Choose elements $t_1,\ldots,t_n\in C$ that induce a basis for $C_\p $ as $A_\p $-module. 
Then $1\otimes t_1,\ldots,1\otimes t_n\in C\otimes_A C$ induces a basis for $C_\p\otimes_{A_\p} C_\p $ as left $C_\p $-module.
The hypothesis of homeomorphic spectra implies that $C_\p$ is also a local ring, so the projective quotient $P_\p$ of $C_\p\otimes_{A_\p} C_\p$ is also a free $C_\p$-module, say of rank $l\leq n$. Again, since $C_\p$ is local, after rearrangement we may assume that the images of $1\otimes t_{1},\ldots,1\otimes t_l$ in $P$ induce a basis for $P_\p$ as $C_\p$-module. Let $\phi_1,\ldots,\phi_l\in \on{Hom}_{C_\p}(P_\p,C_\p)$ be the dual basis.
When viewing these as elements of $\on{Hom}_{C_\p}(C_\p\otimes_{A_\p} C_\p,C_\p)$ we have $\phi_i(1\otimes t_j)=\delta_{ij}$ in $C_\p$ for all $1\leq i,j\leq l$.

Coming back to $H\subset \on{End}_A(C)$, we have found a basis $\phi_1,\ldots,\phi_l$ for the left $C_\p$-module $H_\p\subset \on{End}_{A_\p}(C_\p)$ and elements $t_1,\ldots,t_n\in C$ such that $\phi_i(t_j)=\delta_{ij}$ in $C_\p$. Since $H$ is a finite projective $C$-module and $A\subset C$ induces a homeomorphism of spectra, there is $f\in A\setminus \p$ such that $\phi_1,\ldots,\phi_l$ are in $H_f\subset \on{End}_{A_f}(C_f)$ and they form a basis for $H_f$ as $C_f$-module. After multiplying $f$ by some other element of $A\setminus \p$ we may even assume that $\phi_i(t_j)=\delta_{ij}$ holds in $C_f$ for all $1\leq i,j\leq l$. This finishes the proof.
\end{proof}

\medskip

\subsection{Derivations and module of K\"ahler differentials}\label{subsection: preliminaries on derivations}
We review the definition and some basic properties of the modules of derivations and the module of K\"ahler differentials. See \cite{Kunz86} for details.

\medskip

Let $C$ be an $A$-algebra and let $M$ be a $C$-module. A homomorphism of $A$-modules $C\to M$ is called an \textit{$A$-derivation} if it satisfies $D(c_1c_2)=c_1D(c_2)+c_2D(c_1)$ for all $c_1,c_2\in C$. The set of $A$-derivations is denoted by $\Der_A(C,M)$. It is a $C$-submodule of $\Hom_A(C,M)$. We remark that the construction of $\Der_A(C,M)$ is functorial in $M$.

We now recall the construction of the universal derivation. Let $J$ be the kernel of the multiplication map $C\otimes_A C\to C$ and let $\Omega_{C/A}=J/J^2$. We consider it with the structure of $C$-module deduced from the structure of left $C$-module in $C\otimes_A C$. The $C$-module $\Omega_{C/A}$ is 
called the module of \textit{K\"ahler differentials}. The map $d\colon C\to\Omega_{C/A}$ associating to each $c\in C$ the class of $1\otimes c-c\otimes 1$ is called the \textit{universal $A$-derivation}. In fact, it is an $A$-derivation and satisfies the following universal property: the map $\varphi\mapsto \varphi\circ d$ gives an isomorphism of $C$-modules 
\begin{align}\label{Derivations as dual of Omega}
\Hom_C(\Omega_{C/A},M)\cong \Der_A(C,M).
\end{align}

\begin{proposition}[{\cite[Proposition 4.21 and Corollary 4.22]{Kunz86}}]\label{omega and base change}
    Let $C$ be an $A$-algebra. If $A\to A'$ is any homomorphism of rings and $C':=A'\otimes_AC$, then there is a natural isomorphism
    \begin{align*}
        \Omega_{C'/A'}\cong C'\otimes_C\Omega_{C/A}.
    \end{align*}
    In particular, if $S\subset A$ a multiplicative subset, then there is a natural isomorphism $\Omega_{S^{-1}C/S^{-1}A}=S^{-1}\Omega_{C/A}$.
\end{proposition}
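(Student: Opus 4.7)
The plan is to verify that $C'\otimes_C\Omega_{C/A}$, equipped with a suitable derivation from $C'$, represents the same functor as $\Omega_{C'/A'}$; then the isomorphism is formal. First I would construct a candidate universal $A'$-derivation $d'\colon C'\to C'\otimes_C \Omega_{C/A}$ by setting $d'(a'\otimes c)=a'\otimes dc$, where $d\colon C\to \Omega_{C/A}$ is the universal $A$-derivation. Well-definedness comes from the $A$-bilinearity of $(a',c)\mapsto a'\otimes dc$, which in turn uses $A$-linearity of $d$. The Leibniz rule on products of elementary tensors $(a'_1\otimes c_1)(a'_2\otimes c_2)=a'_1a'_2\otimes c_1c_2$ reduces to the Leibniz rule for $d$ together with the way the $C'$-action on $C'\otimes_C\Omega_{C/A}$ absorbs the factors $c_i$ into the $\Omega_{C/A}$-slot; $A'$-linearity is built into the definition.

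By the universal property of $\Omega_{C'/A'}$ applied to $d'$, there is a unique $C'$-linear map $\alpha\colon \Omega_{C'/A'}\to C'\otimes_C\Omega_{C/A}$ with $d'=\alpha\circ d_{C'/A'}$. Conversely, the composition $C\to C'\xrightarrow{d_{C'/A'}}\Omega_{C'/A'}$ is an $A$-derivation into the $C$-module $\Omega_{C'/A'}$ (where $C$ acts through $C\to C'$), so by the universal property of $\Omega_{C/A}$ it factors uniquely through a $C$-linear map $\Omega_{C/A}\to\Omega_{C'/A'}$. Extending scalars along $C\to C'$ then produces a $C'$-linear map $\beta\colon C'\otimes_C\Omega_{C/A}\to\Omega_{C'/A'}$ sending $a'\otimes dc$ to $a'\, d_{C'/A'}(1\otimes c)$.

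Finally, I would check that $\alpha$ and $\beta$ are mutual inverses. The composition $\alpha\circ\beta$ acts as the identity on the generating set $\{a'\otimes dc\}$ of the left-hand side by direct computation, so it is the identity. For $\beta\circ\alpha$, note that $\Omega_{C'/A'}$ is generated as a $C'$-module by the image of $d_{C'/A'}$; on a generator $d_{C'/A'}(a'\otimes c)$, the Leibniz rule together with the vanishing of $d_{C'/A'}$ on the image of $A'$ (since $d_{C'/A'}$ is an $A'$-derivation) yields $d_{C'/A'}(a'\otimes c)=a'\, d_{C'/A'}(1\otimes c)$, which is fixed by $\beta\circ\alpha$. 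The only real subtlety, essentially bookkeeping, is tracking the two $C$-module structures on $C'\otimes_C\Omega_{C/A}$ (one via $C\to C'$, the other via the $\Omega_{C/A}$ factor) and verifying they assemble into a well-defined $C'$-action; everything else is formal from universal properties. The localization statement is the special case $A'=S^{-1}A$, for which $C'\cong S^{-1}C$ and $C'\otimes_C\Omega_{C/A}\cong S^{-1}\Omega_{C/A}$.
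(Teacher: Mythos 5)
Your argument is correct and complete: the paper itself gives no proof of this proposition (it is quoted from Kunz's book), and your universal-property argument — building the candidate derivation $d'$ on $C'\otimes_C\Omega_{C/A}$, producing $\alpha$ and $\beta$ from the two universal properties, and checking they are mutually inverse on module generators — is essentially the standard proof found in the cited reference. The only point worth stating explicitly is the identification $C'\otimes_C\Omega_{C/A}\cong A'\otimes_A\Omega_{C/A}$ implicit in writing $a'\otimes dc$, and the biadditivity remark that lets you verify the Leibniz rule on elementary tensors only; both are routine.
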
 

\begin{proposition}[{\cite[Propositions 4.12 and 4.17]{Kunz86}}]\label{Omega of a finitely presented algebra}
    Let $C$ be an $A$-algebra of finite presentation. Then $\Omega_{C/A}$ is a $C$-module of finite presentation.
\end{proposition}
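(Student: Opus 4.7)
The plan is to reduce from the general finitely presented case to the two building blocks: polynomial algebras and quotients by finitely generated ideals.

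First I would handle the polynomial ring case. If $B = A[X_1,\ldots,X_n]$, I would show that $\Omega_{B/A}$ is a free $B$-module of rank $n$, with basis $dX_1,\ldots,dX_n$. The universal property \eqref{Derivations as dual of Omega} reduces this to the fact that, for any $B$-module $M$, an $A$-derivation $B \to M$ is determined by, and can be prescribed arbitrarily on, the generators $X_1,\ldots,X_n$ (via the product and sum rules). Concretely, I would check that $B \to B^n$, $f \mapsto (\partial f/\partial X_1,\ldots,\partial f/\partial X_n)$, is the universal derivation. In particular $\Omega_{B/A}$ is finitely presented over $B$.

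Next, for a surjection $B \twoheadrightarrow C = B/I$ of $A$-algebras, I would derive the conormal right exact sequence
\begin{equation*}
I/I^2 \xrightarrow{\ \delta\ } C \otimes_B \Omega_{B/A} \longrightarrow \Omega_{C/A} \longrightarrow 0,
\end{equation*}
where $\delta$ sends the class of $f \in I$ to $1 \otimes df$. This follows from the universal property of $\Omega_{C/A}$: a $C$-module map out of $C \otimes_B \Omega_{B/A}$ corresponds to an $A$-derivation $B \to M$ (for $M$ a $C$-module), and such a derivation factors through $C$ iff it vanishes on $I$, iff the induced $C$-linear map $I/I^2 \to M$ is zero.

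Finally, if $C$ is of finite presentation over $A$, choose a presentation $C \cong A[X_1,\ldots,X_n]/I$ with $I = (f_1,\ldots,f_m)$ finitely generated. Setting $B = A[X_1,\ldots,X_n]$, the first step gives that $C \otimes_B \Omega_{B/A} \cong C^n$ is free of finite rank, while $I/I^2$ is generated as a $C$-module by the classes of $f_1,\ldots,f_m$. Plugging these into the conormal sequence presents $\Omega_{C/A}$ as the cokernel of a $C$-linear map $C^m \to C^n$, hence it is of finite presentation.

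The only mildly delicate step is verifying the conormal sequence, since one must check that $\delta$ is well defined (i.e., $I^2$ is killed, which follows from $d(fg) = f\,dg + g\,df \in I \cdot \Omega_{B/A}$ when $f,g \in I$) and that the quotient is precisely $\Omega_{C/A}$; the rest is a direct bookkeeping of generators and relations.
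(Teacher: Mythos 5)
Your proof is correct, and since the paper gives no argument of its own here (it simply cites Kunz, Propositions 4.12 and 4.17), your route — free module of rank $n$ for a polynomial algebra, then the conormal sequence $I/I^2 \to C\otimes_B\Omega_{B/A}\to \Omega_{C/A}\to 0$ exhibiting $\Omega_{C/A}$ as the cokernel of a map $C^m\to C^n$ — is exactly the standard argument the citation refers to. The delicate points you flag (well-definedness of $\delta$ on $I/I^2$ and identifying the cokernel with $\Omega_{C/A}$ via the universal property) are handled correctly.
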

As an immediate consequence of Proposition \ref{Omega of a finitely presented algebra}, the isomorphism (\ref{Derivations as dual of Omega}) and Proposition \ref{localization of Hom}, we obtain a good localization property for the modules of derivations.
\begin{corollary}\label{derivations and localization}
    If $C$ is an $A$-algebra of finite presentation and $M$ is a $C$-module, then the natural map $S^{-1}\Der_A(C,M)\to \Der_{S^{-1}A}(S^{-1}C,S^{-1}M)$ is an isomorphism.
\end{corollary}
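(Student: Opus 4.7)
The plan is to simply chain together the three ingredients explicitly named just before the statement: the universal property \eqref{Derivations as dual of Omega}, Proposition \ref{Omega of a finitely presented algebra}, and Proposition \ref{localization of Hom}, together with the base change formula from Proposition \ref{omega and base change}.

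First I would rewrite both sides as $\Hom$-modules using the universal derivation. On the left, \eqref{Derivations as dual of Omega} gives
\[
\Der_A(C,M) \cong \Hom_C(\Omega_{C/A},M),
\]
and since localization is exact, $S^{-1}\Der_A(C,M) \cong S^{-1}\Hom_C(\Omega_{C/A},M)$. On the right, applying \eqref{Derivations as dual of Omega} to the $S^{-1}A$-algebra $S^{-1}C$ and the $S^{-1}C$-module $S^{-1}M$ gives
\[
\Der_{S^{-1}A}(S^{-1}C,S^{-1}M) \cong \Hom_{S^{-1}C}(\Omega_{S^{-1}C/S^{-1}A},S^{-1}M).
\]

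Next I would bridge these two expressions. By Proposition \ref{Omega of a finitely presented algebra}, $\Omega_{C/A}$ is a $C$-module of finite presentation, so Proposition \ref{localization of Hom} applies and yields an isomorphism
\[
S^{-1}\Hom_C(\Omega_{C/A},M) \xrightarrow{\ \sim\ } \Hom_{S^{-1}C}(S^{-1}\Omega_{C/A}, S^{-1}M).
\]
Finally, Proposition \ref{omega and base change} identifies $S^{-1}\Omega_{C/A}$ with $\Omega_{S^{-1}C/S^{-1}A}$ as $S^{-1}C$-modules. Composing the three isomorphisms produces the desired isomorphism, and inspecting the construction shows that it sends the class of a derivation $D/s$ to the induced localized derivation $c/t \mapsto D(c)/(st)$, which is precisely the natural map in the statement.

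There is no real obstacle here; the only mild point to check is that all these isomorphisms are canonical and that their composition coincides with the naive ``localize a derivation'' map rather than differing by a sign or a unit, which follows because the universal derivation $d\colon C\to\Omega_{C/A}$ localizes to the universal derivation $S^{-1}C\to \Omega_{S^{-1}C/S^{-1}A}$ under the identification of Proposition \ref{omega and base change}.
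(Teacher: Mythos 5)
Your proposal is correct and follows exactly the route the paper intends: identifying $\Der_A(C,M)$ with $\Hom_C(\Omega_{C/A},M)$ via \eqref{Derivations as dual of Omega}, invoking Proposition \ref{Omega of a finitely presented algebra} to see that $\Omega_{C/A}$ is finitely presented, and then applying Proposition \ref{localization of Hom} together with the localization formula for $\Omega$ from Proposition \ref{omega and base change}. This is precisely the ``immediate consequence'' argument the paper sketches, with your final compatibility check of the universal derivations being the right way to see the composite agrees with the natural map.
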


Finally, in the particular case in which $M=C$, $\Der_A(C,C)$ is denoted simply as $\Der_A(C)$. It is a left $C$-submodule of $\End_A(C)$. It is not an $A$-subalgebra of $\on{End}_A(C)$ as the  composition of derivations is not a derivation in general.
Given $D,D'\in \End_A(C)$, we instead consider the bracket $$[D,D']:=D\circ D'-D'\circ D.$$
This bracket is $A$-bilinear and makes $\End_A(C)$ into a Lie $A$-algebra. It holds that $\Der_A(C)$ is indeed a Lie subalgebra of $\End_A(C)$.
In other words, if $D,D'\in \Der_A(C)$, then $[D,D']\in \Der_A(C)$ too. In characteristic $p>0$, it also holds that if $D\in\Der_A(C)$, then the $p$-times composition $D^p$ is also in $\Der_A(C)$. In this case, $\Der_A(C)$ is a restricted Lie algebra.

\medskip

\subsection{Differential operators and modules of principal parts}\label{sec: preliminaries on differential operators and principal parts}
In this last part, we review the definition and some basic properties of differential operators and the modules of principal parts. 

\medskip

Let $C$ be an $A$-algebra  and let $M$ be a $C$-module. For any $A$-linear map $D\colon C\to M$ and for any $x\in C$, we define an $A$-linear map $[x,D]\colon C \to M$ by the formula $$[x,D](c):=xD(c)-D(xc) \quad \text{for each $c\in C$.}$$ 
    In other words, if we consider $\Hom_A(C,M)$ with both its natural structures of left $C$-module and right $C$-module, and if for each $x\in C$ we denote by $L_x, R_x\colon \Hom_A(C,M)\to \Hom_A(C,M)$ the left and right multiplications by $x$, respectively, then
    \begin{align*}
        [x,D]:=(L_x-R_x)(D).
    \end{align*}
    We say that $D\colon C\to M$ is an \textit{$A$-differential operator} from $C$ to $M$ if there exists a non-negative integer $n$ such that 
    \begin{equation}\label{condition of differential operator}
    [x_0[x_1[\dots,[x_n,D]\dots]]]=0 \quad \text{ in $\Hom_A(C,M)$ for all $x_0,\dots,x_{n}\in C$.}
    \end{equation}
    The smallest integer $n$ with this property is called {\em the order of $D$}. 

    \medskip 
    
    For example, an $A$-differential operator of order 0 is just a $C$-linear map $D\colon C\to M$, so it is uniquely determined by $D(1)\in M$. Any $A$-derivation $D\colon C\to M$ is a differential operator of order $\leq 1$, and any $A$-differential operator of order $\leq 1$ can be written uniquely as a sum of a differential operator of order 0 and an $A$-derivation.

    \medskip 
    
    We denote by $\Diff_A^n(C,M)$ the set of $A$-diferential operators from $C$ to $M$ of order $\leq n$, and
    by $\Diff_A(C,M)$ the set of all $A$-differential operators.  These are $C$-submodules of the left $C$-module $\on{Hom}_A(C,M)$ and they are functorial in $M$ too. 
    There is a filtration
    \begin{align}\label{eq: filtration of Hom}
        \Diff_A^0(C,M)\subset \Diff_A^1(C,M)\subset\Diff_A^2(C,M)\subset\cdots\subset \Hom_A(C,M)
    \end{align}
    of the left $C$-module $\Hom_A(C,M)$, which may not be exhaustive in general. As remarked previously, there is a natural identification of $C$-modules
    $\Diff_A^0(C,M)=M$ given by $D\mapsto D(1)$, and a decomposition into a direct sum of $C$-submodules 
    $$\Diff_A^1(C,M)= M\oplus \Der_A(C,M),\quad D=D(1)+(D-D(1)).$$
    
     When $M=C$, we simply write $\Diff_A(C)=\Diff_A(C,C)$ and $\Diff_A^n(C)=\Diff_A^n(C,C)$ for all $n\geq 0$. 
    
    \medskip
    
    We now review the construction of the universal $A$-differential operator of order $n$. 
We consider $C\otimes_A C$ as a $C$-algebra via the map $c\mapsto c\otimes 1$.
    Let $J$ be the kernel of the multiplication map $C\otimes_A C\to C$. For each non-negative integer $k$,
    the quotient $C$-module
    $$\on{P}_{C/A}^k:=(C\otimes_A C)/J^{k+1}$$
    is called the \textit{module of principal parts} of order $k$ of $C$ relative to $A$.
The map
$$\delta_k \colon C\to \on{P}_{C/A}^k, \quad x\mapsto\overline{ 1\otimes x},$$ 
is called the \textit{universal $A$-differential operator} of order $k$. In fact, it is an $A$-differential operator of order $k$ satisfying the following universal property: for any $C$-module $M$, the map
\begin{align}\label{dual of principal parts}
        \Hom_C(\on{P}^k_{C/A},M)\to \Diff_A^k(C,M), \quad \varphi\mapsto \varphi \circ \delta_k
    \end{align}
    is an isomorphism of $C$-modules
    (see \cite[Proposition 16.8.4]{EGAIV}).

\medskip

We state two basic properties of these modules.

\begin{proposition}[ {\cite[Proposition 16.4.5]{EGAIV}}]\label{prop: principal parts and base change}
Let $C$ be an $A$-algebra and $k\geq 0$. If $A\to A'$ is any homomorphism of rings and $C':=C\otimes_A A'$, then 
\begin{align}\label{eq: base change principal part}
	\on{P}_{C'/A'}^k=C'\otimes_C \on{P}_{C/A}^k
\end{align} 
In particular, if $S\subset A$ is a multiplicatively closed subset, then $S^{-1}\on{P}_{C/A}^k=\on{P}_{S^{-1}C/S^{-1}A}^k$.
\end{proposition}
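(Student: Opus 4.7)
The plan is to work directly with the definition $\on{P}_{C/A}^k = (C \otimes_A C)/J^{k+1}$, where $J = \ker(\mu)$ is the kernel of the multiplication map $\mu : C \otimes_A C \to C$, and to track what happens to $J$ and its powers under the base change to $A'$.

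First, I will use the standard associativity and commutativity isomorphisms of the tensor product to produce a canonical isomorphism of $A'$-algebras
$$C' \otimes_{A'} C' \cong (C \otimes_A C) \otimes_A A',$$
under which the multiplication map $\mu' : C' \otimes_{A'} C' \to C'$ corresponds to $\mu \otimes_A \on{id}_{A'}$. The kernel $J'$ of $\mu'$ therefore agrees, as an ideal of $C' \otimes_{A'} C'$, with the ideal generated by the image of the natural map $J \otimes_A A' \to (C \otimes_A C) \otimes_A A'$. Indeed, $J'$ is generated by elements $1 \otimes c' - c' \otimes 1$ for $c' \in C'$, and writing $c' = \sum c_i \otimes a_i$ expresses each such generator as $\sum a_i \cdot \bigl((1 \otimes c_i - c_i \otimes 1) \otimes 1\bigr)$, visibly in the ideal generated by the image of $J$.

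Second, since $(J')^{k+1}$ is generated as an ideal by $(k+1)$-fold products of generators of $J'$, the same reasoning identifies $(J')^{k+1}$ with the image of $J^{k+1} \otimes_A A'$ in $C' \otimes_{A'} C'$. Applying the right-exact functor $-\otimes_A A'$ to the exact sequence
$$J^{k+1} \to C \otimes_A C \to \on{P}_{C/A}^k \to 0$$
and invoking the two identifications above, I obtain a natural isomorphism
$$\on{P}_{C'/A'}^k \cong \on{P}_{C/A}^k \otimes_A A'.$$
Finally, rewriting the right-hand side as $C' \otimes_C \on{P}_{C/A}^k$ (using that the $C$-structure on $\on{P}_{C/A}^k$ comes from the left factor of $C \otimes_A C$, and that $C' = C \otimes_A A'$) gives the desired formula. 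The localization statement is the special case $A' = S^{-1}A$.

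I do not expect a serious obstacle. The one subtle point is that, without assuming $A \to A'$ flat, the canonical map $J^{k+1} \otimes_A A' \to (C \otimes_A C) \otimes_A A'$ need not be injective; however, the argument only uses its image, and the computation above shows that this image already coincides with $(J')^{k+1}$ as an ideal, so flatness of the base change is not needed.
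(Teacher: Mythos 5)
Your argument is correct: the identification $C'\otimes_{A'}C'\cong (C\otimes_A C)\otimes_A A'$, the verification that the kernel $J'$ of the new multiplication map is exactly the (ideal generated by the) image of $J\otimes_A A'$, the resulting identification of $(J')^{k+1}$ with the image of $J^{k+1}\otimes_A A'$, and the use of right-exactness of $-\otimes_A A'$ — including your explicit remark that only the image, not injectivity, of $J^{k+1}\otimes_A A'\to (C\otimes_A C)\otimes_A A'$ is needed, so no flatness hypothesis enters — together give a complete proof. The paper itself gives no proof but cites \cite[Proposition 16.4.5]{EGAIV}, and your argument is essentially the standard one underlying that reference, so there is nothing further to reconcile.
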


\begin{proposition}[{\cite[Corollaire 16.4.22]{EGAIV}}] \label{prop: principal parts and finite presentation}
    Let $C$ be an $A$-algebra of finite presentation. Then $\on{P}_{C/A}^k$ is a $C$-module of finite presentation.
\end{proposition}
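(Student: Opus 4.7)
The plan is to reduce to the case of a polynomial algebra and then descend to a finitely generated quotient ideal. Since $C$ is finitely presented over $A$, I write $C = B/I$ with $B = A[x_1, \ldots, x_n]$ and $I = (f_1, \ldots, f_m)$ a finitely generated ideal.

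For the polynomial case, I use the standard explicit computation. With the change of variable $\xi_i := 1 \otimes x_i - x_i \otimes 1$, one has $B \otimes_A B = B[\xi_1, \ldots, \xi_n]$, and the kernel $K$ of the multiplication map is $(\xi_1, \ldots, \xi_n)$. Hence
\[\on{P}_{B/A}^k \cong B[\xi_1, \ldots, \xi_n]/(\xi_1, \ldots, \xi_n)^{k+1},\]
which is a free $B$-module of finite rank, with basis the monomials $\xi^\alpha$ of degree $|\alpha| \leq k$.

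To pass from $B$ to $C$, I use the identification $C \otimes_A C = (B \otimes_A B)/(I \otimes_A B + B \otimes_A I)$, which realises both $\on{P}_{C/A}^k$ and $C \otimes_B \on{P}_{B/A}^k$ (via the left $B$-structure on the latter coming from $b \mapsto \overline{b \otimes 1}$) as explicit quotients of $B \otimes_A B$. Comparing these quotients yields a canonical surjection of $C$-modules
\[C \otimes_B \on{P}_{B/A}^k \twoheadrightarrow \on{P}_{C/A}^k,\]
whose kernel is the image of $B \otimes_A I$, namely the ideal of $C \otimes_B \on{P}_{B/A}^k$ generated by the finitely many classes $\overline{1 \otimes f_1}, \ldots, \overline{1 \otimes f_m}$. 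The source is a finite free $C$-module by the previous paragraph, and since the ambient ring $C \otimes_B \on{P}_{B/A}^k$ is itself finite as a $C$-module, the ideal in question is finitely generated as a $C$-module (one multiplies each $\overline{1 \otimes f_j}$ by a finite $C$-module generating set of $C \otimes_B \on{P}_{B/A}^k$). This exhibits $\on{P}_{C/A}^k$ as the cokernel of a map between finite free $C$-modules, proving the claim.

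The main subtlety lies in this last step: although $B \otimes_A I$ is not finitely generated as a left $B$-module (because $B$ is infinite over $A$), once projected into the $C$-module-finite quotient $C \otimes_B \on{P}_{B/A}^k$, being an ideal with finitely many generators suffices to force finite generation as a $C$-module. The polynomial-case computation is routine; the only tool needed beyond the definitions is a careful bookkeeping of the two ways in which the ideal $I$ enters the presentation of $\on{P}_{C/A}^k$.
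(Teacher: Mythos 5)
Your proof is correct. The paper does not prove this statement at all --- it simply cites \cite[Corollaire 16.4.22]{EGAIV} --- so your argument serves as a self-contained replacement for that reference, and it is essentially the standard one: for a polynomial algebra $B=A[x_1,\ldots,x_n]$ the module $\on{P}_{B/A}^k$ is finite free with basis the classes of the $\xi^\alpha$, $|\alpha|\le k$, and writing $C=B/I$ with $I=(f_1,\ldots,f_m)$ one presents $\on{P}_{C/A}^k$ as the quotient of $C\otimes_B\on{P}_{B/A}^k$ by the ideal generated by the classes of $1\otimes f_1,\ldots,1\otimes f_m$. Your identification of the kernel (the image of $B\otimes_A I$, hence the ideal generated by the $\overline{1\otimes f_j}$ inside the module-finite $C$-algebra $C\otimes_B\on{P}_{B/A}^k$) is the one point requiring care, and your bookkeeping there is accurate, so $\on{P}_{C/A}^k$ is indeed exhibited as a cokernel of a map of finite free $C$-modules.
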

As an immediate consequence of Proposition \ref{prop: principal parts and finite presentation}, the isomorphism (\ref{dual of principal parts}) and Proposition \ref{localization of Hom}, we obtain a good localization property for the modules of differential operators.

\begin{corollary}\label{cor: localization of diff}
    Let $C$ be an $A$-algebra of finite presentation, let $M$ be a $C$-module and let $k\geq 0$. If $S\subset A$ is a multiplicative subset, then there is a canonical identification
    \begin{align*}
        S^{-1}\Diff_A^k(C,M)=\Diff_{S^{-1}A}^k(S^{-1} C, S^{-1} M).
    \end{align*}
\end{corollary}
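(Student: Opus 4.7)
The plan is to deduce this as a direct chain of canonical identifications, using the three ingredients highlighted in the paragraph preceding the statement: the universal property (\ref{dual of principal parts}) for differential operators, the finite presentation of $\on{P}_{C/A}^k$ from Proposition \ref{prop: principal parts and finite presentation}, and the localization compatibility of Hom from Proposition \ref{localization of Hom}.

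Concretely, I would first apply (\ref{dual of principal parts}) on both sides to translate the desired identity into the assertion
\begin{align*}
    S^{-1}\Hom_C(\on{P}_{C/A}^k,M) = \Hom_{S^{-1}C}(\on{P}_{S^{-1}C/S^{-1}A}^k,S^{-1}M).
\end{align*}
Proposition \ref{prop: principal parts and base change}, applied to the flat base change $A\to S^{-1}A$, identifies the principal part on the right-hand side as $\on{P}_{S^{-1}C/S^{-1}A}^k = S^{-1}\on{P}_{C/A}^k$. Next, since $C$ is an $A$-algebra of finite presentation, Proposition \ref{prop: principal parts and finite presentation} says that $\on{P}_{C/A}^k$ is finitely presented as a $C$-module, so Proposition \ref{localization of Hom}(1), applied with $C$ in place of $A$ and with the image of $S$ in $C$ as the multiplicative subset, yields
\begin{align*}
    S^{-1}\Hom_C(\on{P}_{C/A}^k,M) = \Hom_{S^{-1}C}(S^{-1}\on{P}_{C/A}^k,S^{-1}M).
\end{align*}
Chaining these two identifications produces the required one.

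Finally, to justify the adjective \emph{canonical}, I would check that the composite sends a fraction $D/s$ to the unique $S^{-1}A$-linear differential operator $S^{-1}C\to S^{-1}M$ whose composition with the localization map $C\to S^{-1}C$ agrees with $c\mapsto D(c)/s$. This is routine bookkeeping: the universal differential operators $\delta_k$ are compatible with base change, and the isomorphism in Proposition \ref{localization of Hom} acts on morphisms by tensoring with $S^{-1}A$. I do not foresee any serious obstacle here; every ingredient is already in place and what remains is to unwind definitions.
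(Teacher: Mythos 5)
Your proposal is correct and follows essentially the same route as the paper's own argument: both reduce the statement via the universal property (\ref{dual of principal parts}) to a statement about $\Hom_C(\on{P}_{C/A}^k,-)$, invoke Proposition \ref{prop: principal parts and finite presentation} to get finite presentation of $\on{P}_{C/A}^k$, apply Proposition \ref{localization of Hom} to commute $\Hom$ with localization, and use Proposition \ref{prop: principal parts and base change} to identify $\on{P}_{S^{-1}C/S^{-1}A}^k$ with $S^{-1}\on{P}_{C/A}^k$. No changes are needed.
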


\section{Purely inseparable extensions of exponent one}\label{Section on Galois extensions}

Purely inseparable ring extensions of exponent one were studied in depth by Yuan (\cite{Yuan70/1}), who refers to them as \textit{Galois extensions of exponent one}. In this section, we revisit some basic properties 
and characterizations of Galois extensions. In addition, we give a new characterization for Galois extensions using the module of derivations.  Our guideline for this section is Theorem \ref{thm: Galois characterizations}, which collects these characterizations of Galois extensions and introduces a new one.

Specifically, in \S \ref{subsection: p-basis} we review the concept and some basic properties of $p$-basis, as well as some existence criteria. In \S \ref{subsection: Galois extension} we use $p$-basis to
introduce Galois extensions, as Yuan does in \cite{Yuan70/1}, and explain why this notion coincides with our notion of purely inseparable extension of exponent one. We also review the characterization of Galois extensions in terms of the module of K\"ahler differentials.
In \S \ref{subsection: Galois extensions and derivations} we review the condition of being a Galois extension in terms of derivations and endomorphisms. Proposition \ref{properties of Der(C) for extensions with p-basis} collects two necessary conditions for a finitely presented flat extension $A\subset C$ of exponent one to be Galois: (1) $\Der_A(C)$ and $C$ generate $\End_A(C)$ as $A$-algebra, and (2) $\Der_A(C)$ is a $C$-direct summand of the left $C$-module $\End_A(C)$. Yuan proved that  condition (1) is also sufficient, while we prove here the sufficiency of (2).
Finally, in \S \ref{sec: Galois extensions} we collect  all the above mentioned characterizations of Galois extensions. 

\medskip

The section revisits many results included in Yuan's paper \cite{Yuan70/1}. In addition, we recall some results that can be also found in Kunz's book \cite{Kunz86}. Proofs of propositions are included here when they are short to facilitate the reading. The contents from this section will be used later when we consider purely inseparable extensions of arbitrary exponent.  We recall that a finite extension of finite exponent $A\subset C$ is purely inseparable if and only if $A[C^{p^{e+1}}]\subset A[C^{p^e}]$ is a Galois extension for all $0\leq e < \exp(C/A)$ (Theorem \ref{thm: characterization of purely inseparable}).

\medskip

\subsection{On $p$-basis}\label{subsection: p-basis} 
We recall the definition and some elementary properties of $p$-basis for ring extensions. 
The notion of $p$-basis will be used in \S \ref{subsection: Galois extension} to introduce Galois extensions of exponent one. 
 As we will see, these are extensions of exponent one that admit locally a finite $p$-basis.

\begin{definition}\label{def: p-basis}
	Let $A\subset C$ be a ring extension of exponent one and let $x_1,\dots,x_n\in C$. We say that $\{x_1,\dots,x_n\}$ is a \textit{$p$-basis} for $C$ over $A$ if 
	\begin{enumerate}[(i)]
		\item $x_1,\dots,x_n$ generate $C$ as $A$-algebra, i.e, $C=A[x_1,\dots,x_n]$, and
		\item $\{x_1,\dots,x_n\}$ is a \textit{$p$-independent set} over $A$, i.e., the set of monomials $\{x_1^{\alpha_1}\cdot\ldots\cdot x_n^{\alpha_n} : 0\leq \alpha_i < p\}$ is linearly independent over $A$. 
	\end{enumerate}
	In other words, $\{x_1,\dots, x_n\}$ is a $p$-basis if $C$ is a free $A$-module with basis the set of \textit{reduced monomials} $\{\x^\alpha:=x_1^{\alpha_1}\cdot\ldots\cdot x_n^{\alpha_n}: 0\leq \alpha_i<p\}$, or equivalently, if the $A$-algebra homomorphism $A[X_1,\ldots,X_n]\to C$ which maps $X_i$ to $x_i$ induces an isomorphism
	\begin{align*}
		A[X_1,\ldots,X_n]/\langle X_1^p-x_1^p,\ldots,X_n^p-x_n^p\rangle\cong C.
	\end{align*}
 \end{definition} 

\begin{example}
It is well-known that every finite field extension of exponent one admits a $p$-basis. More generally, every exponent-one finite extension of regular local rings has a $p$-basis. This was proved by Kimura and Nitsuma in \cite{KN82}. 
\end{example}

The following lemma collects some well-known (and easy to prove) properties of $p$-basis.
\begin{lemma}\label{basic properties of p-basis}
	Let $A\subset C$ be a ring extension of exponent one.
	\begin{enumerate}
		\item If $\{x_1,\ldots,x_n\}$ is a $p$-basis for $C$ over $A$, then for any $a_1,\ldots,a_n\in A$, $\{x_1-a_1,\ldots,x_n-a_n\}$ is also a $p$-basis for $C$ over $A$.
		\item If $\{x_1,\ldots,x_n\}$ is a $p$-basis for $C$ over $A$ and $A'$ is an $A$-algebra, then $\{x_1\otimes 1,\ldots, x_n\otimes 1\}\subset C\otimes_A A'$ is a $p$-basis for $C\otimes_A A'$ over $A'$. 
		\item Let $B$ be an intermediate ring, $A\subset B\subset C$. If $\{x_1,\ldots,x_r\}$ is a $p$-basis for $C$ over $B$ and $\{x_{r+1},\ldots,x_n\}$ is a $p$-basis for $B$ over $A$, then $\{x_1,\ldots,x_n\}$ is a $p$-basis for $C$ over $A$.
	\end{enumerate}
\end{lemma}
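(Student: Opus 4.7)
The plan is to reduce each of the three parts to a direct manipulation of the presentation $C\cong A[X_1,\ldots,X_n]/\langle X_i^p-x_i^p:1\leq i\leq n\rangle$ encoded by a $p$-basis (Definition~\ref{def: p-basis}), since this formulation converts each claim into a routine bookkeeping statement about reduced monomials.

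For (1), writing $y_i:=x_i-a_i$, the equality $A[x_1,\ldots,x_n]=A[y_1,\ldots,y_n]=C$ is immediate, so only the basis condition requires attention. Ordering multi-indices $\alpha\in\{0,\ldots,p-1\}^n$ lexicographically, the expansion $\mathbf{y}^\alpha=\prod_i(x_i-a_i)^{\alpha_i}$ is a polynomial in the $x_i$ whose unique reduced monomial of maximal multi-index is $\mathbf{x}^\alpha$, appearing with coefficient $1$. The resulting change-of-basis matrix from $\{\mathbf{y}^\alpha\}$ to $\{\mathbf{x}^\alpha\}$ is then upper triangular with $1$'s on the diagonal, hence invertible over $A$, so the $\mathbf{y}^\alpha$ also form a free $A$-basis of $C$. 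I would avoid trying to implement (1) via a substitution $X_i\mapsto X_i-a_i$ on the polynomial ring, because $(x_i-a_i)^p\neq x_i^p$ in general, so the naive substitution does not preserve the ideal. For (2), tensoring the isomorphism $A[X_1,\ldots,X_n]/\langle X_i^p-x_i^p\rangle\xrightarrow{\sim}C$ over $A$ with $A'$ yields the isomorphism $A'[X_1,\ldots,X_n]/\langle X_i^p-(x_i\otimes 1)^p\rangle\xrightarrow{\sim}C\otimes_A A'$ carrying $X_i\mapsto x_i\otimes 1$, which is precisely the $p$-basis presentation for $\{x_i\otimes 1\}_{i=1}^n$.

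For (3), I would multiply the bases. By hypothesis $C=B[x_1,\ldots,x_r]$ is $B$-free on the reduced monomials $\mathbf{x}^\alpha$ for $\alpha\in\{0,\ldots,p-1\}^r$, and $B=A[x_{r+1},\ldots,x_n]$ is $A$-free on the reduced monomials $\mathbf{x}^\beta$ for $\beta\in\{0,\ldots,p-1\}^{n-r}$. Then $C=A[x_1,\ldots,x_n]$ as an $A$-algebra, and the standard fact that multiplying a $B$-basis of $C$ by an $A$-basis of $B$ gives an $A$-basis of $C$ produces the reduced-monomial $A$-basis $\{\mathbf{x}^{(\alpha,\beta)}:0\leq\alpha_i<p,\ 0\leq\beta_j<p\}$ of $C$. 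None of these steps should present a serious obstacle; the only mildly subtle point is selecting the right proof strategy in (1), once one notices that the obvious linear substitution fails.
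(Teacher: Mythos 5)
Your proof is correct. Note that the paper itself gives no argument for this lemma -- it is stated as ``well-known (and easy to prove)'' -- so there is nothing to compare against; your three arguments (unitriangular change of basis between the reduced monomials in the $x_i$ and in the $x_i-a_i$, base change of the presentation $A[X_1,\ldots,X_n]/\langle X_i^p-x_i^p\rangle\cong C$, and multiplying a $B$-basis of $C$ by an $A$-basis of $B$) are exactly the standard ones and are all sound. One small remark on your aside in (1): the substitution approach you dismiss actually does work in characteristic $p$. Writing $y_i:=x_i-a_i$, the Frobenius identity gives $y_i^p=x_i^p-a_i^p$, and the automorphism $X_i\mapsto X_i+a_i$ of $A[X_1,\ldots,X_n]$ carries $X_i^p-y_i^p$ to $X_i^p-x_i^p$; composing it with the evaluation $X_i\mapsto x_i$ identifies $A[X_1,\ldots,X_n]/\langle X_i^p-y_i^p\rangle$ with $C$ via $X_i\mapsto y_i$, which is precisely the $p$-basis presentation for the $y_i$. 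Since you do not rely on that remark, this is not a gap in your proof, only a slightly misplaced caveat; your triangular-matrix argument is complete as written.
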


We will be particularly interested in some $p$-basis, called \textit{splitting $p$-basis}, that induce an special presentation of the ring extension.

\begin{definition}
    Let $A\subset C$ be a ring extension of exponent one. 
    A {\em splitting $p$-basis} for $C$ over $A$ is a $p$-basis $\{x_1,\ldots,x_n\}$ such that $x_i^p=0$ for all $i$. In other words, it is a $p$-basis that induces an $A$-algebra isomorphism $C\cong A[X_1,\ldots,X_n]/\langle X_1^p,\ldots,X_n^p\rangle$. 
\end{definition}

Splitting $p$-basis can be obtained from arbitrary $p$-basis by allowing a suitable base change if necessary. More specifically, 
let $A\subset C$ be a ring extension of exponent one that admits a $p$-basis $\{x_1,\ldots,x_n\}$.
	Let  $A\subset A'$ be any ring extension such that, for each $i=1,\ldots,n$, there exists $y_i\in A'$ with $y_i^p=x_i^p$.
 By Lemma \ref{basic properties of p-basis}(2), $\{x_1\otimes 1,\ldots,x_n\otimes 1\}$ is a $p$-basis for $C\otimes_A A'$ over $A'$.
 Let $z_i:=x_i\otimes 1-1\otimes y_i\in  C\otimes_A C$. 
 By Lemma \ref{basic properties of p-basis}(1),  $\{z_1,\ldots,z_n\}$ is also a $p$-basis of $ C\otimes_A A'$ over $A'$. 
 It is in fact a splitting $p$-basis since $z_i^p=x_i^p\otimes 1-1\otimes y_i^p=1\otimes (x_i^p-y_i^p)=0$. Note that we can take $A'=C$, or just $A'=A$ if $A=A^p$.  Thus,

\begin{corollary}\label{coro: p-basis becomes splitting p-basis}
    Let $A\subset C$ be a ring extension of exponent one that admits a $p$-basis of $n$ elements.
\begin{enumerate}
    \item If $A=A^p$ (e.g., $A$ a perfect field), 
    then $C\cong A[X_1,\ldots,X_n]/\langle X_1^p,\ldots,X_n^p\rangle$ as $A$-algebras.
    \item $C\otimes_A C\cong C[X_1,\ldots,X_n]/\langle X_1^p,\ldots,X_n^p\rangle$ as (left) $C$-algebras.
    \end{enumerate}
\end{corollary}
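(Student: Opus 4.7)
The plan is to observe that both parts of the corollary are essentially immediate specializations of the general construction described in the paragraph preceding the statement; the real work has already been done in showing that a $p$-basis remains a $p$-basis after base change and after translation by elements of the base ring (Lemma \ref{basic properties of p-basis} (1)--(2)). The task now is simply to choose an appropriate ring $A'$ and appropriate elements $y_i$ so that the translated $p$-basis $\{z_i = x_i \otimes 1 - 1 \otimes y_i\}$ is already a splitting $p$-basis, and then to unpack what being a splitting $p$-basis means in terms of an algebra presentation.

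For part (1), I would take $A' = A$. Since $A \subset C$ has exponent one, each $x_i^p$ lies in $A$, and since $A = A^p$ by hypothesis, for each $i$ there is an element $y_i \in A$ with $y_i^p = x_i^p$. Setting $z_i := x_i - y_i$, Lemma \ref{basic properties of p-basis}(1) gives that $\{z_1, \ldots, z_n\}$ is still a $p$-basis of $C$ over $A$, and the direct computation $z_i^p = x_i^p - y_i^p = 0$ shows it is a splitting $p$-basis. By the definition of splitting $p$-basis, this immediately yields the isomorphism $C \cong A[X_1, \ldots, X_n]/\langle X_1^p, \ldots, X_n^p\rangle$.

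For part (2), I would take $A' = C$ and $y_i = x_i$ for each $i$. By Lemma \ref{basic properties of p-basis}(2), the base change $\{x_1 \otimes 1, \ldots, x_n \otimes 1\}$ is a $p$-basis of $C \otimes_A C$ over $C$ (viewing $C$ inside $C \otimes_A C$ via $c \mapsto c \otimes 1$). Translating by $1 \otimes x_i \in C \otimes_A C$ and again invoking Lemma \ref{basic properties of p-basis}(1), the family $z_i := x_i \otimes 1 - 1 \otimes x_i$ is a $p$-basis of $C \otimes_A C$ over $C$. Finally, $z_i^p = x_i^p \otimes 1 - 1 \otimes x_i^p = 0$ since $x_i^p \in A$, so it is a splitting $p$-basis and the desired presentation follows.

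There is no real obstacle here: the two parts are just the two distinguished specializations ($A' = A$ and $A' = C$) of the generic base-change construction that was carried out in the discussion preceding the corollary. The only subtlety worth flagging explicitly is the use of the hypothesis $A = A^p$ in part (1), which is precisely what allows one to stay over $A$ rather than having to enlarge scalars.
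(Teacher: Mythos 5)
Your argument is exactly the paper's: the corollary is presented there as an immediate consequence of the same base-change-and-translate construction, specialized to $A'=A$ (using $A=A^p$) for (1) and $A'=C$ for (2). The only slip is a factor mix-up in part (2): you call $\{x_1\otimes 1,\ldots,x_n\otimes 1\}$ a $p$-basis of $C\otimes_A C$ over the copy of $C$ embedded by $c\mapsto c\otimes 1$, but those elements lie in that base copy — with your stated embedding the base-changed $p$-basis is $\{1\otimes x_1,\ldots,1\otimes x_n\}$ and one translates by $x_i\otimes 1\in C\otimes 1$ (Lemma \ref{basic properties of p-basis}(1) only allows translation by base-ring elements), giving $z_i=1\otimes x_i-x_i\otimes 1$ with $z_i^p=0$; this is pure bookkeeping (equivalently, apply the flip automorphism of $C\otimes_A C$) and does not affect the correctness of the approach.
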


\medskip

We review now  the notion of {\em differential basis} in relation to the notion of $p$-basis. Indeed, for finite extensions of exponent one, the concepts of $p$-basis and differential basis coincide.

\begin{definition}
    Let $C$ be a finitely generated $A$-algebra and let $x_1,\ldots,x_n\in C$. We say that $\{x_1,\ldots,x_n\}$ is a \textit{differential basis} if the $C$-module $\Omega_{C/A}$ is free and  $\{dx_1,\ldots,dx_n\}$ is a basis.
\end{definition}

\begin{proposition}\label{p-basis=differential basis}
	Let $A\subset C$ be a finite extension of exponent one and let $x_1,\ldots,x_n\in C$. Then $\{x_1,\ldots,x_n\}$ is a $p$-basis of $C$ over $A$ if and only if it is a differential basis.
\end{proposition}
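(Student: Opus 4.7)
The plan is to translate both implications into the explicit presentation of $C$ as a quotient $A[X_1,\ldots,X_n]/\langle X_i^p-x_i^p\rangle$ and compute K\"ahler differentials in that presentation.

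For $(\Rightarrow)$: if $\{x_1,\ldots,x_n\}$ is a $p$-basis, the $A$-algebra map $A[X_1,\ldots,X_n]\to C$ sending $X_i\mapsto x_i$ induces an isomorphism $C\cong A[X_1,\ldots,X_n]/\langle X_i^p-x_i^p\rangle$, and the conormal right-exact sequence reads
\begin{equation*}
I/I^2\to \Omega_{A[X_1,\ldots,X_n]/A}\otimes_{A[X_1,\ldots,X_n]} C\to \Omega_{C/A}\to 0,
\end{equation*}
with $I=\langle X_i^p-x_i^p\rangle$. Since $x_i^p\in A$ and $\mathrm{char}\,C=p$, one has $d(X_i^p-x_i^p)=pX_i^{p-1}dX_i-d(x_i^p)=0$; the leftmost map therefore vanishes and $\Omega_{C/A}$ inherits the free basis $\{dx_1,\ldots,dx_n\}$.

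For $(\Leftarrow)$: assume $\{dx_1,\ldots,dx_n\}$ is a basis of $\Omega_{C/A}$. The duality $\Der_A(C)\cong \Hom_C(\Omega_{C/A},C)$ produces derivations $D_1,\ldots,D_n\in\Der_A(C)$ dual to the $dx_i$, i.e., $D_i(x_j)=\delta_{ij}$; they commute because $[D_i,D_j]$ is an $A$-derivation killing every $x_k$ and a derivation is determined by its action on a differential basis. I will show that the well-defined $A$-algebra map $\varphi\colon R:=A[X_1,\ldots,X_n]/\langle X_i^p-x_i^p\rangle\to C$, $X_i\mapsto x_i$, is an isomorphism. Injectivity reduces to proving $A$-linear independence in $C$ of the monomials $\{x^\alpha:0\leq\alpha_i<p\}$ (which form a free $A$-basis of $R$). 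Given a relation $\sum_\alpha a_\alpha x^\alpha=0$, applying $D^\beta:=D_1^{\beta_1}\cdots D_n^{\beta_n}$ and using the identity $D^\beta(x^\alpha)=\binom{\alpha}{\beta}\beta!\,x^{\alpha-\beta}$ (zero unless $\beta\leq\alpha$ componentwise), the choice $\beta=(p-1,\ldots,p-1)$ forces $a_\beta\beta!=0$, hence $a_\beta=0$ since $\beta!\in\mathbb{F}_p^\times$; a downward induction on $\beta$ in the componentwise partial order then kills every $a_\alpha$.

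For surjectivity, the forward direction applied to $R$ (with tautological $p$-basis $\{X_1,\ldots,X_n\}$) makes $\Omega_{R/A}$ free on $\{dX_i\}$. The base-change map $C\otimes_R\Omega_{R/A}\to\Omega_{C/A}$ sends this basis to $\{dx_i\}$, hence is an isomorphism, and the exact sequence $C\otimes_R\Omega_{R/A}\to\Omega_{C/A}\to\Omega_{C/R}\to 0$ forces $\Omega_{C/R}=0$. Identifying $R$ with its image in $C$ via the injective $\varphi$, we obtain a finite extension $R\subset C$ with $C^p\subset A\subset R$ (exponent one) and $\Omega_{C/R}=0$, and surjectivity follows from the key lemma: any finite extension $B\subset C$ with $C^p\subset B$ and $\Omega_{C/B}=0$ is trivial. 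I would prove this lemma by localizing at a prime of $B$ and reducing modulo the maximal ideal to the case $B=k$ is a field; the hypothesis $\bar C^p\subset k$ forces the primitive idempotents of $\bar C$ to lie in $k$ and hence to be $0$ or $1$, so $\bar C$ is local with maximal ideal $\mathfrak{n}$; the residue field is $k$ (since $\Omega_{k'/k}$ is a quotient of $\Omega_{\bar C/k}\otimes k'=0$ and a nontrivial finite purely inseparable field extension has nonzero differentials); finally the $k$-derivation $\bar C=k\oplus\mathfrak{n}\to\mathfrak{n}/\mathfrak{n}^2$ projecting onto the nilpotent part vanishes by duality with $\Omega_{\bar C/k}=0$, so $\mathfrak{n}=\mathfrak{n}^2$ and Nakayama gives $\mathfrak{n}=0$. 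The main obstacle is this key lemma, since finiteness and exponent one alone do not obviously translate triviality of $\Omega_{C/B}$ into equality of rings; everything else (the conormal computation and the descending induction via dual derivations) is formal.
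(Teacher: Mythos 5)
Your proof is correct, and it follows essentially the route the paper takes: the forward direction is the conormal-sequence computation cited from Kunz, and your surjectivity step (base change of $\Omega_{R/A}$ plus the cotangent exact sequence forcing $\Omega_{C/R}=0$) is exactly the mechanism of Lemma \ref{generating sets and omega}, while the linear-independence argument via dual commuting derivations and the operators $D^\beta$ parallels the machinery of Lemma \ref{commuting derivations and p-basis}. The only genuine addition is that you prove the key lemma ($\Omega_{C/B}=0$ for a finite exponent-one extension implies $B=C$) rather than citing it as the paper does (\cite[Lemma 3.5]{dB-S-V}), and your reduction to the fiber followed by the idempotent, residue-field, and $\mathfrak{n}=\mathfrak{n}^2$ arguments is sound.
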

The direct implication is easy and can be found in \cite[Proposition 5.6]{Kunz86}. The difficult part in the proof of the converse is to show that if
$x_1,\ldots,x_n$ form a differential basis then they generate $C$ as $A$-algebra. The complete proof can be found in \cite[Proposition 3.6]{dB-S-V}. We include in the following lemma an argument used in that proof. This lemma will be used later in some arguments.
\begin{lemma}\label{generating sets and omega}
    Let $A\subset C$ be a finite extension of rings of finite exponent and let $x_1,\ldots,x_n\in C$. Then $C=A[x_1,\ldots,x_n]$ if and only if $dx_1,\ldots,dx_n$ generate $\Omega_{C/A}$ as $C$-module.
\end{lemma}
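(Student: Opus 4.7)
The plan is to handle the two implications separately. The forward direction is a direct consequence of the Leibniz rule for the universal derivation, while the reverse direction uses the conormal-type exact sequence of differentials for a tower, combined with a vanishing result for Kähler differentials of finite extensions of finite exponent.

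For the forward implication, suppose $C=A[x_1,\ldots,x_n]$. Any $c\in C$ can be written as a polynomial expression in $x_1,\ldots,x_n$ with coefficients in $A$. Since the universal derivation $d\colon C\to \Omega_{C/A}$ is $A$-linear and satisfies $d(ab)=a\,db+b\,da$, a straightforward induction on the total degree of the polynomial expression shows that $dc$ lies in the $C$-submodule generated by $dx_1,\ldots,dx_n$. So these elements generate $\Omega_{C/A}$.

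For the reverse direction, set $B:=A[x_1,\ldots,x_n]\subset C$, and consider the standard exact sequence of $C$-modules associated to the tower $A\subset B\subset C$ (see \cite[Proposition 4.26]{Kunz86}):
\begin{align*}
    C\otimes_B \Omega_{B/A}\xrightarrow{\alpha} \Omega_{C/A}\to \Omega_{C/B}\to 0.
\end{align*}
The map $\alpha$ sends $1\otimes db\mapsto db$ for $b\in B$, so its image contains $dx_1,\ldots,dx_n$. By hypothesis, these elements generate $\Omega_{C/A}$ as a $C$-module, hence $\alpha$ is surjective and $\Omega_{C/B}=0$.

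It remains to deduce $B=C$ from $\Omega_{C/B}=0$. Since $A\subset C$ is finite of finite exponent and $A\subset B\subset C$, the intermediate extension $B\subset C$ is again finite of finite exponent (indeed $\exp(C/B)\leq \exp(C/A)$, and $C$ is finite over $A$, hence over $B$). Applying the vanishing result \cite[Lemma 3.5]{dB-S-V}, which states that a finite extension of finite exponent with trivial module of Kähler differentials must be an equality, we conclude $B=C$.

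The only non-trivial ingredient is that last vanishing lemma, which is an already established result that we are allowed to invoke; the remaining work consists of identifying the correct exact sequence and checking that the hypothesis forces $\alpha$ to be surjective.
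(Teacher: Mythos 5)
Your proposal is correct and follows essentially the same route as the paper: the forward direction via the Leibniz rule, and the converse by setting $B=A[x_1,\ldots,x_n]$, using the exact sequence $C\otimes_B\Omega_{B/A}\to\Omega_{C/A}\to\Omega_{C/B}\to 0$ to deduce $\Omega_{C/B}=0$, and then invoking \cite[Lemma 3.5]{dB-S-V} for the finite extension of finite exponent $B\subset C$. No gaps.
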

\begin{proof}
    The $S$-module $\Omega_{C/A}$ is generated by $\{dc\colon c\in C\}$. In particular, if $C=A[x_1,\dots,x_n]$, by an application of the Leibniz rule, it follows that $dx_1,\dots,dx_n$ generate $\Omega_{C/A}$ as $C$-module.
	
	Now, we assume that $dx_1,\dots,dx_n$ generate $\Omega_{C/A}$ as $C$-module. Let $C'=A[x_1,\dots,x_n]$. Note that $dx_1,\dots,dx_n$ also generate $\Omega_{C'/A}$. Thus, the natural map $C\otimes_{C'}\Omega_{C'/A}\to \Omega_{C/A}$ is surjective. As we have an exact sequence
	$$C\otimes_{C'}\Omega_{C'/A}\to \Omega_{C/A}\to \Omega_{C/C'}\to 0,$$
	we deduce that $\Omega_{C/C'}=0$. Since $C'\subset C$ is a finite extension of finite exponent, it follows from \cite[Lemma 3.5]{dB-S-V}
 that $C=C'$. Hence, $x_1,\dots,x_n$ generate $C$ as $A$-algebra. 
\end{proof}
 \color{black}
 
As an immediate consequence of Proposition \ref{p-basis=differential basis} and Lemma \ref{generating sets and omega}
we obtain some criteria for the existence of $p$-basis in the local case.
\begin{corollary}\label{free omega implies existence of p-basis}
    Let $A\subset C$ be an exponent one finite extension of local rings. Then $C$ admits a $p$-basis over $A$ if and only if $\Omega_{C/A}$ is a flat $C$-module. If this is the case, any minimal generating set $x_1,\ldots,x_n$ of $C$ as an $A$-algebra is a $p$-basis.
\end{corollary}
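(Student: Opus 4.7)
The plan is to derive the corollary as a direct consequence of Proposition \ref{p-basis=differential basis} (a $p$-basis is the same as a differential basis) together with both directions of Lemma \ref{generating sets and omega} (the $A$-algebra generators of $C$ are in bijection with the $C$-module generators of $\Omega_{C/A}$ via the universal derivation). For the forward implication, if $\{x_1,\ldots,x_n\}$ is a $p$-basis then $C$ is a free $A$-module of rank $p^n$ by the definition of $p$-basis, and $\{dx_1,\ldots,dx_n\}$ is a $C$-basis of $\Omega_{C/A}$ by Proposition \ref{p-basis=differential basis}; combining these, $\Omega_{C/A}$ is free as an $A$-module of rank $np^n$, hence flat.

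For the converse together with the explicit claim about minimal generating sets, my plan is as follows. Since $\Omega_{C/A}$ is finitely generated over $C$ by Proposition \ref{Omega of a finitely presented algebra} and $C$ is local, the flatness hypothesis yields that $\Omega_{C/A}$ is free as a $C$-module of some rank $m$. Let $x_1,\ldots,x_n$ be a minimal $A$-algebra generating set of $C$, which exists because $C$ is a finite $A$-module. By Lemma \ref{generating sets and omega}, the differentials $dx_1,\ldots,dx_n$ generate $\Omega_{C/A}$ as a $C$-module, so $n\ge m$. For the opposite inequality, I pick $\xi_1,\ldots,\xi_m\in C$ whose differentials project to a $k_C$-basis of $\Omega_{C/A}/\mathfrak{m}_C\Omega_{C/A}$ (possible since $\{dc : c\in C\}$ generates $\Omega_{C/A}$); Nakayama's lemma then ensures that $d\xi_1,\ldots,d\xi_m$ already generate $\Omega_{C/A}$ over $C$, and Lemma \ref{generating sets and omega} applied in the other direction shows that $\xi_1,\ldots,\xi_m$ generate $C$ as an $A$-algebra, so $n\le m$. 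Hence $n=m$.

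With $n=m$, the surjection $C^n\twoheadrightarrow \Omega_{C/A}$ sending the standard basis to $(dx_i)$ is a surjection between free $C$-modules of the same finite rank over the local ring $C$, and is therefore an isomorphism. Thus $\{dx_1,\ldots,dx_n\}$ is a $C$-basis of $\Omega_{C/A}$, which by Proposition \ref{p-basis=differential basis} means that $\{x_1,\ldots,x_n\}$ is a $p$-basis. The main obstacle I anticipate is the equality $n=m$: the inequality $n\ge m$ is automatic from Lemma \ref{generating sets and omega}, but the reverse requires realizing a minimal $C$-generating set of $\Omega_{C/A}$ by exact differentials $d\xi_i$, which is a small but essential Nakayama-type manipulation inside the local ring $C$. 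Once this is in place, everything else follows formally from the equivalence between $p$-bases and differential bases.
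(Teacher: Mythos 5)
Your argument is correct and follows essentially the paper's own route: both reduce the statement, via Proposition \ref{p-basis=differential basis} and Lemma \ref{generating sets and omega}, to showing that the differentials of a minimal $A$-algebra generating set form a basis of the free $C$-module $\Omega_{C/A}$, and your construction of the auxiliary elements $\xi_1,\ldots,\xi_m$ just makes explicit the Nakayama step that the paper compresses into ``a minimal generating set of a free module over a local ring is a basis.'' The only caveat is that in the converse you pass from the stated hypothesis (flatness of $\Omega_{C/A}$ as an $A$-module) to freeness over $C$, which is legitimate only if the hypothesis is read as flatness over $C$ — but this is exactly how the paper's own proof and its later use in Proposition \ref{prop: Galois iff omega projective} read it (the literal $A$-module version would already fail when $A$ is a field), so your proof matches the intended statement.
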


\begin{proof}
By Proposition \ref{p-basis=differential basis}, we only need to prove
that if $\Omega_{C/A}$ is a free $C$-module and $x_1,\ldots,x_n$ is a minimal generating set for the $A$-algebra $C$,
then it is a differential basis.
Now, by Lemma \ref{generating sets and omega}, $dx_1,\ldots,dx_n$ is a minimal generating set for $\Omega_{C/A}$, so it has to be a basis since $C$ is a local ring.   
\end{proof}

\begin{corollary}\label{cor: descent of p-basis}
    Let $A\subset C$ be an exponent one finite extension of local rings. Assume that there is a faithfully flat homomorphism $A\to A'$ such that $C\otimes_A A'$ has $p$-basis over $A'$.
    Then $C$ has a $p$-basis over $A$.
\end{corollary}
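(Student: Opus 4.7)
The plan is to reduce the statement to one about the $C$-module $\Omega_{C/A}$ and then apply faithfully flat descent. By Corollary~\ref{free omega implies existence of p-basis}, the existence of a $p$-basis of $C$ over $A$ is equivalent to $\Omega_{C/A}$ being free over $C$. Since $C$ is finite (hence of finite presentation) over $A$, $\Omega_{C/A}$ is a finitely presented $C$-module by Proposition~\ref{Omega of a finitely presented algebra}, and because $C$ is local, being free is equivalent to being flat. So it suffices to show that $\Omega_{C/A}$ is flat over $C$.

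First I would translate the hypothesis into a statement about differentials. Since $C\otimes_A A'$ admits a $p$-basis over $A'$, Proposition~\ref{p-basis=differential basis} gives that $\Omega_{(C\otimes_A A')/A'}$ is free, and in particular flat, over $C\otimes_A A'$. The base change formula (Proposition~\ref{omega and base change}) identifies this with $(C\otimes_A A')\otimes_C \Omega_{C/A}$. Therefore $\Omega_{C/A}\otimes_C (C\otimes_A A')$ is a flat $C\otimes_A A'$-module.

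The final step is faithfully flat descent. Because $A\to A'$ is faithfully flat, so is the base change $C\to C\otimes_A A'$. Flatness descends along faithfully flat ring maps, so flatness of $\Omega_{C/A}\otimes_C (C\otimes_A A')$ over $C\otimes_A A'$ yields flatness of $\Omega_{C/A}$ over $C$. Combined with the reductions in the first paragraph, this gives the desired $p$-basis.

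There is no substantive obstacle here; the proof amounts to assembling three essentially formal ingredients: the base change formula for Kähler differentials, Corollary~\ref{free omega implies existence of p-basis}, and the standard fact that flatness descends along faithfully flat extensions. The only point worth checking carefully is that $C\to C\otimes_A A'$ is indeed faithfully flat whenever $A\to A'$ is, which is a classical consequence of base change preserving faithful flatness.
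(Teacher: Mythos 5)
Your proposal is correct and follows essentially the same route as the paper: translate the hypothesis via Proposition \ref{p-basis=differential basis} and the base change formula into flatness of $\Omega_{C/A}\otimes_C(C\otimes_A A')$, descend flatness along the faithfully flat map $C\to C\otimes_A A'$, and conclude with Corollary \ref{free omega implies existence of p-basis}. The extra remarks on finite presentation and flat $=$ free over a local ring are harmless elaborations of the same argument.
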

\begin{proof}
Let $C':=C\otimes_A A'$. Since $A\to A'$ is faithfully flat, the base change $C\to C'$ is so too.
Since $C'$ has $p$-basis over $A'$,  $\Omega_{C'/A'}$ is a flat $C'$-module by Proposition \ref{p-basis=differential basis}. This module is the base change of the $C$-module $\Omega_{C/A}$ under the faithfully flat map $C\to C'$. By faithfully flat descent, $\Omega_{C/A}$ is a flat $C$-module.
Thus, by Corollary \ref{free omega implies existence of p-basis}, we conclude that $A\subset C$ has a $p$-basis.
\end{proof}

\subsection{Galois extensions of exponent one}\label{subsection: Galois extension}
We recall Yuan's notion of Galois extension for ring extensions of exponent one.  Later, we will show that this notion of Galois extension coincides with our notion of purely inseparable extension of exponent one.

\begin{definition}\label{def: Galois extension}
    Let $A\subset C$ be a finite ring extension of exponent one. 
    We say that $A\subset C$ is a \textit{Galois extension of exponent one} if 
    \begin{enumerate}[(i)]
    \item  $C$ is a projective $A$-module, and
    \item for each prime ideal $\p \subset A$, the extension $A_\p\subset C_\p:=C\otimes_A{A}_\p$ admits a $p$-basis.
    \end{enumerate}
\end{definition}

\begin{remark}  By Proposition \ref{characterization of projective modules}, condition (i) in the above definition can be replaced by the condition that $C$ is an $A$-module of finite presentation, since (ii) already implies that $C_\p$ is a flat $A_\p$-module for all prime ideal $\p\subset A$.
\end{remark}
The  following lemma, which is a direct consequence of Definition \ref{def: Galois extension} and Lemma \ref{basic properties of p-basis}, tells us that Galois extensions behave well under base extension and under composition when restricted to exponent one extensions.

\begin{lemma}\label{lem: basics on Galois extensions}
    Let $A\subset C$ be a finite ring extension of exponent one.
    \begin{enumerate}
        \item If $A\subset C$ has a $p$-basis, then $A\subset C$ is a Galois extension. 
        \item Let $A\to A'$ be any ring homomorphism. If $A\subset C$ is a Galois extension, then $A'\subset C\otimes_A A'$ is also a Galois extension.
        \item Let $B$ be any intermediate subring $A\subset B\subset C$. If $A\subset B$ and $B\subset C$ are Galois extensions, then $A\subset C$ is also a Galois extension.
    \end{enumerate}
\end{lemma}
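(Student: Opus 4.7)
The claim decomposes into three independent assertions, which I would treat in order; part (3) is the most substantive and the other two are essentially bookkeeping.

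For part (1), I simply observe that a global $p$-basis $\{x_1,\ldots,x_n\}$ exhibits $C$ as a free $A$-module, hence in particular projective. For any prime $\p\subset A$, Lemma \ref{basic properties of p-basis}(2) applied to the localization $A\to A_\p$ shows that $\{x_1\otimes 1,\ldots,x_n\otimes 1\}$ is a $p$-basis of $C_\p$ over $A_\p$, fulfilling both conditions of Definition \ref{def: Galois extension}.

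For part (2), I would verify each requirement for $A'\subset C':=C\otimes_A A'$ to be Galois. Finiteness is preserved under base change; exponent one follows from the characteristic $p$ identity $(c\otimes a')^p=c^p\otimes(a')^p\in A\otimes_A A'=A'$; the inclusion $A'\hookrightarrow C'$ holds because $A\to C$ is split by Proposition \ref{projective finite extensions are split} and splittings persist after applying $-\otimes_A A'$; and projectivity of $C'$ over $A'$ follows from Proposition \ref{projective modules are preserved by base change}. To verify the local $p$-basis condition, given a prime $\p'\subset A'$ with contraction $\p:=\p'\cap A$, I would use the identification $C'_{\p'}=C_\p\otimes_{A_\p}A'_{\p'}$ and invoke Lemma \ref{basic properties of p-basis}(2) on the hypothesized $p$-basis of $C_\p$ over $A_\p$.

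For part (3), projectivity of $C$ as a finite $A$-module follows by chaining $C$ projective over $B$ with $B$ projective over $A$ via Proposition \ref{restriction of scalar of projective modules}. The essential work is to produce a $p$-basis for $C_\p$ over $A_\p$ for each prime $\p\subset A$. The Galois hypothesis on $A\subset B$ directly yields a $p$-basis $\{y_1,\ldots,y_m\}$ of $B_\p$ over $A_\p$. To exploit the Galois hypothesis on $B\subset C$, I would observe that the exponent-one condition $B^p\subset A$ forces $\Spec B\to\Spec A$ to be a bijection, with the unique prime $\q\subset B$ over $\p$ given by $\{b\in B:b^p\in\p\}$. Consequently, $B_\p$ is local and coincides with $B_\q$, while $C_\p=C\otimes_A A_\p=C\otimes_B B_\q=C_\q$. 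The Galois hypothesis on $B\subset C$ applied at $\q$ then provides a $p$-basis $\{z_1,\ldots,z_r\}$ of $C_\q=C_\p$ over $B_\q=B_\p$, and Lemma \ref{basic properties of p-basis}(3) assembles the union $\{y_1,\ldots,y_m,z_1,\ldots,z_r\}$ into a $p$-basis of $C_\p$ over $A_\p$.

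The principal obstacle I anticipate is precisely the identification $B_\p=B_\q$ in part (3): the Galois hypothesis on $B\subset C$ produces $p$-bases after localizing at primes of $B$, while the conclusion demands $p$-bases after localizing at primes of $A$, and the homeomorphism of spectra for finite exponent-one extensions is what allows the two hypotheses to interlock. Everything else reduces to applications of the base change property in Lemma \ref{basic properties of p-basis}(2) and of standard descent statements for projective modules.
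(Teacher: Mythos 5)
Your proposal is correct and follows exactly the route the paper intends: the paper gives no written proof, describing the lemma as a direct consequence of Definition \ref{def: Galois extension} and Lemma \ref{basic properties of p-basis}, and your argument is precisely that, combined with the standard projectivity facts (Propositions \ref{projective modules are preserved by base change}, \ref{restriction of scalar of projective modules}, \ref{projective finite extensions are split}). In particular, your careful matching of primes of $B$ with primes of $A$ via $\q=\{b\in B: b^p\in\p\}$ in part (3) correctly supplies the detail the paper leaves implicit.
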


We now explain why the notion of Galois extension coincides with the notion of purely inseparable extension of exponent one.
We recall the latter from Definition \ref{def: purely inseparable} in the introduction.

\begin{definition}
    We say that $A\subset C$ is a \textit{purely inseparable extension of exponent one} if
    \begin{enumerate}[(i)]
    \item $A\subset C$ has exponent one,
    \item $C$ is a finite projective $A$-module, and
    \item for every  prime ideal $\p\subset A$, there exists a faithfully flat homomorphism $A_\p\to A'$ such $C_\p\otimes_{A_\p} A'$ has a splitting $p$-basis over $A'$.
    \end{enumerate}
\end{definition}

\begin{proposition}\label{prop: Galois= purely inseparable}
  Let $A\subset C$ be a finite ring extension of exponent one. Then $A\subset C$ is a Galois extension if and only if it is a purely inseparable extension.
\end{proposition}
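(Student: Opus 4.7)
My plan is to verify the two implications separately, each by a direct appeal to one of the earlier corollaries on splitting $p$-bases and on descent of $p$-bases. Both directions hinge on the fact that for a finite extension $A\subset C$ of exponent one, the induced map $\on{Spec}(C)\to \on{Spec}(A)$ is a homeomorphism (since $c\in \mf{P}\Leftrightarrow c^p\in \p$ for any prime $\mf{P}\subset C$ lying over $\p$). In particular, localizing at any prime $\p\subset A$, both $A_\p$ and $C_\p$ are local rings, and $C_\p$ is a finite projective $A_\p$-module and therefore free. Since $C_\p\neq 0$, the extension $A_\p\to C_\p$ is faithfully flat.

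For the implication Galois $\Rightarrow$ purely inseparable, conditions (1) and (2) of the definition of purely inseparable are immediate from the definition of Galois. For condition (3), fix $\p\subset A$ and let $\{x_1,\ldots,x_n\}$ be a $p$-basis for $C_\p$ over $A_\p$. Take as faithfully flat extension $A_\p\to A':=C_\p$ itself. Applying Corollary \ref{coro: p-basis becomes splitting p-basis}(2) to the extension $A_\p\subset C_\p$, the elements $z_i:=x_i\otimes 1-1\otimes x_i\in C_\p\otimes_{A_\p}C_\p$ form a splitting $p$-basis of $C_\p\otimes_{A_\p}C_\p$ over $C_\p$, which gives exactly (3).

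For the reverse implication purely inseparable $\Rightarrow$ Galois, condition (1) of the Galois definition is immediate from (2) of the purely inseparable definition. For condition (2), fix a prime $\p\subset A$ and let $A_\p\to A'$ be the faithfully flat extension supplied by the definition, so that $C_\p\otimes_{A_\p}A'$ admits a splitting $p$-basis, hence in particular a $p$-basis, over $A'$. Since $A_\p$ and $C_\p$ are both local rings by the observation above, Corollary \ref{cor: descent of p-basis} applies and produces a $p$-basis of $C_\p$ over $A_\p$, which is exactly what is needed.

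I do not expect a significant obstacle here: the whole point of the preceding Corollaries \ref{coro: p-basis becomes splitting p-basis} and \ref{cor: descent of p-basis} is precisely to bridge $p$-bases and splitting $p$-bases up to faithfully flat base change, so the two definitions agree once one notices that the local rings $A_\p$ and $C_\p$ are in the scope of the descent corollary and that $A_\p\to C_\p$ itself is a legitimate faithfully flat base change witnessing the existence of a splitting $p$-basis.
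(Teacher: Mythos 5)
Your proof is correct and follows essentially the same route as the paper, which deduces the proposition from Corollary \ref{coro: p-basis becomes splitting p-basis}(2) and Corollary \ref{cor: descent of p-basis} applied to $A_\p\subset C_\p$ for each prime $\p$. You merely spell out the details the paper leaves implicit (taking $A'=C_\p$ as the faithfully flat witness, and noting that $C_\p$ is local so the descent corollary applies), and these verifications are accurate.
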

\begin{proof}
   This follows from Corollary \ref{coro: p-basis becomes splitting p-basis}(2) and Corollary \ref{cor: descent of p-basis} applied to $A_\p\subset C_\p$  for each prime ideal $\p\subset A$. 
\end{proof}

The following proposition includes a characterization of Galois extensions in terms of the module of K\"ahler differentials.
\begin{proposition}\label{prop: Galois iff omega projective}
    Let $A\subset C$ be a finite ring extension of exponent one. Then the following conditions are equivalent.
    \begin{enumerate}
        \item $A\subset C$ is a Galois extension.
        \item $C$ is an $A$-module of finite presentation and $\Omega_{C/A}$ is a flat $C$-module.
        \item $\Omega_{C/A}$ is a projective $C$-module.
        \item There exist $f_1,\ldots,f_r\in A$ generating the unit ideal such that $C_{f_i}$ has a $p$-basis over $A_{f_i}$.
    \end{enumerate}
\end{proposition}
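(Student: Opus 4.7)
The plan is to establish the equivalences via $(1) \Leftrightarrow (4) \Leftrightarrow (3)$ and $(1) \Leftrightarrow (2)$. The key translation tool is Proposition \ref{p-basis=differential basis}, which identifies $p$-basis with differential basis in our setting. This allows the algebraic $p$-basis condition to be reformulated in terms of $\Omega_{C/A}$, a module that behaves well under localization by Proposition \ref{omega and base change}.

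I would begin with $(1) \Rightarrow (4)$, which is the technical heart. Fix $\p \subset A$; by hypothesis $A_\p \subset C_\p$ admits a $p$-basis $x_1, \ldots, x_n$, which is a differential basis of $\Omega_{C_\p/A_\p} = (\Omega_{C/A})_\p$ by Proposition \ref{p-basis=differential basis}. Since $C$ is finitely presented over $A$, Proposition \ref{Omega of a finitely presented algebra} makes $\Omega_{C/A}$ finitely presented, and the final clause of Proposition \ref{characterization of projective modules} yields $f \in A \setminus \p$ such that $dx_1, \ldots, dx_n$ is a basis of $\Omega_{C_f/A_f}$. Lemma \ref{generating sets and omega} then forces $C_f = A_f[x_1, \ldots, x_n]$, and Proposition \ref{p-basis=differential basis} promotes this differential basis to a $p$-basis of $C_f$ over $A_f$. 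Finitely many such principal opens cover $\Spec(A)$. The converse $(4) \Rightarrow (1)$ is routine: for each $\p$, pick $f_i \notin \p$ and localize a $p$-basis of $C_{f_i}$ via Lemma \ref{basic properties of p-basis}(2), while projectivity of $C$ follows from Proposition \ref{characterization of projective modules} since each $C_{f_i}$ is free over $A_{f_i}$.

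The remaining equivalences are then quick. For $(1) \Rightarrow (3)$, the proof of $(1) \Rightarrow (4)$ already shows $\Omega_{C/A}$ is locally free; combined with its finite presentation (Proposition \ref{Omega of a finitely presented algebra}), Proposition \ref{characterization of projective modules} gives that it is projective. For $(3) \Rightarrow (4)$: given $\Omega_{C/A}$ projective, for each $\p$ take a minimal generating set $x_1, \ldots, x_n$ of $C_\p$ as an $A_\p$-algebra, which is a $p$-basis by Corollary \ref{free omega implies existence of p-basis}, and spread it out as in $(1) \Rightarrow (4)$. Finally, $(1) \Rightarrow (2)$ is immediate ($C$ projective is finitely presented, $\Omega_{C/A}$ projective is flat). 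For $(2) \Rightarrow (1)$, localize: $\Omega_{C_\p/A_\p}$ is flat over the local ring $C_\p$, so Corollary \ref{free omega implies existence of p-basis} yields a $p$-basis of $C_\p$ over $A_\p$, and in particular $C_\p$ is free over $A_\p$; coupled with finite presentation of $C$, Proposition \ref{characterization of projective modules} gives that $C$ is projective. The main obstacle throughout is the spreading-out step in $(1) \Rightarrow (4)$, where one must leverage the differential-basis reformulation to convert pointwise $p$-basis data into $p$-basis data on a principal open.
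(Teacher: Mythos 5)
Your proposal is correct and uses essentially the same toolkit as the paper (Proposition \ref{p-basis=differential basis}, Corollary \ref{free omega implies existence of p-basis}, localization of $\Omega_{C/A}$, and the spreading-out clause of Proposition \ref{characterization of projective modules}); the paper simply arranges the equivalences as the single cycle $(1)\Rightarrow(2)\Rightarrow(3)\Rightarrow(4)\Rightarrow(1)$, performing the spreading-out argument only once in $(3)\Rightarrow(4)$, whereas you prove $(1)\Rightarrow(4)$ directly and reuse it. The only slight imprecision is that in your $(1)\Rightarrow(4)$ the cited final clause of Proposition \ref{characterization of projective modules} presupposes projectivity, which you should justify by noting that $\Omega_{C/A}$ is finitely presented and free at every prime, hence projective (or invoke the standard spreading-out fact for finitely presented modules).
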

\begin{proof}
The implication (1)$\Rightarrow$(2) follows from the formula $(\Omega_{C/A})_\p=\Omega_{C_\p/A_\p}$ for all prime ideal $\p\subset A$ (Proposition \ref{omega and base change}), 
and Corollary \ref{free omega implies existence of p-basis}. 
The implication (2)$\Rightarrow$(3) follows from the fact that if $C$ is an $A$-module of finite presentation, then $\Omega_{C/A}$ is a $C$-module of finite presentation (Proposition \ref{Omega of a finitely presented algebra}).

We assume now that $\Omega_{C/A}$ is projective as $C$-module. To prove (3)$\Rightarrow$(4),
we only need to show that given a prime ideal $\p\subset A$, there is $h\in A\setminus \p$ such that $C_h$ has a $p$-basis over $A_h$.
Since $\Spec(A)$ and $\Spec(C)$ are homeomorphic and $\Omega_{C/A}$ is a finite projective $C$-module, there is $f\in A\setminus\p$ such that $\Omega_{C_f/A_f}$ is a finite free $C_f$-module, say of rank $n$. Choose $x_1,\ldots,x_n\in C$ such that when they are viewed as elements of $C_\p$ then $dx_1,\ldots,dx_n$ is a minimal generating set for the $C_\p$-module $\Omega_{C_\p/A_\p}$. 
Then there is $g\in A\setminus \p$ such that $\Omega_{C_{fg}/A_{fg}}$ is generated by $dx_1,\ldots,dx_n$ (we view here $x_1,\ldots,x_n$ as elements in $C_{fg}$). 
Since $\Omega_{C_{fg}/A_{fg}}$ is a free $C_{fg}$-module of rank $n$,  $dx_1,\ldots,dx_n$ has to be a basis for this $C_{fg}$-module. By Proposition \ref{p-basis=differential basis}, $x_1,\ldots,x_n$ is a $p$-basis for $C_{fg}$ over $A_{fg}$. This ends the proof (3)$\Rightarrow$(4).
Finally, the implication (4)$\Rightarrow$(1) is trivial.
\end{proof}

\medskip

\subsection{Galois extensions and derivations}\label{subsection: Galois extensions and derivations}
We explore the connection between Galois extensions and their derivations viewed as endomorphisms of the extension. More precisely, let $A\subset C$ be a ring extension of exponent one such that $C$ is a finite projective $A$-module. We consider the left $C$-module of $A$-endomorphisms $\End_A(C)$. This  is a finite and projective $C$-module as shown in Corollary \ref{cor: End is finite and projective}. 
It can be also considered as a unital $A$-algebra under composition. 
In \cite{Yuan70/1}, Yuan shows that $A\subset C$ is Galois if and only if $\End_A(C)$ is generated as an $A$-algebra by $C$ and the 
$A$-derivations,  i.e., $\End_A(C)=C[\Der_A(C)]$. 
Here, we consider $\Der_A(C)$ as $C$-submodule of $\End_A(C)$ and
show that $A\subset C$ is Galois if and only if $\Der_A(C)$ is a $C$-module direct summand of $\End_A(C)$. 
Later, in 
\S \ref{sec: purely inseparable and differential operators}, we will extend this result and give a characterization of purely inseparable extensions of higher exponent in terms of modules of differential operators viewed as $C$-submodules of $\End_A(C)$.

\begin{parrafo}
\label{derivations associated with a p-basis}	\textit{Derivations associated to a $p$-basis.} Let $A\subset C$ be a ring extension of exponent one that admits a $p$-basis $\{x_1,\dots, x_n\}$. Denote $ \x^{\alpha}:=x_1^{\alpha_1}\cdot\dots \cdot x_n^{\alpha_n}$, for each $\alpha=(\alpha_1,\ldots,\alpha_n)\in \mb{N}_0^n$, and let $\mc{A}:=\{\alpha=(\alpha_1,\ldots,\alpha_n)\in\mb{N}_0^n:\ 0\leq \alpha_i<p\}$. Then the set of reduded monomials $\{\x^\alpha:\ \alpha\in\mc{A}\}$ is a basis for $C$ over $A$. For each $i\in\{1,\dots,n\}$, we denote by $\e_i$ the $n$-tuple such that $(\e_i)_j=\delta_{ij}$. Then for each $i$ there is an $A$-linear map $\d_i \colon C\to C$ satisfying
$$\d_i(\x^{\alpha})=\begin{cases}
		\alpha_i\cdot \x^{\alpha-\e_i} & \text{if $\alpha_i\neq 0$}\\
		0 & \text{if $\alpha_i=0$.}
	\end{cases}$$
	One easily shows that  $\d_1,\dots,\d_n$ are in fact $A$-derivations.
 We refer to them as the \textit{partial derivatives associated to the $p$-basis} $\{x_1,\dots,x_n\}$. Every other $A$-derivation $\d\colon C\to C$ can be written uniquely as a $C$-linear combination of the partial derivatives, namely
 \begin{align*}
     \d=\d(x_1)\d_1+\cdots+\d(x_n)\d_n.
 \end{align*}
Thus, $\{\d_1,\dots,\d_n\}$ is a basis for $\Der_A(C)$ as $C$-module.

More generally, for each $\beta\in\mc{A}$ there is an $A$-linear map $\d_\beta \colon C\to C$ given by the formula
\begin{align*}
\d_\beta(\x^\alpha)=\binom{\alpha}{\beta}\x^{\alpha-\beta},   
\end{align*}
where the binomial coeffiecient is defined as
$$\binom{\alpha}{\beta}:=\prod_{i=1}^n \binom{\alpha_i}{\beta_i}.$$
Each $\d_\beta$ is a differential operator of order $|\beta|:=\beta_1+\cdots+\beta_n$, and the whole collection $\{\d_\beta:\ \beta\in\mc{A}\}$ is a basis for the $C$-module $\End_A(C)$.
Moreover, we observe that
\begin{align*}
\d_\beta:=\frac{1}{\beta!}\d_1^{\beta_1}\circ\cdots\circ\d_n^{\beta_n}, \qquad   \text{where $\beta!:=\prod_{i=1}^n\beta_i!$.}
\end{align*}

These observations combined with Corollary \ref{derivations and localization} imply the following.
\end{parrafo}
\begin{proposition}
    [{\cite[Theorem 9]{Yuan70/1}}]\label{properties of Der(C) for extensions with p-basis}
     Let $A\subset C$ be a finite ring extension of exponent one such that $C$ is a projective $A$-module. If $A\subset C$ be a Galois extension, then
    \begin{enumerate}
        \item $\End_A(C)=C[\Der_A(C)]$,
        \item $\Der_A(C)\subset \End_A(C)$ is a $C$-module direct summand.
    \end{enumerate}
    In addition, $A=\{x\in C: \partial(x)=0,\ \forall \partial\in\Der_A(C)\}$.
\end{proposition}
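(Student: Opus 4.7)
The plan is to reduce all three claims to the case in which $A\subset C$ admits a $p$-basis---available on a finite open cover of $\Spec(A)$ by Proposition \ref{prop: Galois iff omega projective}(4)---and to use the good localization properties of $\on{End}_A(C)$ (Corollary \ref{localization of End}) and $\on{Der}_A(C)$ (Corollary \ref{derivations and localization}) to patch the local statements into global ones. The explicit description of bases recalled in \ref{derivations associated with a p-basis} will supply the combinatorial input.

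For (1), after localizing at some $f\in A$ so that $C_f$ has a $p$-basis $x_1,\ldots,x_n$ over $A_f$, the formula $\d_\beta=\tfrac{1}{\beta!}\d_1^{\beta_1}\circ\cdots\circ \d_n^{\beta_n}$ exhibits every element of the $C_f$-basis $\{\d_\beta:\beta\in\mc{A}\}$ of $\on{End}_{A_f}(C_f)$ as a polynomial in the partial derivatives. Thus the inclusion $C[\on{Der}_A(C)]\hookrightarrow \on{End}_A(C)$ becomes an equality after localizing at each element of a finite covering family of $A$, and hence is an equality globally, since its cokernel is a $C$-module whose vanishing can be checked at each point of $\Spec(A)$.

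For (2), the cleanest route is through the identification $\on{End}_A(C)\cong (C\otimes_A C)^*$. The multiplication map $\mu\colon C\otimes_A C\to C$ is split as a left $C$-module map by $c\mapsto c\otimes 1$, giving $C\otimes_A C=C\oplus J$ with $J=\ker\mu$. Since $J/J^2=\Omega_{C/A}$ is a projective $C$-module by Proposition \ref{prop: Galois iff omega projective}, the short exact sequence $0\to J^2\to J\to J/J^2\to 0$ splits, yielding $J=J^2\oplus L$ with $L\cong \Omega_{C/A}$. Dualizing the resulting decomposition $C\otimes_A C=C\oplus L\oplus J^2$ and invoking the identification $\on{Der}_A(C)=(J/J^2)^*$ from (\ref{Derivations as dual of Omega}) exhibits $\on{Der}_A(C)$ as a $C$-module direct summand of $\on{End}_A(C)$. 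The point that requires care is verifying that projectivity of $\Omega_{C/A}$---rather than any merely local hypothesis---is what allows the filtration to split canonically.

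Finally, for (3), set $B:=\{x\in C:\d(x)=0\ \forall \d\in\on{Der}_A(C)\}$; the inclusion $A\subset B$ is immediate. It suffices to show $B_\p=A_\p$ for every prime $\p\subset A$. Fixing a $p$-basis $x_1,\ldots,x_n$ of $C_\p$ over $A_\p$ and writing any $y\in B_\p$ as $\sum_\alpha a_\alpha \x^\alpha$ with $a_\alpha\in A_\p$ and $\alpha\in\mc{A}$, the localized partial derivatives (which come from $\on{Der}_A(C)_\p=\on{Der}_{A_\p}(C_\p)$) satisfy $\d_i(y)=\sum_\alpha a_\alpha\alpha_i\x^{\alpha-\e_i}=0$, so comparing coefficients in the $A_\p$-basis $\{\x^\alpha\}$ forces $\alpha_i a_\alpha=0$ whenever $\alpha_i>0$. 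Since $\alpha_i\in \mb{F}_p^\times$ is invertible for $0<\alpha_i<p$, we deduce $a_\alpha=0$ whenever $\alpha\neq 0$, and therefore $y=a_0\in A_\p$, completing the proof.
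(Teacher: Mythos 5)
Your proposal is correct. For part (1) and for the statement that $A$ is the ring of constants you follow essentially the route the paper intends: reduce to the $p$-basis case via Proposition \ref{prop: Galois iff omega projective}(4) (resp.\ Definition \ref{def: Galois extension}), use the explicit bases from \ref{derivations associated with a p-basis}, and patch using Corollaries \ref{derivations and localization} and \ref{localization of End}; the implicit points --- that $(C[\Der_A(C)])_f$ maps onto $C_f[\Der_{A_f}(C_f)]$ because localized derivations kill $f\in A$, and that $B:=\{x\in C:\d(x)=0\ \forall\d\in\Der_A(C)\}$ commutes with localization since $\Der_A(C)$ is a finitely presented $A$-module --- are routine and worth a line each. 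For part (2), however, you take a genuinely different route. The paper's argument is local: with a $p$-basis, the partial derivatives $\d_1,\ldots,\d_n$ form part of the monomial basis $\{\d_\beta\}$ of $\on{End}_A(C)$, so the quotient $\on{End}_A(C)/\Der_A(C)$ is locally free, hence projective, and the inclusion splits globally. You instead argue globally: split $C\otimes_A C=(C\otimes 1)\oplus L\oplus J^2$ with $L\cong\Omega_{C/A}$, using only the projectivity of $\Omega_{C/A}$, and dualize. This is localization-free, exhibits an explicit complement, and is essentially the $k=1$ case of the dualization technique the paper later uses in Proposition \ref{prop: purely inseparable implies direct summand}; the paper's local route has the advantage that the same basis data simultaneously yields (1). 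The one verification you should record to close part (2): under $\on{End}_A(C)\cong\Hom_C(C\otimes_A C,C)$, the submodule $\Der_A(C)$ is exactly the set of functionals vanishing on $(C\otimes 1)\oplus J^2$ (a derivation $D$ corresponds to $x\otimes y\mapsto xD(y)$, which kills $C\otimes 1$ because $D(1)=0$ and kills $J^2$ by the Leibniz rule, and conversely); with this in hand your decomposition identifies $\Der_A(C)$ with the summand $\Hom_C(L,C)$, as claimed.
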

Yuan proved that condition (1) in the above proposition is in fact sufficient for $A\subset C$ to be Galois.

\begin{proposition}[{\cite[Theorem 10]{Yuan70/1}}] \label{prop: Der generates End implies Galois}
    Let $A\subset C$ be a finite ring extension of exponent one such that $C$ is a projective $A$-module.
    If $\End_A(C)=C[\Der_A(C)]$, then $A\subset C$ is a Galois extension.
\end{proposition}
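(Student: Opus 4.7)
The plan is to show that for every prime $\p\subset A$ there exists $f\in A\setminus\p$ such that $C_f$ admits a $p$-basis over $A_f$; by Proposition \ref{prop: Galois iff omega projective}, this will imply that $A\subset C$ is Galois. First I would localize at $\p$ so that $(A,\m)$ is local and $C$ is a finite free $A$-module of rank $N$. The hypothesis $\End_A(C)=C[\Der_A(C)]$ is preserved under this localization by Corollary \ref{localization of End} and Corollary \ref{derivations and localization}. By Corollary \ref{free omega implies existence of p-basis}, to conclude it suffices to show that $\Omega_{C/A}$ is a free $C$-module. I would pick a minimal set of $A$-algebra generators $x_1,\ldots,x_n$ of $C$, so that by Lemma \ref{generating sets and omega} the differentials $dx_1,\ldots,dx_n$ generate $\Omega_{C/A}$; it then remains to prove that these generators are $C$-linearly independent.

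The key step is to produce derivations $\partial_1,\ldots,\partial_n\in\Der_A(C)$ dual to the $x_j$, in the sense that $\partial_i(x_j)=\delta_{ij}$. Equivalently, the evaluation map
\begin{equation*}
\mathrm{ev}\colon \Der_A(C)\longrightarrow C^n,\qquad \partial\mapsto(\partial(x_1),\ldots,\partial(x_n)),
\end{equation*}
must be surjective. Given such $\partial_i$, applying $\partial_i$ to a hypothetical relation $\sum_j c_j\,dx_j=0$ yields $c_i=0$ for each $i$; thus the $dx_j$ are $C$-linearly independent, $\Omega_{C/A}$ is free, and by Proposition \ref{p-basis=differential basis} the $x_i$ form a $p$-basis.

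To establish surjectivity of $\mathrm{ev}$ I would argue by contradiction using the full hypothesis. If the image of $\mathrm{ev}$ were not all of $C^n$ then, by Nakayama applied to the cokernel, there would be $\tilde a_1,\ldots,\tilde a_n\in A$, not all in $\m$, such that $y:=\sum_i \tilde a_i x_i$ satisfies $\partial(y)\in\m C$ for every $\partial\in\Der_A(C)$. I would then propagate this by induction on the order of differential operator: using the commutator identity $[\phi,c]=\phi\circ c-c\circ\phi$, the Leibniz rule, $A$-linearity, and the fact that in characteristic $p$ the Lie algebra $\Der_A(C)$ is closed under $p$-power, one shows that every $\phi\in C[\Der_A(C)]$ acts on $\bar y\in\bar C:=C/\m C$ as multiplication by the scalar $\phi(1)\bmod\m$. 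The hypothesis $\End_A(C)=C[\Der_A(C)]$ would then force every $A$-endomorphism of $C$ to act this way on $\bar y$ modulo $\m$, which contradicts the presence in $\End_{A/\m}(\bar C)=\End_A(C)\otimes_A(A/\m)$ of any projector onto a complement of the line $(A/\m)\cdot\bar y\subset \bar C$ (such a complement exists because $\bar y\neq 0$ in $\bar C$, since the $\bar a_i$ are not all zero and $\bar x_1,\ldots,\bar x_n$ are part of a minimal generating set of $\bar C$).

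The main obstacle is the inductive step promoting the single condition ``$\partial(y)\in\m C$ for all derivations $\partial$'' to the stronger conclusion ``every element of $C[\Der_A(C)]$ acts scalarly on $\bar y$''. This requires a careful bookkeeping of how compositions of derivations behave modulo $\m$, together with the restricted Lie structure on $\Der_A(C)$ which keeps the algebra $C[\Der_A(C)]$ under control and allows the propagation to be carried out level by level in the filtration by order of differential operator.
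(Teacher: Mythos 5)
There is a genuine gap, and it sits exactly at the step your argument hinges on: the passage from non-surjectivity of the evaluation map $\on{ev}\colon\Der_A(C)\to C^n$ to the existence of $y=\sum_i\tilde a_ix_i$ with $\tilde a_i\in A$ \emph{not all in} $\m$ and $\partial(y)\in\m C$ for all $\partial\in\Der_A(C)$. Nakayama applied to the cokernel only gives that the image of $\on{ev}$ modulo $\m$ is a proper $\bar C$-submodule $N\subsetneq\bar C^n$, where $\bar C=C/\m C$. What you need is much stronger: that $N$ lies in the kernel of a functional of the special shape $(c_1,\ldots,c_n)\mapsto\sum_i\bar a_i c_i$ with coefficients $\bar a_i$ in the residue field $k=A/\m$ (not merely in $\bar C$). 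A proper submodule of $\bar C^n$ need not satisfy any such $k$-rational relation: already for $n=1$ and $\bar C=k[t]/(t^p)$, the proper submodule $(t)\subset\bar C$ is annihilated by no nonzero scalar of $k$. Nor can you relax the coefficients to lie in $C$: your propagation step relies precisely on $y$ being an $A$-combination of the $x_i$, so that $A$-linearity of derivations keeps everything inside $\m C$; with coefficients in $C$ the Leibniz terms destroy the argument. So the proof has a hole at its decisive point, and closing it seems to require a genuinely different idea. (For comparison, the paper does not prove this proposition at all — it quotes Yuan's Theorem 10 — and its proof of the companion sufficiency, Proposition \ref{prop: direct summand of endomorphism implies Galois}, manufactures the dual derivations by a different mechanism, Proposition \ref{existence of special basis for a direct summand of End}, which uses the direct-summand hypothesis rather than $\End_A(C)=C[\Der_A(C)]$.)

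Two smaller points. The step you single out as the main obstacle is in fact harmless and needs no restricted-Lie bookkeeping: from $\partial\circ c=c\circ\partial+\partial(c)$ one gets $C[\Der_A(C)]=\sum_k C\cdot\Der_A(C)^{\circ k}$, and any nonempty composition of derivations kills $1$ and maps $y$ into $\m C$ (by $A$-linearity and $\m\subset A$), so $\phi(y)\equiv\phi(1)\,y \pmod{\m C}$ for every $\phi\in C[\Der_A(C)]$. At the other end, your final contradiction needs $1$ and $\bar y$ to be $k$-linearly independent, not merely $\bar y\neq0$: the relation $\bar\phi(\bar y)=\overline{\phi(1)}\,\bar y$ for all $\bar\phi\in\End_k(\bar C)$ only forces $\bar y\in k\cdot 1$, and when $\bar y$ is a nonzero multiple of $1$ your projector gives $0=0$ rather than a contradiction. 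This case can be excluded by the same minimality argument you use elsewhere: $y-\lambda\in\m C$ gives $\sum_i\tilde a_i\,dx_i=d(y-\lambda)\in\m\,\Omega_{C/A}$, making some $dx_i$ redundant and contradicting minimality via Lemma \ref{generating sets and omega}. These two issues are repairable; the Nakayama step, as it stands, is not.
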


Our goal is to demonstrate that condition (2) in Proposition \ref{properties of Der(C) for extensions with p-basis} is also sufficient for $A\subset C$ to be Galois. We prove this in Proposition \ref{prop: direct summand of endomorphism implies Galois}.
For the proof  we shall need the following three technical lemmas. 

\begin{lemma}\label{commuting derivations and p-basis}
Let $C$ be a ring and let $x_1,\ldots,x_n\in C$.
If there exist derivations $\d_1,\dots, \d_n$ such that $$\d_i(x_j)=\delta_{ij},\quad [\d_i,\d_j]=0, \quad \d_i^p=0, \quad \forall i,j\in\{1,\dots,n\},$$
then $x_1,\ldots,x_n$ is a $p$-basis for $C$ over $A:=\cap_{i=1}^n\ker(\d_i)$ and  $\{\d_1,\dots, \d_n\}$ are the associated partial derivatives.  In particular, $\{\d_1,\dots, \d_n\}$
is a basis for the $C$-module $\Der_A(C)$.
\end{lemma}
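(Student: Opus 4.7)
The plan is to induct on $n$, reducing to the single-variable case which is then established by an explicit Taylor-style reconstruction. First, $\d_j(x_i^p)=p\,x_i^{p-1}\d_j(x_i)=0$ for all $i,j$, so every $x_i^p$ lies in $A$, and the notion of reduced monomial is meaningful.

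For the inductive step, set $A':=\cap_{i=2}^n\ker(\d_i)$. Since the $\d_j$ pairwise commute, $\d_1$ restricts to a derivation of $A'$ (because for $a\in A'$ and $j\ge 2$, $\d_j(\d_1(a))=\d_1(\d_j(a))=0$), it satisfies $\d_1^p=0$, and $x_1\in A'$ with $\d_1(x_1)=1$. The $n=1$ case applied to $A\subset A'$ gives $A'=A[x_1]$ with $\{x_1\}$ a $p$-basis. In parallel, $\d_2,\dots,\d_n$ on $C$ have common kernel $A'$ and still satisfy the hypotheses with respect to $x_2,\dots,x_n$; by the inductive hypothesis $\{x_2,\dots,x_n\}$ is a $p$-basis for $A'\subset C$. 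Lemma \ref{basic properties of p-basis}(3) then assembles these into a $p$-basis $\{x_1,\dots,x_n\}$ for $A\subset C$.

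The crux is the base case $n=1$: given $\d\colon C\to C$ with $\d^p=0$ and $x\in C$ with $\d(x)=1$, we show that $\{1,x,\dots,x^{p-1}\}$ is a basis of $C$ over $A:=\ker(\d)$. Linear independence is immediate: applying $\d^{p-1},\d^{p-2},\dots$ to a relation $\sum_{i=0}^{p-1}a_ix^i=0$ successively extracts $(p-1)!\,a_{p-1}=0$, then $(p-2)!\,a_{p-2}=0$, and so on. For spanning, introduce the operator
\[
\pi\colon C\to C,\qquad \pi(c):=\sum_{i=0}^{p-1}\frac{(-x)^i}{i!}\,\d^i(c).
\]
A direct calculation using $\d^p=0$ shows $\d(\pi(c))=0$, so $\pi(c)\in A$; the binomial identity $\sum_{l=0}^k(-1)^l\binom{k}{l}=0$ for $k\ge 1$ gives $\pi(x^k)=\delta_{k,0}$ for $0\le k\le p-1$, and clearly $\pi$ restricts to the identity on $A$. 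Given $c\in C$, set $a_i:=\pi(\d^i(c))/i!\in A$ and $d:=c-\sum_i a_i x^i$. Using $A$-linearity of $\pi$ and the values $\pi(x^k)$, one obtains $\pi(\d^j(d))=0$ for all $0\le j\le p-1$. A downward induction on $j$ then forces $\d^j(d)=0$: knowing $\d^{j+1}(d)=0$ puts $\d^j(d)$ in $A$, whence $\d^j(d)=\pi(\d^j(d))=0$. Setting $j=0$ gives $d=0$, so $c\in A[x]$.

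Once $\{x_1,\dots,x_n\}$ is a $p$-basis, the Leibniz rule together with $\d_j(x_i)=\delta_{ij}$ shows $\d_j(\x^\alpha)=\alpha_j\,\x^{\alpha-\e_j}$ (or $0$ if $\alpha_j=0$), so the $\d_j$ coincide with the partial derivatives attached to this $p$-basis in Paragraph \ref{derivations associated with a p-basis}; that they form a $C$-basis of $\Der_A(C)$ is then immediate from that paragraph. The main obstacle is the Taylor-reconstruction step in the $n=1$ case: since no actual ``evaluation at $x=0$'' is available, one must build $\pi$ by hand and verify by direct manipulation both that it lands in $A$ and that the formal Taylor series recovers $c$, both of which rely crucially on the nilpotency $\d^p=0$ to keep the sums finite and to make the cancellations work.
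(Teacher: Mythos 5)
Your proof is correct, but it follows a genuinely different route from the paper's. The paper argues in all variables at once, Hochschild-style: it forms the composite operators $D_\beta=\d_1^{\beta_1}\circ\cdots\circ\d_n^{\beta_n}$, checks $\tfrac{1}{\beta!}D_\beta(\x^\alpha)=\binom{\alpha}{\beta}\x^{\alpha-\beta}$ (with $D_\beta=0$ for $\beta\notin\mc{A}$), deduces independence of the reduced monomials, and then proves $C=A[x_1,\ldots,x_n]$ by a descending induction on $d(c)=\max\{|\alpha|:D_\alpha(c)\neq 0\}$, peeling off the top-degree Taylor coefficients $\tfrac{1}{\alpha!}D_\alpha(c)\in A$ at each step. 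You instead induct on the number of derivations, reducing to $n=1$ via the restriction of $\d_1$ to $A'=\cap_{i\ge 2}\ker\d_i$ (which is legitimate, as you note, by $[\d_1,\d_j]=0$) and reassembling with the transitivity of $p$-bases, Lemma \ref{basic properties of p-basis}(3); the one-variable case is then settled by the explicit projector $\pi(c)=\sum_{i=0}^{p-1}\tfrac{(-x)^i}{i!}\d^i(c)$, whose image lies in $A$ thanks to $\d^p=0$ and which produces closed-form Taylor coefficients $a_i=\pi(\d^i(c))/i!$. All the computations you invoke ($\d(\pi(c))=0$, $\pi(x^k)=\delta_{k,0}$, $A$-linearity of $\pi$, the downward induction forcing $\d^j(d)=0$) check out, as do the exponent-one hypotheses needed for Lemma \ref{basic properties of p-basis}(3), since $C^p\subset A$. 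What your approach buys is modularity: the whole content is isolated in the single-derivation case, with an explicit "constant-term" projection replacing the paper's degree induction; what the paper's approach buys is that it never needs to restrict derivations to intermediate rings and it exhibits the full multivariable Taylor expansion directly, which is the form reused in \ref{derivations associated with a p-basis}. Your concluding identification of the $\d_i$ with the partial derivatives attached to the $p$-basis, and hence with a $C$-basis of $\Der_A(C)$, matches the paper's final remark.
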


A proof of this fact is included in the last paragraph of the proof of \cite[Theorem 12]{Yuan70/1}, which in turn follows the lines of Hochschild's proof of \cite[Theorem 1]{Hochschild55}. We include a proof here for the sake of convenience for the reader.
    
\begin{proof}
     For $\beta=(\beta_1,\ldots,\beta)\in\mb{N}_0^n$ we set $D_\beta:=\d_1^{\beta_1}\circ\cdots \circ \d_n^{\beta_n}$. One easily shows that $\frac{1}{\beta!}D_\beta(\x^\alpha)=\binom{\alpha}{\beta}\x^{\alpha-\beta}$ for all $\alpha,\beta\in\mc{A}:=\{\alpha=(\alpha_1,\ldots,\alpha_n):\ 0\leq\alpha_i<p\}$, and $D_\beta=0$ if $\beta\notin\mc{A}$ by the condition $\partial_i^p=0$ for all $i$.
     Using the first observation, one easily proves that $\{x_1,\ldots,x_n\}$ is a $p$-basis for $C':=A[x_1,\ldots,x_n]$ over $A$. It remains to show that $C'=C$.
     
    For a non-zero $c\in C$, we prove that $c\in C'$ by induction on $d(c):=\max\{|\alpha|: \alpha\in\mc{A}, D_\alpha(c)\neq 0\}$, which exists because $D_0(c)=c\neq 0$ and $\mc{A}$ is a finite set.
    If $d(c)=0$, then $\d_i(c)=0$ for all $i$, and therefore $c\in A\subset C'$. 
    Assume that $d(c)=d>0$.
    For $\alpha\in\mc{A}$ with $|\alpha|=d$, we set $a_\alpha=\frac{1}{\alpha!}D_{\alpha}(c)$. Given $i=1,\ldots,n$, note that $\partial_i(a_\alpha)=\frac{1}{\alpha!}D_{\alpha+\e_i}(c)$. This is zero if $\alpha+\e_i\notin \mc{A}$ because $D_{\alpha+\e_i}=0$ in this case, and it is also zero if  $\alpha+\e_i\in \mc{A}$ by the maximal property of $d$.
    It follows that $a_\alpha\in A$, by definition of $A$.
    Let $c'=c-\sum_{|\alpha|=d} a_\alpha \x^\alpha$. Then either $c'=0$ or else $d(c')<d$. Thus $c'\in C'$ by the inductive hypothesis and so $c\in C'$. This completes the proof of the fact that $\{x_1,\ldots,x_n\}$ is a $p$-basis for $C$ over $A$. Note that $\d_1,\ldots,\d_n$ must be the associated partial derivatives, and so they from a basis for the $C$-module $\on{Der}_A(C)$, as discussed in \ref{derivations associated with a p-basis}.
\end{proof}

\begin{lemma}\label{ring extension with no derivations}
    Let $A\subset C$ be a finite ring extension of exponent one such that $C$ is projective as $A$-module. If $\Der_A(C)=0$, then $A=C$.
\end{lemma}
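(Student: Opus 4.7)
The plan is to reduce to the case where $A$ is local, to establish the exponent-one version of the statement as a preliminary step, and then to leverage it via the tower $A\subset A[C^p]\subset C$ to force the exponent of $A\subset C$ to zero, in which case $A=C$. The heart of the argument will be the equivalence $\Omega_{C/A}=0\Leftrightarrow A=C$ for finite extensions of finite exponent, namely \cite[Lemma 3.5]{dB-S-V}, already invoked in the proof of Lemma \ref{generating sets and omega}.

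First I would reduce to $A$ local. The hypothesis $\Der_A(C)=0$ descends to $A_\p\subset C_\p$ for every prime $\p\subset A$ by Corollary \ref{derivations and localization}. Conversely, by Proposition \ref{projective finite extensions are split}, $C=A\oplus N$ as $A$-modules with $N$ a finitely generated $A$-module, so the conclusion $A=C$ is equivalent to $N=0$, which can be tested at each prime of $A$. Under the local assumption $C$ is a finite free $A$-module, and since $A\subset C$ has finite exponent, $\Spec(C)\to\Spec(A)$ is a homeomorphism, so $C$ is local as well.

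Next I would treat the exponent-one case: if $C^p\subset A$ and $\Der_A(C)=0$, then $A=C$. Through the canonical isomorphism $\Der_A(C)\cong\Hom_C(\Omega_{C/A},C)$ the hypothesis reads $\Hom_C(\Omega_{C/A},C)=0$, and the goal becomes $\Omega_{C/A}=0$. I would combine Proposition \ref{prop: Galois iff omega projective} with Nakayama to reduce to the residue-field fiber $A/\m\subset C/\m C$; a non-trivial finite local exponent-one algebra over a field must carry a non-zero derivation (from the $p$-basis structure when the residue fields differ, or by an explicit construction otherwise), and this derivation can then be lifted to a non-zero $A$-derivation of $C$ using the $A$-module splitting $C=A\oplus N$, contradicting $\Der_A(C)=0$. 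For a general $A\subset C$ with $e:=\exp(C/A)\geq 2$, set $B:=A[C^p]$; then $B\subset C$ has exponent at most one and $\Der_B(C)\subseteq\Der_A(C)=0$, so, assuming $C$ is finite projective over $B$, the exponent-one case yields $B=C$, i.e.\ $C=A\cdot C^p$ (the equality $A[C^p]=A\cdot C^p$ being valid in characteristic $p$). Writing any $c\in C$ as $\sum a_ic_i^p$ with $a_i\in A$, $c_i\in C$, and raising to the $p^{e-1}$-th power gives $c^{p^{e-1}}=\sum a_i^{p^{e-1}}c_i^{p^e}\in A$, contradicting $\exp(C/A)=e$.

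The main obstacle is the exponent-one case, because the vanishing of $\Hom_C(\Omega_{C/A},C)$ need not force the finitely generated $C$-module $\Omega_{C/A}$ to vanish; the specific structure of $\Omega_{C/A}$ as $J/J^2$, with $J$ a direct summand of the finite projective $C$-module $C\otimes_A C$, together with the local finite-exponent hypothesis, must be carefully exploited when performing the fiber analysis and lifting. A secondary technicality is to verify that $C$ is finite projective over $B=A[C^p]$, which I expect to follow in the local setting from the freeness of $C$ over $A$ via Proposition \ref{tower of projective extensions}.
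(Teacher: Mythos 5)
Your reduction to the local case is fine, but the two subsequent steps both have genuine gaps, and the second paragraph of your proposal essentially names the central difficulty without resolving it. For the exponent-one case, producing a non-zero derivation of the fiber $A/\m\subset C/\m C$ only yields a non-zero element of $\Der_A(C,C/\m C)=\Hom_C(\Omega_{C/A},C/\m C)$ (compose with $C\to C/\m C$); there is no way to ``lift'' this to a non-zero element of $\Der_A(C)=\Hom_C(\Omega_{C/A},C)$. Derivations do not lift along $C\to C/\m C$, the splitting $C=A\oplus N$ is an $A$-module splitting of $A\hookrightarrow C$ and is irrelevant to that quotient, and since $\Omega_{C/A}$ is not known to be projective (that would essentially be the Galois hypothesis, so invoking Proposition \ref{prop: Galois iff omega projective} is circular) no base-change or dualization argument bridges the gap: over a local ring it is perfectly possible that $\Hom_C(M,C)=0$ while $\Hom_C(M,C/\m C)\neq 0$ for a non-zero finitely generated $M$ (e.g.\ $M$ the residue field of a DVR). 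Second, your reduction from exponent $e$ to exponent one needs $C$ to be finite projective over $B=A[C^p]$, and this does not follow from Proposition \ref{tower of projective extensions}: all three equivalent conditions there are hypotheses on the intermediate ring, not consequences of freeness of $C$ over $A$. Indeed projectivity of $C$ over $A[C^p]$ is exactly the $\mc{F}$-extension condition, and it can fail: for $p=2$, $A=k$ and $C=k[x,y]/\langle x^4,y^2,x^2y\rangle$, $C$ is free over $k$ but $A[C^p]=k[x^2]\cong k[t]/\langle t^2\rangle$ and $C$ is not flat over it (the kernel of multiplication by $x^2$ has the wrong dimension). So this projectivity would have to be extracted from the hypothesis $\Der_A(C)=0$, which your argument does not do.

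The paper's proof avoids both issues with a single uniform argument, with no localization and no induction on the exponent: it works with the kernel $I$ of the multiplication map $C\otimes_A C\to C$. Since $C$ is finite projective over $A$, $C\otimes_A C=C\oplus I$ as left $C$-modules, so $I$ is a finite projective $C$-module, and finite exponent makes $I$ nilpotent. The hypothesis, read as $\Hom_C(I/I^2,C)=\Der_A(C)=0$, is propagated up the filtration: by induction $\Hom_C(I^k/I^{k+1},C)=0$ for all $k$, because a non-zero map on $I^k/I^{k+1}$ composed with multiplication by a suitable element of $I$ would give a non-zero map on $I^{k-1}/I^k$. Hence $\Hom_C(I,C)=0$, and projectivity of $I$ forces $I=0$, so $C\otimes_AC=C$ and faithful flatness of $A\to C$ gives $A=C$. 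If you want to rescue your outline, the missing ingredient is precisely an argument of this type exploiting that $\Omega_{C/A}=I/I^2$ with $I$ projective and nilpotent; the fiberwise analysis by itself cannot supply it.
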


\begin{proof}
    The (left) $C$-module $C\otimes_A C$ is projective of finite rank, since $C$ is so over $A$. 
    Let $I\subset C\otimes_A C$ be the kernel of the multiplication map. Then $C\otimes_A C=C\oplus I$ and so $I$ is also a projective $C$-module of finite rank. 
    The fact that $A\subset C$ is finite also ensures that $I$ is a finitely generated ideal, and the fact that $A\subset C$ has finite exponent implies that $I$ is nilpotent.  We prove by induction that $\Hom_C(I^k/I^{k+1},C)=0$ for all $k\geq 1$. The case $k=1$ is the hypothesis $\on{Der}_A(C)=0$. Let $k>1$ and suppose that $\Hom_C(I^{k-1}/I^k,C)= 0$. If there exists a non-zero $\varphi\in\Hom_C(I^k/I^{k+1},C)$, then $\varphi(xy+I^{k+1})\neq 0$ for some $x\in I$ and  $y\in I^{k-1}$. But then the map $\psi(u+I^k):=\varphi(xu+I^{k+1})$ is a non-zero $C$-module morphism $I^{k-1}/I^k\to C$, contradicting the inductive hypothesis. Thus $\Hom_C(I^k/I^{k+1},C)=0$. 
    Now choose $n$ such that $I^{n+1}=0$. Then $\Hom_C(I,C)=\on{Hom}_C(I/I^{n+1},C)$ and by the above claim this is equal to $\Hom_C(I/I^{n-1},C)=\cdots=\Hom_C(I/I^2,C)=0$. Therefore, $\Hom_C(I,C)=0$ and, since $I$ is a projective $C$-module, we conclude that $I=0$. Thus $C\otimes_A C=C$, and since $A\to C$ is faithfully flat we deduce that $C=A$.
\end{proof}

\begin{lemma}\label{lem: equal derivations implies equal rings}
    Let $A\subset C$ be a finite ring extension of exponent one such that $C$ is a projective $A$-module. 
    Let $B$ be an intermediate ring, $A\subset B\subset C$, such that  $C$ has a $p$-basis over $B$ and $\Der_A(C)=\Der_B(C)$. Then $A=B$.
\end{lemma}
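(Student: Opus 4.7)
The plan is to reduce the claim to Lemma \ref{ring extension with no derivations} applied to the extension $A\subset B$. For that, I need to verify two things: (i) $B$ is a finite projective $A$-module (of finite exponent), and (ii) $\on{Der}_A(B)=0$.

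For (i), the existence of a $p$-basis for $C$ over $B$ implies that $C$ is a finite free, hence finite projective, $B$-module. Since we also have $C$ finite projective over $A$ by hypothesis, Proposition \ref{tower of projective extensions} (applied with $A\subset B\subset C$ via condition (3) $\Rightarrow$ (1)) yields that $B$ is finite projective over $A$. The extension $A\subset B$ has exponent at most one because $B^p\subset C^p\subset A$.

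The core step is (ii). Given any $\delta\in \on{Der}_A(B)$, I plan to extend it to some $D\in\on{Der}_A(C)$, then use the hypothesis $\on{Der}_A(C)=\on{Der}_B(C)$ to conclude $D|_B=0$, which forces $\delta=0$. For the extension, use the presentation coming from the $p$-basis: if $\{x_1,\ldots,x_n\}$ is a $p$-basis for $C$ over $B$ with $a_i:=x_i^p$, then
\begin{align*}
    C\cong B[Y_1,\ldots,Y_n]/\langle Y_1^p-a_1,\ldots,Y_n^p-a_n\rangle.
\end{align*}
Since $A\subset C$ has exponent one, each $a_i=x_i^p$ lies in $A$. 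Lift $\delta$ to the $A$-derivation $\tilde D$ on the polynomial ring $B[Y_1,\ldots,Y_n]$ defined by $\tilde D|_B=\delta$ and $\tilde D(Y_i)=0$. Then $\tilde D(Y_i^p-a_i)=pY_i^{p-1}\tilde D(Y_i)-\delta(a_i)=0-0=0$ (the second term vanishes because $a_i\in A$ and $\delta$ is $A$-linear), so $\tilde D$ preserves the ideal of relations and descends to an $A$-derivation $D\colon C\to C$ that extends $\delta$.

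By hypothesis $D\in\on{Der}_A(C)=\on{Der}_B(C)$, so $D|_B=0$ and therefore $\delta=0$. Hence $\on{Der}_A(B)=0$, and Lemma \ref{ring extension with no derivations} gives $A=B$. The only mildly subtle step is verifying that the would-be extension is well defined on the quotient, but this is automatic because the characteristic-$p$ Frobenius kills the $Y_i$-part of $\tilde D(Y_i^p-a_i)$ and the $A$-linearity of $\delta$ kills the $a_i$-part, precisely because $a_i\in A$ by the exponent-one assumption.
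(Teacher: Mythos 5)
Your proof is correct and follows essentially the same route as the paper: both hinge on the presentation $C\cong B[X_1,\ldots,X_n]/\langle X_i^p-a_i\rangle$ with $a_i=x_i^p\in A$ to extend $A$-derivations of $B$ to $A$-derivations of $C$, and both conclude via Lemma \ref{ring extension with no derivations} applied to $A\subset B$ (the paper phrases this as a contradiction, you argue directly that $\Der_A(B)=0$, which is just a reorganization of the same argument).
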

\begin{proof}
The hypothesis implies that $B$ is a finite projective $A$-module. 
Let  $\{x_1,\ldots,x_n\}$ be a $p$-basis for $C$ over $B$. Let $a_i=x_i^p\in A$. Then $$C=\frac{B[X_1,\ldots,X_n]}{\langle X_1^p-a_1,\ldots, X_n^p-a_n\rangle}=B\otimes_A\frac{A[X_1,\ldots,X_n]}{\langle X_1^p-a_1,\ldots,X_n^p-a_n\rangle}.$$
    If $A\neq B$, by Lemma \ref{ring extension with no derivations}, there is a non-zero $A$-derivation $\d \colon B\to B$, and by the above equality, there is a non-zero $A$-derivation of $C$ that extends $\d$. This contradicts $\on{Der}_A(C)=\on{Der}_B(C)$. Thus $A=B$.
\end{proof}


\begin{proposition}\label{prop: direct summand of endomorphism implies Galois}
Let $A\subset C$ be a finite ring extension of exponent one such that $C$ is a projective $A$-module. If $\Der_A(C)$ is $C$-module direct summand of $\on{End}_A(C)$,
then $A\subset C$ is a Galois extension.
\end{proposition}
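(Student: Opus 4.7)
The plan is to combine Proposition \ref{existence of special basis for a direct summand of End} with Lemma \ref{commuting derivations and p-basis} and Lemma \ref{lem: equal derivations implies equal rings} to produce a $p$-basis locally on $\Spec(A)$. By Proposition \ref{prop: Galois iff omega projective}(4), it will be enough to show that for every prime $\p\subset A$ there exists $f\in A\setminus \p$ such that $C_f$ has a $p$-basis over $A_f$.

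Fix $\p\subset A$ and apply Proposition \ref{existence of special basis for a direct summand of End} to the $C$-module direct summand $H=\Der_A(C)\subset \End_A(C)$ (note $\Spec(C)\to\Spec(A)$ is a homeomorphism because $A\subset C$ is finite of finite exponent). This gives $f\in A\setminus \p$, an $A_f$-basis $t_1,\ldots,t_n$ of $C_f$, and a $C_f$-basis $\partial_1,\ldots,\partial_l$ (with $l\leq n$) of $H_f$ such that $\partial_i(t_j)=\delta_{ij}$ for $1\leq i,j\leq l$. By Corollary \ref{derivations and localization}, $H_f$ is naturally identified with $\Der_{A_f}(C_f)$, so each $\partial_i$ is an $A_f$-derivation of $C_f$.

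The key step is to verify the hypotheses of Lemma \ref{commuting derivations and p-basis} for $\partial_1,\ldots,\partial_l$. Since $\Der_{A_f}(C_f)$ is closed under the bracket and under the $p$-power operation, each of $[\partial_i,\partial_j]$ and $\partial_i^p$ is an $A_f$-derivation and can be written as $\sum_k a_{ijk}\partial_k$ respectively $\sum_k b_{ik}\partial_k$ with coefficients in $C_f$. Evaluating at $t_m$ for $1\leq m\leq l$ and using $\partial_i(t_j)=\delta_{ij}$ (hence also $\partial_i\partial_j(t_m)=0$ for $m\leq l$) kills the coefficients one by one, giving $[\partial_i,\partial_j]=0$ and $\partial_i^p=0$. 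This is the part that requires the explicit structure produced by Proposition \ref{existence of special basis for a direct summand of End}, and I expect it to be the main obstacle, though once the right basis is in hand it is a routine verification.

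With these identities, Lemma \ref{commuting derivations and p-basis} applies to $\partial_1,\ldots,\partial_l$ (acting on $C_f$): the elements $t_1,\ldots,t_l$ form a $p$-basis for $C_f$ over $B:=\bigcap_{i=1}^l \ker(\partial_i)\supset A_f$, and $\partial_1,\ldots,\partial_l$ are the associated partial derivatives, hence a $C_f$-basis of $\Der_B(C_f)$. Because $\partial_1,\ldots,\partial_l$ is also a $C_f$-basis of $\Der_{A_f}(C_f)$, we get $\Der_{A_f}(C_f)=\Der_B(C_f)$, so Lemma \ref{lem: equal derivations implies equal rings} forces $A_f=B$. Thus $t_1,\ldots,t_l$ is a $p$-basis for $C_f$ over $A_f$, which concludes the proof.
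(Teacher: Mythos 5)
Your proof is correct and follows essentially the same route as the paper: apply Proposition \ref{existence of special basis for a direct summand of End} to $H=\Der_A(C)$, kill the coefficients of $[\partial_i,\partial_j]$ and $\partial_i^p$ by evaluating at the dual elements, then invoke Lemma \ref{commuting derivations and p-basis} and Lemma \ref{lem: equal derivations implies equal rings}. The only (cosmetic) difference is that you work on basic opens $D(f)$ and conclude via Proposition \ref{prop: Galois iff omega projective}(4), whereas the paper first reduces to a local base ring via Corollaries \ref{derivations and localization} and \ref{localization of End}.
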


\begin{proof}
By Corollary \ref{derivations and localization} and Corollary \ref{localization of End} we may assume that $A$ (and hence $C$) is local.

By Proposition \ref{existence of special basis for a direct summand of End} with $H=\Der_A(C)$, there are elements $x_1,\dots,x_n\in C$
and a basis $\d_1,\dots,\d_n$ for the $C$-module $\Der_A(C)$ such that $\d_i(x_j)=\delta_{ij}$ for all $i,j$. We claim that these elements are such that $[\d_i,\d_j]=\d_i^p=0$ for all $i,j\in\{1,\dots,n\}$.

In fact, since  $[\d_i,\d_j],\d_i^p\in \Der_A(C)$ for all $1\leq i,j\leq n$, and since
$\d_1,\ldots,\d_n$ is a basis for the $C$-module $\Der_A(C)$, we have $$[\d_i,\d_j]=\sum_{k=1}^n c_{ij}^{(k)}\d_k, \quad \text{and} \quad \d_i^p=\sum_{k=1}^n c_{i}^{(k)}\d_k \quad \text{for some $c_{ij}^{(k)},c_i^{(k)}\in C$}.$$
We note that $c_{ij}^{(k)}=[\d_i,\d_j](x_k)=0$ and $c_i^{(k)}=\d_i^p(x_k)=0$, for $k=1,\dots,n$. So $[\d_i,\d_j]=\d_i^p=0$ for all $i,j\in\{1,\dots,n\}$. 

We have shown that $x_1,\ldots,x_n$ and $\d_1,\ldots,\d_n$ satisfy the conditions in Lemma \ref{commuting derivations and p-basis}. Hence, $x_1,\dots, x_n$ is a $p$-basis for $C$ over $B:=\cap_{i=1}^n\ker(\d_i)$, and  $\d_1,\ldots,\d_n$ is a basis for the $C$-module $\on{Der}_B(C)$. 
In particular, $\on{Der}_A(C)=\on{Der}_B(C)$. We can now apply Lemma \ref{lem: equal derivations implies equal rings} and conclude that $B=A$. Thus $C$ has a $p$-basis over $A$ and $A\subset C$ is Galois.

\end{proof}

\subsection{A summary of equivalences of purely inseparable extensions of exponent one}\label{sec: Galois extensions} To sum up, we collect in the following theorem the equivalences of the notion of Galois extension of exponent one that we mentioned or proved throughout the section.

\begin{theorem}
    \label{thm: Galois characterizations}
    Let $A\subset C$ be a finite extension of exponent one. Then the following are equivalent:
    \begin{enumerate}
        \item $A\subset C$ is a purely inseparable extension.
        \item $A\subset C$ is a Galois extension.
        \item $\Omega_{C/A}$ is a projective $C$-module. 
        \item $C$ is a projective $A$-module and $\End_A(C)=C[\Der_A(C)]$.
        \item $C$ is a projective $A$-module and $\Der_A(C)$ is a $C$-direct summand of $\End_A(C)$.
    \end{enumerate}
\end{theorem}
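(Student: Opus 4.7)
The plan is to observe that this theorem is essentially a compilation of results already established earlier in the section, so the proof amounts to citing each equivalence and threading them together through a central equivalent condition, which I will take to be (2).

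First I would handle (1) $\Leftrightarrow$ (2) by invoking Proposition \ref{prop: Galois= purely inseparable}, whose proof relies on Corollary \ref{coro: p-basis becomes splitting p-basis}(2) and the faithfully flat descent of $p$-basis in Corollary \ref{cor: descent of p-basis}. Next I would deduce (2) $\Leftrightarrow$ (3) directly from Proposition \ref{prop: Galois iff omega projective}, which was proved via the localization formula $(\Omega_{C/A})_\p=\Omega_{C_\p/A_\p}$ together with the identification of $p$-basis with differential basis (Proposition \ref{p-basis=differential basis}) and the criterion Corollary \ref{free omega implies existence of p-basis}. This closes the loop among (1), (2), (3).

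For the remaining characterizations, I would prove (2) $\Rightarrow$ (4) and (2) $\Rightarrow$ (5) simultaneously using Proposition \ref{properties of Der(C) for extensions with p-basis} (Yuan): once a $p$-basis $\{x_1,\dots,x_n\}$ exists, the differential operators $\{\d_\beta:\beta\in\mc A\}$ form a $C$-basis of $\on{End}_A(C)$ while $\{\d_1,\dots,\d_n\}$ form a $C$-basis of $\on{Der}_A(C)$, which simultaneously yields both the algebra-generation statement of (4) and the $C$-module direct summand statement of (5). The projectivity of $C$ over $A$ is built into the definition of Galois extension.

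For the converses, (4) $\Rightarrow$ (2) is precisely Proposition \ref{prop: Der generates End implies Galois}, while (5) $\Rightarrow$ (2) is Proposition \ref{prop: direct summand of endomorphism implies Galois}. I expect no real obstacle here; the only thing to be careful about is that each implication's hypothesis indeed matches the one in the cited proposition (in particular, the projectivity hypothesis on $C$ as an $A$-module appears in both (4) and (5) and is needed for the application of Proposition \ref{existence of special basis for a direct summand of End} in the proof of (5) $\Rightarrow$ (2)). Combining all the implications, (1), (2), (3), (4), (5) are mutually equivalent, completing the proof.
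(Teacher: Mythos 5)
Your proposal is correct and matches the paper's approach: the theorem is stated there precisely as a summary of the section, with (1)$\Leftrightarrow$(2) from Proposition \ref{prop: Galois= purely inseparable}, (2)$\Leftrightarrow$(3) from Proposition \ref{prop: Galois iff omega projective}, (2)$\Rightarrow$(4),(5) from Proposition \ref{properties of Der(C) for extensions with p-basis} (via the partial derivatives of a $p$-basis and localization), and the converses from Propositions \ref{prop: Der generates End implies Galois} and \ref{prop: direct summand of endomorphism implies Galois}. Your attention to the projectivity hypotheses in (4) and (5) is exactly the right compatibility check, and nothing further is needed.
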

Roughly, the aim for the next section will be to find a suitable version of this theorem for ring extensions of higher exponent.

\section{Purely inseparable extensions of higher exponent}\label{sec: purely inseparable}

Purely inseparable ring extensions of arbitrary exponent were firstly treated by Pauer in \cite{Pauer78}, where they are introduced as \textit{special algebras}.  He presented this definition as a ring extension that satisfies a series of equivalent conditions, extending in some sense the equivalences $(1)\Leftrightarrow (2)\Leftrightarrow (3)$ of Theorem \ref{thm: Galois characterizations}
to higher exponent extensions.
In \cite{PSan99}, P. S. de Salas  gave another characterization of special algebras involving the modules of principal parts, providing another extension of the equivalence (1)$\Leftrightarrow$(3) of Theorem \ref{thm: Galois characterizations}
to higher exponent extensions.
In this section, we review Pauer's equivalent conditions that define purely inseparable extensions and we re-obtain P. S. de Salas's characterization from a different point of view.
We also provide a new  characterization in terms of the modules of differential operators. More specifically, we formulate and prove an extension of the equivalence (1)$\Leftrightarrow$(5) in Theorem \ref{thm: Galois characterizations} to the setting of higher exponent extensions.

\medskip

\subsection{On purely inseparable extensions of rings}
We choose one of the equivalent conditions that define Pauer's notion of special algebra, and we take it as our definition of purely inseparable extension. We then review some basic properties and characterizations of these extensions.

\begin{definition}\label{def: purely inseparable 2}
A finite ring extension $A\subset C$ 
is called {\em purely inseparable} if 
\begin{enumerate}[(i)]
    \item $A\subset C$ has finite exponent,
    \item $C$ is a (finite) projective $A$-module, and 
    \item for each prime ideal $\p\subset A$ there is a faithfully flat extension $A_\p\to A'$ such that $C_\p\otimes_{A_\p}A'$ is isomorphic to an $A'$-algebra of the form $A'[X_1,\ldots,X_n]/\langle X_1^{p^{e_1}},\ldots,X_n^{p^{e_n}}\rangle$.
\end{enumerate}
\end{definition}

\begin{remark}
Note that condition (i) in the above definition is redundant because it can be deduced from (ii) and (iii).  By Proposition \ref{characterization of projective modules},  condition (ii) can be replaced by the condition that $C$ is an $A$-module of finite presentation as condition (iii) already implies that $C_\p$ is a flat $A_\p$-module for all prime ideal $\p\subset A$. Finally, we note that  condition (iii) only needs to be verified at maximal ideals.
\end{remark}

It is clear that the three conditions in Definition \ref{def: purely inseparable 2} behave well under base extension and  faithfully flat descent. Therefore:

\begin{proposition}\label{base change of flat descent of purely inseparable}
    Let $A\subset C$ be a finite ring extension. Let $A\to A'$ be a ring map and set $C':=C\otimes_A A'$.
    \begin{enumerate}
        \item If $A\subset C$ is purely inseparable, then so is $A'\to C'$.
        \item If $A\to A'$ is faithfully flat and $A'\subset C'$ is purely inseparable, then so is $A\subset C$.
    \end{enumerate}
\end{proposition}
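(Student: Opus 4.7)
The plan is to verify, in each direction, each of the three conditions in Definition \ref{def: purely inseparable 2} separately. I would note first that since condition (1) (finite exponent) is a consequence of conditions (2) and (3), and since condition (2) can be replaced by finite presentation once one knows each localized fiber is flat, it actually suffices to handle conditions (2) and (3). Nonetheless I would check all three as a sanity check.

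For part (1) (base change), finite exponent is immediate: if $C^{p^e} \subset A$ then $(C')^{p^e} \subset A'$. Projectivity of $C'$ as an $A'$-module is Proposition \ref{projective modules are preserved by base change}(1). For condition (3), given a prime $\mathfrak p' \subset A'$, set $\mathfrak p := \mathfrak p' \cap A$. Choose a faithfully flat map $A_{\mathfrak p} \to A''_0$ provided by the hypothesis. Define $A'' := A''_0 \otimes_{A_{\mathfrak p}} A'_{\mathfrak p'}$; the map $A'_{\mathfrak p'} \to A''$ is faithfully flat as it is the base change of $A_{\mathfrak p} \to A''_0$. A chain of standard tensor identifications, using that $A'_{\mathfrak p'}$ is naturally an $A_{\mathfrak p}$-algebra since $A \setminus \mathfrak p$ maps into the units of $A'_{\mathfrak p'}$, gives
\[
C'_{\mathfrak p'} \otimes_{A'_{\mathfrak p'}} A'' \;=\; C_{\mathfrak p} \otimes_{A_{\mathfrak p}} A'' \;=\; \bigl(C_{\mathfrak p} \otimes_{A_{\mathfrak p}} A''_0\bigr) \otimes_{A''_0} A'',
\]
and since tensoring the model $A''_0[X_1,\ldots,X_n]/\langle X_i^{p^{e_i}}\rangle$ with $A''$ over $A''_0$ yields $A''[X_1,\ldots,X_n]/\langle X_i^{p^{e_i}}\rangle$, condition (3) is verified.

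For part (2) (faithfully flat descent), finite exponent descends because if $c \in C$ satisfies $c^{p^e} \in A'$ inside $C'$ (where $e = \exp(C'/A')$), then the image of $c^{p^e}$ in $C/A$ becomes zero after tensoring with $A'$, and faithful flatness of $A \to A'$ forces it to be zero in $C/A$, i.e. $c^{p^e} \in A$. Projectivity of $C$ as an $A$-module is Proposition \ref{projective modules are preserved by base change}(2). For condition (3), given a prime $\mathfrak p \subset A$, faithful flatness of $A \to A'$ guarantees a prime $\mathfrak p' \subset A'$ lying over $\mathfrak p$, whence $A_{\mathfrak p} \to A'_{\mathfrak p'}$ is faithfully flat. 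Applying the hypothesis at $\mathfrak p'$ yields a faithfully flat map $A'_{\mathfrak p'} \to A''_0$ with the desired trivialization of $C'_{\mathfrak p'} \otimes_{A'_{\mathfrak p'}} A''_0$. Then $A_{\mathfrak p} \to A''_0$ is faithfully flat as a composition, and the sequence of identifications
\[
C_{\mathfrak p} \otimes_{A_{\mathfrak p}} A''_0 \;=\; C \otimes_A A''_0 \;=\; C' \otimes_{A'} A''_0 \;=\; C'_{\mathfrak p'} \otimes_{A'_{\mathfrak p'}} A''_0
\]
exhibits the required trivialization over $A''_0$.

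The only real care needed is in the bookkeeping of tensor products and the choice of a prime $\mathfrak p'$ over $\mathfrak p$ in the descent direction; I do not anticipate any obstacle. Everything reduces to the standard stability and descent properties of flatness, faithful flatness, projectivity, and tensor products.
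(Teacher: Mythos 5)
Your proposal is correct and is essentially the argument the paper has in mind: the paper simply states that the three conditions of Definition \ref{def: purely inseparable 2} behave well under base extension and faithfully flat descent and gives no further proof, and your write-up is the detailed verification of exactly that (stability of projectivity and of the local trivialization under base change, plus the standard facts that faithfully flat maps are surjective on spectra, compose, localize, and detect vanishing for the exponent condition).
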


\medskip

    Let $C$ be an $A$-algebra. 
  For each integer $e\geq 0$, we set 
  $$C^{[e]}:=A[C^{p^e}].$$ That is, $C^{[e]}$ is the $A$-subalgebra of $C$ generated by all elements of the form $x^{p^{e}}$ for $x\in C$.
  Note that if $C$ is finitely generated over $A$, then so is each $C^{[e]}$. 
  In addition, if $A\subset C$ is an extension of finite exponent, then the extension $C^{[e+1]}\subset C^{[e]}$ is finite of exponent one, for each $e <\exp(C/A)$, and $C^{[e]}=A$ for all $e\geq \exp(C/A)$. Since $\Omega_{C^{[e]}/A}=\Omega_{C^{[e]}/C^{[e+1]}}$, Theorem \ref{thm: Galois characterizations} implies the following.

    \begin{proposition}
      Let $A\subset C$ be a finite ring extension of finite exponent. Then the following conditions are equivalent:
      \begin{enumerate}
          \item $\Omega_{C^{[e]}/A}$ is a projective $C^{[e]}$-module for all $0\leq e <\exp(C/A)$
          \item $C^{[e+1]}\subset C^{[e]}$ is a Galois extension for all  $0\leq e <\exp(C/A)$
      \end{enumerate}
  \end{proposition}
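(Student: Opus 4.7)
The plan is to apply Theorem \ref{thm: Galois characterizations} separately to each subextension $C^{[e+1]}\subset C^{[e]}$ for $0\leq e<\exp(C/A)$, after verifying that its module of K\"ahler differentials relative to $C^{[e+1]}$ agrees with the one relative to $A$.

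First, I would verify that for each such $e$, the extension $C^{[e+1]}\subset C^{[e]}$ is finite and of exponent one. Finiteness is clear because $C^{[e]}$ is a finite $A$-module (as an $A$-subalgebra of the finite $A$-module $C$), and $A\subset C^{[e+1]}$. Exponent one holds by definition: every $x\in C^{[e]}$ satisfies $x^p\in A[C^{p^{e+1}}]=C^{[e+1]}$.

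Second, I would verify the identification $\Omega_{C^{[e]}/A}=\Omega_{C^{[e]}/C^{[e+1]}}$ suggested by the remark preceding the proposition. Using the conormal exact sequence
\[
C^{[e]}\otimes_{C^{[e+1]}}\Omega_{C^{[e+1]}/A}\to \Omega_{C^{[e]}/A}\to \Omega_{C^{[e]}/C^{[e+1]}}\to 0,
\]
it suffices to show that $ds=0$ in $\Omega_{C^{[e]}/A}$ for every $s\in C^{[e+1]}$. Since $C^{[e+1]}$ is generated as an $A$-algebra by the elements $x^{p^{e+1}}=(x^{p^e})^p$ with $x\in C$, and since $x^{p^e}\in C^{[e]}$, we have
\[
d\bigl((x^{p^e})^p\bigr)=p\,(x^{p^e})^{p-1}\,d(x^{p^e})=0
\]
in characteristic $p$. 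The Leibniz rule and $A$-linearity of $d$ then propagate this vanishing to all of $C^{[e+1]}$, making the third arrow in the sequence an isomorphism.

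Third, I would combine these two observations with the equivalence (2)$\Leftrightarrow$(3) of Theorem \ref{thm: Galois characterizations}, applied to the finite exponent-one extension $C^{[e+1]}\subset C^{[e]}$: this extension is Galois if and only if $\Omega_{C^{[e]}/C^{[e+1]}}$ is projective as $C^{[e]}$-module. Via the identification of step two, this is the same as $\Omega_{C^{[e]}/A}$ being projective as a $C^{[e]}$-module. Running this equivalence for every $0\leq e<\exp(C/A)$ yields the stated equivalence between (1) and (2).

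The argument is essentially a bookkeeping consequence of Theorem \ref{thm: Galois characterizations}, so there is no real technical obstacle; the one point requiring care is the identification of the two $\Omega$'s, and in particular checking that the conormal sequence is well-set up and that $d$ vanishes on $C^{[e+1]}$, as indicated above.
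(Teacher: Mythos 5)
Your proposal is correct and follows essentially the same route as the paper, which deduces the proposition directly from the identification $\Omega_{C^{[e]}/A}=\Omega_{C^{[e]}/C^{[e+1]}}$ (valid since $C^{[e+1]}=A[(C^{[e]})^p]$, so the universal $A$-derivation kills it) together with the equivalence of Galois and projectivity of $\Omega$ in Theorem \ref{thm: Galois characterizations} applied to each exponent-one extension $C^{[e+1]}\subset C^{[e]}$. Your conormal-sequence verification of that identification is just a slightly more explicit writing-out of the same observation, so there is nothing to correct.
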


  We shall review the fact that for extensions of finite exponent these conditions characterize purely inseparable extensions. The following example might be insightful.
  \begin{example}\label{example trivial modular extension}
    Let $A$ be a ring and let $C=A[x_1,\ldots,x_n]$ be the $A$-algebra with defining relations $x_1^{p^{e_1}}=\cdots=x_n^{p^{e_n}}=0$, for certain integers $e_1\geq\cdots\geq e_n\geq 1$. Then $A\subset C$ is a purely inseparable extension of exponent $\exp(C/A)=e_1$. 
    For each integer $1\leq e\leq e_1$, we denote by $n(e)$ the largest integer $m\leq n$ such that $x_{m}^{p^e}\neq 0$. Then
    \begin{align*}
        C^{[e]}=A[x_1^{p^e},\ldots,x_{n(e)}^{p^e}]
    \end{align*}
    and the defining relations for this $A$-algebra are $(x_1^{p^e})^{p^{e_1-e}}=\ldots=(x_{n(e)}^{p^e})^{p^{e_{n(e)}-e}}=0$. It follows easily that $x_1^{p^e},\ldots,x_{n(e)}^{p^e}$ is a $p$-basis for $C^{[e]}$ over $C^{[e+1]}$. 
\end{example}

  The following proposition, whose verification is straightforward, tells us that under certain circunstances the tower $C\supset C^{[1]}\supset C^{[2]}\supset\cdots\supset A$ behaves well under base extensions.

\begin{proposition}\label{frobenius filtration and base change}
 Let $A\subset C$ be a finite extension of finite exponent and let $A\to A'$ be a ring map.
 Suppose that either $A\to A'$ is flat, or
 $C^{[e+1]}\to C^{[e]}$ is flat for all $e$.
 Then $(C\otimes_A A')^{[e]}=C^{[e]}\otimes_A A'$ for all $e\geq 0$. 
 \end{proposition}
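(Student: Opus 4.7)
The task reduces to showing that the natural $A'$-algebra homomorphism $\varphi_e\colon C^{[e]}\otimes_A A'\to C\otimes_A A'=:C'$ is injective for each $e$. Indeed, because we are in characteristic $p$, the Frobenius computes on elementary tensors as $(c\otimes a')^{p^e}=c^{p^e}\otimes (a')^{p^e}=(a')^{p^e}\cdot(c^{p^e}\otimes 1)$, so $(C')^{[e]}=A'[(C')^{p^e}]$ is the $A'$-subalgebra of $C'$ generated by $\{c^{p^e}\otimes 1:c\in C\}$. This is precisely the image of $\varphi_e$, and once injectivity is established the identification $(C')^{[e]}=C^{[e]}\otimes_A A'$ as subrings of $C'$ follows.

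If $A\to A'$ is flat, injectivity is automatic from the inclusion $C^{[e]}\hookrightarrow C$, since tensoring an injection of $A$-modules with the flat module $A'$ preserves injectivity.

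For the second hypothesis the plan is to exhibit $C^{[e]}$ as an $A$-module direct summand of $C$, which then yields injectivity of $\varphi_e$ upon applying $-\otimes_A A'$. The steps are: (i) choose finitely many $A$-algebra generators $c_1,\ldots,c_n$ of $C$ (which exist since $C$ is finite over $A$); then $C^{[k]}=A[c_1^{p^k},\ldots,c_n^{p^k}]$, and each step $C^{[k+1]}\subset C^{[k]}$ is module-finite, generated as a $C^{[k+1]}$-module by the $p^n$ monomials $\prod_i c_i^{\alpha_i p^k}$ with $0\leq\alpha_i<p$; by transitivity of finiteness, $C^{[e]}$ is finite over $A$. (ii) Composing the hypothesized flat inclusions $C^{[k+1]}\hookrightarrow C^{[k]}$ for $k=e-1,\ldots,0$ shows $C$ is flat over $C^{[e]}$. (iii) Proposition \ref{tower of projective extensions}, applied to $A\subset C^{[e]}\subset C$ using flatness from (ii) and module-finiteness of $C^{[e]}$ over $A$ from (i), promotes $C$ to a finite projective $C^{[e]}$-module. (iv) Proposition \ref{projective finite extensions are split} then provides a decomposition $C=C^{[e]}\oplus N$ as $C^{[e]}$-modules, hence as $A$-modules; tensoring with $A'$ over $A$ preserves this direct sum and so $\varphi_e$ is injective.

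The main obstacle is step (i): verifying that $C^{[e]}$ is finite as an $A$-module. Without this we cannot apply Proposition \ref{tower of projective extensions} to promote the mere flatness of $C$ over $C^{[e]}$ to projectivity, and hence cannot guarantee the $A$-module splitting of $C^{[e]}\subset C$ that underlies the injectivity argument.
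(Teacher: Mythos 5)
Your reduction to the injectivity of $\varphi_e\colon C^{[e]}\otimes_A A'\to C\otimes_A A'$ and the identification of its image with $(C\otimes_A A')^{[e]}$ via additivity of Frobenius are correct, and the flat-base-change case is fine. The gap is in step (iii) of the second case: Proposition \ref{tower of projective extensions} carries the standing hypothesis that $C$ is a finite \emph{projective} $A$-module, which is not among the hypotheses of the statement you are proving and does not follow from them. Flatness of all the steps $C^{[e+1]}\subset C^{[e]}$ only makes $C$ a finite \emph{flat} $A$-module, and a finitely generated flat module need not be projective without finite presentation (no Noetherian or finite-presentation assumption is in force here). So you cannot promote flatness of $C$ over $C^{[e]}$ to projectivity, and the splitting $C=C^{[e]}\oplus N$ used in step (iv) is not available. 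Note also that the obstacle you single out at the end --- finiteness of $C^{[e]}$ over $A$ --- is not actually a problem: your own step (i) settles it, since each step $C^{[k+1]}\subset C^{[k]}$ is module-finite and there are only finitely many steps because the exponent is finite.

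The second case can be repaired without producing any splitting: all you need is that $C^{[e]}\to C$ is universally injective (pure) as a map of $C^{[e]}$-modules. Since each $C^{[k+1]}\subset C^{[k]}$ is flat by hypothesis, $C$ is flat over $C^{[e]}$; the inclusion $C^{[e]}\subset C$ is integral (every $c\in C$ satisfies $c^{p^e}\in C^{[e]}$), so $\Spec(C)\to\Spec(C^{[e]})$ is surjective by lying over, and a flat ring map that is surjective on spectra is faithfully flat, hence pure. Applying purity to the $C^{[e]}$-module $M:=C^{[e]}\otimes_A A'$ gives that $M\to M\otimes_{C^{[e]}}C\cong C\otimes_A A'$ is injective, and this map is exactly $\varphi_e$. (The paper itself records the proposition as a straightforward verification and gives no written proof, so the only question is correctness, and as written your argument for the second alternative does not go through.)
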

 
In particular, for each prime ideal $\p\subset A$, we have that $(C_\p)^{[e]}=(C^{[e]})_\p$, so we can just write $C_\p^{[e]}$ without confusion.

\begin{proposition}
Let $A\subset C$ be a finite ring extension of finite exponent.
If $A\subset C$ is a purely inseparable extension, then $C^{[e+1]}\subset C^{[e]}$ is a Galois extension for all  $e < \exp(C/A)$.     
\end{proposition}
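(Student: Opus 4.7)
The plan is to invoke Proposition~\ref{prop: Galois iff omega projective} and reduce Galoisness of $C^{[e+1]}\subset C^{[e]}$ to the projectivity of $\Omega_{C^{[e]}/C^{[e+1]}}$ as a $C^{[e]}$-module; this projectivity will then be established by faithfully flat descent from the standard form. Fix $0\leq e<\exp(C/A)$. The extension $C^{[e+1]}\subset C^{[e]}$ is finite (since $C$ is finite over $A\subset C^{[e+1]}$) and of exponent one (since $(C^{[e]})^p\subset C^{[e+1]}$ by definition), so Proposition~\ref{prop: Galois iff omega projective} does apply. Because the extensions $A\subset C^{[e+1]}\subset C^{[e]}$ all have finite exponent, their spectra are pairwise homeomorphic, so by Proposition~\ref{omega and base change} projectivity of $\Omega_{C^{[e]}/C^{[e+1]}}$ may be checked by proving that $\Omega_{C_\p^{[e]}/C_\p^{[e+1]}}$ is projective over the local ring $C_\p^{[e]}$ for each $\p\subset A$.

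Next I would exploit the definition to reach the standard form. Fix $\p$ and pick, by Definition~\ref{def: purely inseparable 2}(3), a faithfully flat extension $A_\p\to A'$ such that $C':=C_\p\otimes_{A_\p} A'\cong A'[X_1,\ldots,X_n]/\langle X_1^{p^{e_1}},\ldots,X_n^{p^{e_n}}\rangle$. Because $A_\p\to A'$ is flat, Proposition~\ref{frobenius filtration and base change} gives $C'^{[i]}=C_\p^{[i]}\otimes_{A_\p} A'$ for all $i\geq 0$, so the induced map $C_\p^{[e]}\to C'^{[e]}$ is itself faithfully flat as a base change of $A_\p\to A'$. Example~\ref{example trivial modular extension} exhibits the explicit $p$-basis $x_1^{p^e},\ldots,x_{n(e)}^{p^e}$ for $C'^{[e+1]}\subset C'^{[e]}$, so Proposition~\ref{p-basis=differential basis} shows that $\Omega_{C'^{[e]}/C'^{[e+1]}}$ is a free $C'^{[e]}$-module. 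Since this module is the base change $\Omega_{C_\p^{[e]}/C_\p^{[e+1]}}\otimes_{C_\p^{[e]}} C'^{[e]}$ by Proposition~\ref{omega and base change}, faithfully flat descent of finite projectivity (Proposition~\ref{projective modules are preserved by base change}(2)) implies that $\Omega_{C_\p^{[e]}/C_\p^{[e+1]}}$ is projective over $C_\p^{[e]}$.

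The main technical subtlety is passing from stalkwise projectivity to global projectivity of $\Omega_{C^{[e]}/C^{[e+1]}}$, which via Proposition~\ref{characterization of projective modules} requires this module to be finitely presented as a $C^{[e]}$-module. This can in turn be extracted from Proposition~\ref{Omega of a finitely presented algebra} by observing that $C^{[e]}$ is a finitely presented $C^{[e+1]}$-algebra: it is finitely generated by the $p^e$-th powers of algebra generators of $C$ over $A$, and the finite presentation of the relations follows from the finite presentation of $C$ as an $A$-algebra, itself a consequence of $C$ being a finite projective $A$-module. Once this is in hand, Proposition~\ref{prop: Galois iff omega projective} delivers the desired Galois property, completing the argument.
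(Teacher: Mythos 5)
The local part of your argument is sound: using Proposition~\ref{frobenius filtration and base change}, the identification $C'^{[e]}=C_\p^{[e]}\otimes_{A_\p}A'$, Example~\ref{example trivial modular extension}, Proposition~\ref{p-basis=differential basis}, Proposition~\ref{omega and base change} and faithfully flat descent of finite projectivity does show that $\Omega_{C_\p^{[e]}/C_\p^{[e+1]}}$ is free over the local ring $C_\p^{[e]}$ for every prime $\p\subset A$. The genuine gap is exactly at the point you flag as the ``main technical subtlety'': your claimed source of finite presentation does not work. You assert that $C^{[e]}$ is a finitely presented $C^{[e+1]}$-algebra because ``the finite presentation of the relations follows from the finite presentation of $C$ as an $A$-algebra.'' This is a non sequitur: $C^{[e]}$ is a subalgebra of $C$, not a quotient, so a finite presentation of $C$ over $A$ gives no presentation of $C^{[e]}$, and over a non-Noetherian base a module-finite subalgebra of a finitely presented algebra need not be finitely presented. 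The fact you need -- that $C^{[e]}$ is finitely presented (indeed finite projective) over $A$, equivalently over $C^{[e+1]}$ -- is a nontrivial part of the content of the theorem, not a formal consequence of the hypotheses; and without it stalkwise freeness of a finitely generated module does not yield projectivity, so the globalization step genuinely fails as written.

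This is precisely the point the paper's proof is organized around. There one argues by induction on the exponent: by descent of $p$-bases (Corollary~\ref{cor: descent of p-basis}) each $C_\p$ has a $p$-basis over $C_\p^{[1]}$, hence $C$ is flat over $C^{[1]}$, and then Proposition~\ref{tower of projective extensions} (whose proof contains the module-theoretic finite-presentation argument via the finitely generated kernel lemma) shows that $C^{[1]}$ is a finite projective $A$-module and $C$ is finite projective over $C^{[1]}$; only then is $A\subset C^{[1]}$ known to be purely inseparable, and the induction proceeds. Your strategy can be repaired along the same lines without the induction: descend flatness of $C$ over $C^{[e]}$ from the standard form (Example~\ref{example trivial modular extension} gives that $C'$ is free over $C'^{[e]}$), then apply Proposition~\ref{tower of projective extensions} to the tower $A\subset C^{[e]}\subset C$ to conclude that $C^{[e]}$ is a finite projective, hence finitely presented, $A$-module; after that $\Omega_{C^{[e]}/A}$ is finitely presented by Proposition~\ref{Omega of a finitely presented algebra}, your stalkwise computation combined with Proposition~\ref{characterization of projective modules} gives global projectivity, and Proposition~\ref{prop: Galois iff omega projective} finishes. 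As submitted, however, the finite-presentation step is unjustified and the proof is incomplete.
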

\begin{proof}
    We use an inductive argument on the exponent of the extension. The case of exponent one is Proposition \ref{prop: Galois= purely inseparable}. So we assume that $\exp(C/A)>1$. Then 
    the proposition follows if  we prove that $C^{[1]}\subset C$ is Galois and $A\subset C^{[1]}$ is purely inseparable.
    
    Let $\p\subset A$ be a prime ideal.  We first show that $A_\p\subset C_\p^{[1]}$ is purely inseparable and that $C_\p$ has a $p$-basis over $C_\p^{[1]}$.
        Since $A\subset C$ is purely inseparable, there is a faithfully flat homomorphism $A_\p\to A'$ such that $C':=C_\p\otimes_{A_\p} A'$ is of the form $A'[x_1,\ldots,x_n]$ with relations $x_1^{p^{e_1}}=\cdots=x_n^{p^{e_n}}=0$ for certain $e_1\geq\cdots\geq e_n\geq 1$. 
    By Proposition \ref{frobenius filtration and base change}, we have
    \begin{align*} {C'}^{[1]}&=C_\p^{[1]}\otimes_{A_\p} A',
    \end{align*}
    and so  using Example \ref{example trivial modular extension}, we obtain that
    $A_\p\subset C_\p^{[1]}$ is purely inseparable, and also, with the help of Corollary \ref{cor: descent of p-basis}, that $C_\p$ has a $p$-basis over $C_\p^{[1]}$.

    To conclude that $A\subset C^{[1]}$ is purely inseparable and $C^{[1]}\subset C$ is Galois, it is only left to show that $C^{[1]}$ is a projective $A$-module and that $C$ is a projective $C^{[1]}$-module.  We already know that $C$ is a finite projective $A$-module, since it is purely inseparable. We also know that $A\subset C^{[1]}$ is finite. The fact that each $C_\p$ has a $p$-basis over $C_\p^{[1]}$ also implies that $C$ is a flat $C^{[1]}$-module. Hence our desired conclusion follows from Proposition \ref{tower of projective extensions}.
\end{proof}

The following definition might be seen as a possible generalization of the notion of $p$-basis to higher exponent extensions.  It was firstly introduced by Rasala for purely inseparable field extensions (\cite{Ras71}) and then it was generalized by Pauer to the setting of finite ring extensions of finite exponent. 

\begin{definition} (\cite[Definition 2.1.5]{Pauer78}) 
 Let $A\subset C$ be a finite extension of finite exponent. We say that $x_1,\ldots,x_n\in C$ is a {\em normal generating sequence}  (NGS) for $C$ over $A$  if there are positive integers $e_1\geq \cdots\geq e_n$ and polynomials $P_i\in A[X_1,\ldots,X_{i-1}]$ for $i=1,\ldots,n$, such that  $C$ admits a presentation $C=A[x_1,\ldots,x_n]$  whose defining relations are $x_i^{p^{e_i}}=P_i(x_1^{p^{e_i}},\ldots, x_{i-1}^{p^{e_i}})$ for $i=1,\ldots,n$. 
\end{definition}

Given a finite ring extension $A\subset C$ of finite exponent, we shall prove that if the extension $C^{[e+1]}\subset C^{[e]}$ 
is Galois for all $e< \exp(C/A)$, then, locally at any prime, there is always a NGS for $C$ over $A$. With this aim, 
we review a more general construction in the setting of local ring extensions.

\medskip

  Let $A\subset C$ be a finite extension of local rings with  finite exponent.
  We denote by $n(e)$ the minimum number of generators of $C^{[e]}$ as $A$-algebra, or equivalently, as an $C^{[e+1]}$-algebra. This number coincides with the minimum number of generators of the $C^{[e]}$-module $\Omega_{C^{[e]}/A}=\Omega_{C^{[e]}/C^{[e+1]}}$ by Lemma \ref{generating sets and omega}. We write $n(0)=n$. There are inequalities
$$ n=n(0)\geq n(1)\geq n(2)\geq\cdots.$$
Moreover, observe that $n(e)=0$ if $e\geq \exp(C/A)$, since $C^{[e]}=A$ in those cases. Note also that $n(e)>0$, if $e<\exp(C/A)$. 

\begin{definition}
 In the setting described above, a sequence $x_1,\ldots,x_n\in C$ is called a \textit{generalized normal generating sequence} (GNGS) for $C$ over $A$ if $x_1^{p^e},\dots,x_{n(e)}^{p^e}$ is a minimal generating set for the $A$-algebra $C^{[e]}$ for all $e<\exp(C/A)$. 
\end{definition}
The existence of a GNGS for finite extensions of local rings with finite exponent  is clear. Indeed, let $x_1,\ldots,x_n$ be any minimal generating set of $C$ as $A$-algebra. If $\exp(C/A)=1$, then we are done. If $\exp(C/A)>1$, then $x_1^p,\ldots,x_n^p$ generate $C^{[1]}$ as $A$-algebra. After rearrangement, we may assume that the first $x_1^p,\ldots,x_{n(1)}^p$  form a minimal generating set. So, if $\exp(C/A)=2$, then we are done. Otherwise we continue in this way. The process stops at some point since $A\subset C$ has finite exponent.

Now for each $i=1,\ldots,n$ we define $e(i)$ as the smallest integer $e$ such that $n(e)<i$. In other words,
\begin{align}\label{eq: def of e(i)}
n(e(i))<i\leq n(e(i)-1).
\end{align}
 It is clear that
 $\exp(C/A)=e(1)\geq e(2)\geq\cdots\geq e(n)\geq 1.$
We call $(e(1),\ldots,e(n))$ the \textit{sequence of exponents} of the extension. Note that the sequence $n(0)\geq n(1)\geq \cdots$ can be recovered from this sequence via the formula
\begin{align*}
n(e)=\max\{ i: e(i)>e\}.
\end{align*}
Moreover, using these observations, we can easily prove that
\begin{align}\label{eq: sum of exponents}
    n(0)+n(1)+\cdots= e(1)+e(2)+\cdots+e(n).
\end{align}

\medskip

Let $x_1,\ldots,x_n\in C$  be GNGS for $C$ over $A$. Then $e(i)$ is the smallest integer $e$ such that $x_i^{p^e}\in A[x_1^{p^e},\ldots, x_{i-1}^{p^e}]=A[x_1^{p^e},\ldots,x_{n(e)}^{p^e}]$. Thus we have relations
\begin{align*}
		x_i^{p^{e(i)}}=P_i\left(x_1^{p^{e(i)}},\ldots,x_{n(e(i))}^{p^{e(i)}}\right),\quad i=1,\ldots,n
	\end{align*}
	for some polynomials $P_i\in A[Y_1,\ldots,Y_{n(e(i))}]$. Note that  we may assume that the degree of $P_i$ in the variable $Y_j$ is lower than $p^{e(j)-e(i)}$. From the above relations for $i=1,\ldots,n$, we obtain a surjective homomorphism of $A$-algebras
	\begin{align}\label{GNGS surjective morphism}
		A[X_1,\ldots,X_n]/\left\langle X_i^{p^{e(i)}}-
		P_i\left(X_1^{p^{e(i)}},\ldots,X_{n(e(i))}^{p^{e(i)}}\right): i=1,\ldots,n \right\rangle\to C
	\end{align}

\begin{proposition}
     Let $A\subset C$ be a finite extension of local rings with finite exponent and let $x_1,\ldots,x_n$ be a GNGS for $C$ over $A$. If the extension $C^{[e+1]}\subset C^{[e]}$ has a $p$-basis for each $0\leq e < \exp(C/A)$, then the map in (\ref{GNGS surjective morphism}) is an isomorphism, and therefore $x_1,\ldots,x_n$ is a NGS for $C$ over $A$. 
\end{proposition}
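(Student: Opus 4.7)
The plan is to compare $A$-module ranks on the two sides of the map (\ref{GNGS surjective morphism}). On the target side, I will show that $C$ is a free $A$-module of rank $N := p^{e(1)+\cdots+e(n)}$, using the hypothesis on $p$-bases in the filtration $C\supset C^{[1]}\supset\cdots\supset A$. On the source side, the ring $R$ on the left of (\ref{GNGS surjective morphism}) is generated as an $A$-module by the same number $N$ of reduced monomials. An elementary argument will then force both of these generating sets to be bases and the surjection to be an isomorphism.

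First I would compute the rank of $C$ over $A$. For each $0\leq e<\exp(C/A)$, the hypothesis gives a $p$-basis of $C^{[e]}$ over $C^{[e+1]}$; by Proposition \ref{p-basis=differential basis}, this is a differential basis, so $\Omega_{C^{[e]}/C^{[e+1]}}$ is a free $C^{[e]}$-module whose rank equals the size of the $p$-basis. Since $C^{[e]}$ is local, this rank coincides with the minimum number of generators of $\Omega_{C^{[e]}/C^{[e+1]}}$, which by Lemma \ref{generating sets and omega} equals $n(e)$. Therefore $C^{[e]}$ is free of rank $p^{n(e)}$ over $C^{[e+1]}$, and chaining these yields that $C$ is free over $A$ of rank $p^{n(0)+n(1)+\cdots}$. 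By formula (\ref{eq: sum of exponents}), this rank is exactly $N$.

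Next I would argue that $R$ is spanned as an $A$-module by the $N$ monomials $X_1^{\alpha_1}\cdots X_n^{\alpha_n}$ with $0\leq \alpha_i<p^{e(i)}$. Each defining relation in (\ref{GNGS surjective morphism}) rewrites $X_i^{p^{e(i)}}$ as a polynomial in $X_1^{p^{e(i)}},\ldots,X_{n(e(i))}^{p^{e(i)}}$ of degree strictly less than $p^{e(j)}$ in $X_j$; ordering tuples $(\alpha_n,\ldots,\alpha_1)$ lexicographically from the right (so that replacing the largest $i$ with $\alpha_i\geq p^{e(i)}$ strictly decreases the tuple while only altering lower-index exponents beneath their own bounds) shows the reduction terminates with a representative in the span of the reduced monomials.

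Finally I would combine the two estimates. We have surjections $A^N\twoheadrightarrow R\twoheadrightarrow C$ where $C\cong A^N$ as $A$-module, so the composition is a surjective endomorphism of $A^N$, and any surjective endomorphism of a finitely generated module over a commutative ring is an isomorphism. This forces the composition to be an isomorphism, so the first arrow $A^N\to R$ must be injective and hence an isomorphism, and then $R\to C$ is an isomorphism between free $A$-modules of rank $N$. The main obstacle in this argument is the rank identification $p^{n(e)}$ for $C^{[e]}$ over $C^{[e+1]}$, since it requires matching the size of a $p$-basis with the invariant $n(e)$; this is where the combination of Proposition \ref{p-basis=differential basis} and Lemma \ref{generating sets and omega}, plus the local hypothesis to invoke Nakayama, is essential. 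The termination of the monomial-reduction procedure for $R$ is routine but must be checked to bound the size of a generating set of $R$ by $N$.
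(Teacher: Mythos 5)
Your proof is correct and follows essentially the same route as the paper: both sides of (\ref{GNGS surjective morphism}) are compared as $A$-modules of size $p^{e(1)+\cdots+e(n)}$, using that each $C^{[e]}$ is free of rank $p^{n(e)}$ over $C^{[e+1]}$ (via Proposition \ref{p-basis=differential basis}, Lemma \ref{generating sets and omega} and locality) together with the identity (\ref{eq: sum of exponents}), and the surjection between modules of that common size is then forced to be an isomorphism. Your only deviation is a small but worthwhile refinement: where the paper simply asserts it is ``easy to show'' that the left-hand ring has rank $p^{e(1)+\cdots+e(n)}$, you prove only that it is spanned by the $p^{e(1)+\cdots+e(n)}$ reduced monomials (via the lexicographic reduction, which is sound) and then recover the isomorphism from the standard fact that a surjective endomorphism of a finitely generated module over a commutative ring is injective, thereby sidestepping a direct proof of freeness of the source.
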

\begin{proof}
    It is easy to show that the ring on the left of (\ref{GNGS surjective morphism}) has rank $p^{e(1)+\cdots+e(n)}$ as $A$-module. 
    On the other hand, $x_1^{p^{e}},\ldots,x_{n(e)}^{p^e}$ is a $p$-basis for $C^{[e+1]}\subset C^{[e]}$ by Corollary \ref{free omega implies existence of p-basis}. Thus, $C$ is a free $A$-module of rank $p^{n(1)+\cdots+n(\exp(C/A)-1)}$, which is equal to $p^{e(1)+\cdots+e(n)}$ by (\ref{eq: sum of exponents}). Thus (\ref{GNGS surjective morphism}) is a surjective morphism between free $A$-modules of the same rank, so it must be an isomorphism.
\end{proof}

\begin{corollary}
    Let $A\subset C$ be a finite extension of finite exponent. If $C^{[e+1]}\subset C^{[e]}$ is a Galois extension for each $0\leq e < \exp(C/A)$, then for any prime ideal $\p\subset A$, the extension $A_\p\subset C_\p$ has a NGS.
\end{corollary}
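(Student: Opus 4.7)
The plan is to reduce the statement to the local case already handled in the preceding proposition. The main tasks are (i) to verify that the hypothesis descends to the localization $A_\p \subset C_\p$, (ii) to confirm we are in the local-ring setting required by the preceding proposition, and (iii) to invoke the existence of a GNGS in that setting.

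First I would localize at $\p$. Since $A\subset C$ is a finite extension of finite exponent, the induced map $\Spec(C)\to\Spec(A)$ is a homeomorphism, so $C_\p$ is a local ring and $A_\p\subset C_\p$ is again a finite extension of finite exponent (indeed $\exp(C_\p/A_\p)\leq \exp(C/A)$). Next I would check that for every $e\geq 0$,
\begin{align*}
(C_\p)^{[e]}=(C^{[e]})_\p.
\end{align*}
This is exactly Proposition \ref{frobenius filtration and base change} applied to the flat map $A\to A_\p$. Consequently the subextension $(C_\p)^{[e+1]}\subset (C_\p)^{[e]}$ is the localization of $C^{[e+1]}\subset C^{[e]}$ at the prime of $C^{[e+1]}$ lying below $\p$. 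By Lemma \ref{lem: basics on Galois extensions}(2), Galois extensions are stable under base change, so $(C_\p)^{[e+1]}\subset (C_\p)^{[e]}$ is Galois for each $0\leq e<\exp(C_\p/A_\p)$.

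Second, I would produce a GNGS for $C_\p$ over $A_\p$ by the procedure described in the paragraph preceding the previous proposition: start with any minimal generating set of $C_\p$ as an $A_\p$-algebra, pass to the $p^1$-powers and rearrange so that the first $n(1)$ of them form a minimal generating set of $(C_\p)^{[1]}$, and iterate. This process terminates because the extension has finite exponent, and produces $x_1,\ldots,x_n\in C_\p$ such that $x_1^{p^e},\ldots,x_{n(e)}^{p^e}$ is a minimal generating set of $(C_\p)^{[e]}$ over $A_\p$ for each $e<\exp(C_\p/A_\p)$.

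Finally, having verified the hypotheses of the previous proposition for the local extension $A_\p\subset C_\p$, together with the existence of the GNGS $x_1,\ldots,x_n$ just constructed, I would conclude that the canonical surjection in (\ref{GNGS surjective morphism}) is an isomorphism, i.e.\ $x_1,\ldots,x_n$ is a NGS for $C_\p$ over $A_\p$. I do not foresee a real obstacle here; the only point that requires any care is the compatibility $(C_\p)^{[e]}=(C^{[e]})_\p$, which is the reason Proposition \ref{frobenius filtration and base change} was stated.
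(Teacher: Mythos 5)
Your proof is correct and is exactly the argument the paper intends (the corollary is left as an immediate consequence of the preceding proposition): localize at $\p$, use $(C_\p)^{[e]}=(C^{[e]})_\p$ from Proposition \ref{frobenius filtration and base change}, note the Galois property passes to the localization, take a GNGS (whose existence in the local case is the iterative construction you describe), and apply the proposition. The only micro-step worth making explicit is that the proposition asks for a $p$-basis of each $(C_\p)^{[e+1]}\subset (C_\p)^{[e]}$, which follows from "Galois" because these are extensions of local rings, so Definition \ref{def: Galois extension} (localizing at the maximal ideal) directly yields a $p$-basis.
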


Finally, we show that the existence of a NGS locally at any prime ideal implies that the extension is purely inseparable.
\begin{proposition}
    Let $A\subset C$ be a finite ring extension of finite exponent such that $C$ is a projective $A$-module. If the extension $A_\p\subset C_\p$ has a NGS for each prime ideal $\p\subset A$,
    then $A\subset C$ is purely inseparable.
\end{proposition}
\begin{proof}
    Let $\p\subset A$ be a prime ideal and let $x_1,\ldots,x_n\in C_\p$ be a NGS for $C_{\p}$ over $A_{\p}$. Then there are positive integers $e_1\geq\cdots\geq e_n$ and polynomials $P_i\in A_\p[X_1,\ldots,X_{i-1}]$ for $i=1,\ldots,n$, such that $C_\p=A_\p[x_1,\ldots,x_n]$ where the defining relations are $x_i^{p^{e_i}}=P_i(x_1^{p^{e_i}},\ldots, x_{i-1}^{p^{e_i}})$. 
    Let us write $$P_i(X_1,\ldots,X_{i-1})=\sum_{\alpha\in\mc{A}_i} a_{i,\alpha}X_1^{\alpha_1}\cdots X_{i-1}^{\alpha_{i-1}},$$ where $\mc{A}_i$ is a finite set of $i-1$-tuples. We define the $A_{\p}$-algebra 
    \begin{align*}
        A':=A_\p[z_{i,\alpha}: \alpha\in \mc{A}_i, i=1,\ldots,n]
    \end{align*}
    with relations $ z_{i,\alpha}^{p^{e_i}}=a_{i,\alpha}$.
    Note that $A'$ is a faithfully flat $A_\p$-algebra, and that the $A'$-algebra $C':=C_\p\otimes_{A_\p} A'$  has a presentation $C'=A_\p[x_1,\ldots,x_n]$ with relations
    \begin{align*}
    x_i^{p^{e_i}}=\sum_{\alpha\in \mc{A}_i} z_{i,\alpha}^{p^{e_i}} (x_1^{p^{e_i}})^{\alpha_1}\cdots (x_{i-1}^{p^{e_i}})^{\alpha_{i-1}}\,.
    \end{align*}
    We finally define $y_i:=x_i-\sum b_{i,\alpha} x_1^{\alpha_1}\cdots x_{i-1}^{\alpha_{i-1}}$. Then $C'=A'[y_1,\ldots,y_n]$, and the defining relations are exactly $y_i^{p^{e_i}}=0$. Thus $A\subset C$ is purely inseparable.
\end{proof}

To sum up, we have  proved the following equivalences for a finite ring extension of finite exponent to be purely inseparable. 

\begin{theorem}[Satz 2.2.5, \cite{Pauer78}]\label{thm: characterization of purely inseparable}
    Let $A\subset C$ be a finite ring extension of finite exponent. Then the following conditions are equivalent.
    \begin{enumerate}
        \item $A\subset C$ is purely inseparable.
        \item $C^{[e+1]}\subset C^{[e]}$ is a Galois extension for all $0\leq e <\exp(C/A)$. 
        \item $\Omega_{C^{[e]}/A}$ is a projective $C^{[e]}$-module for all  $0\leq e <\exp(C/A)$. 
        \item $C$ is a projective $A$-module and for each prime ideal $\p\subset A$, the $A_\p$-algebra $C_\p$ has a NGS.
    \end{enumerate}
\end{theorem}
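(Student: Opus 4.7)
My plan is to prove the theorem by establishing the chain of implications $(3)\Leftrightarrow(2)\Rightarrow(4)\Rightarrow(1)\Rightarrow(2)$. The equivalence $(2)\Leftrightarrow(3)$ is essentially free from what has already been set up: since $\Omega_{C^{[e]}/A}=\Omega_{C^{[e]}/C^{[e+1]}}$ and each $C^{[e+1]}\subset C^{[e]}$ has exponent one, Theorem \ref{thm: Galois characterizations} applied layer by layer translates projectivity of $\Omega_{C^{[e]}/A}$ into the Galois property of $C^{[e+1]}\subset C^{[e]}$. So the real content lies in the three remaining arrows.

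For $(1)\Rightarrow(2)$ I would argue locally and then descend. Fix a prime $\mathfrak{p}\subset A$. By hypothesis there is a faithfully flat $A_\mathfrak{p}\to A'$ such that $C_\mathfrak{p}\otimes_{A_\mathfrak{p}}A'\cong A'[X_1,\dots,X_n]/\langle X_i^{p^{e_i}}\rangle$. Example \ref{example trivial modular extension} shows that for this model algebra $C'$, each step $(C')^{[e+1]}\subset (C')^{[e]}$ admits an explicit $p$-basis. Because either $A\to A'$ is flat or each $C^{[e+1]}\subset C^{[e]}$ will turn out to be flat (which is what we are trying to prove), I would set up a careful induction on the exponent: first show $C^{[1]}\subset C$ is Galois at every prime via Proposition \ref{frobenius filtration and base change} and Corollary \ref{cor: descent of p-basis}; then check that $A\subset C^{[1]}$ is again purely inseparable (of smaller exponent), completing the induction. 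Projectivity of the intermediate layers is extracted from Proposition \ref{tower of projective extensions}.

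For $(2)\Rightarrow(4)$ I would work locally at a prime $\mathfrak{p}$ and produce a normal generating sequence from a GNGS. Start with any minimal generating set of $C_\mathfrak{p}$ as $A_\mathfrak{p}$-algebra, peel off its $p^e$-th powers, and rearrange to obtain a sequence $x_1,\dots,x_n$ whose first $n(e)$ entries give a minimal generating set of $C_\mathfrak{p}^{[e]}$ at each level. Under hypothesis $(2)$ each $C_\mathfrak{p}^{[e+1]}\subset C_\mathfrak{p}^{[e]}$ admits a $p$-basis (by Corollary \ref{free omega implies existence of p-basis}), and a rank comparison via the identity (\ref{eq: sum of exponents}) shows that the natural surjection in (\ref{GNGS surjective morphism}) is an isomorphism of free $A_\mathfrak{p}$-modules. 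This converts the GNGS into a NGS. Projectivity of $C$ as $A$-module is obtained by iterating Proposition \ref{tower of projective extensions} through the tower $C\supset C^{[1]}\supset\cdots\supset A$, where each layer is Galois hence projective.

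For $(4)\Rightarrow(1)$ I would use the standard trick of adjoining $p^{e_i}$-th roots of the coefficients. Given a NGS $x_1,\dots,x_n$ of $C_\mathfrak{p}$ with relations $x_i^{p^{e_i}}=\sum_{\alpha}a_{i,\alpha}x_1^{\alpha_1}\cdots x_{i-1}^{\alpha_{i-1}}$, form $A':=A_\mathfrak{p}[z_{i,\alpha}]$ with $z_{i,\alpha}^{p^{e_i}}=a_{i,\alpha}$. This is faithfully flat over $A_\mathfrak{p}$ (a tower of free extensions of the form $A[z]/(z^{p^e}-a)$), and the substitution $y_i:=x_i-\sum_\alpha z_{i,\alpha}x_1^{\alpha_1}\cdots x_{i-1}^{\alpha_{i-1}}$ turns the defining relations into $y_i^{p^{e_i}}=0$, yielding the required form $A'[Y_1,\dots,Y_n]/\langle Y_i^{p^{e_i}}\rangle$. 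The anticipated obstacle is mostly bookkeeping: ensuring that the inductive descent step in $(1)\Rightarrow(2)$ correctly propagates both the purely inseparable property of $A\subset C^{[1]}$ and the projectivity statements needed to invoke Proposition \ref{tower of projective extensions}; once these are handled, the rest reduces to the concrete constructions above.
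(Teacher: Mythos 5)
Your proposal is correct and follows essentially the same route as the paper: the same cycle $(2)\Leftrightarrow(3)$ via Theorem \ref{thm: Galois characterizations}, $(1)\Rightarrow(2)$ by induction on the exponent using the faithfully flat model algebra, Proposition \ref{frobenius filtration and base change} and Corollary \ref{cor: descent of p-basis}, $(2)\Rightarrow(4)$ by upgrading a GNGS to a NGS through the rank comparison with (\ref{eq: sum of exponents}), and $(4)\Rightarrow(1)$ by adjoining $p^{e_i}$-th roots of the coefficients. The only slip is cosmetic: deducing projectivity of $C$ over $A$ from $(2)$ is really an iteration of Proposition \ref{restriction of scalar of projective modules} (composition of finite projective layers) rather than Proposition \ref{tower of projective extensions}, which already assumes $C$ projective over $A$.
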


\begin{corollary}\label{cor: purely inseparable inductively}
    Let $A\subset C$ be a finite ring extension of finite exponent. Then $A\subset C$ is purely inseparable if and only if $A\subset C^{[1]}$ is purely inseparable and $C^{[1]}\subset C$ is Galois.
\end{corollary}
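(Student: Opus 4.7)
The plan is to reduce this corollary to the equivalence $(1)\Leftrightarrow(2)$ of Theorem \ref{thm: characterization of purely inseparable}, which states that $A\subset C$ is purely inseparable if and only if every subextension $C^{[e+1]}\subset C^{[e]}$ in the Frobenius filtration (for $0\leq e<\exp(C/A)$) is Galois. The key algebraic identity, valid for any $A$-algebra $C$ and any $e\geq 0$, is
$$(C^{[1]})^{[e]} \;=\; A[(C^{[1]})^{p^e}] \;=\; A[C^{p^{e+1}}] \;=\; C^{[e+1]}.$$
This lets me identify the Frobenius filtration of $C^{[1]}$ over $A$ with a shifted version of the Frobenius filtration of $C$ over $A$. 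Once this is observed, both implications of the corollary follow by pure bookkeeping.

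For the forward direction, assuming $A\subset C$ is purely inseparable, I would first apply $(1)\Rightarrow(2)$ of Theorem \ref{thm: characterization of purely inseparable}. The case $e=0$ of that conclusion is exactly the statement that $C^{[1]}\subset C$ is Galois. For the remaining claim that $A\subset C^{[1]}$ is purely inseparable, I note that $A\subset C^{[1]}$ is finite of finite exponent (inherited from $A\subset C$), and then the identity above shows that the tower $(C^{[1]})^{[e+1]}\subset (C^{[1]})^{[e]}$ (for $0\leq e<\exp(C^{[1]}/A)$) coincides with the tower $C^{[e+2]}\subset C^{[e+1]}$, each level of which is Galois by hypothesis. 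Applying $(2)\Rightarrow(1)$ of Theorem \ref{thm: characterization of purely inseparable} to $A\subset C^{[1]}$ closes this direction.

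For the converse, assume $A\subset C^{[1]}$ is purely inseparable and $C^{[1]}\subset C$ is Galois. Then $A\subset C$ is finite as a composition of finite extensions, and has finite exponent, since for any $x\in C$ we have $x^p\in C^{[1]}$, hence $x^{p^{e_0+1}}\in A$ where $e_0:=\exp(C^{[1]}/A)$. To apply $(2)\Rightarrow(1)$ of Theorem \ref{thm: characterization of purely inseparable}, I must verify that $C^{[e+1]}\subset C^{[e]}$ is Galois for every $0\leq e<\exp(C/A)$. The case $e=0$ is exactly the hypothesis $C^{[1]}\subset C$ is Galois, while for $e\geq 1$ the identity above lets me rewrite this subextension as $(C^{[1]})^{[e]}\subset (C^{[1]})^{[e-1]}$, which is Galois by $(1)\Rightarrow(2)$ of Theorem \ref{thm: characterization of purely inseparable} applied to the purely inseparable extension $A\subset C^{[1]}$. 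I do not expect any genuine obstacle: the argument is essentially a reindexing exercise centered on the identity $(C^{[1]})^{[e]}=C^{[e+1]}$.
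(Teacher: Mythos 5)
Your proof is correct and follows essentially the same route as the paper, which states the corollary as an immediate consequence of the equivalence (1)$\Leftrightarrow$(2) in Theorem \ref{thm: characterization of purely inseparable} via exactly the reindexing identity $(C^{[1]})^{[e]}=C^{[e+1]}$ (together with $\exp(C^{[1]}/A)=\exp(C/A)-1$) that you make explicit. No gaps; you have simply written out the bookkeeping the paper leaves implicit.
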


\medskip 

We end this review with the natural question of whether a flat family of purely inseparable extensions defines a purely inseparable extension. 
In the case of exponent one,
a finite extension $A\subset C$ is Galois if and only if $C$ is a projective $A$-module and for each maximal ideal $\m\subset A$ the extension $A/\m\subset C/\m C$ is Galois, or equivalently, it has $p$-basis. This follows easily from Nakayama's Lemma. 
However, Pauer observed that this characterization is not longer true for purely inseparable extensions of higher exponent. 
The result would be true if for every maximal ideal $\m\subset A$  we could ensure that $C^{[e]}\otimes_A A/\m=(C/\m C)^{[e]}$. 
Now, according to Proposition \ref{frobenius filtration and base change},
this condition is satisfied by the following class of extensions.
\begin{definition}\label{def: F-extension}
    Let $A\subset C$ be a finite extension of finite exponent. We say that $A\subset C$ is an \textit{$\mc{F}$-extension} if the following equivalent conditions hold:
    \begin{enumerate}[(i)]
        \item $C^{[e]}$ is a projective $C^{[e+1]}$-module for all $e\geq 0$.
        \item $C$ is a projective $C^{[e]}$-module for all $e\geq 0$.
        \item $C^{[e]}$ is a projective $A$-module for all $e\geq 0$.
    \end{enumerate}
\end{definition}
\begin{remark}\label{rem: base change and F-extensions}
    Let $A\subset C$ be an $\mc{F}$-extension and let $A\to A'$ be any morphism of rings. Denote $C':=C\otimes_A A'$. Then  $C^{[e]}\otimes_A A'=C'^{[e]}$ for all $e\geq 0$, by Proposition \ref{frobenius filtration and base change}. Since being projective is stable under base change, it follows that $A'\subset C'$ is also an $\mc{F}$-extension.
\end{remark}

\begin{theorem}[Satz 2.2.5, \cite{Pauer78}]\label{thm: fibers of purely inseparable}
   Let $A\subset C$ be a finite extension of finite exponent. Then $A\subset C$ a is purely inseparable extension if and only if 
    \begin{enumerate}
        \item $A\subset C$ is an $\mc{F}$-extension.
        \item For any maximal ideal $\m\subset A$, the extension $A/\m A\subset C/\m C$ is purely inseparable.
    \end{enumerate}
\end{theorem}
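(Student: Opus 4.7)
The forward direction is quick: if $A\subset C$ is purely inseparable, Theorem \ref{thm: characterization of purely inseparable} gives that $C^{[e+1]}\subset C^{[e]}$ is Galois for each $0\leq e<\exp(C/A)$, so $C^{[e]}$ is a projective $C^{[e+1]}$-module; iterating Proposition \ref{restriction of scalar of projective modules} then shows $C^{[e]}$ is projective over $A$, establishing (1). Property (2) is just the base-change stability recorded in Proposition \ref{base change of flat descent of purely inseparable}.

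For the converse, the plan is to induct on $e:=\exp(C/A)$. The base case $e=0$ is trivial. For $e\geq 1$, I would consider the tower $A\subset C^{[1]}\subset C$ and apply Corollary \ref{cor: purely inseparable inductively}: it suffices to show that $A\subset C^{[1]}$ is purely inseparable and that $C^{[1]}\subset C$ is Galois. For the former, I would verify that $A\subset C^{[1]}$ satisfies hypotheses (1) and (2) of the theorem with exponent $e-1$ and invoke the induction. Condition (1) is inherited since $(C^{[1]})^{[e']}=C^{[e'+1]}$ is projective over $A$ by hypothesis. For condition (2), the $\mathcal{F}$-extension hypothesis on $A\subset C$ together with Proposition \ref{frobenius filtration and base change} provides the key identification $C^{[1]}/\mathfrak{m}C^{[1]}=(C/\mathfrak{m}C)^{[1]}$, so the fiber $A/\mathfrak{m}\subset C^{[1]}/\mathfrak{m}C^{[1]}$ is purely inseparable by Corollary \ref{cor: purely inseparable inductively} applied to the (by hypothesis) purely inseparable fiber $A/\mathfrak{m}\subset C/\mathfrak{m}C$.

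It remains to show $C^{[1]}\subset C$ is Galois. Since this is a local property on $\Spec C^{[1]}\cong\Spec A$ and the hypotheses are preserved under localization, I may assume $A$ (and hence $C^{[1]}$ and $C$) is local. Choose a $p$-basis $\bar x_1,\ldots,\bar x_n$ of the Galois fiber $(C/\mathfrak{m}C)^{[1]}\subset C/\mathfrak{m}C$, lift it to $x_1,\ldots,x_n\in C$, and form the $C^{[1]}$-algebra map
\[\phi\colon C^{[1]}[X_1,\ldots,X_n]/\langle X_i^p-x_i^p\rangle_{i=1}^{n}\longrightarrow C.\]
Modulo $\mathfrak{m}$, $\phi$ becomes the isomorphism exhibiting the fiber's $p$-basis (using once more the identification $C^{[1]}/\mathfrak{m}C^{[1]}=(C/\mathfrak{m}C)^{[1]}$), so $\phi$ is surjective by Nakayama. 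Both domain and codomain are free $C^{[1]}$-modules: the domain of rank $p^n$ by construction, and the codomain because $C^{[1]}$ is local with $C$ projective over it, of rank equal to the $(C/\mathfrak{m}C)^{[1]}$-rank of $C/\mathfrak{m}C$, which is $p^n$ from the fiber's $p$-basis. A surjection between free modules of the same finite rank is an isomorphism, so $\phi$ is an isomorphism and $x_1,\ldots,x_n$ is a $p$-basis of $C$ over $C^{[1]}$.

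The decisive step, and the main obstacle, is the rank identification $\mathrm{rank}_{C^{[1]}}(C)=p^n$, which depends crucially on the $\mathcal{F}$-extension hypothesis to identify $C^{[1]}/\mathfrak{m}C^{[1]}$ with $(C/\mathfrak{m}C)^{[1]}$; without this identification, fiberwise Galois information cannot be propagated up to $C^{[1]}\subset C$. This is exactly the point at which the naive version of the theorem without hypothesis (1) would break for exponents greater than one, in line with Pauer's observation that mere fiberwise pure inseparability no longer suffices.
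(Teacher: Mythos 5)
Your proof is correct. There is nothing in the paper to compare it with: the statement is quoted from Pauer (Satz 2.2.5) without proof, the text only remarking beforehand that the fibrewise criterion works as soon as one knows $C^{[e]}\otimes_A A/\m=(C/\m C)^{[e]}$, which Proposition \ref{frobenius filtration and base change} (see also Remark \ref{rem: base change and F-extensions}) provides for $\mc{F}$-extensions. Your induction on the exponent, with the Nakayama/rank argument at the exponent-one step $C^{[1]}\subset C$, is precisely a complete implementation of that hinted strategy, and the forward direction is handled correctly via Theorem \ref{thm: characterization of purely inseparable}, Proposition \ref{restriction of scalar of projective modules} and Proposition \ref{base change of flat descent of purely inseparable}. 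Two small points to tighten. First, since the fibre hypothesis is only given at maximal ideals, the reduction to the local case should be phrased as: it suffices to check the Galois property of $C^{[1]}\subset C$ after localizing at maximal ideals of $A$, which is legitimate because $\Omega_{C/C^{[1]}}$ is a finitely presented $C$-module whose projectivity can be tested at maximal ideals (Proposition \ref{prop: Galois iff omega projective}), or because a $p$-basis at $\m$ localizes to every prime below $\m$ (Lemma \ref{basic properties of p-basis}). Second, when verifying that $A\subset C^{[1]}$ is again an $\mc{F}$-extension you invoke formulation (3) of Definition \ref{def: F-extension}; this is safest done with formulation (1) (or (2)) instead, which is literally inherited from the identity $(C^{[1]})^{[e']}=C^{[e'+1]}$, whereas the equivalence with (3) is only asserted in that definition. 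Note also that the finiteness and projectivity of $C^{[1]}$ over $A$, needed both to run the induction and to give the split $A/\m\hookrightarrow C^{[1]}/\m C^{[1]}$, follow from the $\mc{F}$-hypothesis via Proposition \ref{tower of projective extensions}.
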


\medskip

\subsection{Extending differential operators along a Galois extension}
We are going to prove a technical result on differential operators 
(Proposition \ref{prop: extension of differential operators}) which establishes that for a Galois extension $C'\subset C$ of $A$-algebras, a differential operator of $C'$ of order $\leq k$ can be extended (locally) to a differential operator of $C$ of order $\leq kp$.
This result will be used in
\S \ref{sec: purely inseparable and differential operators}
to re-obtain a characterization of purely inseparable extensions in terms of the modules of principal parts, as well as to give
 a new characterization in terms of some modules of differential operators.

\medskip

 The following lemma collects some basic and useful properties of the bracket for differential operators. We shall use these properties later, especially in the proof of Proposition \ref{prop: extension of differential operators} on extension of differential operators.

\begin{lemma}\label{bracket properties}
    Let $C$ be an $A$-algebra and let $M$ be a $C$-module. Let $D\colon C\to M$ be an $A$-linear map. For any $x,y\in C$ and $a\in A$, we have:
	\begin{enumerate}[(1)]
		\item[(0)] (Linearity) $[ax,D]=a[x,D]$ and $[x+y,D]=[x,D]+[y,D];$
		 
		\item  (Symmetry) $[x,[y,D]]=[y,[x,D]]$;
		
		\item (Decomposition for a product) $[xy,D]=x[y,D]+y[x,D]-[y,[x,D]];$ 
		
		\item (Reduction to generators) Let $S\subset C$ be a set and $n$ a positive integer. If $[s_0,[s_1,[\ldots,[s_n,D]\ldots]]]=0$ for any $s_0,\ldots,s_n\in S$, then $[f_0,[f_1,[\ldots,[f_n,D]\ldots]]]=0$ for any $f_0,\ldots,f_n\in A[S]$.
	
		\item (Composition $p$-times) For any $x\in C$,  $[\underbrace{x,[\cdots,[x}_p,D]\cdots]]=[x^p,D]$.
	\end{enumerate}
\end{lemma}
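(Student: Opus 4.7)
The plan is to reformulate the bracket in terms of commuting operators on $\on{Hom}_A(C,M)$. For $x\in C$, let $L_x$ denote left multiplication (acting on the target $M$) and $R_x$ right multiplication (acting by precomposition with multiplication by $x$ on $C$). Then by definition $[x,D]=(L_x-R_x)(D)=:\Delta_x(D)$. Since $A$-linearity of $D$ gives $L_x R_y=R_y L_x$ and since $C$ is commutative, a short expansion shows $\Delta_x\Delta_y=\Delta_y\Delta_x$. This operator viewpoint makes (0), (1), and (4) essentially immediate.

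Property (0) is just $A$-linearity of $D$: $\Delta_{ax}=a\Delta_x$ and $\Delta_{x+y}=\Delta_x+\Delta_y$ hold termwise. Property (1) is the commutation $\Delta_x\Delta_y=\Delta_y\Delta_x$ just noted. Property (2) follows from a direct computation---for instance, expand $\Delta_{xy}=L_xL_y-R_xR_y=L_x\Delta_y+\Delta_x R_y$ and then substitute $R_y=L_y-\Delta_y$ to match the stated decomposition, or simply verify both sides at a generic $c\in C$.

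The main obstacle is (3), because the product rule of (2) produces a length-$(n+2)$ nested bracket and thereby raises, rather than lowers, the apparent complexity. The key is to induct on the total excess degree $\delta:=\sum_{i=0}^{n}(r_i-1)$, after using $A$-multilinearity (from (0)) together with the identity $\Delta_a=0$ for $a\in A$ to reduce each $f_i$ to a pure monomial of length $r_i\geq 1$ in elements of $S$. When $\delta=0$ every $f_i$ lies in $S$ and the hypothesis applies. When $\delta>0$, symmetry (1) lets us put a monomial of length $\geq 2$ in the outermost slot; writing $f_0=s_0g_0$ and applying (2) yields three terms whose excess degrees are $\delta-1$, at most $\delta-1$, and $\delta-1$ respectively. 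The last of these is a length-$(n+2)$ bracket, so the inductive statement must be strengthened to cover all lengths $\geq n+1$. Such longer brackets, however, automatically vanish once any $n+1$ consecutive arguments lie in $S$, which---via symmetry---reduces to the hypothesis applied to the innermost $n+1$ slots.

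For (4), the operator identity $\Delta_x^{p}=\Delta_{x^p}$ follows from the Frobenius identity $(L_x-R_x)^p=L_x^p-R_x^p$ in characteristic $p$, valid because $L_x$ and $R_x$ commute and all intermediate binomial coefficients vanish modulo $p$; the conclusion then follows from $L_x^p=L_{x^p}$ and $R_x^p=R_{x^p}$.
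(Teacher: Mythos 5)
Your proposal is correct, and for parts (0), (1), (2) and (4) it is essentially the paper's argument: the paper also verifies (1) and (2) by expanding at a point $c\in C$ (your operator identity $\Delta_x\Delta_y=\Delta_y\Delta_x$ is the same computation packaged differently), and (4) is proved there by exactly your Frobenius identity $(L_x-R_x)^p=L_x^p-R_x^p$ for the commuting operators $L_x,R_x$. The only genuine difference is in (3). You claim the induction \emph{must} be strengthened to brackets of all lengths $\geq n+1$ because the product rule produces a length-$(n+2)$ bracket; the paper's proof shows this strengthening is unnecessary. After writing $f_0=sg_0$ and applying (2), the second and third terms both contain the $(n+1)$-fold bracket $[s,[f_1,[\ldots,[f_n,D]\ldots]]]$, whose entries $s,f_1,\ldots,f_n$ have total degree strictly smaller than $\deg f_0+\cdots+\deg f_n$ (since $\deg f_0\geq 2$); hence this inner bracket is already zero by the induction hypothesis on total degree, the third term is a bracket with the zero operator, and all three terms vanish without ever invoking brackets of length $>n+1$. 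Your strengthened induction on the excess degree $\delta=\sum(r_i-1)$ across all lengths, with the observation that a longer bracket dies once its innermost $n+1$ slots lie in $S$, is a valid alternative and closes correctly, but it carries slightly more bookkeeping than is needed; the paper's version buys a shorter step at the cost of noticing the reuse of the same inner bracket in two of the three terms.
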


\begin{proof}
    (0): The linearity of the bracket is straightforward from the definition.
    \medskip
		
    (1): For any $x,y,c\in C$, we have: 
	$$[x,[y,D]](c)=x[y,D](c)-[y,D](xc)=xyD(c)-xD(yc)-yD(xc)+D(yxc).$$
	Note that this expression is symmetric in $x,y$. Thus, $[x,[y,D]]=[y,[x,D]]$ as we wanted.
	
    \medskip
	
    (2): For any $x,y,c\in C$, we have:
	\begin{align*}
		[xy,D](c)&=xyD(c)-D(xyc)=xyD(c)-xD(yc)+xD(yc)-D(xyc)\\
		&=x[y,D](c)+[x,D](yc)=x[y,D](c)+y[x,D](c)-[y,[x,D]](c).
	\end{align*}
	
	(3): By the linearity of the bracket mentioned in (0), it is enough to prove the assertion for $f_j\in A[S]$ that are monomial expressions on the elements of $S$. In this case we can proceed by induction on $\deg f_0+\cdots+\deg f_n$. If $\deg f_0=\cdots=\deg f_n=1$, then  $[f_0,[f_1,[\ldots,[f_n,D]\ldots]]]=0$ by the hypothesis. If $\deg f_i>1$ for some $i$, then by the symmetry in (1) we can assume that $\deg f_0>1$, so that $f_0=s g_0$ for some $s\in S$ and some monomial expression $g_0$ on $S$ of degree $<\deg f_0$. By applying the decomposition for a product in (2) we obtain
 \begin{align*}
     [f_0,[f_1,[\ldots[f_n,D]\ldots]]]=s[g_0,[f_1,[\ldots[f_n,D]\ldots]]]+g_0[s,[f_1,[\ldots[f_n,D]\ldots]]]-[g_0,[s,[f_1,[\ldots[f_n,D]\ldots]]]]
 \end{align*}
 The right hand side is zero by the inductive hypothesis, and so the left hand side is zero too.

    \medskip
	
    (4) Let $L_x,R_x \colon \Hom_A(C,M)\to \Hom_A(C,M)$ be as in  \S \ref{sec: preliminaries on differential operators and principal parts}. Since $L_x$ and $R_x$ commute, we have
	$$[\underbrace{x,[\cdots,[x}_p,D]\cdots]]=(L_x-R_x)^p(D)=(L_x^p-R_x^p)(D)=[x^p,D].$$ 
\end{proof}

As a first application, we show that the filtration (\ref{eq: filtration of Hom}) involving the modules of differential operators is exhaustive for finite extensions of finite exponent.
\begin{proposition}\label{differentials = homomorphisms}
Let $A\subset C$ be a finite ring extension of finite exponent, and let $M$ be a $C$-module. Then there exists $n$ such that
$\on{Diff}_A^n(C,M)=\on{Diff}_A(C,M)=\on{Hom}_A(C,M)$.    
\end{proposition}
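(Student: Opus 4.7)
The plan is to exploit the five bracket identities collected in Lemma \ref{bracket properties}, combined with the fact that a finite extension is finitely generated as an $A$-algebra and consists of elements of bounded exponent.

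First I would pick a finite set of $A$-algebra generators $c_1,\ldots,c_m$ of $C$ (which exists because $A\subset C$ is finite, hence a fortiori finitely generated as an $A$-algebra), and set $e := \exp(C/A)$, so that $c_i^{p^{e}} \in A$ for every $i$. I then claim that
\begin{align*}
n := m(p^e - 1)
\end{align*}
works, i.e.\ $\Diff_A^n(C,M) = \Hom_A(C,M)$, which also gives $\Diff_A(C,M) = \Hom_A(C,M)$. Fix an arbitrary $A$-linear $D\colon C\to M$; we must verify the vanishing condition (\ref{condition of differential operator}) for this $n$.

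By the reduction to generators (Lemma \ref{bracket properties}(3)) applied to $S=\{c_1,\ldots,c_m\}$, it is enough to check that $[y_0,[y_1,[\cdots,[y_n,D]\cdots]]]=0$ whenever every $y_j$ lies in $\{c_1,\ldots,c_m\}$. Since we have $n+1 = m(p^e-1)+1$ such entries drawn from a set of $m$ elements, the pigeonhole principle forces some generator $c_i$ to appear at least $p^e$ times among $y_0,\ldots,y_n$. Using the symmetry relation $[x,[y,\cdot]] = [y,[x,\cdot]]$ (Lemma \ref{bracket properties}(1)) repeatedly, I can transpose adjacent entries of the nested bracket at will, and thereby reorganize the expression as
\begin{align*}
[z_1,[z_2,[\cdots[z_r,[\underbrace{c_i,[c_i,[\cdots,[c_i}_{p^e \text{ times}},D]\cdots]]]]\cdots]]],
\end{align*}
where $z_1,\ldots,z_r$ are the remaining entries. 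Now Lemma \ref{bracket properties}(4) collapses the innermost $p^e$-fold bracket into a single bracket $[c_i^{p^e},D]$. Since $c_i^{p^e}\in A$ and $D$ is $A$-linear, $[c_i^{p^e},D](c) = c_i^{p^e}D(c) - D(c_i^{p^e}c) = 0$ for every $c\in C$. Thus the whole nested bracket vanishes, which completes the proof.

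The argument is essentially bookkeeping based on the lemma's identities; the only step that requires a little care is justifying the rearrangement of brackets, but this follows by induction from the symmetry identity (1), so there is no real obstacle.
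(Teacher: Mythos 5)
Your proof is correct and follows essentially the same route as the paper's: reduce to bracket expressions in a finite set of algebra generators via Lemma \ref{bracket properties}(3), apply the pigeonhole principle, use the symmetry (1) to move $p^e$ equal entries next to $D$, and collapse them with iterated applications of (4) to get $[c_i^{p^e},D]=0$. The only difference is your slightly sharper bound $n=m(p^e-1)$ versus the paper's $rp^e-1$, which is immaterial to the statement.
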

\begin{proof}
    Let $e:=\exp(C/A)$ and let $S$ be a finite generating set for $C$ as an $A$-algebra, say with $r$ elements. Let $D\colon C\to M$ be an $A$-linear map. Set $n:=r p^e -1$. We claim that $D$ is a differential operator of order $\leq n$. By property (3) of Lemma \ref{bracket properties} , we only need to check that $[s_0,[s_1,[\ldots,[s_n,D]\ldots]]]=0$ for any $s_0,\ldots,s_n\in S$. Now, since $|S|=r$, among any $n+1=r p^e$ elements $s_0,\ldots,s_n\in S$ there must be one which appears at least $p^e$ times. By property (1) of Lemma \ref{bracket properties}, we may assume that the last $p^e$ elements in $s_0,\ldots,s_n$ are all equal to a single element, say $s$. Finally, by applying (4) of Lemma \ref{bracket properties} exactly $e$ times we obtain that $[s_0,[s_1,[\ldots,[s_{n},D]\ldots]]]=[s_0,[s_1,[\ldots,[s_{n-p^e},[s^{p^e},D]]\ldots]]]$, which is zero because $s^{p^e}\in A$ and $D$ is $A$-linear.
\end{proof}

\begin{proposition}\label{restriction of differential operator to C1}
Let $C$ be an $A$-algebra, and let $M$ be a $C$-module. Then the restriction map
\begin{align*}
    \on{res} \colon \Hom_A(C,M)\to \Hom_A(C^{[1]},M),\quad D\mapsto D|_{C^{[1]}},
\end{align*}
is a morphism of left $C$-modules, and for any $k\geq 0$, it induces a restriction map
\begin{align*}
    \on{res} \colon \Diff_A^{pk}(C,M)\to \Diff_A^{k}(C^{[1]},M).
\end{align*}
In other words, if $D\colon C\to M$ is a differential operator from $C$ to $M$ of order $\leq pk$, then its restriction $D\colon C^{[1]}\to M$ is a differential operator from $C^{[1]}$ to $M$  of order $\leq k$.   \end{proposition}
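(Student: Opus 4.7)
The plan is to exploit the two key properties in Lemma~\ref{bracket properties}: the reduction to generators (3), and the identity $[x^p,D]=[\underbrace{x,[\cdots,[x}_p,D]\cdots]]$ from (4), which converts $p$-th powers into $p$-fold iterated brackets. The $C$-linearity of $\on{res}$ is immediate from the definition of the left $C$-module structures on $\Hom_A(C,M)$ and $\Hom_A(C^{[1]},M)$, since for $c\in C$ and $D\in\Hom_A(C,M)$ the value of $c\cdot D$ at any $x\in C^{[1]}$ is simply $c\,D(x)$.

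For the second statement, the first observation is that the bracket commutes with restriction on elements of the subring: for any $x\in C^{[1]}$ and any $A$-linear $D\colon C\to M$, one has
\begin{align*}
[x,D|_{C^{[1]}}](c)=xD(c)-D(xc)=[x,D](c)\qquad \text{for all }c\in C^{[1]},
\end{align*}
i.e.\ $[x,D|_{C^{[1]}}]=[x,D]|_{C^{[1]}}$. Iterating, for $x_0,\ldots,x_k\in C^{[1]}$, the nested bracket $[x_0,[x_1,[\ldots,[x_k,D|_{C^{[1]}}]\ldots]]]$ equals the restriction to $C^{[1]}$ of $[x_0,[x_1,[\ldots,[x_k,D]\ldots]]]\in\Hom_A(C,M)$. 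Thus it suffices to show that the latter vanishes on $C$ whenever $D$ has order $\leq pk$.

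Now by Lemma~\ref{bracket properties}(3) applied to the generating set $S=\{y^p:y\in C\}$ of the $A$-algebra $C^{[1]}=A[C^p]$, it is enough to verify the vanishing when each $x_i$ is of the form $y_i^p$ for some $y_i\in C$. Using Lemma~\ref{bracket properties}(4), each bracket $[y_i^p,-]$ can be rewritten as $p$ successive brackets with $y_i$, so that
\begin{align*}
[y_0^p,[y_1^p,[\ldots,[y_k^p,D]\ldots]]]=[\underbrace{y_0,[\ldots,[y_0}_p,[\underbrace{y_1,[\ldots,[y_1}_p,[\ldots,[\underbrace{y_k,[\ldots,[y_k}_p,D]\ldots]]\ldots]]\ldots]]
\end{align*}
is an iterated bracket in which $(k+1)p=pk+p>pk$ elements of $C$ appear. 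Since $D$ has order $\leq pk$, this nested expression is zero in $\Hom_A(C,M)$, and restricting to $C^{[1]}$ yields the required vanishing. The main (and only) non-routine point is recognizing that Lemma~\ref{bracket properties}(4)—which is where the characteristic $p$ hypothesis plays its role—is precisely what bridges the gap between order $\leq pk$ on $C$ and order $\leq k$ on $C^{[1]}$.
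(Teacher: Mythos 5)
Your proof is correct and follows essentially the same route as the paper: reduce to brackets against the generators $y^p$ of $C^{[1]}=A[C^p]$ via Lemma \ref{bracket properties}(3), then use Lemma \ref{bracket properties}(4) to expand $[y^p,-]$ into $p$ iterated brackets, producing $(k+1)p>pk$ bracket factors so the expression vanishes since $D$ has order $\leq pk$. Your explicit verification that $[x,D|_{C^{[1]}}]=[x,D]|_{C^{[1]}}$ for $x\in C^{[1]}$ just spells out a step the paper passes over with ``it is enough to show that \dots in $\Hom_A(C,M)$.''
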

\begin{proof}
The fact that the restriction map is a morphism of $C$-modules is trivial. We prove the second part.
Let $D\in \on{Diff}_A^{pk}(C,M)$, and let $\partial$ be its restriction to $C^{[1]}$. 
Recall that $C^{[1]}=A[C^p]$, hence, by Lemma \ref{bracket properties}, in order to show that $\partial\in \on{Diff}_A^{k}(C^{[1]},M)$,  we only need to show that for any $s_0,\ldots,s_k\in C$ it holds that
    $[s_0^p,[s_1^p,[\ldots,[s_k^p,\partial]\ldots]]]=0$ in $\on{Hom}_A(C^{[1]},M)$. 
    To show this it enough to show that  $[s_0^p,[s_1^p,[\ldots,[s_k^p,D]\ldots]]]=0$ in $\on{Hom}_A(C,M)$. Now, by Lemma \ref{bracket properties}(4), $$[s_0^p,[s_1^p,[\ldots,[s_k^p,D]\ldots]]]=
    [\underbrace{s_0,[\ldots,[s_0}_{p},[\underbrace{s_1,[\ldots,[s_1}_{p},[\ldots,[\underbrace{s_k,[\ldots,[s_k}_{p},D]\ldots]]\ldots]]\ldots]]]\ldots]],$$
    which is zero because $D$ has order $\leq pk$.
\end{proof}

 Proposition \ref{prop: extension of differential operators} will imply that if $C$ has a $p$-basis over $C^{[1]}$, then the restriction map $\on{Diff}_A^{pk}(C,M)\to \on{Diff}_A^{k}(C^{[1]},M)$ is surjective. The next two lemmas will be used in the proof of that proposition.

\begin{lemma}\label{bracket development}
    Let $C$ be an $A$-algebra and let $M$ be a $C$-module. Then for any $A$-linear map  $D\colon C \to M$, any $x_1,\dots,x_n\in C$,  and  any $c\in C$, we have:
	$$[x_1,[x_2,\dots,[x_n,D]]](c)=\sum_{S\subset [n]} (-1)^{n-|S|} \x_{S} \cdot D(\x_{S^c} c),$$
	where $[n]=\{1,\dots,n\}$, $\x_S=\prod_{i\in S}x_i$ and $\x_{S^c}=\prod_{i\notin S} x_i$.
\end{lemma}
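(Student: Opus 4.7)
The plan is to prove the identity by induction on $n$. The base case $n=1$ is the definition itself: $[x_1,D](c)=x_1D(c)-D(x_1c)$, which matches the two terms of the sum indexed by $S=\{1\}$ and $S=\emptyset$.

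For the inductive step, I would set $E:=[x_2,[x_3,\ldots,[x_n,D]]]$ and use the definition $[x_1,E](c)=x_1E(c)-E(x_1c)$. Applying the inductive hypothesis to both $E(c)$ and $E(x_1c)$ (viewing $E$ as an $A$-linear map from $C$ to $M$, and expanding over subsets $S\subset\{2,\ldots,n\}$) and then reindexing, the subsets $T\subset[n]$ split into two families: those with $1\in T$, written as $T=\{1\}\sqcup S$ with $S\subset\{2,\ldots,n\}$, which contribute to the $x_1E(c)$ term; and those with $1\notin T$, which contribute to $-E(x_1c)$. The sign bookkeeping works out cleanly because $(-1)^{n-|T|}=(-1)^{(n-1)-|S|}$ in the first case and $(-1)^{n-|T|}=-(-1)^{(n-1)-|T|}$ in the second. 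There is no real obstacle; the argument is purely combinatorial.

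A conceptually cleaner alternative, which I would mention if space permits, uses the commuting operators $L_x,R_x\colon \Hom_A(C,M)\to\Hom_A(C,M)$ from Subsection \ref{sec: preliminaries on differential operators and principal parts}. Since $C$ is commutative, all the operators $\{L_{x_i},R_{x_j}\}_{i,j}$ pairwise commute, and the bracket is $[x,D]=(L_x-R_x)(D)$. Hence
\[
[x_1,[x_2,\ldots,[x_n,D]]]=\prod_{i=1}^n(L_{x_i}-R_{x_i})(D)=\sum_{S\subset[n]}(-1)^{n-|S|}\Bigl(\prod_{i\in S}L_{x_i}\Bigr)\Bigl(\prod_{j\notin S}R_{x_j}\Bigr)(D).
\]
Evaluating the operator $\bigl(\prod_{i\in S}L_{x_i}\bigr)\bigl(\prod_{j\notin S}R_{x_j}\bigr)$ at $c\in C$ produces exactly $\x_S\cdot D(\x_{S^c}c)$, which gives the stated formula with no further work.
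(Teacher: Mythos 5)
Your inductive argument is correct and is essentially the paper's proof: the paper also proceeds by induction on $n$, only it applies the inductive hypothesis with the inner map $[x_n,D]$ as the base operator rather than peeling off the outer $x_1$ as you do, and the sign bookkeeping is identical. Your alternative via expanding $\prod_{i=1}^n(L_{x_i}-R_{x_i})$ is also valid (these operators do pairwise commute because $C$ is commutative) and mirrors the paper's own use of $L_x,R_x$ in Lemma \ref{bracket properties}(4), though the paper does not take that route for this lemma.
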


\begin{proof}
	We prove the assertion by induction on $n$. For $n=1$, the assertion is clear. Let $n>1$.
 Using that $[x_n,D]$ is an $A$-linear map from $C$ to $M$ and the inductive hypothesis for $n-1$, we obtain
	\begin{align*}
		[x_1,[x_2,\dots,[x_n,D]]](c)&=  \sum_{S\subset [n-1]} (-1)^{n-1-|S|} \x_{S} \cdot [x_n,D](\x_{S^c} \cdot c)\\
		&= \sum_{S\subset [n-1]} (-1)^{n-1-|S|} \x_{S} \cdot(x_n D(\x_{S^c} \cdot c) - D(x_n\x_{S^c} \cdot c))\\
		&= \sum_{S\subset [n-1]} (-1)^{n-1-|S|} \x_{S\cup \{n\}} \cdot D(\x_{S^c} \cdot c) + (-1)^{n-|S|}\x_{S} \cdot D(\x_{S^c\cup \{n\}} \cdot c)\\
		&= \sum_{S\subset [n]} (-1)^{n-|S|} \x_{S} \cdot D(\x_{S^c} c).
	\end{align*}
	Observe that the last equality holds because the first term of the sum includes the sets $S\subset [n]$ containing $n$ and the second term of the sum includes those that does not contain $n$. 
\end{proof}

We show now that the previous formula can be refined by grouping the repeated elements $x_i$ that are inside the bracket.
\begin{lemma}\label{grouped bracket development}
    Let $C$ be an $A$-algebra and let $M$ be a $C$-module. For any $A$-linear operator, $D\colon C \to M$ and for any $x_1,\dots,x_m\in C$, given $c\in C$, we have:
	\begin{equation*}
		\begin{split}
			&[\underbrace{x_1,[\ldots,[x_1}_{r_1},[\underbrace{x_2,[\ldots,[x_2}_{r_2},[\ldots,[\underbrace{x_m,[\ldots,[x_m}_{r_m},D]\ldots]]\ldots]]\ldots]]]\ldots]](c)\\
			&=\sum_{\substack{S_i\subset [r_i]\\ i=1,\dots,m }}\prod_{i=1}^m (-1)^{r_i-|S_i|} x_i^{|S_i|} D\left(c \cdot \prod_{i=1}^m x_i^{r_i-|S_i|} \right),
		\end{split}
	\end{equation*}
	where $r_i$ denotes the number of appearances of each $x_i$ on the bracket expression and $[d]=\{1,\dots,d\}$ for any positive integer $d$.
\end{lemma}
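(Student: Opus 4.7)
The plan is to deduce this formula directly from the ungrouped version in Lemma \ref{bracket development} by a straightforward reindexing of the sum, with no additional inductive work needed.

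First, I would set $n := r_1 + r_2 + \cdots + r_m$ and label the $n$ elements appearing in the nested bracket as $y_1, \ldots, y_n$, where $y_j = x_i$ precisely when $j$ lies in the $i$-th block of consecutive indices of length $r_i$. Concretely, if we set $N_0 := 0$ and $N_i := r_1 + \cdots + r_i$, then $y_j = x_i$ for $N_{i-1} < j \leq N_i$. Applying Lemma \ref{bracket development} to $y_1, \ldots, y_n$, the left-hand side equals
\begin{equation*}
    \sum_{S \subset [n]} (-1)^{n-|S|}\, \y_S \cdot D(\y_{S^c} c),
\end{equation*}
where $\y_S = \prod_{j\in S} y_j$ and $\y_{S^c} = \prod_{j\notin S} y_j$.

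Next, I would set up the bijection between subsets $S \subset [n]$ and tuples $(S_1, \ldots, S_m)$ with $S_i \subset [r_i]$, given by $S_i := \{\, k \in [r_i] : N_{i-1} + k \in S\,\}$. Under this bijection one has $|S| = \sum_i |S_i|$, and hence $n - |S| = \sum_i (r_i - |S_i|)$, so
\begin{equation*}
    (-1)^{n-|S|} = \prod_{i=1}^m (-1)^{r_i - |S_i|}.
\end{equation*}
Moreover, grouping together the $y_j$'s according to the block they belong to gives $\y_S = \prod_{i=1}^m x_i^{|S_i|}$ and $\y_{S^c} = \prod_{i=1}^m x_i^{r_i - |S_i|}$. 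Substituting these identities into the sum obtained from Lemma \ref{bracket development} yields exactly the claimed expression.

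There is no real obstacle here; the lemma is purely combinatorial bookkeeping on top of Lemma \ref{bracket development}. The only minor thing to be careful about is that the elements $y_j$ commute in $C$ (since $C$ is commutative), so we may freely regroup the products $\y_S$ and $\y_{S^c}$ by the value of $y_j$, and that $D$ is $A$-linear but the factors being pushed inside $D(\cdot)$ are multiplied in $C$, which is legitimate.
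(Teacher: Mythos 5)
Your proposal is correct and follows essentially the same route as the paper: apply Lemma \ref{bracket development} to the list with repetitions and then reindex the sum over $S\subset[n]$ via the block partition $[n]=A_1\uplus\cdots\uplus A_m$ with $|A_i|=r_i$, identifying $S$ with the tuple $(S_1,\dots,S_m)$ and regrouping signs and powers. Your version just makes the bijection and the identities $\mathbf{y}_S=\prod_i x_i^{|S_i|}$, $\mathbf{y}_{S^c}=\prod_i x_i^{r_i-|S_i|}$ slightly more explicit than the paper does.
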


\begin{proof}
	Let $n=r_1+\dots + r_m$. Note that there is a partition $[n]=A_1\uplus\dots \uplus A_m$ with $|A_i|=r_i$ for $i=1,\dots,m$. If we identify $A_i$ with $[r_i]$, choosing a subset $S\subset [n]$ is equivalent to choosing subsets $S_i\subset [r_i]$ such that $|S|=|S_1|+ \dots + |S_m|$. Then by Lemma \ref{bracket development}, the first expression on the statement is 
	\begin{align*}
		\sum_{S\subset [n]} (-1)^{n-|S|} \x_{S} \cdot D(\x_{S^c} c) &=\sum_{\substack{S_i\subset [r_i]\\ i=1,\dots,m }} \prod_{i=1}^m (-1)^{r_i-|S_i|} x_i^{|S_i|} D\left(c \cdot \prod_{i=1}^m x_i^{r_i-|S_i|} \right)
	\end{align*}
	where the last equality follows from the use of the partition of $[n]$ and from observing that $\x_S=x_1^{\alpha_1}\cdot\dots \cdot x_n^{\alpha_n}$ and $\x_{S^c}=x_1^{r_1- \alpha_1}\cdot\dots \cdot x_n^{r_n - \alpha_n}$ for some $0\leq \alpha_i \leq r_i$. In fact, $\alpha_i=|S_i|$. 
\end{proof}

\begin{proposition}\label{prop: extension of differential operators}
	Let $A\subset C$ be a ring extension of finite exponent  and assume that $C$ has a $p$-basis $x_1,\ldots,x_n$ over $C^{[1]}$. Let $M$ be a $C$-module. For each $A$-linear map $\partial \colon C^{[1]}\to M$, we define an $A$-linear map $D\colon C\to M$ by the formula
	\begin{align*}
	D\left(\sum\lambda_\alpha \x^\alpha\right)=\sum \x^\alpha\partial(\lambda_\alpha) ,
	\end{align*}
	where $\lambda_\alpha\in C^{[1]}$ and the sum is taken over the reduced monomials $\x^{\alpha}:=x_1^{\alpha_1}\cdot\ldots\cdot x_n^{\alpha_n}$, with $0\leq \alpha_i<p$. 
 Then the assignment $\partial\mapsto D$ gives a morphism of left $C$-modules
 \begin{align*}
    \on{ext} \colon  \Hom_A(C^{[1]},M) \rightarrow  \Hom_A(C,M)
 \end{align*}
 such that $\on{res}\circ \on{ext}=\on{Id}$, and for each $k\geq 0$, this map induces
 \begin{align*}
    \on{ext} \colon \Diff_A^{k}(C^{[1]},M)  \rightarrow \Diff_A^{pk}(C,M)
 \end{align*}
 In other words, if $\partial$ is a differential operator of order $\leq k$, then $D$ is a differential operator of order $\leq pk$.
\end{proposition}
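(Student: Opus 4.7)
The first three claims are immediate: the formula defining $D$ is unambiguous because $\{\x^\alpha\}$ is a $C^{[1]}$-basis for $C$; $\mathrm{ext}$ is a morphism of left $C$-modules since on both sides the $C$-action is inherited from multiplication in $M$ and commutes with the defining formula; and $D(\lambda) = \x^0 \partial(\lambda) = \partial(\lambda)$ for $\lambda \in C^{[1]}$, giving $\mathrm{res}\circ\mathrm{ext}=\mathrm{Id}$. For the substantive claim, the order bound, I would apply Lemma \ref{bracket properties}(3) with the generating set $S:=C^{[1]}\cup\{x_1,\ldots,x_n\}$ of the $A$-algebra $C$, reducing the task to showing $[s_0,[s_1,\ldots,[s_{pk},D]\ldots]]=0$ for all $s_0,\ldots,s_{pk}\in S$. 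The symmetry of Lemma \ref{bracket properties}(1) allows arbitrary reordering of the entries, and Lemma \ref{bracket properties}(4) allows $p$ adjacent copies of $x_i$ to be replaced by a single $x_i^p\in C^{[1]}$; iterating, I may assume the bracket has the form
\[ B = [\mu_1, [\ldots, [\mu_m, [\underbrace{x_1, [\ldots, [x_1}_{r_1}, [\ldots, [\underbrace{x_n, [\ldots, [x_n}_{r_n}, D] \ldots]] \ldots]]] \ldots]]] \]
with $\mu_j\in C^{[1]}$ and $0\leq r_i\leq p-1$, and it suffices to show $B=0$.

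To evaluate $B$ at $c=\sum_\beta \mu_\beta \x^\beta \in C$ (the $C^{[1]}$-basis expansion), I would apply Lemma \ref{grouped bracket development}, substitute the definition of $D$, and reduce each product $\x^{\mathbf{r}-\gamma+\beta}$ coordinatewise modulo $p$, splitting it as $\x^\rho\x^{p\delta}$ with $\x^\rho$ reduced and $\x^{p\delta}\in C^{[1]}$. The factor $\x^{p\delta}$ passes into the argument of $\partial$, and the resulting sum over subsets $T\subset[m]$ combines, by Lemma \ref{bracket development} applied to $\partial$ in place of $D$, into expressions of the form $[\mu_1,[\ldots,[\mu_m,\partial]\ldots]](\mu_\beta\x^{p\delta})$. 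Using the identity $\partial(a\lambda)=a\partial(\lambda)-[a,\partial](\lambda)$ iteratively to extract each factor $x_i^p$ of $\x^{p\delta}$ as an additional entry of the $\partial$-bracket, the whole expression becomes a signed sum of $\partial$-brackets whose total length (the outer $\mu_j$'s plus the extracted $x_i^p$-entries) exceeds $k$ in every nonvanishing summand, hence vanishes by the order bound on $\partial$.

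The main obstacle is the combinatorial accounting in the last step: verifying that every summand really has at least $k+1$ entries in its $\partial$-bracket. The budget of $pk+1$ original entries on $D$ distributes among three sources — the outer $\mu_j$'s (contributing $m$ entries), the $x_i^p$-entries produced by the Lemma \ref{bracket properties}(4)-reductions (contributing $\sum_i \lfloor r_i^{\mathrm{orig}}/p\rfloor$ entries), and the further $x_i^p$-entries extracted from the modular reductions of $\x^{\mathbf{r}-\gamma+\beta}$ — and the crux of the proof is showing that these three contributions always add up to at least $k+1$. A safer alternative would be an induction on $k$, using the easy identity $[\mu,D]=\mathrm{ext}([\mu,\partial])$ for $\mu\in C^{[1]}$ (whose right-hand side has order $\leq p(k-1)$ by induction) together with an explicit analysis of $[x_i,D]$ via the formula $[x_i,D](c)=-\sum_{\alpha_i=p-1}\x^{\alpha-(p-1)\mathbf{e}_i}[x_i^p,\partial](\lambda_\alpha)$, which relates $[x_i,D]$ to $\mathrm{ext}([x_i^p,\partial])$ composed with a shift of the $p$-basis decomposition.
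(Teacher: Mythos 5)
The routine parts of your proposal (well-definedness of $D$, $C$-linearity of $\on{ext}$, and $\on{res}\circ\on{ext}=\on{Id}$) are fine; the substantive claim is exactly where the argument stops short. In your main route you reduce, via Lemma \ref{bracket properties}(3), to brackets with entries in $C^{[1]}\cup\{x_1,\ldots,x_n\}$, expand with Lemma \ref{grouped bracket development}, and push $p$-th powers into the argument of $\partial$; but the conclusion you need --- that \emph{every nonvanishing summand} ends up with a $\partial$-bracket of length at least $k+1$ --- is precisely what you concede you have not verified, and as stated it is not what actually happens. In the paper's computation the individual summands (terms of the form $\prod_{i\notin U}x_i^p\cdot\partial\bigl(\lambda\prod_{i\in U}x_i^p\bigr)$) are not zero one by one: the vanishing comes from cancellations \emph{among} summands. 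An alternating-sum identity $\sum_{S\subset[r]}(-1)^{r-|S|}=0$ kills the contribution whenever some exponent satisfies $r_i+\alpha_i<p$, and in the remaining case the signed coefficients $\sigma_U$ satisfy $\sigma_U=(-1)^{|U|}\mu$, so the whole sum regroups (Lemma \ref{bracket development} read backwards) into a single term proportional to $[x_1^p,[\ldots,[x_m^p,\partial]\ldots]](\lambda)$ with $m\geq k+1$ because $pk+1=r_1+\cdots+r_m<pm$. Without this cancellation analysis, or a substitute for it, the main route is not a proof; this is a genuine gap, not a routine verification.

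Your ``safer alternative'' is in outline a correct and genuinely different strategy (arguably cleaner than the paper's Case II computation), but as written it is only a gesture. To complete it you would need: (i) the identity $[\mu,D]=\on{ext}([\mu,\partial])$ for $\mu\in C^{[1]}$, which by induction on $k$ gives $[\mu,D]$ order $\leq p(k-1)$; (ii) the corrected identity $[x_i,D]=\on{ext}([x_i^p,\partial])\circ\pi_i$ (your displayed formula has the wrong sign for the convention $[x,D](c)=xD(c)-D(xc)$), where $\pi_i$ is the $C^{[1]}$-linear map $\sum\lambda_\alpha\x^\alpha\mapsto\sum_{\alpha_i=p-1}\lambda_\alpha\x^{\alpha-(p-1)\e_i}$, together with the observation that $\pi_i$ is itself a differential operator of order $\leq p-1$ (it is the operator $\d_{(p-1)\e_i}$ of \ref{derivations associated with a p-basis} for the $p$-basis of $C$ over $C^{[1]}$); (iii) the fact, stated nowhere in your sketch nor in the paper's preliminaries, that a composition of differential operators of orders $a$ and $b$ is a differential operator of order $\leq a+b$ (proved from $[x,E\circ F]=[x,E]\circ F+E\circ[x,F]$); and (iv) the conclusion that every generator $s\in C^{[1]}\cup\{x_1,\ldots,x_n\}$ satisfies $\on{ord}[s,D]\leq pk-1$, so that any bracket of length $pk+1$ with entries in this generating set vanishes and Lemma \ref{bracket properties}(3) closes the induction. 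None of these steps appear in your proposal, so as it stands neither route establishes the proposition.
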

\begin{proof}
It is clear that $\on{ext}$ is a map of left $C$-modules such that $\on{res}\circ \on{ext}$ is the identity.
We now prove by induction on $k$ that if $\partial \colon C^{[1]}\to M$ is a differential operator of order $\leq k$, then $D \colon C\to M$ is a differential operator of order $\leq pk$. 

Let $k=0$. The $0$-order differential operator $\partial\colon C^{[1]} \to M$ is a $C^{[1]}$-linear map, so it is the
multiplication by some fixed element $m\in M$, that is, $\partial(\lambda)=\lambda m$ for all $\lambda\in C^{[1]}$. It follows from the definition of $D$ that $D(c)=cm$ for all $c\in C$. Thus, $D$ is a differential operator of order $0=p\cdot 0$, and the assertion is true.

Assume that $k>0$.
 We  have to show that $[f_0,[f_1,[\ldots[f_{pk},D]\ldots]]]]=0$ for all $f_0,\ldots,f_{pk}\in C$. Note that $C=A[x_1,\ldots,x_n]$, since $C=C^{[1]}[x_1,\dots, x_n]$ and $A\subset C$ has finite exponent.  Therefore, by Lemma \ref{bracket properties}(3), it is enough to show that 
\begin{align}\label{eq: equation to be proved}
	[x_{i_0},[x_{i_1},[\ldots[x_{i_{pk}},D]\ldots]]]]=0 \quad \forall x_{i_0},\ldots,x_{i_{pk}}\in \{x_1,\ldots,x_n\}.
\end{align}	
 
  We first enunciate a property on the evaluation of $D$ at some monomials that will be used succesively in what follows. Any monomial (not neccesarily reduced) $\x^{\beta}\in C$  admits a unique reduced expression $\x^{\beta}=\lambda \x^{\beta'}$, where $\lambda\in C^{[1]}$ and $\x^{\beta'}$ is a reduced monomial, so $D(\x^{\beta})=\partial(\lambda)\x^{\beta'}$. In particular, take $\x^{\beta}:=x_{l+1}^{\beta_{l+1}}\cdot \ldots \cdot x_n^{\beta_n}$, and let $\x^{\alpha}:=x_1^{\alpha_1}\cdot\ldots\cdot x_l^{\alpha_l}$ be a reduced monomial. Then the product $\x^{\alpha}\cdot\x^{\beta}$ is again a monomial whose reduced form is $\x^{\alpha}\x^{\beta}= \lambda \x^{\alpha} \x^{\beta'}$ and we have
	\begin{align}
		\tag{*} 
		D(\x^{\alpha}\x^{\beta})=\partial(\lambda)\x^{\alpha}\x^{\beta'}= \x^{\alpha}D(\x^{\beta})\ .
	\end{align}

\medskip

We now prove (\ref{eq: equation to be proved}).
We consider two cases according to the number of times that each element  $x_j$ of the $p$-basis appears in the list $x_{i_0},\ldots,x_{i_{pk}}$.

\medskip 

\textit{Case I: There is $j\in\{1,\dots,n\}$ such that $x_j$ appears at least $p$ times in the list $x_{i_0},\ldots,x_{i_{pk}}$.} By Lemma \ref{bracket properties}(1), we may assume that the last $p$ terms in the list $x_{i_0},\ldots,x_{i_{pk}}$ are all equal to $x_j$. Then, by Lemma \ref{bracket properties}(4), we have
\begin{align}\label{bracket case I}
[x_{i_0},[x_{i_1},[\ldots[x_{i_{pk}},D]\ldots]]]]=[x_{i_0},[x_{i_1},[\ldots[x_{i_{p(k-1)}},[x_j^p,D]]\ldots]]]]\ .
\end{align}
Note that $[x_j^p,D]$ is obtained from $[x_j^p,\partial]$ in a similar way to how $D$ is obtained from $\partial$. More precisely,
$$[x_j^p,D]\left(\sum\lambda_{\alpha}\x^{\alpha}\right)=\sum x_j^p\partial(\lambda_{\alpha})\x^{\alpha} - \partial(\lambda_{\alpha} x_j^p)\x^{\alpha}=\sum[x_j^p,\partial](\lambda_{\alpha})\cdot \x^{\alpha}.$$
Since $[x_j^p,\partial]$ is a differential operator from $C^{[1]}$ to $M$ of order $\leq k-1$, by the inductive hypothesis $[x_j^p,D]$ is a differential operator from $C$ to $M$ of order $\leq p(k-1)$. Thus the right term in (\ref{bracket case I}) is zero.

\medskip

\textit{Case II: Every $x_j$ appears at most $p-1$ times in the list $x_{i_0},\ldots,x_{i_{pk}}$.} Reordering $x_1,\ldots,x_n$, we may assume that only the first $m$ elements $x_1,\ldots,x_m$ of the $p$-basis appear on the list. If we denote by $r_i$ the number of times that each $x_i$ appears on the list, then $1\leq r_i<p$ and $r_1+\cdots+r_m=pk+1$. Thus, using the simmetry of Lemma \ref{bracket properties}(1), we can rearrange the terms inside of the brackets as
\begin{align*}
	[x_{i_0},[x_{i_1},[\ldots[x_{i_{pk}},D]\ldots]]]] =[\underbrace{x_1,[\ldots,[x_1}_{r_1},[\underbrace{x_2,[\ldots,[x_2}_{r_2},[\ldots,[\underbrace{x_m,[\ldots,[x_m}_{r_m},D]\ldots]]\ldots]]\ldots]]]\ldots]].
\end{align*}
Since $[x_{i_0},[x_{i_1},[\ldots[x_{i_{pk}},D]\ldots]]]]$ is an $A$-linear operator from $C$ to $M$, it is enough to show that its evaluation at $\lambda \x^\alpha$ is zero, for any $\lambda\in C^{[1]}$ and any reduced monomial $\x^{\alpha}$. 

\medskip

Let $\lambda \x^\alpha\in C$, where $\lambda\in C^{[1]}$ and $\x^{\alpha}$ is a reduced monomial. For any positive integer $r$, we denote by $[r]$ the set $\{1,\dots,r\}$. Then by Lemma \ref{grouped bracket development}, we obtain: 
\begin{align*}
	&[\underbrace{x_1,[\ldots,[x_1}_{r_1},[\underbrace{x_2,[\ldots,[x_2}_{r_2},[\ldots,[\underbrace{x_m,[\ldots,[x_m}_{r_m},D]\ldots]]\ldots]]\ldots]]]\ldots]](\lambda \x^\alpha)\\
	&=\sum_{\substack{S_i\subset [r_i]\\ i=1,\dots,m }}\prod_{i=1}^m (-1)^{r_i-|S_i|} x_i^{|S_i|} \cdot D\left(\lambda\x^\alpha \cdot \prod_{i=1}^m x_i^{r_i-|S_i|} \right)\\
	&=\sum_{\substack{S_i\subset [r_i]\\ i=1,\dots,m }} \prod_{i=1}^m(-1)^{r_i-|S_i|} x_i^{|S_i|} \cdot D\left(\lambda x_{m+1}^{\alpha_{m+1}}\cdots x_n^{\alpha_n} \cdot \prod_{i=1}^m x_i^{r_i+\alpha_i-|S_i|}
	\right)\ .
\end{align*}
By property (*) stated at the beginning of the proof, we see that the term $x_{m+1}^{\alpha_{m+1}}\cdots x_n^{\alpha_n}$ can be factored out from the expression. Thus, $[x_{i_0},[x_{i_1},[\ldots[x_{i_{pk}},D]\ldots]]]](\lambda \x^\alpha)=0$ if the following expression is also zero: 
\begin{align}\label{evaluation}
	 \sum_{\substack{S_i\subset [r_i]\\ i=1,\dots,m }} \prod_{i=1}^m(-1)^{r_i-|S_i|} x_i^{|S_i|} \cdot D\left(\lambda \cdot \prod_{i=1}^m x_i^{r_i+\alpha_i-|S_i|}
	 \right)\ .
\end{align}
We now consider two subcases depending on the values $r_i+\alpha_i$, for $i=1,\dots,m$.

\medskip

\textit{Case II.1: There is an index $i\in\{1,\dots,m\}$ such that $r_i+\alpha_i<p$.} Without loss of generality, we may assume that $i=1$. 
Then, in each term of the expression, (\ref{evaluation})
 we have
\begin{align*}
D\left(\lambda \cdot \prod_{i=1}^m x_i^{r_i+\alpha_i-|S_i|} \right)=x_1^{r_1+\alpha_1-|S_1|} \cdot D\left(\lambda \cdot \prod_{i=2}^m x_i^{r_i+\alpha_i-|S_i|}\right).
\end{align*}
This follows from property (*), since in this case $x_1^{r_1+\alpha_1-|S_1|}$ is a reduced monomial.
Thus (\ref{evaluation}) becomes
\begin{align*}
	& \sum_{\substack{S_i\subset [r_i]\\ i=1,\dots,m }} (-1)^{r_1-|S_1|} x_1^{r_1+|S_1|}\prod_{i=2}^m(-1)^{r_i-|S_i|} x_i^{|S_i|} \cdot D\left(\lambda \cdot \prod_{i=1}^m x_i^{r_i+\alpha_i-|S_i|}
	 \right)\\
&=\left(\sum_{S_1\subset [r_1]} (-1)^{r_1-|S_1|}\right) x_1^{r_1+\alpha_1}\sum_{\substack{S_i\subset [r_i]\\ i=2,\dots,m }} \prod_{i=1}^m(-1)^{r_i-|S_i|} x_i^{|S_i|} \cdot D\left(\lambda \cdot \prod_{i=1}^m x_i^{r_i+\alpha_i-|S_i|}
	 \right).
\end{align*}
This term is zero, because
\begin{align*}
	 \sum_{S_1\subset [r_1]} (-1)^{r_1-|S_1|} &= \sum_{l=0}^{r_1} (-1)^{r_1-l}\binom{r_1}{l} =(1-1)^{r_1} = 0.
\end{align*}

\medskip

\textit{Case II.2: For every $i\in\{1,\dots,m\}$, we have $r_i+\alpha_i\geq p$.} For a collection $S=(S_1,\dots,S_m)$ of subsets $S_i\subset [r_i]$, we define $U_S:=\{1\leq i \leq m : r_i+\alpha_i-|S_i|\geq p\}$. Then we can express (\ref{evaluation}) as follows:
\begin{align*}
\sum_{\substack{S=(S_1,\dots,S_m) \\ S_i\subset [r_i]} } \prod_{i=1}^m(-1)^{r_i-|S_i|} x_i^{|S_i|} \cdot D\left(\lambda \cdot \prod_{i\in U_S} x_i^{r_i+\alpha_i-|S_i|} \prod_{i\notin U_S} x_i^{r_i+\alpha_i-|S_i|} \right)\ .
\end{align*}
Note that $x_i^{r_i+\alpha_i-|S_i|}$ is a reduced monomial for all $i\notin U_S$. Hence, using property (*), we can rewrite the above expression as:
\begin{align}\label{evaluation3}
	\sum_{\substack{S=(S_1,\dots,S_m) \\ S_i\subset [r_i]} } \prod_{i=1}^m(-1)^{r_i-|S_i|} x_i^{|S_i|} \prod_{i\notin U_S} x_i^{r_i+\alpha_i-|S_i|} \cdot D\left(\lambda \cdot \prod_{i\in U_S} x_i^{r_i+\alpha_i-|S_i|} \right)\ .
\end{align}
Observe that 
 if $i\in U_S$, then $r_i+\alpha_i-|S_i|= p+l_i$ for some $l_i<p$, since $r_i,\alpha_i<p$. 
 Thus, we can write 
$$\lambda\cdot\prod_{i\in U_S} x_i^{r_i+\alpha_i-|S_i|}=\lambda\cdot \prod_{i\in U_S} x_i^p \prod_{i\in U_S} x_i^{l_i}, \quad \text{ where $\lambda\prod_{i\in U_S}x_i^p\in C^{[1]}$ \quad and \quad $\prod_{i\in U_S}x_i^{l_i}$ is a reduced monomial.}$$   
Using this factorization in (\ref{evaluation3}) and applying the definition of $D$, we obtain the following equivalent expression:
\begin{align*}
	\sum_{\substack{S=(S_1,\dots,S_m) \\ S_i\subset [r_i]} } \prod_{i=1}^m(-1)^{r_i-|S_i|} x_i^{|S_i|} \prod_{i\notin U_S} x_i^{r_i+\alpha_i-|S_i|} \prod_{i\in U_S}x_i^{l_i} \cdot \partial \left(\lambda \cdot \prod_{i\in U_S}x_i^p  \right)\ .
\end{align*}
Recall that $l_i= r_i+\alpha_i-|S_i|-p$ for $i\in U_S$, so by regrouping some terms, the above expression becomes
\begin{align*}
	&\sum_{\substack{S=(S_1,\dots,S_m) \\ S_i\subset [r_i]} } \prod_{i=1}^m(-1)^{r_i-|S_i|} \prod_{i\notin U_S} x_i^{r_i+\alpha_i} \prod_{i\in U_S}x_i^{r_i+\alpha_i-p} \cdot \partial \left(\lambda \cdot \prod_{i\in U_S}x_i^p  \right)\\
	&= \sum_{\substack{S=(S_1,\dots,S_m) \\ S_i\subset [r_i]} } \prod_{i=1}^m(-1)^{r_i-|S_i|} x_i^{r_i+\alpha_i-p} \prod_{i\notin U_S} x_i^p \cdot \partial \left(\lambda \cdot \prod_{i\in U_S}x_i^p  \right)\\
	&= \left(\prod_{i=1}^m x_i^{r_i+\alpha_i-p}\right) \cdot \sum_{\substack{S=(S_1,\dots,S_m) \\ S_i\subset [r_i]} } \prod_{i=1}^m(-1)^{r_i-|S_i|} \prod_{i\notin U_S} x_i^p \cdot \partial \left(\lambda \cdot \prod_{i\in U_S}x_i^p  \right)\ .
\end{align*}
By rearranging the above sum over $S=(S_1,\dots,S_m)$ according to the resulting subsets $U_S\subset [m]$, the  expression becomes
\begin{align}
	\nonumber &\left(\prod_{i=1}^m x_i^{r_i+\alpha_i-p}\right) \cdot \sum_{U\subset [m]} \sum_{\substack{S=(S_1,\dots,S_m) \\ S_i\subset [r_i], \ U_S=U }} \prod_{i=1}^m(-1)^{r_i-|S_i|} \prod_{i\notin U} x_i^p \cdot \partial \left(\lambda \cdot \prod_{i\in U}x_i^p  \right)\\ 
 \label{equation5}
 = &\left(\prod_{i=1}^m x_i^{r_i+\alpha_i-p}\right)\cdot  \sum_{U\subset [m]}\sigma_U\cdot   \prod_{i\notin U} x_i^p \cdot \partial \left(\lambda \cdot \prod_{i\in U}x_i^p  \right),
\end{align}
where 
\begin{align*}
	\sigma_U:=\sum_{\substack{S=(S_1,\dots,S_m) \\ S_i\subset [r_i], \ U_S=U }} \prod_{i=1}^m(-1)^{r_i-|S_i|}= \prod_{i\in U} \left(\sum_{\substack{S_i\subset [r_i], \\ r_i+\alpha_i-|S_i|\geq p }} (-1)^{r_i-|S_i|}\right) \prod_{i\notin U} \left( \sum_{\substack{S_i\subset [r_i], \\ r_i+\alpha_i-|S_i|< p }}(-1)^{r_i-|S_i|}\right).
\end{align*}
Note that the last equality follows since finite sums can be exchanged with finite products and the subsets $S_1,\dots,S_m$ are independent of each other in the expression of $\sigma_U$.

We now claim that $\sigma_U=(-1)^{|U|}\cdot \mu$, for some integer $\mu$ independent from the subset $U\subset [m]$.  
Suppose that $U\subset[m]$ is a proper subset and let $V=U\cup \{j\}\subset [m]$ for some $j\notin U$.
Observe that $\sigma_U$ and $\sigma_V$ have the same factors except the one corresponding to $j$. Indeed,
$$\sigma_U=\left(\sum_{\substack{S_j\subset [r_j], \\ r_j+\alpha_j-|S_j|< p }} (-1)^{r_j-|S_j|}\right)\prod_{i\in U} \left(\sum_{\substack{S_i\subset [r_i], \\ r_i+\alpha_i-|S_i|\geq p }} (-1)^{r_i-|S_i|}\right) \prod_{\substack{i\notin U \\ i\neq j}} \left( \sum_{\substack{S_i\subset [r_i], \\ r_i+\alpha_i-|S_i|< p }}(-1)^{r_i-|S_i|}\right),$$
$$\sigma_V=\left(\sum_{\substack{S_j\subset [r_j], \\ r_j+\alpha_j-|S_j|\geq p }} (-1)^{r_j-|S_j|}\right)\prod_{i\in U} \left(\sum_{\substack{S_i\subset [r_i], \\ r_i+\alpha_i-|S_i|\geq p }} (-1)^{r_i-|S_i|}\right) \prod_{\substack{i\notin U \\ i\neq j}} \left( \sum_{\substack{S_i\subset [r_i], \\ r_i+\alpha_i-|S_i|< p }}(-1)^{r_i-|S_i|}\right),$$
and the last two products on both expressions coincide. Therefore, we have that $\sigma_U+\sigma_V=0$, because
$$\sum_{\substack{S_j\subset [r_j], \\ r_j+\alpha_j-|S_j|< p }} (-1)^{r_j-|S_j|} + \sum_{\substack{S_j\subset [r_j], \\ r_j+\alpha_j-|S_j|\geq p }} (-1)^{r_j-|S_j|}= \sum_{S_j\subset [r_j]} (-1)^{r_j-|S_j|}=0.$$
This implies that $\sigma_V=-\sigma_U$.
It follows that $\sigma_U=(-1)^{|U|}\cdot\mu$, with $\mu=\sigma_\emptyset$ for all $U$.

\medskip

Thus, the expression in (\ref{equation5}) simplifies to 
\begin{align*}
&\left(\prod_{i=1}^m x_i^{r_i+\alpha_i-p}\right)\cdot
\sum_{U\subset [m]}
(-1)^{|U|}\cdot \mu \cdot 
\prod_{i\notin U} x_i^{p} \cdot\partial\left(\prod_{i\in U} x_i^p\lambda\right). 
\end{align*}
Finally, using the formula from Lemma \ref{bracket development}, we note that the above expression coincides with
\begin{align}\label{final evaluation}
	\mu \cdot \left(\prod_{i=1}^m x_i^{r_i+\alpha_i-p}\right)[x_1^{p},[\ldots,[x_m^p,\partial]\ldots]](\lambda),
\end{align}
which is our final expression for (\ref{evaluation}). To conclude recall that $pk+1=r_1+\cdots+r_m<pm$, so that $m\geq k+1$. Since $\partial \colon C^{[1]}\to M$ is a differential operator of order $\leq k$, the above term in (\ref{final evaluation}) is zero. 

We have covered all the cases, hence the proof is complete.
\end{proof}

\begin{corollary}\label{cor: restriction of diff surjective}
    Let $A\subset C$ be a finite ring extension of finite exponent. If $C$ has a $p$-basis over $C^{[1]}$, then for each $k\geq 0$ the restriction map $\on{res} \colon \Diff_A^{pk}(C,M)\to \Diff_A^k(C^{[1]},M)$ is surjective for any $C$-module $M$.
\end{corollary}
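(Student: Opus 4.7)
The plan is to deduce this corollary as an essentially immediate consequence of Proposition \ref{prop: extension of differential operators}. That proposition does most of the real work: assuming $x_1,\ldots,x_n$ is a $p$-basis of $C$ over $C^{[1]}$, it constructs, for any $C$-module $M$, an explicit left $C$-linear extension map
\[
\on{ext}\colon \on{Hom}_A(C^{[1]},M)\to \on{Hom}_A(C,M),\qquad \partial\mapsto D,
\]
defined by $D(\sum_\alpha \lambda_\alpha \mathbf{x}^\alpha)=\sum_\alpha \mathbf{x}^\alpha \partial(\lambda_\alpha)$, and it establishes two key properties: (i) $\on{res}\circ \on{ext}=\on{Id}$ on $\on{Hom}_A(C^{[1]},M)$; and (ii) if $\partial$ has order $\le k$ then $D=\on{ext}(\partial)$ has order $\le pk$, i.e., $\on{ext}$ restricts to a map $\on{Diff}_A^k(C^{[1]},M)\to \on{Diff}_A^{pk}(C,M)$.

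Given this, the proof of the corollary is a one-liner. Fix $k\ge 0$ and an arbitrary $\partial\in \on{Diff}_A^k(C^{[1]},M)$. Set $D:=\on{ext}(\partial)$. By (ii), $D\in \on{Diff}_A^{pk}(C,M)$, and by (i), $\on{res}(D)=\partial$. Therefore $\partial$ lies in the image of $\on{res}\colon \on{Diff}_A^{pk}(C,M)\to \on{Diff}_A^{k}(C^{[1]},M)$, which proves surjectivity.

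Since the statement of Proposition \ref{prop: extension of differential operators} already asserts both (i) and (ii), there is no remaining technical obstacle; the only thing to verify is that the proposition's hypotheses match those of the corollary, which they do verbatim (finite extension of finite exponent with a $p$-basis of $C$ over $C^{[1]}$). In particular, no separate computation is needed here: the heart of the argument is the combinatorial identity carried out in the proof of the proposition, where the cases I, II.1 and II.2 handle the bracketing behavior. The corollary simply packages the surjectivity consequence of having a section $\on{ext}$ of $\on{res}$ compatible with the order filtrations.
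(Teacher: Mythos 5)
Your proposal is correct and is exactly the paper's route: the corollary is stated as an immediate consequence of Proposition \ref{prop: extension of differential operators}, since $\on{ext}$ is a section of $\on{res}$ carrying $\Diff_A^k(C^{[1]},M)$ into $\Diff_A^{pk}(C,M)$, which gives surjectivity at once. The only (harmless) nuance is that the proposition does not even need finiteness of the extension, so the corollary's hypotheses are if anything stronger than required.
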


\medskip 

\subsection{Purely inseparable extensions and differential operators}\label{sec: purely inseparable and differential operators}

 The aim of this subsection is  to give a new characterization of purely inseparable extensions in terms of the modules of differential operators (Theorem \ref{thm: characterization of purely inseparable extensions}). Moreover, we re-obtain the characterization for purely inseparable extensions given by P. S. de Salas in \cite[Theorem 3.8]{PSan99} using different techniques. We study first the modules of differential operators and the modules of principal parts of a purely inseparable extension. Then we use the result on the extension of differential operators (Proposition \ref{prop: extension of differential operators}) to complete the proof of Theorem \ref{purely inseparable new characterization} stated in the introduction.

\medskip

We begin by studying the differential operators and the modules of principal parts
for  a certain subclass of purely inseparable extensions. For the next proposition, observe that given a $C$-module $M$, an element $m\in M$ and a differential operator $D\colon C\to C$ of order $\leq k$, the map $E\colon C\to M$ given by $E(c)=D(c)m$ is a differential operator of order $\leq k$. It is the composition of $D$ and right multiplication by $m$. It will be denoted by $m\cdot D$.

\begin{proposition}\label{prop: differential operators of modular split extension}
    Let $A$ be a ring and let $C=A[x_1,\ldots,x_n]$ be an $A$-algebra with presentation $x_1^{p^{e_1}}=\cdots=x_n^{p^n}=0$ for some positive integers $e_1,\dots,e_n$.  Set $\mathcal{A}:=\{\alpha=(\alpha_1,\ldots,\alpha_n)\in\mathbb{N}^n : 0\leq \alpha_j< p^{e_j}, \ j=1,\dots,n\}$. Then
    \begin{enumerate}
        \item  $\on{P}_{C/A}^k$ is a finite free $C$-module for all $k\geq 0$.
        \item For each $\alpha\in\mathcal{A}$, the $A$-linear map $\Delta_{\alpha}\colon C\to C$ defined as
	\begin{align*}
		\Delta_{\alpha}(\x^{\beta})=\binom{\beta}{\alpha}\x^{\beta-\alpha}
	\end{align*}
        is a differential operator of order $|\alpha|$.

        \item Let $M$ be a $C$-module. Then each differential operator $D\colon C\to M$ of order $\leq k$ can be written uniquely as
	\begin{align*}
            D=\sum_{\substack{\alpha\in\mc{A}\\|\alpha|\leq k}} m_\alpha \cdot \Delta_\alpha,\quad m_\alpha\in M.
	\end{align*}
    \end{enumerate}
\end{proposition}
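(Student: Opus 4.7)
The plan is to reduce all three parts to an explicit computation of $C\otimes_A C$ and of its diagonal ideal $J$. Set $z_i := 1\otimes x_i - x_i\otimes 1 \in C\otimes_A C$. Since $x_i^{p^{e_i}}=0$, the Frobenius identity in characteristic $p$ gives $z_i^{p^{e_i}} = (1\otimes x_i)^{p^{e_i}} - (x_i\otimes 1)^{p^{e_i}} = 0$. Hence there is a surjective $C$-algebra homomorphism
\begin{equation*}
\Phi\colon C[Z_1,\ldots,Z_n]/\langle Z_1^{p^{e_1}},\ldots,Z_n^{p^{e_n}}\rangle \to C\otimes_A C,\qquad Z_i\mapsto z_i.
\end{equation*}
Both sides are free $C$-modules of rank $p^{e_1+\cdots+e_n}$ (for the target, use $C\otimes_A C = C[y_1,\ldots,y_n]/\langle y_i^{p^{e_i}}\rangle$ with $y_i = 1\otimes x_i$), so $\Phi$ is an isomorphism. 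Under this identification $J$ corresponds to $(z_1,\ldots,z_n)$ and $J^{k+1}$ is the $C$-span of the monomials $\z^\gamma$ with $\gamma\in\mc{A}$ and $|\gamma|\geq k+1$. Therefore $\on{P}^k_{C/A}$ is a finite free $C$-module with basis $\{\overline{\z^\gamma}:\gamma\in\mc{A},\ |\gamma|\leq k\}$, establishing (1).

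For parts (2) and (3), I invoke the universal isomorphism $\Hom_C(\on{P}^k_{C/A},M)\cong \Diff^k_A(C,M)$, $\psi\mapsto\psi\circ\delta_k$, from (\ref{dual of principal parts}). Since $\on{P}^k_{C/A}$ is free on $\{\overline{\z^\gamma}\}_{\gamma\in\mc{A},|\gamma|\leq k}$, any such $\psi$ is determined uniquely by the tuple of values $m_\gamma := \psi(\overline{\z^\gamma}) \in M$. Using $1\otimes x_i = z_i + x_i$ I expand
\begin{equation*}
\delta_k(\x^\beta) \;=\; \overline{\textstyle\prod_{i=1}^n(z_i+x_i)^{\beta_i}} \;=\; \sum_{\substack{\gamma\in\mc{A}\\ |\gamma|\leq k}} \binom{\beta}{\gamma}\,\x^{\beta-\gamma}\,\overline{\z^\gamma}
\end{equation*}
inside $\on{P}^{k}_{C/A}$ (terms with $|\gamma|>k$ vanish since $\z^\gamma\in J^{|\gamma|}$, and $\binom{\beta}{\gamma}=0$ unless $\gamma\leq\beta$, which also forces $\gamma\in\mc{A}$). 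Applying $\psi$ gives $(\psi\circ\delta_k)(\x^\beta) = \sum_\gamma m_\gamma\binom{\beta}{\gamma}\x^{\beta-\gamma} = \sum_\gamma m_\gamma\Delta_\gamma(\x^\beta)$. Taking $M=C$, $k=|\alpha|$, and $\psi=\varphi_\alpha$ the dual-basis functional characterized by $\varphi_\alpha(\overline{\z^\gamma}) = \delta_{\alpha,\gamma}$, this identity becomes $\Delta_\alpha = \varphi_\alpha\circ\delta_{|\alpha|}$, exhibiting $\Delta_\alpha$ as a differential operator of order $\leq |\alpha|$, which is (2). For general $M$ and any $D\in\Diff^k_A(C,M)$, existence and uniqueness of the corresponding $\psi$ translates directly into existence and uniqueness of the expansion $D = \sum_{|\gamma|\leq k} m_\gamma\Delta_\gamma$, proving (3).

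The only substantive step is the identification $\Phi$; once it is in place, everything else is bookkeeping with the monomial basis of $\on{P}^k_{C/A}$ and the universal property of the modules of principal parts. The main potential pitfall is to verify carefully that $\Phi$ is well-defined (the vanishing of $z_i^{p^{e_i}}$ via Frobenius) and bijective (a rank comparison between two free $C$-modules of the same finite rank, which reduces to checking surjectivity on the chosen monomial basis).
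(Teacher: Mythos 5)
Your argument follows essentially the same route as the paper's: identify $C\otimes_A C$ with the $C$-algebra generated by $z_1,\ldots,z_n$ subject to $z_i^{p^{e_i}}=0$, read off that $J=(z_1,\ldots,z_n)$ and that $J^{k+1}$ is spanned by the monomials $\z^\gamma$ with $|\gamma|\geq k+1$, and then transport everything through the universal isomorphism $\Hom_C(\on{P}^k_{C/A},M)\cong\Diff^k_A(C,M)$. The only mechanical difference is how the presentation of $C\otimes_A C$ is justified: the paper base-changes the given presentation of $C$ and performs the change of generators $1\otimes x_i\mapsto z_i$, while you produce a surjection from $C[Z_1,\ldots,Z_n]/\langle Z_1^{p^{e_1}},\ldots,Z_n^{p^{e_n}}\rangle$ via the Frobenius identity and conclude bijectivity by a rank comparison (valid, since a surjection between finitely generated free modules of the same rank over a commutative ring is injective by the usual Cayley--Hamilton argument). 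Your computation of $\delta_k(\x^\beta)$ and the passage to parts (2) and (3) via the dual basis is the same bookkeeping the paper does.

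There is one small gap: part (2) asserts that $\Delta_\alpha$ has order \emph{exactly} $|\alpha|$, whereas writing $\Delta_\alpha=\varphi_\alpha\circ\delta_{|\alpha|}$ only gives order $\leq|\alpha|$. The lower bound needs a word: if $\Delta_\alpha$ had order $\leq|\alpha|-1$, the corresponding $C$-linear functional $\varphi_\alpha$ on $C\otimes_A C$ would have to vanish on $J^{|\alpha|}$, but $\varphi_\alpha(\z^\alpha)=1$ with $\z^\alpha\in J^{|\alpha|}$, a contradiction. (Equivalently, one can argue from the uniqueness in (3), applied at a level $k$ large enough that $\Diff^k_A(C,C)=\End_A(C)$, that $\Delta_\alpha$ cannot lie in the span of the $\Delta_\gamma$ with $|\gamma|<|\alpha|$.) This is exactly the observation the paper makes, namely that $\Delta_\alpha$ has order $\leq k$ if and only if $\varphi_\alpha$ kills $J^{k+1}$, if and only if $|\alpha|\leq k$. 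With that one line added, your proof is complete and matches the paper's.
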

\begin{proof}
    (1): The hypothesis implies that $A$-module $C$ has basis $\{\x^\alpha:\ \alpha\in \mc{A}\}$.
        Since presentations of an algebra are preserved under base change, the $C$-algebra $C\otimes_A C$ obtained from  $C$ by application of $C\otimes_A-$ has presentation $C\otimes_A C=C[1\otimes x_1,\ldots,1\otimes x_n]$ with defining relations $(1\otimes x_i)^{p^{e_i}}=0$, $i=1,\ldots,n$. 
    We can make a change of generators and consider instead $z_1,\ldots,z_n$, where $z_i:=1\otimes x_i-x_i\otimes 1$. The  defining relations are $z_i^{p^{e_i}}=0$,  $i=1,\ldots,n$. 
    It follows that $C\otimes_A C$ is a free $C$-module with basis given by the monomials $\{\mathbf{z}^{\alpha}: \alpha\in\mc{A}\}$.

    Let $J$ be the kernel of the multiplication map $C\otimes_A C\to C$. It is generated as an ideal by $z_1,\ldots,z_n$; hence for any $k$ the ideal
    $J^{k+1}$ is a free $C$-module generated by $\{\mathbf{z}^{\alpha}: \alpha\in\mc{A}, |\alpha|\geq k+1\}.$
    It follows that $\on{P}_{C/A}^k$ is a free $C$-module generated by the classes of $\{\mathbf{z}^{\alpha}: \alpha\in\mc{A}, |\alpha|\leq k\}$.

    \medskip
    
    (2): Let $\{\varphi_\alpha\colon C\otimes_A C\to C: \ \alpha\in\mc{A}\}$ be the dual basis of $\{\z^\alpha :  \alpha\in \mc{A}\}$, and let $\Delta_\alpha\colon C\to C$ be the corresponding $A$-endomorphisms of $C$.
    Observe that for any $\alpha\in \mc{A}$ we have
    \begin{align*}
        \Delta_\alpha(\x^\beta)=\varphi_\alpha(1\otimes \x^\beta)=\varphi_{\alpha}((\z+\x\otimes 1)^\beta)=\varphi_\alpha(\sum_\gamma \binom{\beta}{\gamma} \x^{\beta-\gamma}\z^\gamma)=\binom{\beta}{\alpha} \x^{\beta-\alpha}.
    \end{align*}
    Note also that $\Delta_\alpha$ is differential operator of order $\leq k$ if and only if $\varphi_\alpha$ is zero on $J^{k+1}$ if and only if $|\alpha|\leq k$. Hence $\Delta_\alpha$ is a differential operator of order $|\alpha|$. 
 
    \medskip
    
    (3): Let $M$ be a $C$-module. 
    Any homomorphism of $C$-modules $\varphi \colon C\otimes_A C\to M$ can be written uniquely as
    \begin{align*}
	\varphi=\sum_{\substack{\alpha\in\mc{A}}} m_\alpha \cdot\varphi_\alpha
	\end{align*}
 for certain $m_\alpha\in M$, $\alpha\in \mc{A}$. Namely, $m_\alpha=\varphi(\z^\alpha)$.
 By applying the identification $\on{Hom}_A(C,M)\cong \on{Hom}_C(C\otimes_A C,M)$
 we obtain that any $D\in\on{Hom}_A(C,M)$
 can be written uniquely as  $D=\sum_{\alpha\in \mc{A}}m_\alpha\cdot\Delta_\alpha$ for certain $m_\alpha\in M$. 
 Finally, $D$ is a differential operator of order $\leq k$ if and only if the corresponding map $\varphi:C\otimes_AC\to M$ is zero on $J^{k+1}$ if and only if $m_\alpha=0$ for $|\alpha|\geq k+1$.
%
%
\end{proof}

Using the above proposition we obtain the following conclusions  on the modules of principal parts and the modules of differential operators  for purely inseparable extensions of rings.
\begin{proposition}\label{prop: purely inseparable implies direct summand}
    Let $A\subset C$ be a purely inseparable extension. Then
    \begin{enumerate}
        \item $\on{P}_{C/A}^k$ is a finitely generated projective $C$-module for all $k\geq 0$.
        \item $\Diff_A^k(C)$ is a $C$-module direct summand of $\End_A(C)$ for all $k\geq 0$.
    \end{enumerate}
\end{proposition}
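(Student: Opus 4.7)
My plan is to prove (1) first by local/faithfully flat descent arguments, and then to deduce (2) as a formal consequence of (1) by dualizing an appropriate split short exact sequence.

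For (1), since $A\subset C$ is purely inseparable, $C$ is of finite presentation over $A$, and so $\on{P}_{C/A}^k$ is a finitely presented $C$-module by Proposition \ref{prop: principal parts and finite presentation}. By Proposition \ref{characterization of projective modules}, it suffices to check projectivity locally on $\Spec(C)$, which amounts to checking it at each prime ideal $\p\subset A$ (using that $A\subset C$ is finite of finite exponent, hence $\Spec(C)\to\Spec(A)$ is a homeomorphism). By the definition of purely inseparable extension, there exists a faithfully flat $A_\p\to A'$ such that $C':=C_\p\otimes_{A_\p}A'$ is of the split form $A'[X_1,\ldots,X_n]/\langle X_1^{p^{e_1}},\ldots,X_n^{p^{e_n}}\rangle$. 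By Proposition \ref{prop: principal parts and base change}, $\on{P}_{C'/A'}^k=C'\otimes_{C_\p}\on{P}_{C_\p/A_\p}^k$, and by Proposition \ref{prop: differential operators of modular split extension}(1), $\on{P}_{C'/A'}^k$ is a finite free $C'$-module. Since $C_\p\to C'$ is faithfully flat (as a base change of $A_\p\to A'$), Proposition \ref{projective modules are preserved by base change}(2) yields that $\on{P}_{C_\p/A_\p}^k$ is a projective $C_\p$-module. This gives local projectivity at every prime of $A$, hence projectivity of $\on{P}_{C/A}^k$ over $C$.

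For (2), I would use the natural identifications $\on{End}_A(C)\cong\on{Hom}_C(C\otimes_A C,C)$ and $\on{Diff}_A^k(C)\cong\on{Hom}_C(\on{P}_{C/A}^k,C)$ recorded in Subsection \ref{sec: preliminaries on differential operators and principal parts}. Under these identifications, the inclusion $\on{Diff}_A^k(C)\hookrightarrow\on{End}_A(C)$ corresponds to precomposition with the canonical projection $\pi\colon C\otimes_A C\twoheadrightarrow\on{P}_{C/A}^k$. By part (1), $\on{P}_{C/A}^k$ is a finite projective $C$-module, so the short exact sequence
\[
0\to J^{k+1}\to C\otimes_A C\xrightarrow{\pi}\on{P}_{C/A}^k\to 0
\]
of left $C$-modules splits. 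Choosing a $C$-linear section $s\colon\on{P}_{C/A}^k\to C\otimes_A C$ of $\pi$ and applying $\on{Hom}_C(-,C)$ produces a $C$-linear retraction of the inclusion $\on{Diff}_A^k(C)\hookrightarrow\on{End}_A(C)$, which is precisely the statement that $\on{Diff}_A^k(C)$ is a $C$-module direct summand of $\on{End}_A(C)$.

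The bulk of the work lies in (1); once that is established, (2) is essentially formal. The main technical ingredient is the compatibility of the modules of principal parts with base change (Proposition \ref{prop: principal parts and base change}), which allows the reduction to the split case. I do not expect any serious obstacle: the faithfully flat descent step relies on the standard fact that being a finitely generated projective module descends along faithfully flat extensions, and the split case has already been handled in Proposition \ref{prop: differential operators of modular split extension}.
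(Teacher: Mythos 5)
Your proposal is correct and follows essentially the same route as the paper: for (1) it reduces to a prime $\p\subset A$, passes along the faithfully flat base change $A_\p\to A'$ to the split form where Proposition \ref{prop: differential operators of modular split extension} gives freeness of the module of principal parts, and descends using faithful flatness of $C_\p\to C'$ together with Propositions \ref{prop: principal parts and finite presentation} and \ref{prop: principal parts and base change}; for (2) it splits the sequence $0\to J^{k+1}\to C\otimes_A C\to \on{P}_{C/A}^k\to 0$ and applies $\on{Hom}_C(-,C)$, exactly as in the paper. The only cosmetic difference is that the paper phrases the local step as checking flatness of $\on{P}_{C_\p/A_\p}^k$ and invokes "finitely presented + flat = projective," while you check projectivity locally directly; the two are equivalent here.
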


\begin{proof}
We prove (1).
$\on{P}_{C/A}^k$ is a $C$-module of finite presentation by Proposition \ref{prop: principal parts and finite presentation}. Therefore, it is enough to show that for each prime ideal $\p\subset A$, $C_\p\otimes_C\on{P}_{C/A}^k$ is a flat $C_\p$-module, or equivalently, by Proposition \ref{prop: principal parts and base change}, that $\on{P}_{C_\p/A_\p}^k$ is a flat $C_\p$-module.

Since $A\subset C$ is purely inseparable, there is a faithfully flat map $A_{\p}\to A'$ such that 
    \begin{equation}\label{eq: purely inseparable presentation}
        C':=C_{\p}\otimes_{A_{\p}} A'= A'[X_1,\ldots,X_n]/\langle X_1^{p^{e_1}},\ldots,X_n^{p^{e_n}}\rangle
    \end{equation}
    for some sequence of positive integers $e_1,\dots, e_n$. By Proposition \ref{prop: differential operators of modular split extension}, we know that $\on{P}^k_{C'/A'}$ is a finite free $C'$-module. 
    By Proposition \ref{prop: principal parts and base change} we also know that $\on{P}_{C'/A'}^k=C'\otimes_{C_{\p}} \on{P}_{C_{\p}/A_{\p}}^k$. Finally, note that $C_{\p}\to C'$ is faithfully flat since it is a base change of the faithfully flat map $A_{\p}\to A'$.
   Therefore, by faithfully flat descent, $\on{P}_{C_{\p}/A_{\p}}^k$ is a flat $C_{\p}$-module, as we wanted to show. 

   Property (2) follows easily from (1). Indeed, the exact sequence $0\to J^{k+1}\to C\otimes_A C\to \on{P}_{C/A}^k\to 0$, where $J$ is the kernel of the multiplication map $C\otimes_A C\to C$, is split, by (1). Therefore, $\on{Hom}_C(\on{P}_{C/A}^k,C)$ is a $C$-module direct summand of $\Hom_C(C\otimes_A C,C)$, or equivalently, $\Diff_A^k(C)$ is a $C$-module direct summand of $\End_A(C)$.
\end{proof}

We will show that the converse to the later proposition is true for any finite extension of rings $A\subset C$ with finite exponent. First, we will prove that if the modules of principal parts $\on{P}^{p^e}_{C/A}$ are finite projective $C$-modules, then the extension $A\subset C$ is purely inseparable. 
In order to prove it, we discuss some properties of the modules of principal parts. In what follows, for each positive integer $e$ we shall denote by $$\delta_k^{[e]} \colon C^{[e]}\to \on{P}_{C^{[e]}/A}^k$$ the universal differential operator  of order $k$ of the $A$-algebra $C^{[e]}$.
We discuss some relation between $\on{P}_{C/A}^{pk}$ and $\on{P}_{C^{[1]}/A}^k$. Note that the former can be seen as a $C^{[1]}$-module by restricting scalars.

\begin{lemma}\label{lem: principal parts and frobenius}
    Let $C$ be an $A$-algebra, and let $k\geq 0$. Then there is a unique homomorphism of $C^{[1]}$-modules $\on{P}_{C^{[1]}/A}^k\to \on{P}_{C/A}^{pk}$
    making the following diagram
    \begin{equation*}
    \begin{tikzcd}
        C\arrow[r,"\delta_{pk}"]& \on{P}_{C/A}^{pk}\\
        C^{[1]}\arrow[u, hook ] \arrow[r,"\delta_k^{[1]}"]& \on{P}_{C^{[1]}/A}^k\arrow[u]
        \end{tikzcd}
    \end{equation*}
    commutative.
    \end{lemma}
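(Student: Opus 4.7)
The plan is to produce the map by invoking the universal property of the module of principal parts (equation (\ref{dual of principal parts})), which identifies $C^{[1]}$-linear maps out of $\on{P}_{C^{[1]}/A}^k$ with $A$-differential operators on $C^{[1]}$ of order $\leq k$. The essential input is Proposition \ref{restriction of differential operator to C1} on restriction of differential operators.

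First, I would consider the composition $\delta_{pk}\circ \iota\colon C^{[1]}\to \on{P}_{C/A}^{pk}$, where $\iota\colon C^{[1]}\hookrightarrow C$ denotes the inclusion. The $C$-module $\on{P}_{C/A}^{pk}$ is regarded as a $C^{[1]}$-module by restriction of scalars along $\iota$. Since $\delta_{pk}$ is, by definition, an $A$-differential operator on $C$ of order $\leq pk$ with values in $\on{P}_{C/A}^{pk}$, Proposition \ref{restriction of differential operator to C1} applied to $D=\delta_{pk}$ yields that $\delta_{pk}\circ\iota$ is an $A$-differential operator from $C^{[1]}$ of order $\leq k$ with values in the $C^{[1]}$-module $\on{P}_{C/A}^{pk}$.

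Next, by the universal property of $\delta_k^{[1]}\colon C^{[1]}\to \on{P}_{C^{[1]}/A}^k$ applied to the $C^{[1]}$-module $M=\on{P}_{C/A}^{pk}$, the map
\[
\on{Hom}_{C^{[1]}}\bigl(\on{P}_{C^{[1]}/A}^k,\on{P}_{C/A}^{pk}\bigr)\longrightarrow \on{Diff}_A^k\bigl(C^{[1]},\on{P}_{C/A}^{pk}\bigr),\qquad \varphi\mapsto \varphi\circ \delta_k^{[1]},
\]
is an isomorphism of $C^{[1]}$-modules. Taking the preimage of $\delta_{pk}\circ \iota$ produces a unique $C^{[1]}$-linear map $\varphi\colon \on{P}_{C^{[1]}/A}^k\to \on{P}_{C/A}^{pk}$ satisfying $\varphi\circ \delta_k^{[1]}=\delta_{pk}\circ \iota$, which is exactly the commutativity of the displayed diagram. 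Uniqueness of $\varphi$ follows from the uniqueness part of the same universal property. There is no substantial obstacle here: once Proposition \ref{restriction of differential operator to C1} is invoked, the result is a formal consequence of the representability of higher-order differential operators by the modules of principal parts.
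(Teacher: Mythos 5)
Your argument is correct and is essentially the paper's own proof: restrict the universal operator $\delta_{pk}$ to $C^{[1]}$, note via Proposition \ref{restriction of differential operator to C1} that it is an $A$-differential operator of order $\leq k$, and then obtain existence and uniqueness of the $C^{[1]}$-linear map from the universal property of $\delta_k^{[1]}$ (the isomorphism in (\ref{dual of principal parts})). You have merely spelled out the same two steps in more detail.
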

  \begin{proof}
	The restriction of $\delta_{pk}$ to $C^{[1]}$ is a differential operator of order $\leq k$ by Proposition \ref{restriction of differential operator to C1}. Thus, the existence of our map is a consequence of the universal property of $\delta_k^{[1]}$. 
\end{proof}

\begin{proposition}
    \label{prop: retraction for module of principal parts}
    Let $A\subset C$ be a finite ring extension of finite exponent.
    \begin{enumerate}
        \item There is a natural map of $C$-modules
         \begin{align*}
        C\otimes_{C^{[1]}} \on{P}_{C^{[1]}/A}^k\to \on{P}_{C/A}^{pk},\quad 1\otimes \delta_k^{[1]}c^p\mapsto (\delta_{pk})^p.
    \end{align*}
    \item If $C$ has a $p$-basis over $C^{[1]}$, then the above map has a retraction $\on{P}_{C/A}^{pk}\to C\otimes_{C^{[1]}} \on{P}_{C^{[1]}/A}^k$.
    \end{enumerate}
\end{proposition}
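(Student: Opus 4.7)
The plan is to construct part (1) by directly applying Lemma \ref{lem: principal parts and frobenius} and extending scalars, and then to construct the retraction in part (2) using the extension of differential operators from Proposition \ref{prop: extension of differential operators}.

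For part (1), the $C^{[1]}$-linear map $\on{P}_{C^{[1]}/A}^k \to \on{P}_{C/A}^{pk}$ produced by Lemma \ref{lem: principal parts and frobenius} sends $\delta_k^{[1]}(c^p)$ to $\delta_{pk}(c^p)$ for $c \in C$. Applying $C \otimes_{C^{[1]}} -$ yields the desired $C$-linear map on the left hand side, with the stated behavior on simple tensors $1 \otimes \delta_k^{[1]}(c^p)$.

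For part (2), set $M := C \otimes_{C^{[1]}} \on{P}_{C^{[1]}/A}^k$, viewed as a $C$-module via the first factor. Define the $A$-linear map
\[
\partial \colon C^{[1]} \to M, \qquad \lambda \mapsto 1 \otimes \delta_k^{[1]}(\lambda);
\]
this is an $A$-differential operator from $C^{[1]}$ to $M$ of order $\leq k$, since $\delta_k^{[1]}$ is so and tensoring with $1$ preserves the bracket condition. Since by hypothesis $C$ admits a $p$-basis over $C^{[1]}$, Proposition \ref{prop: extension of differential operators} applies: the extension $D := \on{ext}(\partial) \colon C \to M$ is an $A$-differential operator of order $\leq pk$, given explicitly on a reduced-monomial expansion $c = \sum \lambda_\alpha \mathbf{x}^\alpha$ (with $\lambda_\alpha \in C^{[1]}$) by
\[
D(c) \;=\; \sum \mathbf{x}^\alpha \, \partial(\lambda_\alpha) \;=\; \sum \mathbf{x}^\alpha \otimes \delta_k^{[1]}(\lambda_\alpha).
\]
By the universal property of $\delta_{pk}$, there is a unique $C$-linear map $\varphi \colon \on{P}_{C/A}^{pk} \to M$ with $\varphi \circ \delta_{pk} = D$. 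I claim $\varphi$ is the sought retraction.

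To verify this, compose $\varphi$ with the map of part (1). On a generating element of the form $1 \otimes \delta_k^{[1]}(c^p) \in M$, the composition sends it first to $\delta_{pk}(c^p)$ and then to $\varphi(\delta_{pk}(c^p)) = D(c^p) = 1 \otimes \delta_k^{[1]}(c^p)$, where in the last equality we used the explicit formula for $D$ and the fact that $c^p \in C^{[1]}$ admits the trivial reduced-monomial expansion. Since such elements generate $M$ as a $C$-module, the composite is the identity on $M$, so $\varphi$ is a retraction. The main obstacle is precisely the appeal to Proposition \ref{prop: extension of differential operators}, which provides the crucial order-raising extension from $C^{[1]}$ to $C$; once that is available, everything else is a matter of bookkeeping with the universal properties.
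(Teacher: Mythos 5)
Your proof is correct and rests on the same key ingredient as the paper, namely the extension map of Proposition \ref{prop: extension of differential operators}: the paper packages it as surjectivity of the restriction map $\on{res}\colon \Diff_A^{pk}(C,M)\to\Diff_A^{k}(C^{[1]},M)$ for arbitrary $M$ (Corollary \ref{cor: restriction of diff surjective}) and extracts the retraction by Hom-duality, whereas you apply $\on{ext}$ directly to the operator $\lambda\mapsto 1\otimes\delta_k^{[1]}(\lambda)$ and check the composite on the generators $1\otimes\delta_k^{[1]}(c^p)$. This is essentially the same argument, and your final verification is sound because every element of $C^{[1]}=A[C^p]$ is an $A$-linear combination of $p$-th powers, so those elements do generate $C\otimes_{C^{[1]}}\on{P}^k_{C^{[1]}/A}$ as a $C$-module.
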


\begin{proof}
Part (1) is an immediate consequence of Lemma \ref{lem: principal parts and frobenius}. As for the proof of (2), it is enough to show that for any $C$-module $M$, the induced map
\begin{align*}
    \on{Hom}_C(\on{P}_{C/A}^{pk},M)\to \on{Hom}_C(C\otimes_{C^{[1]}} \on{P}_{C^{[1]}/A}^k,M)=\on{Hom}_{C^{[1]}}(\on{P}_{C^{[1]}/A}^k,M)
\end{align*}
is surjective. By the commutativity of the diagram of Lemma \ref{lem: principal parts and frobenius} and the
universal properties of $\delta_{pk}$ and $\delta_k^{[1]}$, this map can be identified with
the restriction map
\begin{align*}
   \on{res}\colon \Diff_A^{pk}(C,M)\to \Diff_A^k(C^{[1]},M),
\end{align*}
which is surjective by Corollary \ref{cor: restriction of diff surjective}.
\end{proof}
\color{black} 

\begin{proposition}\label{prop: purely inseparable and module of principal parts}
    Let $A\subset C$ be a finite ring extension of finite exponent. If the $C$-module $\on{P}_{C/A}^{p^e}$ is projective for all $0\leq e < \exp(C/A)$, then $A\subset C$ is purely inseparable.
\end{proposition}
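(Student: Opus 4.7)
The plan is to induct on $e:=\exp(C/A)$, first reducing to the case where $A$ is local so that the conclusion "$C^{[1]}\subset C$ is Galois" can be upgraded to "$C$ admits a $p$-basis over $C^{[1]}$" via Corollary \ref{free omega implies existence of p-basis}. This reduction is legitimate: by Proposition \ref{prop: principal parts and finite presentation} the modules $\on{P}_{C/A}^{p^e}$ are finitely presented, and by Proposition \ref{prop: principal parts and base change} their formation commutes with localization, so projectivity of $\on{P}_{C/A}^{p^e}$ descends to and ascends from the stalks at primes of $A$; purely inseparable is likewise a local property (Definition \ref{def: purely inseparable 2}(3)).

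The engine of the argument is the split short exact sequence of $C$-modules
$$0\to \Omega_{C/A}\to \on{P}_{C/A}^{1}\to C\to 0,$$
which shows that whenever $\on{P}_{C/A}^{1}$ is a projective $C$-module, so is $\Omega_{C/A}$. This settles the base case $e=1$: by Theorem \ref{thm: Galois characterizations}, $A\subset C$ is then Galois, hence purely inseparable.

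For the inductive step ($e>1$), I would first show that $C^{[1]}\subset C$ is Galois. Since every $A$-derivation of $C$ vanishes on $C^{[1]}=A[C^p]$ (as $d(c^p)=pc^{p-1}dc=0$), one has $\Omega_{C/A}=\Omega_{C/C^{[1]}}$, so the split sequence above again produces a projective $\Omega_{C/C^{[1]}}$, making $C^{[1]}\subset C$ Galois by Theorem \ref{thm: Galois characterizations}. In the local setting this means $C$ admits a $p$-basis over $C^{[1]}$, so Proposition \ref{prop: retraction for module of principal parts}(2) applies: for every $k\ge 0$, the natural map $C\otimes_{C^{[1]}}\on{P}_{C^{[1]}/A}^{k}\to \on{P}_{C/A}^{pk}$ is a split monomorphism of $C$-modules. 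Specializing to $k=p^{e'}$ with $0\le e'\le e-2$, the target $\on{P}_{C/A}^{p^{e'+1}}$ is projective by hypothesis, hence $C\otimes_{C^{[1]}}\on{P}_{C^{[1]}/A}^{p^{e'}}$ is a projective $C$-module. Since $C^{[1]}\to C$ is faithfully flat (Galois extensions are so), faithfully flat descent yields that $\on{P}_{C^{[1]}/A}^{p^{e'}}$ is projective over $C^{[1]}$ for all $0\le e'<\exp(C^{[1]}/A)=e-1$. This is exactly the inductive hypothesis for $A\subset C^{[1]}$, which is therefore purely inseparable; combined with $C^{[1]}\subset C$ Galois, Corollary \ref{cor: purely inseparable inductively} completes the argument.

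The delicate point is the transition from projectivity of $\on{P}_{C/A}^{pk}$ to projectivity of $\on{P}_{C^{[1]}/A}^{k}$: this is not formal and is precisely where the extension-of-differential-operators machinery of Proposition \ref{prop: extension of differential operators}, repackaged as the retraction in Proposition \ref{prop: retraction for module of principal parts}(2), does the essential work. Reducing to the local case to guarantee a genuine $p$-basis over $C^{[1]}$ is what unlocks its use.
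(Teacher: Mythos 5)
Your inductive engine --- projectivity of $\Omega_{C/A}=\Omega_{C/C^{[1]}}$ extracted from $\on{P}^1_{C/A}$, hence $C^{[1]}\subset C$ Galois; then the retraction of Proposition \ref{prop: retraction for module of principal parts}(2), faithfully flat descent along $C^{[1]}\to C$, and Corollary \ref{cor: purely inseparable inductively} --- is exactly the paper's proof. The problem is the opening move: you reduce the whole statement to local $A$ by asserting that being purely inseparable ``is a local property''. Definition \ref{def: purely inseparable 2} does contain the stalkwise condition (3), but it also contains condition (2): $C$ must be a finite \emph{projective} $A$-module. Knowing that $A_{\mathfrak p}\subset C_{\mathfrak p}$ is purely inseparable for every prime $\mathfrak p$ only tells you that the finite $A$-module $C$ is free at every stalk, i.e.\ flat; since $A$ is not assumed Noetherian and $C$ is only assumed finite (not finitely presented) over $A$, this need not yield projectivity of $C$ over $A$ --- compare Proposition \ref{characterization of projective modules}, where projectivity of a finitely generated module is equivalent to flatness \emph{plus finite presentation}, or to freeness on a finite cover by distinguished opens, not to freeness at all primes. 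So the local-to-global step you invoke is not available in the paper and is dubious in this generality, and what gets lost is precisely the global projectivity of $C$ over $A$ that the conclusion requires.

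The repair is to keep the induction global, which your inductive step essentially allows: $C^{[1]}\subset C$ is Galois globally (your argument via projectivity of $\Omega_{C/C^{[1]}}$ and Theorem \ref{thm: Galois characterizations} uses no localization); then, to prove that $\on{P}^{p^{e'}}_{C^{[1]}/A}$ is a projective $C^{[1]}$-module, use Proposition \ref{prop: Galois iff omega projective}(4) to produce finitely many $f_1,\ldots,f_r\in A$ generating the unit ideal such that $C_{f_i}$ has a $p$-basis over $C^{[1]}_{f_i}$, run your retraction-plus-descent argument over each $A_{f_i}$, and globalize with Proposition \ref{characterization of projective modules}, the module $\on{P}^{p^{e'}}_{C^{[1]}/A}$ being finitely generated over $C^{[1]}$ because $C^{[1]}$ is finite over $A$. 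The global projectivity of $C$ over $A$ then falls out of the induction itself via Corollary \ref{cor: purely inseparable inductively}, rather than being fed in through a stalkwise criterion; this is exactly how the paper structures the argument.
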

\begin{proof}
  We assume now that $\on{P}_{C/A}^{p^e}$ is a projective $C$-module for all $0\leq e < \exp(C/A)$. We prove that $A\subset C$ is purely inseparable by induction on the exponent, $\exp(C/A)$. First of all, note that from the decomposition $\on{P}_{C/A}^1=C\oplus\Omega_{C/A}$ it follows that $\Omega_{C/A}$ is a projective $C$-module. Moreover, since $\Omega_{C/C^{[1]}}=\Omega_{C/A}$ and the extension $C^{[1]}\subset C$ is finite, it follows that $C^{[1]}\subset C$ is Galois (Theorem \ref{thm: Galois characterizations}). In particular, if $\exp(C/A)=1$, this proves that $A\subset C$ is Galois and hence, purely inseparable.
    
    We now assume that $\exp(C/A)>1$. We are going to prove that
    $\on{P}_{C^{[1]}/A}^{p^e}$ is a projective $C^{[1]}$-module for $0\leq e <\exp(C^{[1]}/A)=\exp(C/A)-1$. 
        Since $C^{[1]}\subset C$ is Galois and $\Spec(A)$ and $\Spec(C^{[1]})$ are homeomorphic, it follows from Proposition \ref{prop: Galois iff omega projective} that
        there are $f_1,\ldots,f_r\in A$ such that $C_{f_i}$ has a $p$-basis over $C_{f_i}^{[1]}$. Therefore, for our purpose, we may assume that $C$ has a $p$-basis over $C^{[1]}$.  By Proposition \ref{prop: retraction for module of principal parts}, the natural map
    \begin{align*}
	C\otimes_{C^{[1]}} P_{C^{[1]}/A}^{p^e}\to P_{C/A}^{p^{e+1}}
    \end{align*}
    has a retraction of $C$-modules. This implies that $C\otimes_{C^{[1]}} P_{C^{[1]}/A}^{p^e}$ is a $C$-module direct summand of $P_{C/A}^{p^{e+1}}$, and therefore, it is a projective $C$-module for all $0\leq e< \exp(C^{[1]}/A) $. Since $C^{[1]}\to C$ is faithfully flat, it follows that $P_{C^{[1]}/A}^{p^e}$ is a projective $C^{[1]}$-module for all $0\leq e<\exp(C^{[e]}/A)$. We can now apply the inductive hypothesis to deduce that $A\subset C^{[1]}$ is purely inseparable. Since $C^{[1]}\subset C$ is Galois, we conclude that $A\subset C$ is purely inseparable by Corollary \ref{cor: purely inseparable inductively}.
\end{proof}

Finally, we use the result on extension of differential operators to prove that an $\mc{F}$-extension is purely inseparable if the modules of differential operators are direct summands of the module of endomorphisms. 

\begin{proposition}\label{prop: direct summand implies purely inseparable}
Let $A\subset C$ be an $\mc{F}$-extension such that $\Diff_A^{p^e}(C)$ is a $C$-module direct summand of $\End_A(C)$ for all $0\leq e<\exp(C/A)$. Then $A\subset C$ is purely inseparable.  \end{proposition}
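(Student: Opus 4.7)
The plan is to proceed by induction on $n:=\exp(C/A)$, using Corollary \ref{cor: purely inseparable inductively} at each step to reduce the problem to showing that $C^{[1]}\subset C$ is Galois and that $A\subset C^{[1]}$ again satisfies the hypotheses with smaller exponent. The base case $n=1$ follows immediately from the hypothesis applied to $e=0$: since $\Diff_A^1(C)=C\oplus\Der_A(C)$ is a $C$-direct summand of $\End_A(C)$, so is $\Der_A(C)$; because the $\mc{F}$-extension condition gives that $C$ is projective over $A$, Proposition \ref{prop: direct summand of endomorphism implies Galois} shows that $A\subset C$ is Galois, hence purely inseparable by Theorem \ref{thm: Galois characterizations}.

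For the inductive step ($n\geq 2$), I first use the identity $\Der_A(C)=\Der_{C^{[1]}}(C)$ (valid because any $A$-derivation kills $p$-th powers) and restrict the retraction $\End_A(C)\to\Der_A(C)$ to the $C$-submodule $\End_{C^{[1]}}(C)\subset\End_A(C)$; this yields a retraction $\End_{C^{[1]}}(C)\to\Der_{C^{[1]}}(C)$, so Proposition \ref{prop: direct summand of endomorphism implies Galois} (applied with base ring $C^{[1]}$, using that $C$ is projective over $C^{[1]}$) gives that $C^{[1]}\subset C$ is Galois. Since $A\subset C^{[1]}$ is clearly an $\mc{F}$-extension of exponent $n-1$, by Corollary \ref{cor: purely inseparable inductively} the proof reduces to verifying that $\Diff_A^{p^e}(C^{[1]})$ is a $C^{[1]}$-direct summand of $\End_A(C^{[1]})$ for each $0\leq e<n-1$; the inductive hypothesis will then yield that $A\subset C^{[1]}$ is purely inseparable.

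The main obstacle is this last verification. Being a direct summand is local, so at any prime $\p\subset A$ I may pass to a neighborhood on which $C$ admits a $p$-basis over $C^{[1]}$, using Proposition \ref{prop: Galois iff omega projective} for the Galois extension $C^{[1]}\subset C$ and the localization formula of Corollary \ref{cor: localization of diff}. On such a neighborhood, Proposition \ref{prop: extension of differential operators} supplies $C$-linear maps
\begin{align*}
\on{ext}\colon\Hom_A(C^{[1]},C)\to\End_A(C),\qquad \on{res}\colon\End_A(C)\to\Hom_A(C^{[1]},C),
\end{align*}
with $\on{res}\circ\on{ext}=\on{id}$ and with $\on{ext}$ sending $\Diff_A^{p^e}(C^{[1]},C)$ into $\Diff_A^{p^{e+1}}(C)$. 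Combining with the retraction $\pi\colon\End_A(C)\to\Diff_A^{p^{e+1}}(C)$ guaranteed by the hypothesis (applicable since $e+1\leq n-1$), the composition $\rho:=\on{res}\circ\pi\circ\on{ext}$ is a $C$-linear retraction of the inclusion $\Diff_A^{p^e}(C^{[1]},C)\hookrightarrow\Hom_A(C^{[1]},C)$: for $\partial\in\Diff_A^{p^e}(C^{[1]},C)$ one has $\on{ext}(\partial)\in\Diff_A^{p^{e+1}}(C)$, which $\pi$ fixes, so $\rho(\partial)=\on{res}(\on{ext}(\partial))=\partial$.

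To finish, I descend along $C^{[1]}\to C$. The finite presentation of $C^{[1]}$ over $A$ and of $\on{P}_{C^{[1]}/A}^{p^e}$ over $C^{[1]}$ (Proposition \ref{prop: principal parts and finite presentation}) gives, via flat base change (Proposition \ref{localization of Hom}), natural identifications $\End_A(C^{[1]})\otimes_{C^{[1]}}C\cong\Hom_A(C^{[1]},C)$ and $\Diff_A^{p^e}(C^{[1]})\otimes_{C^{[1]}}C\cong\Diff_A^{p^e}(C^{[1]},C)$, so the inclusion $\Diff_A^{p^e}(C^{[1]})\hookrightarrow\End_A(C^{[1]})$ becomes split after applying $-\otimes_{C^{[1]}}C$. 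Since $C^{[1]}\to C$ is faithfully flat (Proposition \ref{projective finite extensions are split} gives $C=C^{[1]}\oplus N$ as $C^{[1]}$-modules), and since $\End_A(C^{[1]})$ is of finite presentation as $C^{[1]}$-module, faithfully flat descent applied to the precomposition map $\Hom_{C^{[1]}}(\End_A(C^{[1]}),\Diff_A^{p^e}(C^{[1]}))\to\End_{C^{[1]}}(\Diff_A^{p^e}(C^{[1]}))$ produces a $C^{[1]}$-linear retraction, completing the verification and hence the induction.
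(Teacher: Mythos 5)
Your core strategy coincides with the paper's: induct on the exponent, use the $e=0$ case to show $C^{[1]}\subset C$ is Galois (your argument via $\Der_A(C)=\Der_{C^{[1]}}(C)$ and restriction of the retraction is fine), and then transfer the hypothesized retraction $\pi\colon\End_A(C)\to\Diff_A^{p^{e+1}}(C)$ through $\on{ext}$ and $\on{res}$ of Proposition \ref{prop: extension of differential operators}; your composite $\on{res}\circ\pi\circ\on{ext}$ is exactly the diagram in the paper's proof. The genuine gap is in the final passage from a splitting of $\Diff_A^{p^e}(C^{[1]},C)\subset\Hom_A(C^{[1]},C)$ to a splitting of $\Diff_A^{p^e}(C^{[1]})\subset\End_A(C^{[1]})$. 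You invoke ``faithfully flat descent'' applied to the precomposition map $\Hom_{C^{[1]}}(\End_A(C^{[1]}),\Diff_A^{p^e}(C^{[1]}))\to\End_{C^{[1]}}(\Diff_A^{p^e}(C^{[1]}))$, citing only finite presentation of $\End_A(C^{[1]})$. To compare that map with its base change along $C^{[1]}\to C$ you would also need $\End_{C^{[1]}}(\Diff_A^{p^e}(C^{[1]}))\otimes_{C^{[1]}}C\cong\End_C(\Diff_A^{p^e}(C^{[1]},C))$, i.e.\ that $\Diff_A^{p^e}(C^{[1]})$ is finitely presented over $C^{[1]}$ --- which is not known at this stage: it is the dual of the finitely presented module $\on{P}^{p^e}_{C^{[1]}/A}$, and in the paper's generality (no Noetherian or coherence hypotheses) such a dual need not even be finitely generated. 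Split injections do not descend along faithfully flat maps without a finiteness input of this kind. The same caveat applies to your opening assertion that ``being a direct summand is local'': checking the splitting at primes or on basic opens only shows that the finitely generated cokernel is locally projective, and upgrading this to a global splitting again needs an argument (finite presentation of the cokernel glued from a finite cover, then flat $+$ finitely presented $=$ projective).

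Both points are repairable, and the cheapest repair is the device the paper uses, which you already have in hand: on the neighborhood where $C$ has a $p$-basis over $C^{[1]}$, the $C^{[1]}$-linear projection $\varepsilon\colon C\to C^{[1]}$, $\sum\lambda_\alpha\x^\alpha\mapsto\lambda_0$, satisfies $\varepsilon(1)=1$, and post-composition with $\varepsilon$ gives a $C^{[1]}$-linear retraction $\Diff_A^{p^e}(C^{[1]},C)\to\Diff_A^{p^e}(C^{[1]})$; composing it with your $\rho$ and the inclusion $\End_A(C^{[1]})\hookrightarrow\Hom_A(C^{[1]},C)$ produces the required retraction with no descent at all. (Alternatively, descend projectivity of the finitely generated cokernel $\End_A(C^{[1]})/\Diff_A^{p^e}(C^{[1]})$ via Proposition \ref{projective modules are preserved by base change}.) The paper avoids the local-to-global issue entirely by localizing once at the start --- using Corollary \ref{localization of End}, Corollary \ref{cor: localization of diff} and Remark \ref{rem: base change and F-extensions} it reduces to a local base ring, over which $C$ has a global $p$-basis over $C^{[1]}$ --- and then runs the whole induction there; restructuring your argument this way closes both gaps while keeping your (correct) central diagram.
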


\begin{proof}
We proceed by induction on the exponent $\exp(C/A)$. The exponent one case was already settled in Proposition \ref{prop: direct summand of endomorphism implies Galois}. Indeed, the hypothesis in this case are that $C$ is a projective $A$-module and that $\on{Diff}_A^1=C\oplus \on{Der}_A(C)$ is a $C$-module direct summand of $\on{End}_A(C)$, and the latter implies that $\on{Der}_A(C)$ is a $C$-module direct summand of $\on{End}_A(C)$ too.

Assume now that $\exp(C/A)>1$. Since $A\subset C$ is an $\mc{F}$-extension, $C$ is a finite projective $A$-module. In particular, $C$ has finite presentation over $A$. This implies the following
 \begin{enumerate}
     \item $A\subset C$ is purely inseparable if and only if the extension $A_{\p}\subset C_{\p}$ is purely inseparable for each  prime ideal $\p\subset A$.
     \item $(\End_A(C))_{\p}=\End_{A_{\p}}(C_{\p})$ and $(\Diff_A^{p^e}(C))_{\p}=\Diff_{A_{\p}}^{p^e}(C_{\p})$ (see Corollary \ref{localization of End} and Corollary \ref{cor: localization of diff}).
 \end{enumerate}
 The hypothesis implies that $\Diff_{A_{\p}}^{p^e}(C_{\p})$ is a $C_{\p}$-direct summand of $\End_{A_{\p}}(C_{\p})$, since localization commutes with direct sum.  By Remark \ref{rem: base change and F-extensions}, we also know that $A_{\p}\subset C_{\p}$ is an $\mc{F}$-extension. Therefore, for the rest of the proof we can assume that $A$ is local, and hence $C$ and $C^{[e]}$ are also local for all $e\geq 0$.

 Recall that the extension $A\subset C$ is purely inseparable if and only if $A\subset C^{[1]}$ is purely inseparable and $C^{[1]}\subset C$ is Galois (Corollary \ref{cor: purely inseparable inductively}).
   We prove first that $C^{[1]}\subset C$ is a Galois extension. In fact, $C$ is a projective $C^{[1]}$-module because $A\subset C$ is an $\mc{F}$-extension. In addition, in the following chain of $C$-modules
   \begin{align}
      \on{Diff}_A^1(C)= \on{Diff}_{C^{[1]}}^1(C)\subset \on{End}_{C^{[1]}}(C)\subset \on{End}_A(C)
   \end{align}
    $\on{Diff}_A^1(C)$ is a $C$-module direct summand of $\on{End}_A(C)$ by hypothesis, and $\on{End}_{C^{[1]}}(C)$ is a $C$-module direct summand of $\End_A(C)$ because $C$ is a projective $C^{[1]}$-module (see Proposition \ref{prop: endomorphisms of projective submodule} for details). Therefore, $\on{Diff}_{C^{[1]}}^1(C)$ is a $C$-module direct summand of $\on{End}_{C^{[1]}}(C)$. By the exponent one case, $C^{[1]}\subset C$ is Galois.

We now prove that $A\subset C^{[1]}$ is purely inseparable. Since $\exp(C^{[1]}/A)=\exp(C/A)-1$, it is enough to show that $A\subset C^{[1]}$ satisfies the hypothesis on the statement and apply the inductive hypothesis. It is clear that $A\subset C^{[1]}$ is an $\mc{F}$-extension, since so is $A\subset C$. Thus, it is only left to prove that $\Diff_A^{p^e}(C^{[1]})$ is a $C^{[1]}$-direct summand of $\End_A(C^{[1]})$ for all $0\leq e <\exp(C^{[1]}/A)$.  There is a commutative diagram 
\begin{equation*}
 \begin{tikzcd}[row sep=huge]
\arrow[bend left=30, swap]{d}{\textrm{r}}      \End_A(C)\arrow[r,"\textrm{res}"] &\Hom_A(C^{[1]},C)\arrow[bend right=30, swap]{l}{\textrm{ext}}& \End_A(C^{[1]})\arrow{l}\\
\Diff_A^{p^{e+1}}(C) \arrow[r,"\textrm{res}"] \arrow{u} &  \arrow[bend right=30, swap]{l}{\textrm{ext}} \arrow{u}\Diff_A^{p^e}(C^{[1]},C) \arrow[bend left=30, swap]{r}{\textrm{r}} &\Diff_A^{p^e}(C^{[1]})\arrow{l}\arrow{u},
  \end{tikzcd}
 \end{equation*}
where $\on{res}$ denotes the restriction maps as in Proposition \ref{restriction of differential operator to C1}, $\on{ext}$ is the map defined in Proposition \ref{prop: extension of differential operators}, the unnamed maps  are natural inclusions, and $r$ denotes a retraction to the corresponding inclusion that we now explain. The existence of the vertical retraction on the left of the diagram follows from the hypothesis that $\on{Diff}_A^{p^{e+1}}(C)$ is a $C$-module direct summand of $\on{End}_A(C)$. The existence of the horizontal retraction map can be shown as follows. We have proved that $C$ has a $p$-basis $x_1,\ldots,x_n$ over $C^{[1]}$, so there is a $C^{[1]}$-module map $C\to C^{[1]}$ given by $\sum \lambda_\alpha \mathbf{x}^\alpha\mapsto \lambda_0$. Note that this is a retraction to the inclusion $C^{[1]}\to C$, so it induces a retraction $\Diff_A^{p^e}(C^{[1]},C)\to \Diff_A^{p^e}(C^{[1]})$ to the reverse inclusion.

All the maps in the above diagram are morphisms of $C^{[1]}$-modules.
It is easy to check now that the map from $\on{End}_A(C^{[1]})$ to $\on{Diff}_A^{p^e}(C^{[1]})$ deduced from the above diagram is a retraction of the reverse inclusion. We conclude that  $\Diff_A^{p^e}(C^{[1]})$ is a $C^{[1]}$-direct summand of $\End_A(C^{[1]})$, as we wanted. This completes the proof.
\end{proof}

To sum up, as a consequence of the already presented results, we obtain the next characterizations of purely inseparable extensions of rings. 

\begin{theorem}\label{thm: purely inseparable and differential operators}
    Let $A\subset C$ be a finite ring extension of finite exponent. Then the following conditions are equivalent.
    \begin{enumerate}
    \item $A\subset C$ is purely inseparable.
    \item $\on{P}_{C/A}^k$ is a projective $C$-module for all $k$.
    \item $\on{P}_{C/A}^{p^e}$ is projective $C$-module for all $0\leq e < \exp(C/A)$.
    \item  $A\subset C$ is an $\mc{F}$-extension and $\Diff_A^{k}(C)$ is a $C$-direct summand of $\End_A(C)$ for all $k$.
        \item $A\subset C$ is an $\mc{F}$-extension and $\Diff_A^{p^e}(C)$ is a $C$-direct summand of $\End_A(C)$ for all $0\leq e < \exp(C/A)$.
    \end{enumerate}
\end{theorem}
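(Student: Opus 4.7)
The plan is to establish the theorem by verifying two chains of implications: $(1)\Rightarrow(2)\Rightarrow(3)\Rightarrow(1)$ and $(1)\Rightarrow(4)\Rightarrow(5)\Rightarrow(1)$. Each individual implication is either an immediate specialization (a tautology) or a direct appeal to a proposition already proved earlier in the section, so the work is essentially that of assembling the pieces.

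For the first chain, the implication $(1)\Rightarrow(2)$ is exactly Proposition \ref{prop: purely inseparable implies direct summand}(1); $(2)\Rightarrow(3)$ is tautological, since $\{p^e : 0\leq e<\exp(C/A)\}$ is a subset of $\mathbb{N}_0$; and $(3)\Rightarrow(1)$ is Proposition \ref{prop: purely inseparable and module of principal parts}. For the second chain, I would first observe for $(1)\Rightarrow(4)$ that, by Theorem \ref{thm: characterization of purely inseparable}, each subextension $C^{[e+1]}\subset C^{[e]}$ is Galois; hence $C^{[e]}$ is a finite projective $C^{[e+1]}$-module and consequently $A\subset C$ is an $\mc{F}$-extension. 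The $C$-direct summand statement for $\on{Diff}_A^k(C)\subset \End_A(C)$ is then Proposition \ref{prop: purely inseparable implies direct summand}(2). The implication $(4)\Rightarrow(5)$ is again trivial, and $(5)\Rightarrow(1)$ is Proposition \ref{prop: direct summand implies purely inseparable}.

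The substantive content of the theorem therefore reduces to the two non-trivial implications $(3)\Rightarrow(1)$ and $(5)\Rightarrow(1)$, whose proofs are already in place. Both proceed by induction on $\exp(C/A)$: one first shows from the hypothesis on principal parts or on differential operators that $C^{[1]}\subset C$ is Galois (the exponent-one base case, handled either via projectivity of $\Omega_{C/A}=\Omega_{C/C^{[1]}}$ together with Theorem \ref{thm: Galois characterizations}, or via Proposition \ref{prop: direct summand of endomorphism implies Galois}); one then transfers the hypothesis from the top extension $A\subset C$ to the intermediate extension $A\subset C^{[1]}$ of strictly smaller exponent, and closes the induction by Corollary \ref{cor: purely inseparable inductively}. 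This transfer is the main obstacle, and it is exactly where the novel input of Proposition \ref{prop: extension of differential operators} (extension of differential operators across the Galois extension $C^{[1]}\subset C$) is used, either in the form of Proposition \ref{prop: retraction for module of principal parts} for the principal-parts chain or directly for the endomorphism-algebra chain.

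In summary, no new arguments are needed beyond citing the assembled results, and the proof collapses to the diagram of implications described above.
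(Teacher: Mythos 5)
Your proposal is correct and follows essentially the same route as the paper: both establish $(1)\Rightarrow(2)\Rightarrow(3)\Rightarrow(1)$ and $(1)\Rightarrow(4)\Rightarrow(5)\Rightarrow(1)$ by citing Propositions \ref{prop: purely inseparable implies direct summand}, \ref{prop: purely inseparable and module of principal parts} and \ref{prop: direct summand implies purely inseparable}, with the only substantive content residing in those previously proved results. Your explicit remark that the $\mc{F}$-extension condition in (4) follows from Theorem \ref{thm: characterization of purely inseparable} is a small touch of added care, but it does not change the argument.
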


\begin{proof}
By Proposition \ref{prop: purely inseparable implies direct summand}, (1) implies (2) and (4), and clearly  (2) implies (3).
The implication, $(3)\Rightarrow (1)$ 
follows from Proposition \ref{prop: purely inseparable and module of principal parts}. So $(1)\Leftrightarrow(2)\Leftrightarrow(3)$. 
On the other hand, we have that (4) implies (5) and the implication $(5)\Rightarrow(1)$ is proved in Proposition \ref{prop: direct summand implies purely inseparable}.
\end{proof}

The above theorem includes Theorem \ref{purely inseparable new characterization} from the introduction. It also includes the characterization of purely inseparable extensions given by P.S. de Salas in \cite[Theorem 3.8]{PSan99}. However, we obtain it using a completely different method.

\medskip

\subsection{A summary of equivalences of purely inseparable extensions of rings} To conclude, we collect in the following theorem the characterizations of purely inseparable extensions of rings presented in this section. As we announced, this result extends in some sense Theorem \ref{thm: Galois characterizations} for exponent one extensions to the setting of higher exponent extensions. It includes the characterizations from Theorem \ref{thm: characterization of purely inseparable} and those related to differential operators and the modules of principal parts (Theorem \ref{thm: purely inseparable and differential operators}) that we have proved along the section. \

\begin{theorem}\label{thm: characterization of purely inseparable extensions}
    Let $A\subset C$ be a finite ring extension of finite exponent. Then the following are equivalent:
    \begin{enumerate}
        \item $A\subset C$ is a purely inseparable extension.
                \item $C^{[e+1]}\subset C^{[e]}$ is a Galois extension for all $0\leq e <\exp(C/A)$. 
        \item $\Omega_{C^{[e]}/A}$ is a projective $C^{[e]}$-module for all  $0\leq e <\exp(C/A)$. 
        \item $\on{P}_{C/A}^{p^e}$ is projective $C$-module for all $0\leq e < \exp(C/A)$.
        \item $A\subset C$ is an $\mc{F}$-extension and $\Diff_A^{p^e}(C)$ is a $C$-direct summand of $\End_A(C)$ for all $0\leq e < \exp(C/A)$.
    \end{enumerate}
\end{theorem}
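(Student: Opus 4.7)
The plan is to observe that this theorem is a consolidation of equivalences already established throughout Sections \ref{Section on Galois extensions} and \ref{sec: purely inseparable}, so the proof will largely be a matter of citing the right results and stringing them together; no new argument is required.

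First, I would split the equivalences into two clusters. The equivalences (1) $\Leftrightarrow$ (2) $\Leftrightarrow$ (3) sit at the level of the Frobenius filtration $C\supset C^{[1]}\supset C^{[2]}\supset \cdots \supset A$: each step $C^{[e+1]}\subset C^{[e]}$ is an exponent-one extension, and the identity $\Omega_{C^{[e]}/A}=\Omega_{C^{[e]}/C^{[e+1]}}$ reduces (3) to the projectivity of the Kähler differentials of these exponent-one layers. Combining this with Theorem \ref{thm: Galois characterizations} (which gives (2) $\Leftrightarrow$ (3) layer by layer) and with Theorem \ref{thm: characterization of purely inseparable} (which gives (1) $\Leftrightarrow$ (2)), the equivalences among (1), (2), (3) follow.

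Next, the equivalences (1) $\Leftrightarrow$ (4) $\Leftrightarrow$ (5) are exactly the content of Theorem \ref{thm: purely inseparable and differential operators}. So I would just invoke that theorem, which was established via Proposition \ref{prop: purely inseparable implies direct summand} (for the implications (1) $\Rightarrow$ (4) and (1) $\Rightarrow$ (5)), Proposition \ref{prop: purely inseparable and module of principal parts} (for (4) $\Rightarrow$ (1), via the retraction of Proposition \ref{prop: retraction for module of principal parts} built using the extension-of-differential-operators result, Proposition \ref{prop: extension of differential operators}), and Proposition \ref{prop: direct summand implies purely inseparable} (for (5) $\Rightarrow$ (1), via the same extension result combined with restriction maps and the exponent-one base case from Proposition \ref{prop: direct summand of endomorphism implies Galois}).

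Thus the proof is essentially a one-line assembly: (1) $\Leftrightarrow$ (2) $\Leftrightarrow$ (3) is Theorem \ref{thm: characterization of purely inseparable}, and (1) $\Leftrightarrow$ (4) $\Leftrightarrow$ (5) is Theorem \ref{thm: purely inseparable and differential operators}. There is no genuine obstacle here since all the substantive work, the actual hard part being the extension-of-differential-operators Proposition \ref{prop: extension of differential operators}, was done in the earlier sections. The only thing to double-check is that the ranges of indices ($0 \le e < \exp(C/A)$) match across the cited statements, which is immediate from the fact that $C^{[e]}=A$ for $e\ge \exp(C/A)$.
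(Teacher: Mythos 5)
Your proposal is correct and matches the paper exactly: the paper presents this theorem as a summary with no new argument, obtaining (1) $\Leftrightarrow$ (2) $\Leftrightarrow$ (3) from Theorem \ref{thm: characterization of purely inseparable} and (1) $\Leftrightarrow$ (4) $\Leftrightarrow$ (5) from Theorem \ref{thm: purely inseparable and differential operators}, just as you do.
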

\color{black}

\section{The Jacobson-Bourbaki Theorem for ring extensions}\label{sec: Jacobson-Bourbaki}

Given a finite field extension $K\subset L$, the Jacobson-Bourbaki correspondence (\cite[I, \S 2, Theorem 2]{Jacobson64}) establishes a one-to-one correspondence between the intermediate subfields $K\subset E\subset L$ and the unital $K$-subalgebras of $\End_K(L)$ that are also left vector $L$-spaces.  This correspondence, which was originally stated for finite purely inseparable field extensions of exponent one, was quickly extended to the non-commutative case of division rings (\cite{Jacobson47}, \cite{Cartan47}). Later on, a Jacobson-Bourbaki correspondence was formulated for the so called Galois intermediate subfields of a commutative ring (\cite{Winter2005}). Moreover, this formulation has been expanded recently to the setting of non-commutative rings which are finitely generated over their centers (\cite{FengSheng}).  

The general Jacobson-Bourbaki correspondence applies in particular for purely inseparable extensions of fields. In this case, any intermediate field $E$ of a purely inseparable extension $K\subset L$ induces two purely inseparable extensions $K\subset E$ and $E\subset L$. Hence, it is natural to ask if a generalization of this correspondence to the setting of purely inseparable extensions of rings holds. That is, one may ask if there is a one-to-one correspondence between the subrings $B$ of a purely inseparable extension $A\subset C$ such that both $A\subset B$ and $B\subset C$ are purely inseparable and some subojects of $\End_A(C)$. As we will see in Section \ref{section on purely inseparable towers}, finding those subrings seems not to be a simple problem, and we have not achieve yet such a correspondence. However, while dealing with this question, we have been able to formulate and prove a Jacobson-Bourbaki correspondence for a more general class of commutative ring extensions (Theorem \ref{th: endomorphism correspondence}).

More precisely, we  establish a Jacobson-Bourbaki correspondence for finite ring extensions $A\subset C$ such that $C$ is projective as $A$-module and $\Spec(A)$ and $\Spec(C)$ are homeomorphic. Observe that any finite extension of finite exponent has homeomorphic spectra, so, in particular, any purely inseparable extension, belongs to the described class of ring extensions.  We prove that there is a one-to-one correspondence between the intermediate subrings $A\subset B\subset C$ such that $C$ is projective over $B$ and the unital $A$-subalgebras of $\End_A(C)$ that are also $C$-module direct summands. 

\medskip

First, we introduce some preliminaries on the endomorphisms of a projective module that we use later. 
\begin{parrafo}\label{endomorphisms of projective module}
    \textit{Endomorphisms of a projective module.} Let $A\subset C$ be a finite extension such that $C$ is a projective $A$-module. Recall that $\End_A(C)$ is a projective left $C$-module in a natural way (see Corollary \ref{cor: End is finite and projective}) and a unital $A$-algebra under composition.
    Moreover, under this hypothesis, note that the ring $A$ can be recovered as
    \begin{align*}
     A=\{x\in C: [\d,x]=0,\ \forall \d\in \on{End}_A(C)\}.
    \end{align*}
    In fact, it is clear that $[\d,x]=0$ for any $x\in A$, since $\d\in\End_A(C)$ is an $A$-linear map. Conversely, since $C$ is finite and projective over the subring $A$, we have a decomposition $C=A\oplus C/A$ and we have a projection map $\varphi\colon C\to A$ whose restriction $\varphi|_A$ is the identity map (Proposition \ref{projective finite extensions are split}).   Let $x\in C$ such that $[\d,x]=0$ for all $\d\in\End_{A}(C)$, then we have $\d(x)=x\cdot\d(1)$. In particular, as $\varphi$ is an $A$-endomorphism of $C$, we have $x=x\cdot \varphi(1)=\varphi(x)\in A$, as we wanted. \color{black}
\end{parrafo}

Now, we study the sets of endomorphisms with its structure of subalgebras and submodules that are in correspondence to certain intermediate subrings of a projective ring extension. 

\begin{proposition}\label{prop: endomorphisms of projective submodule}
    Let $A\subset C$ be a finite ring extension such that $C$ is projective as $A$-module. Let $A\subset B\subset C$ be an intermediate ring such that $C$ is projective  as $B$-module. Then
    \begin{enumerate}
        \item $\End_B(C)$ is an unital $A$-subalgebra of $\End_A(C)$. 
        \item $\End_B(C)$ is direct summand of $\End_A(C)$ as $C$-module.
    \end{enumerate}
\end{proposition}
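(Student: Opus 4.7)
Part~(1) is immediate: every $B$-linear endomorphism of $C$ is automatically $A$-linear since $A\subset B$, the identity map lies in $\End_B(C)$, and composition preserves $B$-linearity. Hence $\End_B(C)$ is a unital $A$-subalgebra of $\End_A(C)$.

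For part~(2), the plan is to translate the inclusion $\End_B(C)\subset \End_A(C)$ into the dual picture via the natural isomorphisms of left $C$-modules
\[
\End_R(C)\cong \Hom_C(C\otimes_R C,\,C),\qquad R\in\{A,B\},
\]
recalled in Section~\ref{Sec: preliminaries}. The first step will be to verify that, under these identifications, the natural inclusion $\End_B(C)\hookrightarrow \End_A(C)$ corresponds to the map $\pi^{*}$ induced by the canonical surjection of left $C$-modules
\[
\pi\colon C\otimes_A C\twoheadrightarrow C\otimes_B C,\qquad c\otimes_A x\mapsto c\otimes_B x.
\]
This is a straightforward bookkeeping check using the explicit formula $D\mapsto\bigl(c\otimes x\mapsto c\,D(x)\bigr)$ that realizes the isomorphism above, together with the description of the left $C$-action coming from the target.

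With that correspondence in place, the conclusion is quick. Since $C$ is a finite projective $B$-module by hypothesis, Proposition~\ref{projective modules are preserved by base change} shows that $C\otimes_B C$ is a projective left $C$-module. Thus the surjection $\pi$ of left $C$-modules admits a $C$-linear section $s\colon C\otimes_B C\to C\otimes_A C$ with $\pi\circ s=\mathrm{id}$. Applying $\Hom_C(-,\,C)$ then yields a $C$-linear retraction $s^{*}\colon \End_A(C)\to \End_B(C)$ of the inclusion $\pi^{*}$, since $s^{*}\circ \pi^{*}=(\pi\circ s)^{*}=\mathrm{id}$. This exhibits $\End_B(C)$ as a $C$-module direct summand of $\End_A(C)$. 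The only mildly delicate point in the whole argument is the bookkeeping identification in the previous paragraph; once that is nailed down, the projectivity of $C\otimes_B C$ over $C$ does all the work.
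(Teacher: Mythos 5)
Your proposal is correct and follows essentially the same route as the paper: both dualize the canonical surjection $C\otimes_A C\twoheadrightarrow C\otimes_B C$, which splits $C$-linearly because $C\otimes_B C$ is a projective $C$-module, and then apply $\Hom_C(-,C)$ to get the retraction onto $\End_B(C)$. Your explicit check that the inclusion $\End_B(C)\hookrightarrow\End_A(C)$ corresponds to $\pi^{*}$ under the identifications is a reasonable extra bookkeeping step that the paper leaves implicit.
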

\begin{proof}
    (1): Since $A\subset B$, any $B$-endomorphism of $C$ is in particular an $A$-endomorphism. Thus it is clear that $\End_B(C)$ is an $A$-subalgebra of $\End_A(C)$ that contains the same identity element.

    \medskip

     (2): Since $C$ is finite and projective over $B$, $C\otimes_B C$ is finite and projective over $C$. Thus, the surjective map of $C$-modules $C\otimes_A C\to C\otimes_B C$ has a section. Hence, the injective map $\Hom_C(C\otimes_B C,C)\to \Hom_C(C\otimes_A C, C)$, that results from applying $\Hom_C(-,C)$ to the previous map, has a retraction. This implies that $\Hom_C(C\otimes_B C,C)$ is a direct summand of $\Hom_C(C\otimes_A C, C)$. Finally, we note that $\Hom_C(C\otimes_A C, C)\simeq\End_A(C)$ and similarly $\Hom_C(C\otimes_B C, C)\simeq\End_B(C)$. Hence, we have proved that $\End_B(C)$ is a direct summand of $\End_A(C)$ as $C$-module. 
  \end{proof}

    The proof of the next proposition is similar to the one given by Yuan in \cite[Lemma 3.2]{Yuan70/2}, that is also based on Hochschild's proof of the Jacobson-Bourbaki correspondence for division rings (\cite[Theorem 2.1]{Hochschild49}). We make some modifications on this argument to consider endomorphisms and finite projective extensions with homeomorphic spectra.
    
\begin{proposition}\label{prop: direct summands subalgebras of endomorphisms}
    Let $A\subset C$ be a finite ring extension such that $C$ is projective as $A$-module. Assume in addition that the induced morphism $\Spec(C)\to \Spec(A)$ is a homeomorphism.  Let $H\subset \End_A(C)$ be a unital $A$-subalgebra that is also a $C$-module direct summand of the left $C$-module $\End_A(C)$. Put $B_H:=\{x\in C: [\varphi,x]=0,\ \forall \varphi\in H\}$. Then $A\subset B_H\subset C$ is an intermediate ring and $C$ is a projective $B_H$-module. In addition, $H=\End_{B_H}(C)$.
\end{proposition}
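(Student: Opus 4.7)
The plan is to dispense with the easy parts first and then concentrate on the substantive claim, namely that $C$ is projective of rank $l:=\operatorname{rank}_C H$ over $B_H$; the equality $H=\End_{B_H}(C)$ will follow by a rank and direct-summand argument. That $B_H$ is a subring containing $A$ is immediate from the linearity of the bracket and the product rule $[xy,D]=x[y,D]+y[x,D]-[y,[x,D]]$ of Lemma \ref{bracket properties}, together with the fact that every $\varphi\in H$ is $A$-linear. The inclusion $H\subset \End_{B_H}(C)$ is immediate from the definition of $B_H$.

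For the main content I would work locally. Fix $\p\subset A$ and apply Proposition \ref{existence of special basis for a direct summand of End} to obtain, after inverting some $f\in A\setminus\p$, a basis $t_1,\ldots,t_n$ of $C_f$ over $A_f$ and a basis $\phi_1,\ldots,\phi_l$ of $H_f$ over $C_f$ with $\phi_i(t_j)=\delta_{ij}$ for $i,j\leq l$. Because $H$ is a left $C$-submodule containing $\operatorname{id}_C$, every multiplication operator $L_c$ lies in $H$; expanding $\operatorname{id}_C\in H$ in the basis $(\phi_k)$ and evaluating at $t_j$ yields the identity $\operatorname{id}_C=\sum_{i=1}^l t_i\phi_i$, so that
\begin{align*}
c=\sum_{i=1}^l \phi_i(c)\,t_i \qquad \text{for every } c\in C_f.
\end{align*}
The main obstacle—and what I expect to be the crux of the argument—is to show that $\phi_i(c)\in B_H$ for every $c\in C_f$ and every $i\leq l$. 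For this, note that for any $\psi\in H$ the composition $\psi\circ\phi_i$ lies in $H$, and evaluating its expansion $\psi\phi_i=\sum_k c_k\phi_k$ at $t_m$ gives $c_m=\psi(\delta_{im})=\delta_{im}\psi(1)$; hence
\begin{align*}
\psi\,\phi_i=\psi(1)\cdot\phi_i \qquad \text{in } H_f.
\end{align*}
Applying this identity with $\psi=\phi_j\,L_c\in H$ and evaluating at $d\in C_f$ gives $\phi_j(c\,\phi_i(d))=\phi_j(c)\,\phi_i(d)$, and then the commutativity of $C$ rewrites this as $\phi_j(\phi_i(d)\cdot c)=\phi_i(d)\,\phi_j(c)$. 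Thus each $\phi_j$ is $\phi_i(d)$-linear, and by $C_f$-linearity this propagates to every element of $H_f$; so $\phi_i(d)\in B_{H_f}$. A routine check, using that $H$ is finitely generated as a $C$-module, identifies $B_{H_f}$ with $(B_H)_f$, so in fact $\phi_i(d)\in (B_H)_f$.

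Combining this sub-claim with linear independence of $t_1,\ldots,t_l$ over $(B_H)_f$ (apply the $B_H$-linear map $\phi_j$ to any relation $\sum b_i t_i=0$) shows that $C_f$ is a free $(B_H)_f$-module of rank $l$. Covering $\Spec A\cong \Spec B_H$ by such principal opens, Proposition \ref{characterization of projective modules} yields that $C$ is a projective $B_H$-module of rank $l$. Finally, $\End_{B_H}(C)=\Hom_C(C\otimes_{B_H}C,C)$ is locally free of rank $l$ over $C$; the projection $\End_A(C)\twoheadrightarrow H$ restricts to a retraction of the inclusion $H\hookrightarrow \End_{B_H}(C)$, exhibiting $H$ as a $C$-direct summand of $\End_{B_H}(C)$. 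Since the two $C$-modules have the same local rank, the complementary projective summand is zero, and we conclude $H=\End_{B_H}(C)$.
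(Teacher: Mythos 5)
Your proposal is correct and takes essentially the same route as the paper: the same special local data from Proposition \ref{existence of special basis for a direct summand of End}, the same key computation showing $\phi_i(c)\in B_H$ (the paper proves $\phi\circ(x\cdot\phi_i)=\phi(x)\cdot\phi_i$ for arbitrary $\phi\in H_f$, which is your identity $\psi\circ\phi_i=\psi(1)\cdot\phi_i$ applied to $\psi=\phi_j\circ L_c$), the same localization $B_{H_f}=(B_H)_f$, and the same conclusion that $t_1,\ldots,t_l$ is a basis of $C_f$ over $(B_H)_f$. The only divergence is cosmetic, in the last step: the paper writes any $\phi\in\End_{B_H}(C)$ locally as $\sum_i\phi(t_i)\cdot\phi_i\in H_f$, whereas you exhibit $H$ as a $C$-direct summand of $\End_{B_H}(C)$ via the retraction of $\End_A(C)$ onto $H$ and compare local ranks; both arguments are valid.
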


\begin{proof}
    Observe that $B_H$ is the subset of elements of $C$ that are constants for every $\varphi\in H$, i.e., $x\in B_H$ if and only if $\varphi(xc)=x\cdot\varphi(c)$ for all $\varphi\in H$ and $c\in C$. Then it is clear that $B_H$ is a ring such that $A\subset B_H\subset C$ and, therefore $\Spec(B_H)$ is also homeomorphic to $\Spec(A)$ and $\Spec(C)$. In order to prove that $C$ is finite and projective as $B_H$-module, we will show that it is locally free of finite rank over $B_H$. 
    
    \medskip
    First, we show that $B_H$ behaves well under localization, i.e., that $S^{-1}B_H=B_{S^{-1}H}$ for any multiplicatively closed subset $S \subset A$. We consider the $A$-module $M:=\Hom_A(H,\End_A(C))$ and define a map $\Phi\colon C\to M$ as $\Phi(x)=[x,-]$. This is a map of $A$-modules, and by definition, $B_H$ is its kernel. Since $H$ is $C$-module direct summand of $\on{End}_A(C)$ and the latter is a finite projective $C$-module, $H$ is also a finite projective $C$-module, and since $C$ is a finite projective $A$-module, $H$ is also a finite projective $A$-module. In particular, $H$ is an $A$-module of finite presentation. It follows that if $S\subset A$ is a multiplicative subset, then $S^{-1}\on{Hom}_A(H,\on{End}_A(C))=\on{Hom}_{S^{-1}A}(S^{-1}H, \on{End}_{S^{-1}A}S^{-1}C)$. Hence $B_{S^{-1}H}=S^{-1}B_H$, as we wanted to show. 

\medskip 

    Now, let $\p\subset A$ be a prime ideal. By Proposition \ref{existence of special basis for a direct summand of End}, there exists $f\in A\setminus\p$, elements $t_1,\dots,t_n\in C$ that induce a basis for $C_f$ as a free $A_f$-module and a basis $\phi_1,\ldots,\phi_l$ for $H_f\subset \on{End}_{A_f}(C_f)$ as $C_f$-module, such that $\phi_i(t_j)=\delta_{ij}$ in $C_f$ for all $1\leq i,j\leq l$. 
 We show that for all $i=1,\ldots,l$ it holds $\on{Im}\phi_i\subset B_{H_f}$, or equivalently, $\phi(\phi_i(c)x)=\phi_i(c)\phi(x)$ for all $c,x\in C_f$ and $\phi\in H_f$.
  To prove this we use that any $\psi\in H_f$ can be expressed as $\psi=\sum_{k=1}^l\psi(t_k)\cdot \phi_k$. 
  Recall also that $H_f$ is a $C_f$-module and $H_f$ is an $A_f$-algebra with the composition. Hence for any $\phi\in H_f$ we have $\phi\circ (x\cdot \phi_i)\in H_f$ for any $x\in C_f$, and so
    $$\phi\circ (x\cdot \phi_i)=\sum_{k=1}^{l}(\phi\circ (x\cdot \phi_i))(t_k)\phi_k=\sum_{k=1}^{l}\phi(x\cdot \phi_i(t_k))\cdot \phi_k=\phi(x)\cdot \phi_i.$$
    In particular, evaluating the above expression on $c\in C_f$, we obtain $\phi(x\phi_i(c))=\phi(x)\phi_i(c)$, which proves the claim.

    We prove now that $t_1,\ldots,t_l$ form a basis for $C_f$ over $B_{H_f}$. Note first that these elements are linearly independent over $B_{H_f}$, because if $b_1t_1+\cdots+b_lt_l=0$ for $b_i\in B_{H_f}$, after applying $\phi_i$ for $i=1,\dots,l$, we get $b_1=\cdots=b_l=0$.
    It remains to show that $t_1,\dots,t_l$ generate $C_f$ as $B_{H_f}$-module. Let $c\in C_f$ and define $c':=c-\sum_{j=1}^l \phi_j(c)t_j$.
    Using that $\phi_j(c)\in B_{H_f}$, we have $\phi_i(c')=\phi_i(c)-\phi_i(c)=0$ for all $i=1,\ldots,l$. Since $H_f$ is a unital $A_f$-subalgebra, the identity endomorphism $\on{id}_{C_f}$ is contained in $H_f$ and it can be expressed as a $C_f$-linear combination of $\phi_1,\dots,\phi_l$. Thus, we have $0=\on{id}_{C_f}(c')=c'$ and $c=\sum_{i=1}^l \phi_i(c)t_i$. 

    We now prove that $H_f=\on{End}_{B_{H_f}}(C_f)$. Any $\phi\in H_f$ is $B_{H_f}$-linear by definition of $B_{H_f}$. Conversely, if $\phi\in\End_{B_{H_f}}(C_f)$ then
    $\phi=\sum_{i=1}^l \phi(t_i)\cdot\phi_i$ since $t_1,\ldots,t_l$ is a basis for $C_f$ over $B_{H_f}$ and $\phi_1,\ldots,\phi_l$ is the dual basis. Hence $\phi\in H$.

\medskip 

    We have shown that locally in the Zariski topology of $\Spec(A)$, $C$ is a is a  free $B_H$-module of finite rank and $H=\on{End}_{B_H}(C)$.
    Hence, $C$ is a projective $B_H$-module and $H=\on{End}_{B_H}(C)$ holds globally. This completes the proof.
\end{proof}

Combining Propositions \ref{prop: endomorphisms of projective submodule} and \ref{prop: direct summands subalgebras of endomorphisms}, we obtain inmediately the following correspondence that generalizes Jacobson-Bourbaki correspondence for field extensions to a more general context of ring extensions:

\begin{theorem}\label{th: endomorphism correspondence}
    Let $A\subset C$ be a finite ring extension such that $C$ is projective as $A$-module. Assume in addition that  the induced morphism $\Spec(C)\to \Spec(A)$ is an homeomorphism. Then there is a bijection between 
	$$\mc{G}_{C/A}:=\{B : \text{$B$ is an intermediate ring $A\subset B\subset C$} \quad \text{and} \quad  \text{$C$ is a projective $B$-module}\}$$
    and 
	$$\mc{D}_{C/A}:=\{H : \text{$H$ is a unital $A$-subalgebra of $\End_A(C)$ and a $C$-module direct summand of $\End_A(C)$} \},$$
	 given by the following inverse maps
	 \begin{center}
	 	\begin{tabular}{c l c l}
	 		$\mc{G}_{C/A}$ & $ \to \mc{D}_{C/A},$ & $\mc{D}_{C/A}$ & $\to \mc{G}_{C/A}$ \\
	 		$B$ & $\mapsto \End_B(C)$ & $H$ & $\mapsto B_H\,,$
	 	\end{tabular}
	 \end{center}
    where $B_H:=\{x\in C: [\varphi,x]=0,\ \forall \varphi\in H\}$. 
\end{theorem}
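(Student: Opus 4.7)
The plan is to assemble the bijection from the two propositions just proved, together with one short symmetry argument drawn from paragraph \ref{endomorphisms of projective module}. The key observation is that, once well-definedness of the two maps $B\mapsto \End_B(C)$ and $H\mapsto B_H$ is established, one composition identity is already in hand from Proposition \ref{prop: direct summands subalgebras of endomorphisms}, and the other composition follows from a brief ``no hidden elements'' argument.

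First, I would verify that $B\mapsto \End_B(C)$ lands in $\mc{D}_{C/A}$. This is precisely Proposition \ref{prop: endomorphisms of projective submodule}: for $B\in\mc{G}_{C/A}$, the subset $\End_B(C)\subset\End_A(C)$ is a unital $A$-subalgebra and a $C$-module direct summand. Next, I would verify that $H\mapsto B_H$ lands in $\mc{G}_{C/A}$; this is the content of Proposition \ref{prop: direct summands subalgebras of endomorphisms}, which establishes that $B_H\subset C$ is an intermediate $A$-subalgebra over which $C$ is projective, and which moreover also provides the identity $\End_{B_H}(C)=H$, i.e.\ one of the two round-trip compatibilities.

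For the remaining round trip, I would prove that $B_{\End_B(C)}=B$ for every $B\in\mc{G}_{C/A}$. One inclusion is immediate: every element of $B$ bracket-commutes with all $B$-linear endomorphisms of $C$. For the reverse inclusion, I would reproduce the argument of paragraph \ref{endomorphisms of projective module} with $A$ replaced by $B$: since $C$ is a finite projective $B$-module, Proposition \ref{projective finite extensions are split} yields a $B$-linear retraction $\varphi\colon C\to B$ of the inclusion $B\hookrightarrow C$. If $x\in C$ satisfies $[\d,x]=0$ for every $\d\in\End_B(C)$, then taking $\d=\varphi$ gives $\varphi(x)=x\cdot\varphi(1)=x$, so $x\in B$. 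Notice that $B$ automatically inherits the homeomorphism hypothesis ($\Spec(A)$, $\Spec(B)$, and $\Spec(C)$ are all homeomorphic), which is why Proposition \ref{projective finite extensions are split} applies to $B\subset C$.

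There is essentially no technical obstacle left once Propositions \ref{prop: endomorphisms of projective submodule} and \ref{prop: direct summands subalgebras of endomorphisms} are established; the delicate ingredient has already been absorbed into the latter, where the homeomorphism hypothesis on spectra is crucially invoked through Proposition \ref{existence of special basis for a direct summand of End} to produce, locally on $\Spec(A)$, a basis $t_1,\ldots,t_n$ of $C_f$ over $A_f$ together with a basis $\phi_1,\ldots,\phi_l$ of $H_f$ over $C_f$ satisfying $\phi_i(t_j)=\delta_{ij}$. Once that simultaneous basis is in place, everything else in the present theorem is routine bookkeeping between the two directions of the correspondence.
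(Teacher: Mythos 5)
Your proof is correct and follows essentially the same route as the paper: the bijection is assembled by combining Propositions \ref{prop: endomorphisms of projective submodule} and \ref{prop: direct summands subalgebras of endomorphisms} (the latter already giving $\End_{B_H}(C)=H$), with the remaining identity $B_{\End_B(C)}=B$ obtained exactly as in \ref{endomorphisms of projective module} applied to $B\subset C$. One minor inaccuracy: Proposition \ref{projective finite extensions are split} requires only that $C$ be a finite projective $B$-module (which holds by the definition of $\mc{G}_{C/A}$ and finiteness of $C$ over $A$), so invoking the homeomorphism of spectra to justify its applicability is unnecessary, though harmless.
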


The following example shows that the theorem does not hold if $\Spec(A)$ and $\Spec(C)$ are not homeomorphic.

\begin{example}\label{ex: homeomorphism is necessary}
    Let $K$ be a field and let $L=K\times K $ with the component-wise multiplication. 
    We can consider $L$ as a ring extension of $K$ via the morphism $x\mapsto(x,x)$. 
    Note that $\Spec(L)$ is not homeomorphic to $\Spec(K)$ since the former contains two points and the latter only one. We will show that the correspondence in Theorem \ref{th: endomorphism correspondence} does not hold in this case.
    
    By making use of the canonical basis of $L=K\times K$, we can think of the elements of $\End_K(L)$ as $2\times 2$ matrices with coefficients in $K$. 
    The elements of  $L$ viewed as endomorphisms are the diagonal matrices. 
    Let
    \begin{align*}
        H=\left\{\begin{pmatrix} a&b\\ 0&d \end{pmatrix}: a,b,c\in K
        \right\}, \quad P=\left\{\begin{pmatrix} 0&0\\ c&0 \end{pmatrix}: c\in K
        \right\}.
    \end{align*}
    It is clear that $H$ and $P$ are $L$-submodules of $\on{End}_K(L)$ and  $\on{End}_K(L)=H\oplus P$.
    It is also clear that $H$ is an unital $K$-subalgebras of $\End_K(L)$. 
   Finally, since the only diagonal matrices that commute with every matrix in $H$ are the scalar multiples of the identity, we have
   $B_{H}=K$. However, $\on{End}_{B_H}(L)=\on{End}_K(L)\neq H$.
   \end{example}

\section{Towers of purely inseparable extensions}\label{section on purely inseparable towers}

If $K\subset L$ is a purely inseparable field extension, then for any intermediate field $K\subset E\subset L$ the extensions $K\subset E$ and $E\subset L$ are also purely inseparable. Conversely, if $K\subset E$ and $E\subset L$ are purely inseparable extensions of fields, then  the extension $K\subset L$ is also purely inseparable. 
On the contrary, these statements are no longer true when we move to the general context of purely inseparable ring extensions.
We introduce the notion of purely inseparable tower to formalize these problems.

\begin{definition}
    Let $A\subset B\subset C$ be a tower of rings. We say that $A\subset B\subset C$ is a \textit{purely inseparable tower} of rings if $A\subset C$, $A\subset B$ and $B\subset C$ are purely inseparable extensions. If $A\subset C$ has exponent one, a purely inseparable tower of rings is also called a \textit{Galois tower}.
\end{definition}

In this section, we address two questions regarding purely inseparable towers of rings.
 In \S \ref{subsection: intermediate rings of a purely inseparable extension}, given a purely inseparable extension of rings $A\subset C$, we consider the problem of characterizing those intermediate rings $B$ such that $A\subset B$ and $B\subset C$ are also purely inseparable, that is, those subrings such that $A\subset B\subset C$ is a purely inseparable tower of rings. 
 When $A\subset C$ has exponent one, this problem is well understood: an intermediate ring $B$ defines a Galois tower $A\subset B\subset C$ if and only if $C$ is a projective $B$-module
 (\cite[Théorème 71]{Andre91}). If $A\subset C$ has exponent greater than one the problem remains open.
 We shall study some sufficient conditions by making use of the notion of $\mathcal{F}$-extension. Namely, in Proposition \ref{prop: on purely inseparable towers with the strong condition of F-extensions}, which is Theorem \ref{some conditions for a purely inseparable tower} from the introduction, we show that if $B$ is a subring of a purely inseparable extension $A\subset C$ such that $A\subset B$, $B\subset C$ and $A\subset B[C^{p^k}]$ are $\mc{F}$-extensions for $1\leq k<\exp(C/B)$, then $A\subset B\subset C$ is a purely inseparable tower. We shall also show that in exponent two the condition on the extensions $A\subset B[C^{p^k}]$ can be omitted. 
In \S \ref{subsection: composition of purely inseparable extension}, we study the question of when the composition of two purely inseparable extensions of rings $A\subset B$ and $B\subset C$ gives a purely inseparable tower $A\subset B\subset C$.  
In Corollary \ref{cor: composition of purely inseparable}, which is Theorem \ref{composition of purely inseparable extensions} from the introduction,  we prove that if $A\subset B$ and $B\subset C$ are purely inseparable and $A\subset B[C^{p^k}]$ is an $\mc{F}$-extension for all $1\leq k\leq \exp(C/B)$, then $A\subset C$ is also purely inseparable.
\color{black}

\medskip

\subsection{Intermediate rings of a purely inseparable extension of rings}\label{subsection: intermediate rings of a purely inseparable extension} 
Given a purely inseparable extension of rings $A\subset C$, we aim to find some conditions for an $A$-subalgebra $B$ to induce a purely inseparable tower $A\subset B\subset C$. The situation in exponent one is well-understood.
 \begin{proposition}[{\cite[Théorème 71]{Andre91}, \cite[Theorem 11]{Yuan70/1}}]\label{exponent one purely inseparable towers}
	Let $A\subset C$ be a Galois extension of exponent one and let $B$ be an intermediate ring. 
 Then $A\subset B\subset C$ is a Galois tower if and only if $C$ is a projective $B$-module.
\end{proposition}

On the other hand, one can easily produce examples $A\subset B\subset C$ where $A\subset C$ and $A\subset B$ are Galois but $B\subset C$ is not. 

\begin{example}
	Let $A=k$ be a field of characteristic $p=2$. We consider the ring $$C:=k[X,Y]/\langle X^2, Y^2\rangle =k[x,y],$$
	which is a purely inseparable extension of exponent one over $A$, with $p$-basis $\{x,y\}$, so $k\subset C$ is a Galois extension. 
 We define the subring $B:=k[xy]\subset k[x,y]=C$. Note that $B$ has rank 2 over $k$ and $xy$ is a $2$-basis, so $k\subset B$ is a Galois extension. 
 We show that $B\subset C$ is not Galois by showing that $C$ is not a flat $B$-module.
  Assume on the contrary that $C$ is a flat module. Since $B$ is a local ring and since $C/\langle xy\rangle=k[X,Y]/\langle X^2, XY, Y^2\rangle$, which has a basis over $B/\langle xy\rangle=k$ induced by $1,x,y$, by Nakayama's lemma $1,x,y$ would be a basis for $C$ as $B$-module. But this is impossible since $(xy)x=0$. Hence $C$ is not a flat $B$-module and therefore it is not a Galois extension.
\end{example}

We focus now on purely inseparable extensions of higher exponent. 
We observe that Proposition \ref{exponent one purely inseparable towers}
does not hold for purely inseparable extensions of exponent greater than one. In fact, in the next example we construct a tower of rings $A\subset B\subset C$ where $A\subset C$ is a purely inseparable extension of exponent 2, $B\subset C$ is a Galois extension, but $A\subset B$ is not purely inseparable.

 \begin{example}\label{example failed purely inseparable tower}
 Let $A=k$ be a field of characteristic $p=3$. We set $$B:=k[X,Y,Z_1,Z_2]/\langle X^3, Y^3, Z_1^3- X^2,Z_2^3-Y^2\rangle=k[x,y,z_1,z_2],$$
 	which is an extension of $A$ of exponent 2. 
  We set 
 	$$C:=B[T_1,T_2]/\langle T_1^3 -x, T_2^3 -y \rangle =k[x,y,z_1,z_2,t_1,t_2].$$
 	By definition $C$ has $p$-basis over $B$, so $B\subset C$ is a Galois extension. Note also that $A\subset C$ is an extension of exponent 2.
  We claim that $A\subset C$ is purely inseparable. By Theorem \ref{thm: characterization of purely inseparable} it is enough to show that $A\subset A[C^3]$ and $A[C^3]\subset C$ are Galois. Note that $A[C^3]=k[x,y]$ is a Galois extension over $A$ with $p$-basis $\{x,y\}$, and 
 	$$C=A[C^3][Z_1,Z_2,T_1,T_2]/\langle Z_1^3-x^2,Z_2^3-y^2, T_1^3-x,T_2^3-y \rangle$$
   is Galois over $A[C^3]$ with $p$-basis $\{z_1,z_2,t_1,t_2\}$. This proves our claim. 
   We now show that $A\subset B$ is not purely inseparable. It is enough to show that $A\subset A[B^3]$ is not Galois, and since $A$ is local it is enough to show that $A[B^p]$ has no $p$-basis over $A$. 
   This is true because
 	$A[B^3]=k[x^2,y^2]$ has a basis over $k$ formed by the four monomials $1,x^2,y^2, (xy)^2$, so the rank over $A$ is not a power of $p=3$.
 \end{example}


Proposition \ref{exponent one purely inseparable towers} has the following extension to higher exponent, which will be used to prove Theorem \ref{some conditions for a purely inseparable tower} in Proposition \ref{prop: on purely inseparable towers with the strong condition of F-extensions}.

\begin{lemma}\label{lem: on purely inseparable towers with B<C of exponent one}
Let $A\subset C$ be a purely inseparable extension and let $B$ be an intermediate ring such that $A\subset B$ and $B\subset C$ are $\mc{F}$-extensions. If $B\subset C$ has exponent one, then $A\subset B$ is purely inseparable and $B\subset C$ is Galois.    
\end{lemma}

\begin{proof}
    We proceed by induction on the exponent of $A\subset C$. If $\exp(C/A)=1$, the lemma reduces to Proposition \ref{exponent one purely inseparable towers}. 
    Assume now that $e:=\exp(C/A)>1$. Since $\exp(C/B)=1$, we have $C^p\subset B$. Let us consider the tower $A[C^p]\subset B\subset C$. Since $A\subset C$ is purely inseparable, the extension $A[C^p]\subset C$ is Galois by Corollary \ref{cor: purely inseparable inductively}, and since by hypothesis $C$ is a projective $B$-module, both $A[C^p]\subset B$ and $B\subset C$ are Galois by
    Proposition \ref{exponent one purely inseparable towers}. 
    
    We now show that the tower of rings $A\subset A[B^p]\subset A[C^p]$ satisfies the hypothesis of the lemma. The extension $A\subset A[C^p]$ is purely inseparable of exponent $e-1$, by Corollary \ref{cor: purely inseparable inductively}. The extension $A\subset A[B^p]$ is an $\mc{F}$-extension by hypothesis because so is $A\subset B$. We finally show that $A[C^p]$ is a projective $A[B^p]$-module.
    In fact, $B$ is a projective $A[B^p]$-module and $C$ is a projective $B$-module by hypothesis, so $C$ is a projective $A[B^p]$-module by Proposition \ref{restriction of scalar of projective modules}. 
    In addition, $C$ is a projective $A[C^p]$-module because $A[C^p]\subset C$ is Galois. Hence $A[C^p]$ is a projective $A[B^p]$-module by Proposition \ref{tower of projective extensions}.
    
    We now apply the inductive hypothesis and conclude that $A\subset A[B^p]$ is purely inseparable and $A[B^p]\subset A[C^p]$ is Galois. Since we also proved that $A[C^p]\subset B$ is Galois in the first paragraph, we conclude that $A[B^p]\subset B$ is Galois by Lemma \ref{lem: basics on Galois extensions}. Hence $A\subset B$ is a purely inseparable extension by Corollary \ref{cor: purely inseparable inductively}. This and the conclusion of the first paragraph complete the proof of the lemma.
\end{proof}

\begin{proposition}\label{prop: on purely inseparable towers with the strong condition of F-extensions}
    Let $A\subset C$ be a purely inseparable extension and let $B$ be an intermediate ring such that $A\subset B$ and $B\subset C$ are both $\mc{F}$-extensions.
    If $A\subset B[C^{p^k}]$ is an $\mc{F}$-extension for all $0\leq k< \exp(C/B)$, then $A\subset B\subset C$ is a purely inseparable tower. Moreover, $A\subset B[C^{p^k}]\subset C$ is a purely inseparable tower for all $0\leq k< \exp(C/B)$.
\end{proposition}

\begin{proof}
    We prove that $A\subset B[C^{p^k}]$ is purely inseparable by induction on $k$. The case $k=0$ is trivial, so let $k>0$ and assume that $A\subset B[C^{p^{k-1}}]$ is purely inseparable. By hypothesis $A\subset B[C^{p^k}]$ is an $\mc{F}$-extension and $B[C^{p^{k-1}}]$ is a projective $B[C^{p^k}]$-module. We can then apply Lemma \ref{lem: on purely inseparable towers with B<C of exponent one} and conclude that $A\subset B[C^{p^k}]$ is purely inseparable. This completes the induction.
    \end{proof}
\begin{remark}\label{rem: on the condition of F-extension}
    If $A\subset B\subset C$ is a purely inseparable tower and $A\subset B$ has exponent one, then $A\subset B[C^{p^k}]$ is an $\mc{F}$-extension for all $k$. This follows from Proposition \ref{prop: composition of a Galois and a purely inseparable} below. We do not know if the same is true when $A\subset B$ has exponent $>1$.
\end{remark}
The following proposition will be used to show that in an exponent two purely inseparable extension $A\subset B\subset C$ are precisely those for which $A\subset B$ and $B\subset C$ are $\mc{F}$-extensions.
\begin{proposition}
    Let $A\subset C$ be a purely inseparable extension of exponent $\geq 2$ and let $B$ be an intermediate ring such that $A\subset B$ and $B\subset C$ are both $\mc{F}$-extensions. Suppose that $A\subset B[C^{p^k}]$ is an $\mc{F}$-extension for all $k\leq \exp(C/A)-2$.  
    Then $A\subset B\subset C$ is a purely inseparable tower.
\end{proposition}
\begin{proof}
    Let $e=\exp(C/A)$. 
    The result will from Proposition \ref{prop: on purely inseparable towers with the strong condition of F-extensions} if we show that $A\subset B[C^{p^{e-1}}]$ is also an $\mc{F}$-extension. We prove this fact. On the one hand, $B$ is projective over $A[(B[C^{p^{e-1}}])^{p^k}]=A[B^{p^k}]$ for all $k>0$. On the other hand, $B[C^{p^{e-1}}]$ is projective over $B$. Hence  $B[C^{p^{e-1}}]$ is projective over  $A[(B[C^{p^{e-1}}])^{p^k}]$ for all $k$.
\end{proof}

As a direct consequence of the above proposition, we obtain Corollary \ref{purely inseparable towers of exponent two} from the introduction.

\begin{corollary}\label{cor: purely inseparable towers of exponent two}
    Let $A\subset C$ be a purely inseparable extension of exponent two and let $B$ be an intermediate ring such that $A\subset B$ and $B\subset C$ are $\mc{F}$-extensions. Then $A\subset B\subset C$ is a purely inseparable tower.
\end{corollary}

\medskip

\subsection{Composition of purely inseparable extensions of rings}\label{subsection: composition of purely inseparable extension} Given a tower of rings extensions $A\subset B\subset C$ with $A\subset B$ and $B\subset C$ purely inseparable, the extension $A\subset C$ is finite of finite exponent.
We consider here the the problem of whether $A\subset C$ is also purely inseparable, that is, whether $A\subset B\subset C$ is a purely inseparable tower. 
As we will see, in general we need to add some extra conditions to reach this conclusion.  

\medskip

To illustrate the problem we analyze the exponent one case. Let $A\subset B$ and $B\subset C$ be Galois extensions of exponent one.
Then $A\subset C$ has exponent one or two. If $A\subset C$ has exponent one, then it is Galois by Lemma \ref{lem: basics on Galois extensions}. However, if $A\subset C$ has exponent two, we cannot assure in general that $A\subset C$ is purely inseparable, as the following example shows.

\begin{example}\label{ex: not p.i. comp of Galois extensions}
	Let $A=k$  be a field of characteristic $p=3$. We define the ring $$B:=k[X,Y]/\langle X^3, Y^3 \rangle = k[x,y].$$ It is clear that $A\subset B$ is a Galois extension with $p$-basis $\{x,y\}$. Now we define the ring
	$$C:=B[Z_1,Z_2,Z_3]/\langle Z_1^3 - x^2, Z_2^3-xy, Z_3^3 - y^2\rangle =k[x,y,z_1,z_2,z_3].$$
	Then the extension $B\subset C$ is Galois with $p$-basis $\{z_1,z_2,z_3\}$. Note that $A[C^3]=k[x^2,xy,y^2]\subset k[x,y]=B$. One can check that $A[C^3]$ is a free $A$-module with a basis formed by the monomials $1,x^2,xy,y^2,(xy)^2$. Thus $A\subset A[C^3]$ has exponent one and $A[C^3]$ has rank $5$ over $A$. Since we are in the local case, if it were a Galois extension, then $A[C^3]$ would be free of rank a power of $3$ (see Definition \ref{def: p-basis}).  Since $5$ is not a power of $3$, $A\subset A[C^3]$ is not Galois. Therefore, by Theorem \ref{thm: characterization of purely inseparable}, $A\subset C$ is not purely inseparable. 	
\end{example}

The next lemma characterizes when a ring extension, that results from the composition of two Galois extensions, is purely inseparable. 

\begin{lemma}\label{lem: composition of Galois extensions}
	Let $A\subset B$ and $B\subset C$ be Galois extensions of rings. Then $A\subset C$ is purely inseparable if and only if $A\subset C$ is an $\mc{F}$-extension. Moreover, in this case, $A[C^p]\subset B$ is also a Galois extension.
\end{lemma}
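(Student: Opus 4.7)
For the ``only if'' direction, if $A\subset C$ is purely inseparable, then by Theorem \ref{thm: characterization of purely inseparable} each $C^{[e+1]}\subset C^{[e]}$ is Galois, hence $C^{[e]}$ is projective over $C^{[e+1]}$ for every $e$, so $A\subset C$ is an $\mathcal{F}$-extension. For the converse, observe that $\exp(C/A)\le 2$ since $C^{p^2}\subset B^p\subset A$. When $\exp(C/A)=1$, we have $A[C^p]=A$ and Lemma \ref{lem: basics on Galois extensions}(3) applied to $A\subset B\subset C$ yields $A\subset C$ Galois (in particular purely inseparable), and the ``Moreover'' clause reduces to the hypothesis $A\subset B$ Galois. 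The substantive case is $\exp(C/A)=2$, where $A[C^p]=C^{[1]}$ is a proper intermediate ring.

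My plan in that case is to run the exponent-one tower theorem (Proposition \ref{exponent one purely inseparable towers}) twice, sandwiched around the middle ring $A[C^p]$. First, I will show that $B$ is finite projective over $A[C^p]$: in the tower $A[C^p]\subset B\subset C$, the $\mathcal{F}$-hypothesis provides $C$ projective over $A[C^p]$, while $B\subset C$ Galois provides $C$ projective over $B$, so Proposition \ref{tower of projective extensions} forces $B$ to be finite projective over $A[C^p]$. Next, I will feed this into Proposition \ref{exponent one purely inseparable towers} applied to the tower $A\subset A[C^p]\subset B$: since $A\subset B$ is Galois and $A[C^p]\subset B$ is now an $\mathcal{F}$-extension, the proposition concludes that both $A\subset A[C^p]$ and $A[C^p]\subset B$ are Galois. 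This already settles the ``Moreover'' statement.

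To finish, I will feed the newly established Galois extension $A[C^p]\subset B$ back into the tower $A[C^p]\subset B\subset C$: both subextensions are now Galois and the composite $A[C^p]\subset C$ has exponent one since $C^p\subset A[C^p]$, so Lemma \ref{lem: basics on Galois extensions}(3) yields $A[C^p]\subset C$ Galois. Combining with $A\subset A[C^p]$ Galois and using that $A[C^p]=C^{[1]}$, Corollary \ref{cor: purely inseparable inductively} concludes that $A\subset C$ is purely inseparable. I do not foresee any serious obstacle: the whole argument is a bookkeeping assembly of exponent-one tools, and the only conceptual point is recognizing that the $\mathcal{F}$-hypothesis on $A\subset C$ together with $B\subset C$ Galois forces $B$ to be finite projective over the intermediate ring $A[C^p]$ via Proposition \ref{tower of projective extensions}, which is precisely the flatness input required to invoke André's tower theorem on the lower half of the filtration.
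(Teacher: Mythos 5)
Your proposal is correct and follows essentially the same route as the paper's proof: reduce to exponent two, deduce that $B$ is finite projective over $A[C^p]$ from the $\mc{F}$-hypothesis together with $B\subset C$ Galois, apply Proposition \ref{exponent one purely inseparable towers} to the tower $A\subset A[C^p]\subset B$ to get both $A\subset A[C^p]$ and $A[C^p]\subset B$ Galois, and then compose exponent-one Galois extensions to conclude $A[C^p]\subset C$ is Galois, hence $A\subset C$ purely inseparable. The only cosmetic difference is that you cite Proposition \ref{tower of projective extensions} and Corollary \ref{cor: purely inseparable inductively} explicitly where the paper leaves those steps implicit.
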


\begin{proof}
If $A\subset C$ is purely inseparable, then it is in particular an $\mc{F}$-extension
and $A[C^p]\subset C$ is Galois. In addition, there is a tower
$A[C^p]\subset B\subset C$ and $B\subset C$ is Galois by hypothesis. Hence $A[C^p]\subset B$ is also Galois by Proposition \ref{exponent one purely inseparable towers}.

    Assume now that $A\subset B$ and $B\subset C$ are Galois extension and that $A\subset C$ is an $\mc{F}$-extension.
    We consider the tower $A[C^p]\subset B\subset C$.
    Since $B\subset C$ is Galois,  $C$ is projective as $B$-module, and since $C$ is a projective $A[C^p]$-module,
     $B$ has to be a projective $A[C^p]$-module by Proposition \ref{tower of projective extensions}. 
     
     We now consider the tower $A\subset A[C^p]\subset B$. Since $A\subset B$ is Galois and $B$ is a projective $A[C^p]$-module, both extensions $A\subset A[C^p]$ and $A[C^p]\subset B$ are Galois by Proposition \ref{exponent one purely inseparable towers}.	Finally, as $A[C^p]\subset B$ and $B\subset C$ are Galois, and $A[C^p]\subset C$ has exponent one, we conclude that $A[C^p]\subset C$ is Galois, by Lemma \ref{lem: basics on Galois extensions}. 
     
     We have shown that both extensions $A\subset A[C^p]$ and $A[C^p]\subset C$ are Galois, so $A\subset C$ is purely inseparable.
\end{proof}

We now consider compositions of purely inseparable extensions of higher exponent. 
In Lemma \ref{lem: composition of Galois extensions}, we have shown that the composition of two Galois extensions is Galois if and only if it is an $\mc{F}$-extension.  The next two propositions address the problem of characterizing when the composition of a Galois extension and a purely inseparable extension induces again a purely inseparable extension. 

\begin{proposition}\label{prop: comp of pi extension and Galois extension}
    Let $A\subset B\subset C$ be a tower of ring extensions such that $A\subset B$ is purely inseparable and $B\subset C$ is Galois. 
    Then $A\subset C$ is purely inseparable if and only if $A\subset C$ is an $\mc{F}$-extension.
\end{proposition}
\begin{proof}
    If $A\subset C$ is purely inseparable, we already know that $A\subset C$ is an $\mc{F}$-extension. 

    Conversely, assume that $A\subset C$ is an $\mc{F}$-extension. We consider the tower $A\subset A[C^p]\subset B$. By hypothesis $A\subset B$ is purely inseparable, and $A\subset A[C^p]$ is an $\mc{F}$-extension. In addition, $C$ is a projective $A[C^p]$-module and a projective $B$-module by hypothesis, so $B$ is a projective $A[C^p]$-module by Proposition \ref{tower of projective extensions}. Hence we can apply Lemma \ref{lem: on purely inseparable towers with B<C of exponent one} and conclude that $A\subset A[C^p]$ is purely inseparable and $A[C^p]\subset B$ is Galois. Since by hypothesis $B\subset C$ is also Galois, we conclude by Lemma \ref{lem: basics on Galois extensions} that $A[C^p]\subset C$ is Galois. We have shown that $A\subset A[C^p]$ is purely inseparable and $A[C^p]\subset C$ is Galois, so $A\subset C$ is purely inseparable.
    \end{proof}

\begin{proposition}\label{prop: composition of a Galois and a purely inseparable}
    Let $A\subset B\subset C$ be a tower of ring extensions such that $A\subset B$ is Galois and $B\subset C$ is purely inseparable. 
    Then $A\subset C$ is purely inseparable if and only if $A\subset C$ is an $\mc{F}$-extension. Moreover, in this case, each of the extensions $A\subset B[C^{p^k}]$ is purely inseparable for all $k\geq 0$.
\end{proposition}
\begin{proof}
If $A\subset C$ is purely inseparable, then we know that $A\subset C$ is in particular an $\mc{F}$-extension. 
We now prove the converse. In fact, we prove by induction on $e:=\exp(C/B)$ that if $A\subset C$ is an $\mc{F}$-extension then $A\subset B[C^{p^k}]$ is purely inseparable for all $k$. 
If $e=1$, this is Lemma \ref{lem: composition of Galois extensions}. Assume that $e>1$ and consider the tower $A\subset B\subset B[C^p]$, where $A\subset B$ is Galois and $B\subset B[C^p]$ is purely inseparable of exponent $e-1$. We verify that $A\subset B[C^p]$ is an $\mc{F}$-extension. For $k\geq 1$ we have $A[(B[C^p])^{p^k}]=A[C^{p^{k+1}}]$ since $A\subset B$ has exponent one. By hypothesis, $C$ is projective over $A[C^{p^{k+1}}]$ and also over $B[C^p]$, so the latter is projective over  $A[C^{p^{k+1}}]$ by Proposition \ref{tower of projective extensions}. We can now apply the inductive hypothesis to the tower   $A\subset B\subset B[C^p]$ and conclude that $A\subset B[C^{p^k}]$ is purely inseparable for all $k\geq 1$. Finally, we consider the tower $A\subset B[C^p]\subset C$ where $A\subset B[C^p]$ is purely inseparable, $B[C^p]\subset C$ is Galois, and $A\subset C$ is an $\mc{F}$-extension. By Proposition \ref{prop: comp of pi extension and Galois extension}, $A\subset C$ is purely inseparable. We have now shown that $A\subset B[C^{p^k}]$ is purely inseparable for all $k\geq 0$.
This ends the proof of the proposition.
\end{proof}

Finally, as a corollary, we obtain Theorem \ref{composition of purely inseparable extensions} from the introduction,  which shows that the composition of two purely inseparable extensions induces a purely inseparable tower if we ask some intermediate subextensions to be $\mc{F}$-extensions.

\begin{corollary}\label{cor: composition of purely inseparable}
    Let $A\subset B$ and $B\subset C$ be purely inseparable extensions. If $A\subset B[C^{p^k}]$ is an $\mc{F}$-extension for all $k\geq 1$, then $A\subset C$ is purely inseparable.
\end{corollary}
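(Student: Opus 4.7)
The plan is to prove the corollary by induction on $e:=\exp(C/B)$, using Proposition \ref{prop: comp of pi extension and Galois extension} at each stage to handle a top Galois layer of $B\subset C$.

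For the base case $e=1$, the extension $B\subset C$ is itself Galois, so Proposition \ref{prop: comp of pi extension and Galois extension} reduces $A\subset C$ being purely inseparable to $A\subset C$ being an $\mc{F}$-extension, which is the $k=0$ instance of the hypothesis where $B[C^{p^0}]=C$.

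For the inductive step $e>1$, set $B':=B[C^{p^{e-1}}]$ and consider the refined tower $A\subset B\subset B'\subset C$. The inclusion $B\subset B'$ is a Galois extension: indeed $(B')^p = B[C^{p^e}] = B$ gives $\exp(B'/B)\leq 1$, and this is the top Galois step in the filtration of $B\subset C$ guaranteed by Theorem \ref{thm: characterization of purely inseparable}. The hypothesis at $k=e-1$ gives that $A\subset B'$ is an $\mc{F}$-extension, whence Proposition \ref{prop: comp of pi extension and Galois extension} applied to $A\subset B\subset B'$ yields $A\subset B'$ purely inseparable. The residual extension $B'\subset C$ is purely inseparable with $\exp(C/B')\leq e-1$, being the bottom portion of the Galois filtration of $B\subset C$. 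To invoke the inductive hypothesis on $A\subset B'\subset C$, I need to verify that $A\subset B'[C^{p^j}]$ is an $\mc{F}$-extension for each relevant $j$; the direct computation
\begin{align*}
B'[C^{p^j}]=\begin{cases} B[C^{p^j}] & \text{if } j\leq e-1,\\ B' & \text{if } j\geq e,\end{cases}
\end{align*}
shows that each case is supplied either by the original hypothesis on $A\subset B\subset C$ (for $j\leq e-1$) or by the already-established fact that $A\subset B'$ is purely inseparable, hence an $\mc{F}$-extension (for $j\geq e$). The induction then concludes that $A\subset C$ is purely inseparable.

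The main subtlety is engineering the induction so that no new hypothesis is required: one must choose the intermediate $B'$ at the correct height of the Galois filtration of $B\subset C$ so that the two sub-towers $A\subset B\subset B'$ and $A\subset B'\subset C$ each fit the available hypotheses, which in turn relies on the combinatorial identity $B'[C^{p^j}]\in\{B[C^{p^j}],B'\}$ and on verifying via Theorem \ref{thm: characterization of purely inseparable} that the intermediate sub-extensions are Galois or purely inseparable as required.
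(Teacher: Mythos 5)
Your argument is correct and is essentially the paper's own proof: both iterate Proposition \ref{prop: comp of pi extension and Galois extension} along the Galois filtration $B[C^{p^k}]$ of $B\subset C$ provided by Theorem \ref{thm: characterization of purely inseparable}, the paper descending on $k$ from $\exp(C/B)-1$ down to $0$, and you packaging the same descent as an induction on $\exp(C/B)$ that peels off the top Galois layer $B\subset B[C^{p^{e-1}}]$ and recurses, after checking that $B'[C^{p^j}]\in\{B[C^{p^j}],B'\}$ so the hypotheses transfer. The one point to flag is that your base case (exactly like the paper's final step at $k=0$) requires $A\subset C$ itself to be an $\mc{F}$-extension, i.e.\ the $k=0$ instance of the hypothesis, which appears in the formulation of Theorem \ref{composition of purely inseparable extensions} in the introduction ($0\leq e<\exp(C/B)$) but not in the literal ``$k\geq 1$'' of the corollary as stated, so you are reading the hypothesis in the same (intended) way the authors do rather than deriving the $k=0$ case from the others.
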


\begin{proof}
    We prove this using an inductive argument. Let $e:=\exp(C/B)$. Since $B\subset C$ is purely inseparable, for all $0\leq k < e$, the intermediate subextensions $B[C^{p^{k+1}}]\subset B[C^{p^k}]$ are Galois. We analize the case in which $k=e-1$. Observe that $A\subset B$ is purely inseparable, $B\subset B[C^{p^{e-1}}]$ is Galois and $A\subset B[C^{p^{e-1}}]$ is an $\mc{F}$-extension by hypothesis, thus $A\subset B[C^{p^{e-1}}]$ is purely inseparable by Proposition \ref{prop: comp of pi extension and Galois extension}. Using the same argument and descending on $k$, we see that for each $0\leq k<e$ the extension $A\subset B[C^{p^{k+1}}]$ is purely inseparable, $B[C^{p^{k+1}}]\subset B[C^{p^{k}}]$ is Galois and $A\subset B[C^{p^k}]$ is an $\mc{F}$-extension, so $A\subset B[C^{p^k}]$ is purely inseparable. In particular, when we reach $k=0$, we conclude that $A\subset C$ is purely inseparable. 
\end{proof}

\section*{Acknowledgements}
We are deeply grateful to Orlando Villamayor for many helpful discussions and for his comments on the preparation of these notes. 
The second author would also like to thank the hospitality and excellent working conditions at the Department of Mathematics of the UAM, where he carried out part of this work as a Visiting Professor.

	The first author was partially supported by PID2022-138916NB-I00, by the Spanish Ministry of Science and Innovation, through the ``Severo Ochoa'' Programme for Centres of Excellence in R\&D (CEX2019-000904-S). The second author was partially supported by CONICET and MINCYT (PICT-2018-02073). Both authors were also partially supported by the Madrid Government under the multiannual Agreement with UAM in the line for the Excellence of the University Research Staff in the context of the V PRICIT (Regional Programme of Research and Technological Innovation) 2022-2024.

\bibliographystyle{abbrv} 
\bibliography{References}

\end{document}